\documentclass[12pt,openany]{book}
\usepackage{amsmath, amsthm, amssymb}
\usepackage{mdframed}
\usepackage{lipsum}
\usepackage{graphicx}
\usepackage{algorithm,algpseudocode} 
\usepackage{dsfont}
\usepackage[shortlabels]{enumitem}  
\usepackage{todonotes}

\newmdtheoremenv{thm}{Theorem}[section]
\newtheorem{example}[thm]{Example}

\theoremstyle{definition}
\newmdtheoremenv{defi}[thm]{Definition}
\newmdtheoremenv{lemma}[thm]{Lemma}
\newmdtheoremenv{prop}[thm]{Proposition}
\newmdtheoremenv{cor}[thm]{Corollary}
\newmdtheoremenv{prob}[thm]{Problem}
\newmdtheoremenv{ass}[thm]{Assumption}

\newtheorem{remark}[thm]{Remark}

\usepackage{amsthm,amssymb,amsmath}
\theoremstyle{definition}
\newmdtheoremenv{exercise}{Exercise}[section]
\newmdtheoremenv{outlook}{Outlook}

\usepackage{titlesec}
\titleformat{\chapter}[display]
{\normalfont\bfseries\filcenter}
{\LARGE\thechapter}
{1ex}
{\titlerule[2pt]
\vspace{1ex}%
\LARGE}
[\vspace{1ex}%
{\titlerule[2pt]}]

\newcommand{\cA}{\mathcal{A}}
\newcommand{\cB}{\mathcal{B}}

\newcommand{\cD}{\mathcal{D}}
\newcommand{\cE}{\mathcal{E}}
\newcommand{\cF}{\mathcal{F}}
\newcommand{\cG}{\mathcal{G}}

\newcommand{\cL}{\mathcal{L}}
\newcommand{\cM}{\mathcal{M}}

\newcommand{\cO}{\mathcal{O}}
\newcommand{\cP}{\mathcal{P}}

\newcommand{\cU}{\mathcal{U}}

\newcommand{\cX}{\mathcal{X}}
\newcommand{\cY}{\mathcal{Y}}
\newcommand{\cZ}{\mathcal{Z}}

\newcommand{\E}{\mathbb{E}}

\newcommand{\N}{\mathbb{N}}

\newcommand{\R}{\mathbb{R}}

\newcommand{\V}{\mathbb{V}}

\newcommand{\PP}{\mathbb{P}}

\def\P{\mathbb{P}}

\newcommand{\dd}{{\mathrm{d}}}

\usepackage{mathtools}
\DeclarePairedDelimiter\ceil{\lceil}{\rceil}

\usepackage[onehalfspacing]{setspace}
\usepackage{cancel}
\usepackage[margin=2cm]{geometry}
\usepackage{hyperref}
\definecolor{refcol}{rgb}{0.1,0,0.6}
\hypersetup{colorlinks}
\hypersetup{linkcolor=refcol, citecolor=refcol}
\usepackage{cleveref}

\usepackage{fancyhdr}

\newcommand{\HRule}{\rule{\linewidth}{0.5mm}} 
\title{Lecture notes: Stochastic gradient methods}
\begin{document}

\begin{center}
\HRule \\[0.4cm]
{ \huge \bfseries Introduction to stochastic gradient methods}\\[0.4cm] 
\HRule \\[1.5cm]
\begin{minipage}{0.9\textwidth}
\begin{flushleft} \large
Simon {Weissmann} \newline
\normalsize University of Mannheim, Institute of Mathematics, 68138 Mannheim, Germany\newline
\small \texttt{simon.weissmann@uni-mannheim.de}
\end{flushleft}

\end{minipage}

\vspace{2cm}

\begin{minipage}{0.7\textwidth}
\emph{Abstract}\\
These lecture notes provide an introduction to first-order optimization methods with a particular emphasis on stochastic gradient methods. We begin with deterministic gradient based methods for unconstrained optimization and study their convergence under standard assumptions such as smoothness, convexity, strong convexity, and the Polyak--{\L}ojasiewicz condition. We then turn to stochastic approximation and stochastic gradient descent, motivated by empirical and expected risk minimization in machine learning. The main focus is on convergence theory: we discuss almost sure convergence and convergence in expectation, derive classical convergence rates, and present selected advanced topics, including almost sure convergence rates and variance reduction methods.
\end{minipage}\\[0.5cm]
\end{center}

\newpage

\section*{Preface}
These lecture notes were originally prepared for the course \textit{Optimization in Machine Learning} offered at the University of Mannheim during the spring semester of 2023 and the fall semester of 2024. The present version has been revised and expanded, with a particular emphasis on stochastic gradient methods and their convergence theory. Several of the results presented here are based on standard textbooks, monographs, and research articles; references are provided throughout the text. The notes are intended for graduate students with a basic background in analysis, linear algebra, probability theory, and optimization. Their purpose is not to provide a complete account of the
vast literature on stochastic optimization, but rather to give a self-contained introduction to central convergence arguments for deterministic and stochastic first-order methods.

I would like to thank Marc Schäfer and Tilman Aach for carefully proofreading earlier versions of this manuscript. I am also grateful to Felix Benning, Leif Döring, Sebastian Kassing, Sara Klein, and Ashia Wilson for helpful discussions and suggestions on related topics. Comments, corrections, and suggestions for improvement are very welcome and may be sent to the author by email. \medskip 

\newpage

\tableofcontents

\newpage
\chapter{Introduction}\label{ch:Intro}
The aim of these lecture notes is to provide an introduction to stochastic gradient methods from
the perspective of continuous optimization, complemented by the probabilistic tools needed for
their convergence analysis. While the motivating applications come from machine learning, the
main focus lies on the mathematical analysis of first-order optimization algorithms. We first study
deterministic gradient descent methods under standard structural assumptions on the objective
function. Building on this foundation, we then introduce stochastic gradient descent and analyze
different notions of convergence, including convergence in expectation and almost sure convergence.\medskip

Before turning to optimization methods, we briefly recall the role of optimization within \textit{machine
learning}. In many learning problems, one first chooses a class of candidate models and then trains
a model from this class by minimizing a suitable objective function. These lecture notes focus on
this \textit{training task}. Questions concerning \textit{approximation} and \textit{generalization} are equally important
in machine learning, but they will only serve as motivation here and will not be studied in detail. The field of machine learning is often divided into the following three classes:
\begin{enumerate}
\item \textit{Supervised learning:} Given pairs of \textit{input} and \textit{output} variables, the aim is to learn a relation between inputs and outputs. Typical examples are \textit{classification}, where the output takes values in a discrete set, and \textit{regression},
where the output is continuous.
\item \textit{Unsupervised learning:} Here the data set consists of input variables without specified labels. Typical tasks are to separate the data set into groups (\textit{clustering}) or to learn a probability distribution describing the data set (\textit{density estimation}).
\item \textit{Reinforcement learning:} In this setting, an \textit{agent} interacts with an environment and aims to learn a strategy of actions that maximizes a cumulative \textit{reward}. In contrast to the supervised setting, the optimal behavior is typically learned through trial and error rather than from a fixed labeled data set.
\end{enumerate}

These lecture notes focus mainly on optimization methods motivated by supervised learning. To introduce the corresponding learning task, we use the following notation:
\begin{itemize}
    \item Input set: $\cZ \subset \R^{d_z}$ 
    (e.g.~images, observation points, or features).
    \item Output/target set: $\cY\subset\R^{d_y}$ 
    (e.g.~a label in $\{0,1\}$ or a function value in $\R^{d_y}$).
    \item Training data: 
    $S=\{(z^{(1)},y^{(1)}),\dots,(z^{(m)},y^{(m)})\}$ with
    $(z^{(i)},y^{(i)})\in\cZ\times \cY$, $i=1,\dots,m$.
\end{itemize}
The goal in supervised learning is to approximate the unknown model
\begin{equation*}
    \varphi:\cZ\to\cY,\quad z\mapsto \varphi(z)=y\,.
\end{equation*}

The \textit{task of the learner} is to construct a prediction or approximation $g:\cZ\to\cY$, which is actually a task of function approximation. Typically, the learner aims to find a (finite dimensional) parameter $\theta\in\Theta$ and compute the approximation $g_\theta:\cZ\to\cY$. Here, we will call both $\cG$ and $\Theta$ the \textit{learning class} of possible candidates for $g\in\cG$ or $\theta\in\Theta$ respectively. 

In the following, we fix an underlying probability space $(\Omega,\cF,\mathbb P)$. The \textit{data model} is usually described by the \textit{training data set} as a family of random variables. For example, one may assume that inputs $Z^{(i)}\sim\mu_Z$ are generated independently and then passed through the unknown "true" model $\varphi$:
\[ Y^{(i)} := \varphi(Z^{(i)}) + \xi^{(i)}\,,\quad i=1,\dots,m\,,\]
where $(\xi^{(i)})_{i=1}^m$ denotes possible noise. Hence, we will model the input and output as a jointly varying random variable $(Z,Y)\sim\mu_{(Z,Y)}$ with unknown joint distribution $\mu_{(Z,Y)}$. Later, we will usually assume that we have access to an independent and identically distributed (i.i.d.) sample 
$\{(Z^{(i)},Y^{(i)})\}_{i=1,\dots,m}\,, m\in\N\,, \ \text{with}\ (Z^{(1)},Y^{(1)})\sim\mu_{(Z,Y)}\,.$ To measure the quality of a prediction, let $f:\cG\times \cZ \times \cY\to\R$ be a measurable loss function and let $(Z,Y)\sim\mu_{(Z,Y)}$ with $\E[|f(g,Z,Y)|]<+\infty$ for all $g\in\cG$. 
    \begin{enumerate}
        \item[(i)] We define the \textit{expected risk} $F:\cG\to\R$ by
        \[F(g):=\E_{\mu_{(Z,Y)}}[f(g,Z,Y)]:=\int_{\cZ\times\cY} f(g,z,y)\, \mu_{(Z,Y)}(d(z,y)),\quad g\in\cG\,. \]
        \item[(ii)] Let $(Z^{(1)},Y^{(1)}),\dots,(Z^{(m)},Y^{(m)})$ be i.i.d.~random variables with $(Z^{(1)},Y^{(1)})\sim\mu_{(Z,Y)}$. We define the \textit{empirical risk} $F_m:\cG\to\R$ by
        \[F_m(g):= \frac1m\sum_{i=1}^m f(g,Z^{(i)},Y^{(i)}),\quad g\in\cG\,.\]
    \end{enumerate}
    We call $f$ the \textit{loss function}, and the optimization problems
    \[\min_{g\in\cG}\ F(g) \quad (\min_{g\in\cG}\ F_m(g))\]
    are called the \textit{expected (empirical) risk minimization problem}.

\begin{example}[Classification problem]
    Let $\cY = \{0,1\}\subset \R$ and assume that the training data is generated without noise as
    $Y^{(i)} = \varphi(Z^{(i)})\,,\ i=1,\dots,m\,.$
    In a classification problem one usually aims to classify between a finite number of classes, here for simplicity between two classes. The first class corresponds to $0$ and the second one to $1$. Hence, the true model $\varphi$ maps to the set $\{0,1\}$ and states whether the input belongs to $0$ or $1$. 

    A common choice for a loss function is an indicator function over the predicted classification through $g\in\cG$. We receive a penalty $1$ if the prediction is wrong, whereas $0$ if the prediction is correct. Mathematically written, the loss function can be defined by
    \[f(g,z,y) := \mathds{1}_{\{g(z)\neq y\}} = \mathds{1}_{\{g(z)\neq \varphi(z)\}}\]
    and the expected risk corresponds to the probability of giving a wrong prediction
    \[F(g) = \E_{\mu_{(Z,Y)}}[\mathds{1}_{\{g(Z)\neq Y\}}] = \mathbb P_{\mu_{(Z,Y)}}(g(Z)\neq Y)\,.\]
    When having access to a training data set $\{(Z^{(i)},Y^{(i)})\}_{i=1}^m$ the empirical risk counts the relative number of failures in predicting the correct label
    \[F_m(g) = \frac1m\sum_{i=1}^m \mathds{1}_{\{g(Z^{(i)})\neq Y^{(i)}\}} = \frac{|\{i\in\{1,\dots,m\}\mid g(Z^{(i)})\neq Y^{(i)}\}|}{m}\,.\]
\end{example}

\begin{example}[Regression problem]
    A second classical example of supervised learning tasks are regression problems. As an example we consider the task to approximate a function $\varphi:\R^{d_z}\to\R^{d_y}$ based on the loss function 
    \( f(g,z,y):= \frac12\|g(z)-y\|_{\R^{d_y}}^2\,.\)
    The corresponding expected risk measures the expected squared distance of the prediction $g$
    \[F(g)=\E_{\mu_{(Z,Y)}}[\frac12\|g(Z)-Y\|_{\R^{d_y}}^2]\,,\]
    and the empirical risk computes the averaged squared distance of the prediction 
    \[ F_m(g) = \frac1m\sum_{i=1}^m\frac12\|g(Z^{(i)})-Y^{(i)}\|_{\R^{d_y}}^2\]
    within a training data set $\{(Z^{(i)},Y^{(i)})\}_{i=1}^m$.
\end{example}

Machine learning involves several mathematical questions. In these notes, we distinguish three
central aspects, but only the second one will be studied.
\begin{enumerate}
\item \textit{Approximation (expressive power):} In this research area, the concern revolves around the choice of the function class used to approximate the true model. The question is whether a suitable function class even exists for approximating the true model and how large one should choose the class to approximate the true model up to a certain accuracy.
\item \textit{Training/learning task:} Once we have decided on a specific class of functions in which we aim to approximate the underlying true model, the next step is to find the best possible representation within this chosen class $\cG$ or $\Theta$. For example, assuming a parametrized representation $g_\theta:\R^{d_z}\to\R^{d_y}$ of the function approximation and given a training data set $\{(z^{(i)},y^{(i)})\}_{i=1}^N$ constructed through the true model, i.e.~$y^{(i)} = \varphi(z^{(i)})$, $i=1,\dots,N$, we aim to find $\theta\in\Theta$ such that we obtain the best possible approximation $y^{(i)}\approx g_\theta(z^{(i)})$. As described above, the task in learning of the model is then to solve an optimization problem of the form
\[\min_{\theta\in\Theta}\ F_N(\theta,\{(z^{(i)},y^{(i)})\}_{i=1}^N),\]
where $F_N:\Theta\times \left(\times_{i=1}^N (\R^{d_z}\times \R^{d_y})\right)\to\R$ is a suitable objective function. As we have seen in the example of regression, a typical example of objective function is
\[F_N(\theta,\{z^{(i)},y^{(i)}\}_{i=1}^N) = \frac{1}{N} \sum_{i=1}^N\|g_\theta(z^{(i)}) - y^{(i)}\|^2 + \mathcal R(\theta), \]
where $\mathcal R:\Theta\to\R$ is a regularization function in order to avoid the so-called \textit{overfitting}. 
\item \textit{Generalization:} Once we have learned or trained a parameter $\theta_\ast$ that approximates the true model on the training data, the natural question is how well this
approximation generalizes to unseen data. In other words, one seeks to evaluate the quality of
\(
g_{\theta_\ast}(z) \approx \varphi(z)
\)
for data points $(z,\varphi(z))$ that have not been used during training.
\end{enumerate}

The questions of approximation and generalization are equally important, but they are not the focus of these notes. Here, we concentrate on the training problem: given an objective function, typically an empirical or expected risk, how can it be minimized efficiently by first-order methods?

\paragraph{Outline of this course:} 
In Chapter~\ref{ch:unOpt}, we introduce deterministic first-order methods for unconstrained
optimization, with a particular emphasis on gradient descent. We discuss convergence under
different assumptions on the objective function, including smoothness, convexity, strong
convexity, and the Polyak--{\L}ojasiewicz condition. We also briefly discuss sub-gradient
methods for non-smooth convex functions. Chapter~\ref{ch:accmethods} is devoted to
momentum and acceleration, including Polyak's heavy ball method and Nesterov's accelerated
gradient method. In Chapter~\ref{ch:sgd}, we turn to stochastic approximation and stochastic
gradient descent. We derive SGD from empirical and expected risk minimization and study almost sure convergence
and convergence in expectation. We also discuss convergence rates, lower bounds,
almost sure convergence rates, and variance reduction methods.

\chapter{Unconstrained optimization methods}\label{ch:unOpt}
In this chapter, we provide a brief overview of basic concepts of unconstrained optimization. The material presented here is based on standard references such as~\cite{DPB15,N2018,Lan2021}. Fundamental definitions and properties from convex analysis and optimality conditions are collected in \Cref{app:convex} and \Cref{app:op-condition}. Building upon this foundation, we will explore a series of gradient based methods to iteratively solve optimization problems. We analyze the convergence of the resulting schemes under different assumptions on the objective
function, such as differentiability, smoothness, convexity, and strong convexity. Consider the following problem formulation, which will be the central focus of this lecture.
\begin{prob}\label{prob:opti}
Let $f:X\to\R$ be a (continuous) function with domain $X\subset \R^d$. For which value(s) $x\in X$ is the function evaluation $f(x)$ minimal? 
\end{prob}
We will refer to Problem~\ref{prob:opti} as an \textit{optimization} or \textit{minimization problem} and write shortly
\begin{equation}\label{eq:opt_problem}
\min_{x\in X}\ f(x)\, .
\end{equation}
In this lecture course, the corresponding function $f$ is called \textit{objective function}. 
\begin{remark}
\begin{itemize}
\item Our special focus will lie in optimization problems without constraints, in which we assume $X=\R^d$. For the case $X\subsetneq \R^d$ we refer to \eqref{eq:opt_problem} as optimization problem under the constraint $x\in X$, where we also write
\begin{equation*}
\min_{x\in \R^d}\ f(x),\quad \text{s.t.}\quad x\in X, 
\end{equation*}
in order to highlight the constraint. Note that "s.t." stands for "subject to".
\item For simplicity, we will consider minimization problems, since each maximization problem can be equivalently rewritten as minimization problem: 
\[\max_{x\in X}\ f(x) \quad \widehat{=} \quad \min_{x\in X}\ -f(x)\, .\]
\item This lecture focuses on so-called continuous optimization problems, where the feasible set is infinite (uncountable). In case the feasible set is countable or finite, the optimization problem is called discrete.
\item Typically, the objective function $f$ is nonlinear, such that \eqref{eq:opt_problem} is often called nonlinear program.
\end{itemize}
\end{remark}

In the following, we introduce the notion of local and global solutions for \eqref{eq:opt_problem}.
\begin{defi}
Let $f:X\to\R$ for some $X\subset \R^d$. The point $x_\ast\in X$ is called 
\begin{itemize}
\item[a)] \textit{local minimum} of $f$ over $X$, if there exists $\varepsilon>0$ such that $f(x_\ast)\le f(x)$ for all $x\in X$ with $\|x-x_\ast\|<\varepsilon$. If it even holds true that $f(x_\ast)<f(x)$ for all $x\in X\setminus \{x_\ast\} $ with $\|x-x_\ast\|<\varepsilon$, we call $x_\ast$ \textit{strict local minimum}.
\item[b)] \textit{global minimum} of $f$ over $X$, if $f(x_\ast)\le f(x)$ for all $x\in X$. If it even holds true that $f(x_\ast)<f(x)$ for all $x\in X\setminus \{x_\ast\}$, we call $x_\ast$ \textit{strict global minimum}.
\end{itemize}
\end{defi}
\begin{remark}
A local minimum $x_\ast\in X$ minimizes the objective function $f$ only in a local neighborhood $\mathcal B_{\varepsilon}(x_\ast) = \{x\in \R^d\mid \|x-x_\ast\|<\varepsilon\}$, whereas a global minimum minimizes the objective function over the entire domain / feasible set $X$. This definition also covers constrained optimization problems.
Furthermore, every global minimum is also a local minimum. The converse does not hold. 
\end{remark}

\section{Optimization methods based on descent directions}
It is often challenging, and in many applications impossible, to compute solutions of the problem $\min_{x\in\R^d}\ f(x)$ analytically. Instead, iterative methods can be used to solve the minimization task numerically. The focus will be on so-called \textit{descent methods} based on descent directions.

\begin{defi}\label{defi:descentdirection}
Let $f:\R^d\to\R$ be the objective function. We call a vector $d\in\R^d$ \textit{descent direction} of $f$ in $x\in\R^d$, if there exists $\bar \alpha>0$ such that
\[f(x+\alpha d) < f(x) \quad \text{for all}\ \alpha\in(0,\bar \alpha].\]
\end{defi}

Our aim is to construct an iterative scheme $x_0,x_1,x_2,\dots$ initialized with $x_0\in\R^d$, such that $f(x_{k+1})<f(x_k)$ for $k\ge0$. The descent direction will be the key to construct this scheme. The following lemma describes a sufficient condition for a descent direction.

\begin{figure}[!htb]
\centering
\vspace{-2.0cm}
\begin{tikzpicture}[scale=1.6, >=stealth]
\clip (-2.0,-1.2) rectangle (4.5,4.0);
\foreach \a/\b in {1.0/0.45,1.5/0.70,2.0/0.95,2.5/1.20,3.0/1.45,3.5/1.70, 4.5/1.95, 5.0/2.25}
{
    \draw[gray!65] (0,0) ellipse ({\a} and {\b});
}
\filldraw[black] (0,0) circle (1.6pt);
\node[below right] at (0,0) {$x^\ast$};
\coordinate (x0) at (2.0,2.45);
\coordinate (x1) at (2.35,1.80);
\coordinate (x2) at (1.45,1.45);

\filldraw[black] (x0) circle (1.8pt) node[above left] {$x_0$};
\filldraw[black] (x1) circle (1.8pt) node[above right] {$x_1$};
\filldraw[black] (x2) circle (1.8pt) node[above left] {$x_2$};

\draw[->, thick] (x0) -- (x1);
\draw[->, thick] (x1) -- (x2);

\coordinate (xk) at (0.75,1.1);
\coordinate (xkp) at (1.0,0.6);
\coordinate (xkp_vec) at (1.25,0.1);

\draw[->, thick] (x2) -- (xk);

\filldraw[black] (xk) circle (1.9pt);
\node[above left] at (xk) {$x_k$};

\filldraw[black] (xkp) circle (1.9pt);
\node[above right] at (xkp) {$x_{k+1}=x_k+\alpha_k d_k$};

\draw[->, very thick, green!50!black] (xk) -- (xkp_vec);
\node[green!50!black, above right] at (1.2,-0.2) {$d_k$ with $\nabla f(x_k)^\top d_k<0$};

\draw[->, very thick, blue!70!black] (xk) -- (0.65,0.25);
\node[blue!70!black, below left] at (0.6,0.95) {$-\nabla f(x_k)$};

\end{tikzpicture}
\caption{Illustration of descent methods.}
\label{fig:descent-method}
\end{figure}

\begin{lemma}\label{lem:descent_condition}
Let $f:\R^d\to\R$ be continuously differentiable in $x\in\R^d$. Then the condition 
\begin{equation}\label{eq:descent_condition}
\nabla f(x)^\top d <0
\end{equation}
is sufficient for $d\in\R^d$ being a descent direction of $f$ in $x$.
\end{lemma}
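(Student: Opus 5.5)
The plan is to reduce the statement to a one-dimensional question along the ray $\alpha\mapsto x+\alpha d$ and use the definition of the derivative. Concretely, I will introduce the auxiliary function $\varphi:\R\to\R$, $\varphi(\alpha):=f(x+\alpha d)$. Since $f$ is differentiable in $x$, the chain rule (or directly the definition of the Fréchet derivative) gives that $\varphi$ is differentiable at $\alpha=0$ with
\[
\varphi'(0)=\lim_{\alpha\downarrow 0}\frac{f(x+\alpha d)-f(x)}{\alpha}=\nabla f(x)^\top d<0 .
\]

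The key step is to turn this negative limit into a strict inequality on a whole interval. I will apply the definition of the limit with $\varepsilon:=-\tfrac12\nabla f(x)^\top d>0$. This yields some $\bar\alpha>0$ such that for all $\alpha\in(0,\bar\alpha]$
\[
\frac{f(x+\alpha d)-f(x)}{\alpha}<\nabla f(x)^\top d+\varepsilon=\tfrac12\nabla f(x)^\top d<0 .
\]
Multiplying by $\alpha>0$ then gives $f(x+\alpha d)<f(x)$ for all $\alpha\in(0,\bar\alpha]$. This is exactly the requirement of Definition~\ref{defi:descentdirection}.

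An alternative route goes via Taylor expansion, $f(x+\alpha d)=f(x)+\alpha\nabla f(x)^\top d+o(\alpha)$, and amounts to the same argument. Continuity of $\nabla f$ is not really needed; mere differentiability at $x$ suffices. If one prefers the mean value theorem, continuity of $\nabla f$ near $x$ is what makes it work: $f(x+\alpha d)-f(x)=\alpha\nabla f(x+t\alpha d)^\top d$ for some $t\in(0,1)$, and by continuity $\nabla f(y)^\top d<0$ for $y$ near $x$.

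The only mild obstacle is being careful to obtain a \emph{uniform} $\bar\alpha$ rather than just a pointwise statement. Choosing $\varepsilon$ as half the (negative) directional derivative handles this cleanly.
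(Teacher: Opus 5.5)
Your proposal is correct and follows essentially the same route as the paper: restrict $f$ to the ray via $\varphi(\alpha)=f(x+\alpha d)$, note that the difference quotient $\frac{\varphi(\alpha)-\varphi(0)}{\alpha}$ tends to $\nabla f(x)^\top d<0$, and conclude that it stays negative on some interval $(0,\bar\alpha]$. Your choice $\varepsilon=-\tfrac12\nabla f(x)^\top d$ simply spells out the paper's final limit step. Your remark that differentiability at $x$ alone suffices is also correct.
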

\begin{proof}
We fix $x\in\R^d$ and $d\in\R^d$ satisfying \eqref{eq:descent_condition} and define $\varphi(\alpha) = f(x+\alpha d)$. By Taylor expansion it follows
\[\varphi(\alpha) = \varphi(0) + \alpha \varphi'(0) + o(\alpha). \]
Note that $\varphi(0) = f(x)$ and $\varphi'(0) = \nabla f(x)^\top d$ such that
\[ \frac{\varphi(\alpha)- \varphi(0)}{\alpha} = \underbrace{\nabla f(x)^\top d}_{<0}+ \underbrace{\frac{o(\alpha)}{\alpha}}_{\to 0,\ \alpha\to 0}.\]
This implies that there exists $\bar\alpha>0$ such that 
\[\frac{\varphi(\alpha)-\varphi(0)}{\alpha} < 0 \]
for all $\alpha\in(0,\bar\alpha]$.
\end{proof}

The condition \eqref{eq:descent_condition} is not a necessary condition for $d\in\R^d$ being a descent direction. See Exercise~\ref{exc:desc_cond} for more details.

\begin{exercise}\label{exc:desc_cond}
Let $x_\ast\in\R^d$ be a strict local maximum of $f:\R^d\to\R$. Prove that every $d\in\R^d\setminus\{0\}$ is a descent direction of $f$ in $x_\ast$. 
\end{exercise}

\begin{example}
The following two choices are classical examples of descent directions.
\begin{itemize}
\item Given $x\in\R^d$, the choice $d=-\nabla f(x)$ is a descent direction of $f$ in $x$. This direction is also called \textit{steepest descent direction}.
\item Given $x\in\R^d$ and positive definite matrix $M\in\R^{d\times d}$, the choice $d=-M\nabla f(x)$ is a descent direction of $f$ in $x$. We also call it \textit{preconditioned gradient based descent direction}.
\end{itemize}
\end{example}

The resulting iterative descent method is formulated in the following algorithm.

\begin{algorithm}[htb!]
\begin{algorithmic}[1]
\State \textbf{Input:} \begin{itemize}
 \item objective function $f:\R^d\to\R$
 \item initial $x_0\in\R^d$
 \end{itemize}
 \State set $k=0$
\While{"convergence/stopping criterion not met"}
	\State find a descent direction $d_k\in\R^d$ of $f$ in $x_k$
	\State determine a step size $\alpha_k>0$ such that $f(x_k+\alpha_k d_k)< f(x_k)$
	\State set $x_{k+1} = x_k +\alpha_k d_k$, $k \mapsto k+1$
\EndWhile
\end{algorithmic}
 \caption{Descent method}\label{alg:DM}
\end{algorithm}

\begin{remark}
\begin{itemize}
\item The "convergence/stopping criterion" is of practical relevance. We will suppress this criterion in our theoretical analysis and study the long-time behavior of various types of algorithms as the number of iterations tends to infinity.
\item The values $\alpha_k>0$ are called \textit{step sizes}. In the research area of machine learning, they are often called \textit{learning rates}. In general, the choice of the step size rate can be fixed, adaptive or based on line search. We will give more details below. 
\end{itemize}
\end{remark}

\subsection{Examples of descent directions.}
We consider several examples of descent directions of the unified form
\begin{equation}
d_k = -D_k\nabla f(x_k),
\end{equation}
where $D_k\in\R^{d\times d}$ is a positive definite matrix.

\begin{defi}
Iterative schemes of the form
\begin{equation}
x_{k+1} = x_k - \alpha_k D_k \nabla f(x_k),\quad x_0\in\R^d,\ k\ge0,
\end{equation}
with $\alpha_k>0$ and $D_k\in\R^{d\times d}$ positive definite are called \textit{gradient methods}.
\end{defi}

\begin{remark}
The particular choice $d_k=-D_k\nabla f(x_k)$ describes a descent direction due to
\[\nabla f(x_k)^\top d_k = -\nabla f(x_k)^\top D_k \nabla f(x_k) < 0. \]
\end{remark}

We consider the following examples of gradient methods.
\begin{enumerate}
\item[a)] \textit{Method of steepest descent:} This scheme is described by the simplified choice $D_k= {\mathrm{Id}}$, which means that
\begin{equation}\label{eq:GD_method}
x_{k+1} = x_k - \alpha_k \nabla f(x_k)\,,
\end{equation}
and is also known under the name \textit{gradient descent method} (GD). The name "steepest descent" can be motivated by the normalized descent direction 
\[d_k = -\frac{\nabla f(x_k)}{\|\nabla f(x_k)\|}.\]
More details will follow later.
\item[b)] \textit{Newton method:} Consider the quadratic approximation (second-order Taylor approximation) of the objective function $f$. Let $f:\R^d\to\R$ be twice continuously differentiable, $x_k\in\R^d$ be the current iteration and approximate
\[f(x) \approx f_q(x) := f(x_k) + \nabla f(x_k)^\top (x-x_k) + \frac12 (x-x_k)^\top \nabla^2f(x_k)(x-x_k). \]
In order to find a stationary point $x_\ast\in\R^d$ of $f_q$ one can solve
\[\nabla f_q(x) = \nabla f(x_k) + \nabla^2 f(x_k) (x-x_k) = 0 \]
which yields
\begin{equation}\label{eq:Newtoniteration}
x_\ast = x_k-(\nabla^2f(x_k))^{-1}\nabla f(x_k)
\end{equation}
assuming that $\nabla^2 f(x_k)$ is positive definite. This iteration corresponds to the \textit{Newton iteration}. The Newton method is a gradient method with the particular choice $D_k = (\nabla^2 f(x_k))^{-1}$ provided that $\nabla^2 f(x_k)$ is regular, i.e.~
\[x_{k+1} = x_k - \alpha_k (\nabla^2f(x_k))^{-1}\nabla f(x_k).\]
It is a second-order gradient method, since  second-order derivatives are used to formulate the iterative scheme. However,  our focus in this lecture will be on first-order gradient methods.
\item[c)] \textit{Quasi-Newton method:} If the Hessian $\nabla^2 f(x_k)$ is not invertible, or if solving the corresponding linear system \eqref{eq:Newtoniteration} is computationally expensive, one often replaces the Hessian or its inverse by an approximation. This leads to the class of \textit{quasi-Newton methods}.
\end{enumerate}

\subsection{Selection of the step size / learning rate.}
After we have introduced the descent direction, we will now consider the choice of step size / learning rate $\alpha_k$.
\begin{enumerate}
\item[a)] \textit{Constant step size:} The most straightforward choice of step size is the constant step size $\alpha_k=\bar \alpha$ for all $k\ge0$ and some $\bar \alpha>0$ sufficiently small. However, convergence to a stationary point might be slow for too small $\bar \alpha$. In contrast, if $\bar\alpha$ is chosen too large, the resulting scheme might diverge.  We will describe situations for which we can derive specific upper bounds on $\bar \alpha$ to ensure the convergence of different gradient methods.
\item[b)] \textit{Diminishing step size:} Another popular choice, in particular for stochastic gradient schemes, is a diminishing sequence of step sizes $\alpha_k\to 0$ for $k\to\infty$. Again, for too large choices of $\alpha_k$ the resulting scheme violates the monotonic descent along the iteration. Furthermore, if $\alpha_k$ decreases too fast, the steps may become too small to make significant
progress, even if the iterate is still far away from a stationary point. A popular condition for choosing $\alpha_k$, which will regularly appear in this lecture, is 
\[ \sum_{k=1}^\infty \alpha_k = \infty\quad \text{and}\quad \sum_{k=1}^\infty \alpha_k^2<\infty.\]
\item[c)] \textit{Adaptive step size rules:} In many gradient based methods, the step size is often not chosen according to a
pre-specified deterministic schedule, but is adapted based on information collected during the
iteration. Two important examples are AdaGrad-type step sizes \cite{JMLR:v12:duchi11a,tieleman2012rmsprop,zeiler2012adadeltaadaptivelearningrate} and Polyak step sizes \cite{hazan2022revisitingpolyakstepsize}. AdaGrad methods scale the step size by the accumulated size of past gradients. In a simple scalar
form, one may choose
\[
\alpha_k
=
\frac{\alpha}{\sqrt{\delta+\sum_{i=0}^{k}\|\nabla f(x_i)\|^2}},
\]
where $\alpha>0$ and $\delta>0$ are fixed parameters. Hence, the effective step size becomes
smaller in regions where large gradients have been observed. More refined variants use coordinate-wise scaling and are widely used in machine learning.

Polyak step sizes use information about the current objective value gap. If the optimal value
$f^\ast=\inf_{x\in\R^d}\,f(x)$ is known, one can choose, for example
\[
\alpha_k
=
\frac{f(x_k)-f^\ast}{\|\nabla f(x_k)\|^2}.
\]
This choice is motivated by the idea of taking a step that would minimize a local first-order model
of the objective. However, in practice, the value $f^\ast$ is usually unknown and must be estimated
or replaced by a lower bound. Both AdaGrad and Polyak step sizes can lead to strong practical performance and have their own advanced
convergence theory. A detailed analysis is beyond the scope of these notes.
\item[d)] \textit{Step size rule based on line search:} Ideally, one would like to choose $\alpha_k$ such that the decrease along the chosen descent direction is maximized. One possible choice is
\[\alpha_k \in \arg\min_{\alpha\in[0,s]}\ f(x_k+ \alpha d_k)\]
for some pre-specified $s>0$. In most scenarios, however, this one-dimensional minimization problem cannot be solved exactly, and approximate line-search procedures are used instead. One of the most popular line-search strategies is the Armijo rule. Here, the step size is successively decreased until a sufficient decrease condition is satisfied. The Armijo rule is given by the condition
\begin{equation}\label{eq:armijo_condition}
f(x+\alpha d) \le f(x) + \sigma\alpha \nabla f(x)^\top d,
\end{equation}
where $x$ denotes the current iteration, $d$ is the chosen descent direction, and $\sigma\in(0,1)$.
If $d\in\R^d$ satisfies $\nabla f(x)^\top d<0$ (e.g.~$d=-\nabla f(x)$), then condition \eqref{eq:armijo_condition} guarantees a decrease of the objective function. Furthermore, there exists some $\alpha>0$ satisfying condition \eqref{eq:armijo_condition}. In order to choose a suitable step size, we can apply a so-called \textit{backtracking} line search. Given a certain initial step size $\alpha^{(0)} = s_0>0$ we will reduce the step size sequentially until condition \eqref{eq:armijo_condition} is satisfied. The Armijo step size rule is summarized in Algorithm~\ref{alg:armijo}.
\end{enumerate}

\begin{figure}[!htb]
  \centering \includegraphics[width=0.5\textwidth]{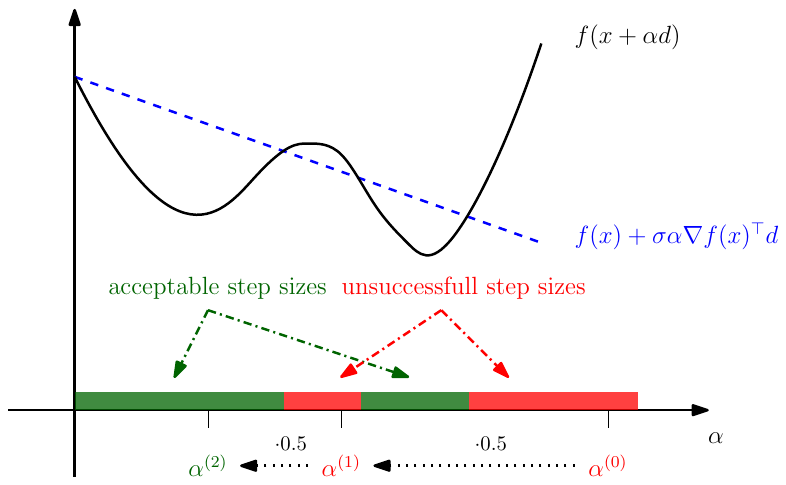}
 \caption{Illustration of the Armijo step size rule based on Figure~1.2.7 in \cite{DPB15}. In the green area condition \eqref{eq:armijo_condition} is satisfied such that $\alpha$ is accepted.} \label{fig:armijo}
\end{figure}

\begin{algorithm}[htb!]
\begin{algorithmic}[1]
\State \textbf{Input:} \begin{itemize}
 \item current iteration $x_k$ and descent direction $d_k$
 \item parameter $\sigma\in(0,1)$, $\rho\in(0,1)$
 \item initial step size $s_0>0$
 \end{itemize}
 \State set $\ell=0$, $\alpha^{(0)} = s_0$
\While{$f(x_k + \alpha^{(\ell)} d_k)> f(x_k) + \sigma \alpha^{(\ell)} \nabla f(x_k)^\top d_k $}
	\State set $\alpha^{(\ell+1)} = \rho\cdot \alpha^{(\ell)}$
	\State set $\ell\mapsto \ell+1$
\EndWhile
\State set $\alpha_k=\alpha^{(\ell)}$
\end{algorithmic}
 \caption{Armijo step size rule}\label{alg:armijo}
\end{algorithm}

We refer interested readers to \cite{DPB15} for more information on alternative step size rules. Throughout this course, we focus on constant and diminishing step sizes.

\subsection{Discussion about convergence behavior.}
Our aim in this course is to analyze the convergence of various optimization algorithms. The first question is what convergence behavior do we expect? In an optimal scenario, we wish that the optimization scheme should converge from any initial state to a global minimum of the objective function. Unfortunately, typically this scenario is way too optimistic.

We consider the gradient descent scheme \eqref{eq:GD_method}, where the state in each iteration moves in the direction of steepest descent. This means the iteration always moves downhill independent of the global structure of the objective function. On the one hand, the iteration is attracted by local minima but on the other hand gets stuck in any stationary point. Without specific structure of the objective function such as convexity, we can only hope for convergence to stationary points. In general, it is not clear whether there exists an accumulation or even a limit point of the sequence $(x_k)_{k\in\N}$ constructed by the gradient descent method \eqref{eq:GD_method}.

\begin{figure}[!htb]
  \centering \includegraphics[width=0.7\textwidth]{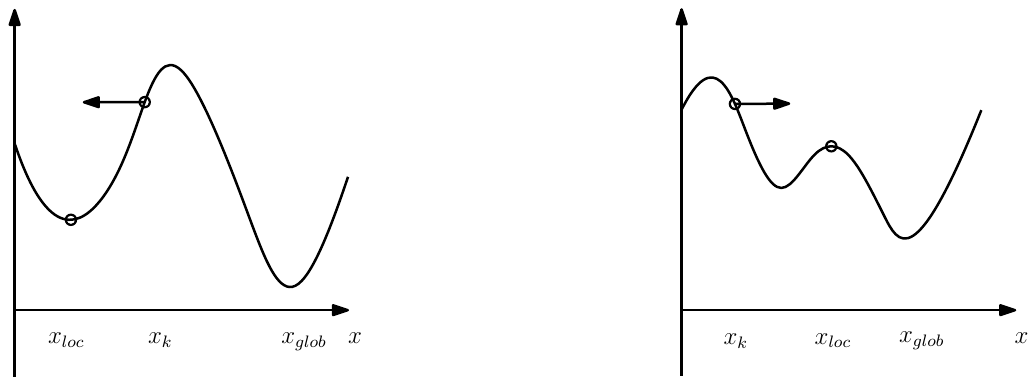}
 \caption{Illustration of possible terminations of the gradient descent method.} \label{fig:GD_stuck}
\end{figure}

However, not all stationary points are equally problematic. In particular, one can distinguish
between local minima and saddle points. A stationary point $x_\ast$ is called a strict saddle point
if the Hessian $\nabla^2 f(x_\ast)$ has at least one negative eigenvalue. Intuitively, this means that
there exists a direction of negative curvature along which the objective function decreases. Although gradient descent can in principle get stuck at saddle points, this behavior is unstable
for strict saddle points. Under suitable regularity assumptions and for sufficiently small constant step sizes, randomly initialized gradient descent avoids strict saddle points with probability one~\cite{lee_first-order_2019}. This is an
asymptotic statement: it says that the set of initializations from which gradient descent converges to a strict saddle point has Lebesgue measure zero, but it does not quantify the time needed to escape a neighborhood of a saddle point.

\section{Convergence analysis of gradient descent}
Before turning to the convergence analysis of gradient descent, we first motivate its interpretation as a \textit{steepest descent method}. By Taylor expansion, we have seen in Lemma~\ref{lem:descent_condition} that $\nabla f(x)^\top d<0$ characterizes the strength of the descent direction $d\in \R^d$. Let $x\in\R^d$ be fixed and choose a normalized descent direction $d\in\R^d$ such that 
\begin{equation}\label{eq:steepest_desc} 
\min_{d\in\R^d}\ \nabla f(x)^\top d,\quad \text{s.t.} \|d\|=1.
\end{equation}
We consider normalized directions because the normalization determines only the direction of movement, whereas the length of the step is later controlled by the step size $\alpha_k$. By the Cauchy-Schwarz inequality, we first observe that for $\|d\|=1$ 
\[0\le |\nabla f(x)^\top d| \le \|\nabla f(x)\| \|d\| = \|\nabla f(x)\|\, .\]
Then it also holds that
\[\nabla f(x)^\top d\ge -|\nabla f(x)^\top d|\ge -\|\nabla f(x)\|\,.\]
Since the choice $d_\ast=-\frac{\nabla f(x)}{\|\nabla f(x)\|}$ leads to $\nabla f(x)^\top d = -\|\nabla f(x)\|$, $d_\ast$ is a solution of \eqref{eq:steepest_desc}. Thus, the negative gradient $-\nabla f(x)$ is the direction of steepest descent among all normalized directions in the sense that it minimizes the directional derivative $\nabla f(x)^\top d$. 

\subsection{Descent estimates via smoothness}
In order to characterize the limit points of the steepest descent scheme, we will need more assumptions about our underlying objective function $f$. One important property considered in this lecture course is \textit{smoothness}. This property guarantees a descent of the objective function along the trajectories of the gradient descent method as long as the step size is sufficiently small.

\begin{defi}
A differentiable function $f:\R^d\to\R$ is called $L$-smooth for some $L>0$ if its gradient $\nabla f$ is $L$-Lipschitz continuous, i.e.
\[
\|\nabla f(x)-\nabla f(y)\|\leq L\|x-y\|,
\quad \text{for all}\ x,y\in\R^d\,.
\]
\end{defi}

If $f$ is twice continuously differentiable, then a sufficient condition for $L$-smoothness is given by a bounded Hessian $\sup_{x\in\R^d}\|\nabla^2 f(x)\|\le L$. Assuming that our objective function $f$ is $L$-smooth allows us to apply the following descent Lemma.
\begin{lemma}[Descent lemma]\label{lem:descentlemma}
Let $f:\R^d\to\R$ be $L$-smooth. Then, for all $x,y\in\R^d$,
\[f(x+y) \le f(x) + y^\top \nabla f(x) + \frac{L}{2}\|y\|^2. \]
\end{lemma}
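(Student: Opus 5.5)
The plan is to reduce the statement to a one-dimensional integral identity along the segment from $x$ to $x+y$, and then control the remainder using the Lipschitz continuity of $\nabla f$. Fix $x,y\in\R^d$ and define $\varphi(t)=f(x+ty)$ for $t\in[0,1]$. Since $f$ is differentiable with continuous gradient (Lipschitz continuity implies continuity), $\varphi$ is continuously differentiable. Its derivative is $\varphi'(t)=\nabla f(x+ty)^\top y$. The fundamental theorem of calculus then gives
\[
f(x+y)-f(x)=\varphi(1)-\varphi(0)=\int_0^1 \nabla f(x+ty)^\top y\,\dd t .
\]

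Next I add and subtract the term $y^\top\nabla f(x)$, which does not depend on $t$. This yields
\[
f(x+y)=f(x)+y^\top\nabla f(x)+\int_0^1 \bigl(\nabla f(x+ty)-\nabla f(x)\bigr)^\top y\,\dd t .
\]
It remains to bound the integral. By Cauchy--Schwarz and $L$-smoothness, the integrand is at most
\[
\|\nabla f(x+ty)-\nabla f(x)\|\,\|y\|\le L\,t\,\|y\|^2 .
\]
Integrating $Lt\|y\|^2$ over $[0,1]$ gives $\frac{L}{2}\|y\|^2$, which is exactly the claimed inequality.

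There is no genuine obstacle here. The only point requiring care is justifying the fundamental theorem of calculus for $\varphi$, which needs $\varphi'$ to be continuous, or at least integrable. This follows from the continuity of $\nabla f$, which is implied by the Lipschitz assumption, together with the chain rule for the differentiable map $t\mapsto x+ty$.
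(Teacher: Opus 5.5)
Your proof is correct and follows the paper's argument essentially step for step: define $\varphi(t)=f(x+ty)$, apply the fundamental theorem of calculus, add and subtract $y^\top\nabla f(x)$, and bound the remainder via Cauchy--Schwarz and $L$-smoothness by $\int_0^1 Lt\|y\|^2\,\mathrm{d}t=\frac{L}{2}\|y\|^2$. You also note that $\varphi'$ is continuous because the gradient is Lipschitz, which justifies the fundamental theorem of calculus; the paper uses this without stating it.
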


\begin{proof}
We define $\phi(t) = f(x+ty)$ and apply chain rule in order to derive
\[\phi'(t) = y^\top \nabla f(x+ty),\quad t\in[0,1]. \]
By the fundamental theorem of calculus it follows
\begin{align*}
f(x+y) - f(x) = \phi(1)- \phi(0) &= \int_0^1 \phi'(t)\, {\mathrm d}t = \int_0^1 y^\top \nabla f(x+ty)\,{\mathrm d}t\\
&=\int_0^1 y^\top \nabla f(x)\,{\mathrm d}t + \int_0^1y^\top (\nabla f(x+ty)-\nabla f(x))\,{\mathrm d}t\\
&\le y^\top \nabla f(x) + \int_0^1\|y\|\cdot \|\nabla f(x+ty)-\nabla f(x)\|\,{\mathrm d}t\\
&\le y^\top \nabla f(x) + \|y\|\int_0^1 Lt\cdot \|y\|\,{\mathrm d}t\\
&= y^\top \nabla f(x) + \frac{L}{2}\|y\|^2,
\end{align*}
where we have applied Cauchy-Schwarz followed by $L$-smoothness. 
\end{proof}

From the descent lemma we obtain for $L$-smooth functions $f$ the upper bound
\begin{equation}\label{eq:upperbound_smooth} 
f(y) \le f(x) + \nabla f(x)^\top (y-x) + \frac{L}{2}\|x-y\|^2, \quad \text{for all}\ x,y\in\R^d.
\end{equation}
In comparison, for $\mu$-strongly convex functions $f$ we have a lower bound
\begin{equation}\label{eq:upperbound_strongconvex} 
f(y) \ge f(x) + \nabla f(x)^\top (y-x) + \frac{\mu}{2}\|x-y\|^2, \quad \text{for all}\ x,y\in\R^d.
\end{equation}

\begin{figure}[!htb]
  \centering \includegraphics[width=0.7\textwidth]{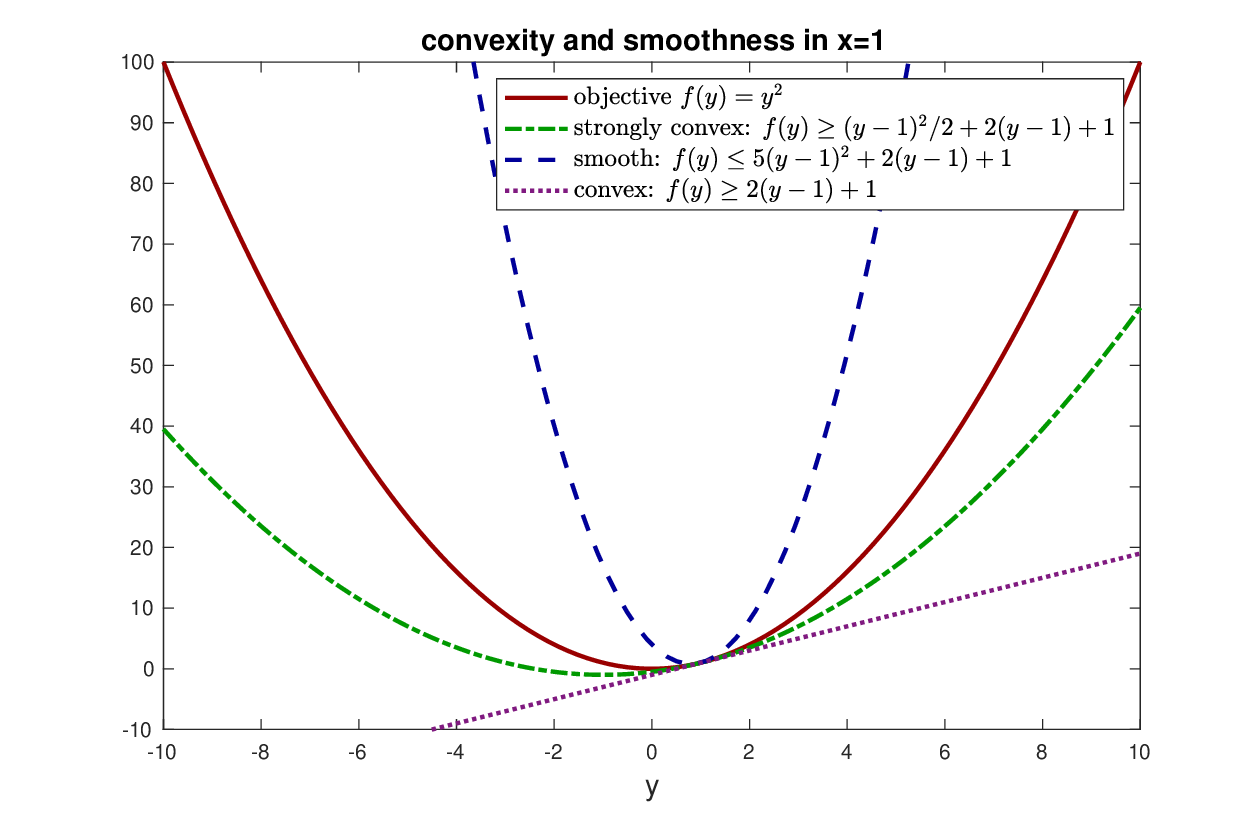}
 \caption{Illustration of $L$-smoothness and $\mu$-strong convexity. We consider the function $f(x) = x^2$ such that $f$ is $L$-smooth and $\mu$-strongly convex with $1:=\mu < f''(x) = 2 < L =: 10$ for all $x\in\R$. This plot illustrates convexity and smoothness of $f$ in $x=1$ using the upper and lower bound \eqref{eq:upperbound_smooth} and \eqref{eq:upperbound_strongconvex} on $f(y)$, $y\in\R$.}  \label{fig:smooth_strongconvex}
\end{figure} 

Combining both estimates, we obtain for $L$-smooth and $\mu$-strongly convex functions
\[\frac{\mu}{2}\|x-y\|^2 \le f(y)-f(x) - \nabla f(x)^\top (y-x) \le \frac{L}{2}\|x-y\|^2. \]
This motivates the definition of the Bregman divergence.
\begin{defi}
Let $f:\R^d\to\R$ be continuously differentiable. The Bregman divergence $D_f^{(B)}:\R^d\times\R^d\to\R$ is defined by
\[D_f^{(B)}(x,y) = f(x)-f(y)-\nabla f(y)^\top (x-y),\quad x,y\in\R^d. \]
\end{defi}
If we assume $L$-smoothness and convexity of $f$ (without $\mu$-strong convexity),  we are able to obtain an additional bound on the differences of the gradient evaluation.

\begin{lemma}\label{lem:convex_and_smooth}
Let $f:\R^d\to\R$ be convex and $L$-smooth, then
\[\frac{1}{2L}\|\nabla f(x)-\nabla f(y)\|^2\le D_f^{(B)}(x,y) \]
is satisfied for all $x,y\in\R^d$.
\end{lemma}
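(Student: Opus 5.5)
The plan is the standard auxiliary-function argument. Fix $y\in\R^d$ and define
\[
\phi(x) := f(x) - \nabla f(y)^\top x, \qquad x\in\R^d .
\]
The function $\phi$ is convex, since it differs from $f$ by a linear function. It is also $L$-smooth, because $\nabla\phi(x)=\nabla f(x)-\nabla f(y)$ and so $\phi$ has the same gradient differences as $f$. Moreover $\nabla\phi(y)=0$. By convexity (first-order characterization from \Cref{app:convex}), a stationary point of a convex function is a global minimizer, so $y$ minimizes $\phi$. Hence $\phi(y)\le\phi(z)$ for all $z\in\R^d$.

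Next, I will bound $\phi(y)$ from above using the Descent lemma (Lemma~\ref{lem:descentlemma}) applied to $\phi$ at the point $x$. For any $z$,
\[
\phi(y)\le\phi(z)\le \phi(x)+\nabla\phi(x)^\top(z-x)+\frac{L}{2}\|z-x\|^2 .
\]
Minimizing the right-hand side over $z$ gives $z=x-\frac1L\nabla\phi(x)$, which yields
\[
\phi(y)\le \phi(x)-\frac{1}{2L}\|\nabla\phi(x)\|^2 .
\]

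Finally, I substitute the definition of $\phi$. The inequality becomes
\[
f(y)-\nabla f(y)^\top y \le f(x)-\nabla f(y)^\top x - \frac{1}{2L}\|\nabla f(x)-\nabla f(y)\|^2 .
\]
Rearranging gives
\[
\frac{1}{2L}\|\nabla f(x)-\nabla f(y)\|^2\le f(x)-f(y)-\nabla f(y)^\top(x-y)=D_f^{(B)}(x,y),
\]
which is the claim.

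The only genuinely nontrivial step is the first one: recognizing that tilting $f$ by the linear term makes $y$ a global minimizer of a convex $L$-smooth function. This is where convexity is used. Everything after that is the descent lemma together with the minimization of a quadratic in $z$.
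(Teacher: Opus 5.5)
Your proof is correct and follows the paper's argument. Your $\phi$ differs from the paper's auxiliary function $\tilde f(x)=f(x)-f(y)-\nabla f(y)^\top(x-y)$ only by an additive constant; both proofs use convexity to make $y$ a global minimizer, then bound that minimum via the descent lemma at $x$ with the step $z=x-\frac1L\nabla\phi(x)$.
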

\begin{proof}
    We first verify that for arbitrary $L$-smooth $\tilde f$ and any $y\in\R^d$ we have
    \[ \min_{x\in\R^d}\ \tilde f(x) \le \tilde f(y) -\frac{1}{2L} \|\nabla \tilde f(y)\|^2\,.\]
    From \eqref{eq:upperbound_smooth} we have that 
    \[ \min_{x\in\R^d} \tilde f(x) \le \min_{x\in\R^d} \{\tilde f(y) + \nabla \tilde f(y)^\top (x-y) + \frac{L}2 \|y-x\|^2\} = \tilde f(x_0) - \nabla \tilde f(y)^\top y + \min_{x\in\R^d}\,\varphi(x)\]
    for $\varphi(x) := \nabla \tilde f(y)^\top x + \frac{L}{2} \|x-y\|^2$. Minimizing $\varphi(\cdot)$ yields the optimum $x_\ast = y-\frac{1}{L}\nabla \tilde f(y)$ and, consequently, we have
    \[ \min_{x\in\R^d} \tilde f(x) \le \tilde f(y) -\frac{1}{L}\|\nabla \tilde f(y)\|^2 + \frac{L}2 \|\frac{1}{L}\nabla \tilde f(y)\|^2 = \tilde f(y) - \frac{1}{2L} \|\nabla \tilde f(y)\|^2\,.\]

    We are now in the position to verify the assertion. Define $\tilde f(x) := f(x)-f(y) - \nabla f(y)^\top (x-y)$, $x\in\R^d$ for fixed $y\in\R^d$. Then we have that $\nabla \tilde f(x) = \nabla f(x) -\nabla f(y)$ meaning that $\tilde f$ is still $L$-smooth and satisfies 
    \[0 \le \min_{z\in\R^d}\tilde f(z) \le \tilde f(x) -\frac{1}{2L} \|\nabla \tilde f(x)\|^2 \] 
    where we have used that $\tilde f(x)\ge 0$ due to convexity of $f$. Reordering the inequality yields
    \[ \frac{1}{2L} \|\nabla f(x)-\nabla f(y)\|^2 \le \tilde f(x) = D_f^{(B)} (x,y) \]
    
\end{proof}
In general, for a convex function, it only holds true that
\[D_f^{(B)} (y,x) = f(y)-f(x)-\nabla f(x)^\top(y-x)\ge 0.\]
With the additional assumption of $L$-smoothness, we obtain a stronger characterization
\[f(y) \ge f(x) + \nabla f(x)^\top (y-x) + \frac{1}{2L}\|\nabla f(x)-\nabla f(y)\|^2. \]
In the next subsection, we will begin the convergence analysis of the gradient descent method in a general non-convex setting.

\subsection{Convergence for non-convex objective functions}
We start the convergence analysis of the gradient descent scheme in the most general setting, where the objective function is assumed to be continuously differentiable without any other assumption.  This and the following Subsection~\ref{ssec:nonconvexsmooth} are largely based on \cite[Section~1.2.2]{DPB15}, adapted to the notation and scope of these notes. As discussed above, we cannot expect to prove convergence to a global minimum in this scenario. However, assuming that there exists an accumulation point of the sequence generated by gradient descent, we are able to characterize this point as a stationary point. 

\begin{thm}[GD with Armijo rule]\label{thm:armijo}
Let $f:\R^d\to\R$ be continuously differentiable and $(x_k)_{k\in\N}$ be generated by
\[x_{k+1} = x_k + \alpha_k d_k,\quad d_k = -\frac{\nabla f(x_k)}{\|\nabla f(x_k)\|},\]
where $\alpha_k>0$ is chosen by the Armijo step size rule Algorithm~\ref{alg:armijo}. Then every accumulation point $\bar x\in\R^d$ of the sequence $(x_k)_{k\in\N}$ is a stationary point of $f$, i.e.~$\nabla f(\bar x)=0$.
\end{thm}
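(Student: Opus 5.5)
We argue by contradiction, following the classical argument in \cite[Section~1.2.2]{DPB15}. First note that the iteration is well defined only while $\nabla f(x_k)\neq 0$. If some $\nabla f(x_k)=0$, the scheme stops at a stationary point and there is nothing to prove, so we assume $\nabla f(x_k)\neq 0$ for all $k$. With $d_k=-\nabla f(x_k)/\|\nabla f(x_k)\|$ we have $\nabla f(x_k)^\top d_k=-\|\nabla f(x_k)\|<0$. By Lemma~\ref{lem:descent_condition} and the Taylor argument in its proof, the backtracking loop in Algorithm~\ref{alg:armijo} terminates after finitely many steps. The Armijo condition \eqref{eq:armijo_condition} then gives
\[
f(x_{k+1})\le f(x_k)-\sigma\alpha_k\|\nabla f(x_k)\|,
\]
so $(f(x_k))_k$ is monotonically decreasing.

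Now let $\bar x$ be an accumulation point with $x_{k}\to\bar x$ along a subsequence $k\in\mathcal K$, and suppose $\nabla f(\bar x)\neq 0$. By continuity, $f(x_k)\to f(\bar x)$ along $\mathcal K$. Since the whole sequence $f(x_k)$ is monotone, it converges to $f(\bar x)$, and therefore $f(x_k)-f(x_{k+1})\to0$. Summing the Armijo inequality then yields $\alpha_k\|\nabla f(x_k)\|\to 0$. Along $\mathcal K$ we have $\|\nabla f(x_k)\|\to\|\nabla f(\bar x)\|>0$, hence $\alpha_k\to0$ for $k\in\mathcal K$.

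The main step is to derive a contradiction from $\alpha_k\to0$. For large $k\in\mathcal K$ we have $\alpha_k<s_0$, so the backtracking loop rejected the previous trial step $\alpha_k/\rho$:
\[
f(x_k+\tfrac{\alpha_k}{\rho}d_k)-f(x_k)>-\sigma\tfrac{\alpha_k}{\rho}\|\nabla f(x_k)\|.
\]
Set $\bar\alpha_k=\alpha_k/\rho\to0$ and apply the mean value theorem: there is $\tilde\alpha_k\in[0,\bar\alpha_k]$ with
\[
f(x_k+\bar\alpha_k d_k)-f(x_k)=\bar\alpha_k\nabla f(x_k+\tilde\alpha_k d_k)^\top d_k.
\]
Dividing by $\bar\alpha_k$ gives
\[
\nabla f(x_k+\tilde\alpha_k d_k)^\top d_k>-\sigma\|\nabla f(x_k)\|.
\]
Since $\|d_k\|=1$, we can pass to a further subsequence with $d_k\to\bar d$, and in fact $\bar d=-\nabla f(\bar x)/\|\nabla f(\bar x)\|$ because $\nabla f$ is continuous. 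Moreover $x_k+\tilde\alpha_k d_k\to\bar x$. Continuity of $\nabla f$ then gives, in the limit,
\[
-\|\nabla f(\bar x)\|\ge-\sigma\|\nabla f(\bar x)\|,
\]
that is, $(1-\sigma)\|\nabla f(\bar x)\|\le0$. Since $\sigma<1$, this contradicts $\nabla f(\bar x)\neq0$, which proves the theorem.

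The delicate point is this last step. It uses the fact that the rejected step $\alpha_k/\rho$ violates the Armijo condition, and it relies on the normalization of $d_k$ together with continuity of $\nabla f$ to pass to the limit.
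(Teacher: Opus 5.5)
Your proof is correct and follows essentially the same route as the paper's proof. Monotonicity from the Armijo condition forces $\alpha_k\|\nabla f(x_k)\|\to 0$, hence $\alpha_k\to 0$ along the subsequence; the rejected trial step $\alpha_k/\rho$, combined with the mean value theorem and continuity of $\nabla f$, then yields $(1-\sigma)\|\nabla f(\bar x)\|\le 0$. Two minor simplifications: $\alpha_k\|\nabla f(x_k)\|\to 0$ follows directly from $f(x_k)-f(x_{k+1})\to 0$ without any summation, and no further subsequence is needed for $d_k$, since continuity of $\nabla f$ already identifies its limit.
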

\begin{proof}
For full details, we refer the interested reader to Appendix~\ref{app:omittedproofs}. The argumentation
is based on the proof in~\cite{DPB15}.
\end{proof}

If we already know that the iterations of gradient descent converge to some limit, we can show that this limit point needs to be a stationary point. This requires additional assumptions about the sequence of step sizes. 
\begin{thm}
Let $f:\R^d\to\R$ be continuously differentiable and $(x_k)_{k\in\N}$ be defined by
\[ x_{k+1} = x_k - \alpha_k \nabla f(x_k), \quad x_0\in\R^d,\]
with step size $\alpha_k>0$ such that $\sum_{k=1}^\infty \alpha_k=\infty$ (e.g.~constant or diminishing). Suppose that $(x_k)_{k\in\N}$ converges to some $x_\ast\in\R^d$. Then $x_\ast$ is a stationary point of $f$, i.e.~$\nabla f(x_\ast)= 0$.
\end{thm}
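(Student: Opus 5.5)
We argue by contradiction. Suppose $\nabla f(x_\ast)\neq 0$. The idea is that near $x_\ast$ the gradient is almost constant and nonzero, so the increments $x_{k+1}-x_k=-\alpha_k\nabla f(x_k)$ all point in roughly the same direction. Summing them over many steps then forces the iterates to travel an unbounded distance, which is incompatible with convergence.

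First, by continuity of $\nabla f$ at $x_\ast$ we can choose $\varepsilon>0$ such that
\[\nabla f(x)^\top \nabla f(x_\ast) \ge \tfrac12\|\nabla f(x_\ast)\|^2 \quad\text{for all } x \text{ with } \|x-x_\ast\|<\varepsilon.\]
Indeed, $\nabla f(x)^\top \nabla f(x_\ast) = \|\nabla f(x_\ast)\|^2 + (\nabla f(x)-\nabla f(x_\ast))^\top\nabla f(x_\ast)$, and the second term is small for $x$ near $x_\ast$. Since $x_k\to x_\ast$, there is $K\in\N$ with $\|x_k-x_\ast\|<\varepsilon$ for all $k\ge K$.

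Next, we project the telescoped iteration onto the fixed direction $v:=\nabla f(x_\ast)$. For $n>K$,
\[ v^\top(x_K - x_n) = \sum_{k=K}^{n-1}\alpha_k\, v^\top \nabla f(x_k) \ge \tfrac12\|v\|^2\sum_{k=K}^{n-1}\alpha_k .\]
Since $\sum_k\alpha_k=\infty$, the right-hand side tends to $+\infty$ as $n\to\infty$. The left-hand side, however, converges to $v^\top(x_K-x_\ast)$, which is finite and is in fact bounded by $\|v\|\varepsilon$. This contradiction shows that $\nabla f(x_\ast)=0$.

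The main obstacle is that the step sizes may be unbounded or irregular, so we cannot simply pass to the limit in $x_{k+1}-x_k=-\alpha_k\nabla f(x_k)$ to get $\alpha_k\nabla f(x_k)\to0$ and conclude from that. Projecting onto the fixed direction $v$ sidesteps this: every summand is nonnegative and bounded below by $\tfrac12\|v\|^2\alpha_k$, so only divergence of $\sum\alpha_k$ is used. No assumption such as $\alpha_k\to0$ or smoothness is needed. Note that the condition $\sum_k\alpha_k=\infty$ is stated with the sum starting at $k=1$, so the tail sum from $K$ still diverges.
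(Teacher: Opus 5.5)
Your proof is correct and follows the same strategy as the paper. Continuity of $\nabla f$ at $x_\ast$ makes the tail gradients nearly aligned with $\nabla f(x_\ast)\neq 0$, so the telescoped displacement $\sum_k \alpha_k\nabla f(x_k)$ grows with the divergent sum $\sum_k\alpha_k$, which contradicts $x_k\to x_\ast$. The only difference is the estimate. The paper bounds all pairwise inner products $\langle\nabla f(x_i),\nabla f(x_j)\rangle$ from below and expands $\|x_n-x_m\|^2$ as a double sum. You instead pair with the single fixed vector $v=\nabla f(x_\ast)$, which gives a linear rather than quadratic lower bound and avoids the paper's final limit $\varepsilon\to 0$.
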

\begin{proof}
    Let $(x_k)_{k\in\N}$ converge to some $x_\ast\in\R^d$, then by continuity of $\nabla f$ also $(\nabla f(x_k))_{k\in\N}$ converges to $\nabla f(x_\ast)$. Let $\varepsilon>0$ be arbitrary. By Cauchy-Schwarz inequality, we have
    \begin{align*}
        \langle \nabla f(x_j),\nabla f(x_i)\rangle  &= \|\nabla f(x_\ast)\|^2 + \langle \nabla f(x_i)-\nabla f(x_\ast),\nabla f(x_\ast)\rangle + \langle \nabla f(x_i),\nabla f(x_j)-\nabla f(x_\ast)\rangle\\
        &\ge \|\nabla f(x_\ast)\|^2-\|\nabla f(x_i)-\nabla f(x_\ast)\| \|\nabla f(x_\ast)\| - \|\nabla f(x_i)\| \|\nabla f(x_j)-\nabla f(x_\ast)\|\,.
    \end{align*}
    Due to convergence of $(\nabla f(x_k))_{k\in\N}$ there exists $n_\varepsilon\in\N$ such that for all $i,j\ge n_\varepsilon$ it holds 
    \[ \langle \nabla f(x_j),\nabla f(x_i)\rangle \ge \|\nabla f(x_\ast)\|^2 - 2\varepsilon \|\nabla f(x_\ast)\| - \varepsilon^2=:p(\varepsilon)\,, \]
    where we have used that 
    \[ \|\nabla f(x_i)\| \le \|\nabla f(x_i)-\nabla f(x_\ast)\| + \|\nabla f(x_\ast)\| \le \varepsilon + \|\nabla f(x_\ast)\|\,. \]
    For arbitrary $n,m\ge n_\varepsilon$, this results in 
    \begin{align*}
        \|x_n-x_m\|^2 = \|\sum_{k=n}^{m-1} \alpha_k \nabla f(x_k)\|^2 &= \sum_{i,j=n}^{m-1}\alpha_i\alpha_j \langle \nabla f(x_i),\nabla f(x_j)\rangle \\
        &\ge \Big(\sum_{k=n}^{m-1} \alpha_k\Big)^2 p(\varepsilon)\,.
    \end{align*}
    Taking the limit over $m$ results in 
    \[ \infty > \|x_n-x_\ast\|^2 \ge \Big(\lim_{m\to\infty} \sum_{k=n}^m \alpha_k\Big)^2 p(\varepsilon)\,.\]
    By $\lim_{m\to\infty} \sum_{k=n}^m \alpha_k=\infty$, we necessarily have $p(\varepsilon)\le 0$. Since $\varepsilon>0$ is arbitrary, we have
    \[0\le \|\nabla f(x_\ast)\|^2 = \lim_{\varepsilon\to0} p(\varepsilon) \le 0\,.\]
\end{proof}

We conclude this section with a property of gradient descent that guarantees that the iteration gets captured by isolated local minima. This means that once the iteration moves into a sufficiently small neighborhood around an isolated local minimum, it will remain within this neighborhood and even converge to the corresponding local minimum. 

\begin{thm}[Capture Theorem]
Let $f:\R^d\to \R$ be continuously differentiable and the sequence $(x_k)_{k\in\N}$ be generated by gradient descent
\[x_{k+1} = x_k - \alpha_k \nabla f(x_k), \]
with bounded step size $\alpha_k\le s$, $s>0$. Moreover, assume that the sequence $(f(x_k))_{k\in\N}$ is decreasing and every accumulation point of $(x_k)_{k\in\N}$ is a stationary point. Let $x_\ast\in\R^d$ be an isolated local minimum of $f$, i.e.~there exists an open neighborhood $U\subset \R^d$ of $x_\ast$ such that $x_\ast$ is the only stationary point within the set $U$. Then there exists an open set $S\subset \R^d$ with $x_\ast\in S$ and the following property: If $x_{\bar k}\in S$ for some $\bar k\in\N$, then $x_k\in S$ for all $k\ge \bar k$ and in particular the sequence $(x_k)_{k\in\N}$ converges to $x_\ast$.
\end{thm}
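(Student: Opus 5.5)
This is the classical capture theorem (Bertsekas, Prop.~1.2.5). I would build $S$ from a small ball around $x_\ast$ intersected with a strict sublevel set, so that monotone decrease of $f$ traps the iterates.

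\textbf{Step 1: choose a radius.} Choose $\varepsilon>0$ with $\overline{\mathcal B_{\varepsilon}(x_\ast)}\subset U$ and $f(x)\ge f(x_\ast)$ for all $x\in\overline{\mathcal B_{\varepsilon}(x_\ast)}$. This uses that $x_\ast$ is a local minimum; shrink $\varepsilon$ if necessary. By continuity of $\nabla f$ and $\nabla f(x_\ast)=0$, pick $\delta\in(0,\varepsilon)$ such that $\|x-x_\ast\|<\delta$ implies $\|x-x_\ast\|+s\|\nabla f(x)\|<\varepsilon$. Consequently, one gradient step from any point of $\mathcal B_\delta(x_\ast)$ stays in $\mathcal B_\varepsilon(x_\ast)$.

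\textbf{Step 2: set the threshold and define $S$.} Let
\[
\varphi:=\min_{\delta\le\|x-x_\ast\|\le\varepsilon} f(x),
\]
which is attained by compactness. Claim: $\varphi>f(x_\ast)$. Otherwise some $y$ in this annulus would satisfy $f(y)=f(x_\ast)$, so $y$ is also a local minimum of $f$ on the open ball $\mathcal B_\varepsilon(x_\ast)$. Hence $\nabla f(y)=0$, contradicting uniqueness of the stationary point in $U$. Now define
\[
S:=\{x: \|x-x_\ast\|<\varepsilon,\ f(x)<\varphi\},
\]
which is open and contains $x_\ast$.

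\textbf{Step 3: invariance by induction.} Suppose $x_k\in S$. Since $f(x_k)<\varphi$ and $x_k$ lies in the $\varepsilon$-ball, $x_k$ cannot lie in the annulus, so $\|x_k-x_\ast\|<\delta$. Step 1 then gives $\|x_{k+1}-x_\ast\|<\varepsilon$, and monotonicity gives $f(x_{k+1})\le f(x_k)<\varphi$. Hence $x_{k+1}\in S$.

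\textbf{Step 4: convergence.} Once $x_{\bar k}\in S$, all later iterates lie in the bounded set $\overline{\mathcal B_\delta(x_\ast)}$. Every accumulation point exists, lies in $U$, and is stationary by hypothesis, so it equals $x_\ast$. A bounded sequence whose only accumulation point is $x_\ast$ converges to $x_\ast$.

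\textbf{Main obstacle.} The delicate point is Step 2, showing $\varphi>f(x_\ast)$. It requires that the $\varepsilon$-ball be chosen inside both the local-minimum neighborhood and $U$, and it must use the isolation of the stationary point, since without isolation the claim fails. A secondary subtlety is that $\delta$ in Step 1 must be tied to the uniform step-size bound $s$, so that one step cannot jump across the annulus.
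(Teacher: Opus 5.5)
Your construction is the same as the paper's: a ball intersected with a strict sublevel set lying below the minimum of $f$ over an annulus, a radius $\delta$ tied to the step bound $s$, invariance from monotonicity, and convergence from uniqueness of the accumulation point. However, Step 2 has a genuine gap, exactly at the point you flag. If the minimizer $y$ of $f$ over the closed annulus lies on the outer sphere $\|y-x_\ast\|=\varepsilon$, then $y$ is not in the open ball $\mathcal B_\varepsilon(x_\ast)$. Knowing $f\ge f(x_\ast)$ only on $\overline{\mathcal B_\varepsilon(x_\ast)}$ makes $y$ a minimizer of $f$ over the closed ball, not a local minimizer of $f$ on $\R^d$, so $\nabla f(y)=0$ does not follow.

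This is more than a missing justification; the claim can actually fail. Take $f(u_1,u_2)=u_1^2(1+u_2)^3+u_2^2$. Its only critical point is the strict local minimum $0$, so $U=\R^2$, yet $f(3,-2)=-5<0$. Set $R:=\inf\{\|u\|: f(u)<0\}\in(0,\infty)$. Then $f\ge 0$ on $\overline{\mathcal B_R(0)}$, so $\varepsilon=R$ satisfies your Step 1 requirements. But a limit of points with $f<0$ gives some $y$ with $\|y\|=R$ and $f(y)=0$. Hence $\varphi=f(x_\ast)$ for every $\delta$, and your set $S=\{\|x\|<R,\ f(x)<0\}$ is empty.

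The repair is easy. Choose $\varepsilon$ strictly smaller than a radius $\varepsilon_0$ for which $f\ge f(x_\ast)$ on the open ball $\mathcal B_{\varepsilon_0}(x_\ast)$, and still with $\overline{\mathcal B_\varepsilon(x_\ast)}\subset U$. Then the whole closed annulus lies in the open set $\mathcal B_{\varepsilon_0}(x_\ast)$. Any $y$ there with $f(y)=f(x_\ast)$ is an unconstrained local minimizer, hence stationary, which contradicts isolation.

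This is precisely what the paper presupposes when it picks $r$ with $f(x_\ast)<f(x)$ on $\bar{\mathcal B}_r(x_\ast)\setminus\{x_\ast\}$. The paper only asserts that such an $r$ exists; your repaired argument justifies it. With this change, your Steps 3 and 4 go through as written.
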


\begin{proof}
Since $x_\ast\in\R^d$ is assumed to be an isolated local minimum,  we can find some $r>0$ such that within a closed Ball $\bar{\mathcal B}_{r}(x_\ast):= \{x\in\R^d\mid \|x-x_\ast\|\le r\}$ with radius $r$ around $x_\ast$ it holds
\[f(x_\ast) < f(x),\quad x\in\bar{\mathcal B_{r}}(x_\ast)\]
while there exists no stationary point $x'\in \bar{\mathcal B}_{r}(x_\ast)\setminus \{x_\ast\}$. It directly follows that 
\[\min_{x\in \bar{\mathcal B_{r}}(x_\ast)\setminus \mathcal B_{t}(x_\ast)} f(x) > f(x_\ast)\]
for any open ball $\mathcal B_t(x_\ast):=\{x\in\R^d\mid \|x-x_\ast\|<t\}$ with radius $t\in(0,r]$ and we define the function
\[\Phi(t) = \min_{x\in \bar{\mathcal B_{r}}(x_\ast)\setminus \mathcal B_{t}(x_\ast)} f(x) - f(x_\ast),\]
which is decreasing for decreasing $t$. Since the gradient $\nabla f$ is assumed to be continuous and $\nabla f(x_\ast)=0$, for a fixed but arbitrary $\varepsilon>0$ we can find $q\in(0,\varepsilon]$ such that
\begin{equation}\label{eq:capturethm1}
\|x-x_\ast\| + s \|\nabla f(x)\| < \varepsilon \quad \text{for all}\ x\in \mathcal B_q(x_\ast).
\end{equation}
We will use \eqref{eq:capturethm1} in order to prove that the open set $S$ defined by
\[S:=\{x\in\R^d\mid \|x-x_\ast\|<\varepsilon,\ f(x)<f(x_\ast) + \Phi(q)\} \]
captures the iteration of gradient descent, i.e.~if $x_k\in S$ then it also holds true that $x_{k+1}\in S$. Let us suppose $x_k\in S$, then for $\tau = \|x_k-x_\ast\|$ we have that $\Phi(\tau) \le f(x_k)-f(x_\ast)<\Phi(q)$ by definition of $S$. Due to decreasing behavior of $\Phi$ we obtain $\|x_k-x_\ast\|=\tau<q$ and therefore $x_k\in\mathcal B_q(x_\ast)$. By \eqref{eq:capturethm1} we obtain 
\[\|x_k-x_\ast\|+s\|\nabla f(x_k)\|<\varepsilon. \]
Since the step size satisfies $\alpha_k\le s$, applying triangular inequality we can imply that
\[\|x_{k+1}-x_\ast\|\le \|x_k-x_\ast\| + \alpha_k \|\nabla f(x_k)\| \le \|x_k-x_\ast\| + s\|\nabla f(x_k)\| < \varepsilon. \]
By assumption $(f(x_k))_{k\in\N}$ is decreasing such that
\[f(x_{k+1}) - f(x_\ast) < f(x_k) - f(x_\ast) < \Phi(q), \]
where we have used $x_k\in S$, and therefore,  we obtain $x_{k+1}\in S$ as well.  Once we find $\bar k\in \N$ such that $x_{\bar k}\in S$, we can imply $x_k\in S$ for all $k\ge \bar k$. It is left to argue that in this case $\lim_{k\to\infty} x_k = x_\ast$. Consider the closure $\bar S$ of $S$ which is a compact set. Then there exists at least one accumulation point $\bar x$ of $(x_k)_{k\in\N}$, which by assumption is a stationary point.  By construction $\bar S\subset \mathcal B_r(x_\ast)$ such that $x_\ast\in\bar S$ is the unique stationary point of $f$ in $\bar S$ implying that $\lim_{k\to\infty} x_k = x_\ast$. 
\end{proof}

\subsection{Convergence for non-convex and smooth objective functions}\label{ssec:nonconvexsmooth}
We continue with the convergence analysis of gradient descent for $L$-smooth objective functions $f$, but without additional assumptions such as convexity. The following theorem then characterizes accumulation points of the iterates of the gradient descent scheme with fixed but sufficiently small step size. 

\begin{thm}[GD with constant step size]\label{thm:GD_conststep}
Let $f:\R^d\to\R$ be $L$-smooth and $(x_k)_{k\in\N}$ be generated by
\[x_{k+1} = x_k - \bar\alpha \nabla f(x_k),\]
where $\bar\alpha\in(0,\frac{2}{L})$. Then every accumulation point $\bar x\in\R^d$ of the sequence $(x_k)_{k\in\N}$ is a stationary point of $f$, i.e.~$\nabla f(\bar x)=0$.
\end{thm}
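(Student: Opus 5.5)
The plan is to apply the descent lemma (Lemma~\ref{lem:descentlemma}) with $x=x_k$ and $y=-\bar\alpha\nabla f(x_k)$. This gives
\[
f(x_{k+1}) \le f(x_k) - \bar\alpha\|\nabla f(x_k)\|^2 + \frac{L\bar\alpha^2}{2}\|\nabla f(x_k)\|^2 = f(x_k) - c\,\|\nabla f(x_k)\|^2,
\]
where $c:=\bar\alpha\bigl(1-\frac{L\bar\alpha}{2}\bigr)>0$. The constant $c$ is positive precisely because $\bar\alpha\in(0,\frac2L)$. In particular, the sequence $(f(x_k))_{k\in\N}$ is monotonically nonincreasing.

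Next, suppose $\bar x$ is an accumulation point, and let $(x_{k_j})_{j\in\N}$ be a subsequence with $x_{k_j}\to\bar x$. Since $f$ is continuous, $f(x_{k_j})\to f(\bar x)$. A monotone sequence with a convergent subsequence converges to the same limit, so the whole sequence satisfies $f(x_k)\to f(\bar x)$. This yields the needed boundedness from below of the relevant part of the sequence; no global assumption $\inf f>-\infty$ is required.

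We then sum the descent inequality from $k=0$ to $N$. The sum telescopes to
\[
c\sum_{k=0}^{N}\|\nabla f(x_k)\|^2 \le f(x_0)-f(x_{N+1}) \le f(x_0)-f(\bar x),
\]
where the last step uses that $f(x_{N+1})\ge \lim_k f(x_k)=f(\bar x)$ by monotonicity. Hence the series $\sum_k\|\nabla f(x_k)\|^2$ is finite, so $\nabla f(x_k)\to0$. Alternatively, one can argue directly that $f(x_k)-f(x_{k+1})\to0$, which again forces $\|\nabla f(x_k)\|^2\to0$.

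Finally, $\nabla f$ is Lipschitz and hence continuous. Along the subsequence this gives $\nabla f(\bar x)=\lim_j\nabla f(x_{k_j})=0$, which proves the claim.

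The main subtlety is the step that uses the accumulation point to control $f$ from below, since $f$ is not assumed to be bounded below. Everything else is a routine consequence of the descent lemma.
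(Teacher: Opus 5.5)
Your proof is correct and follows essentially the same route as the paper: the descent lemma gives $f(x_k)-f(x_{k+1})\ge c\|\nabla f(x_k)\|^2$, monotonicity together with continuity at the accumulation point gives $f(x_k)\to f(\bar x)$, and this forces $\nabla f(x_k)\to 0$. The differences are cosmetic: you argue directly rather than by contradiction, and you use the explicit constant $c=\bar\alpha(1-\tfrac{L\bar\alpha}{2})$ instead of the paper's auxiliary $\varepsilon$ with $\bar\alpha\in[\varepsilon,\tfrac{2-\varepsilon}{L}]$.
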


\begin{proof}
Since $\bar\alpha\in(0,\frac2L)$ is fixed, there exists $\varepsilon\in(0,\frac{2}{L+1}]$ such that $\bar\alpha\in[\varepsilon,\frac{2-\varepsilon}{L}]$. 
Since $f$ is assumed to be $L$-smooth, we apply the descent Lemma~\ref{lem:descentlemma} (with the choice $y\equiv -\bar\alpha\nabla f(x_k)$ and $x\equiv x_k$),
\begin{equation}\label{eq:lem:descentlemma1}
f(x_k-\bar\alpha\nabla f(x_k))-f(x_k) \le (-\bar\alpha\nabla f(x_k))^\top \nabla f(x_k) + \frac{L}2\|\bar\alpha \nabla f(x_k)\|^2 = \bar\alpha \|\nabla f(x_k)\|^2 (\frac{\bar\alpha L}{2}-1).
\end{equation} 
Due to $\bar\alpha\le \frac{2-\varepsilon}{L}$ it holds
\[ \frac{\bar\alpha L}{2}-1 \le -\frac{\varepsilon}2<0.\]
The inequality \eqref{eq:lem:descentlemma1} can be reformulated to obtain
\begin{equation}\label{eq:lem:descentlemma2}
f(x_k) - f(x_k-\bar\alpha\nabla f(x_k)) \ge \frac{\varepsilon}2 \bar\alpha \|\nabla f(x_k)\|^2 \ge \frac{\varepsilon^2}2 \|\nabla f(x_k)\|^2. 
\end{equation}
Similarly to the proof of Theorem~\ref{thm:armijo}, we first assume that $(x_{k_n})_{n\in\N}$ is a sub-sequence with limit point $\bar x\in\R^d$ and $\nabla f(\bar x)\neq 0$. With \eqref{eq:lem:descentlemma2} we can imply that the sequence $(f(x_k))_{k\in\N}$ is monotonically decreasing, and therefore, converges to $f(\bar x)$ using continuity of $f$. In particular, we have
\[\lim_{k\to\infty} (f(x_{k+1})-f(x_k)) = 0. \]
However, with \eqref{eq:lem:descentlemma2}, it follows that
\[\lim_{k\to\infty} \frac{\varepsilon^2}{2}\|\nabla f(x_k)\|^2 = 0, \]
which is in contradiction to $\nabla f(\bar x) \neq 0$.
\end{proof}

A similar convergence result for gradient descent with diminishing step size choice can be formulated. In order to characterize accumulation points as stationary points, we need to force $\alpha_k$ to vanish but not too fast. 
\begin{thm}[GD with diminishing step size]\label{thm:GD_diminishingstep}
Let $f:\R^d\to\R$ be $L$-smooth and $(x_k)_{k\in\N}$ be generated by gradient descent
\[x_{k+1} = x_k - \alpha_k \nabla f(x_k),\]
with 
\[\lim_{k\to\infty} \alpha_k = 0 \quad \text{and}\quad \sum_{k=0}^\infty \alpha_k = \infty\,. \]
Then the sequence $(f(x_k))_{k\in\N}$ satisfies either
\[\lim_{k\to\infty} f(x_k) = -\infty\quad \text{or}\quad \lim_{k\to\infty} \nabla f(x_k) = 0. \]
Moreover, every accumulation point $\bar x\in\R^d$ of the sequence $(x_k)_{k\in\N}$ is a stationary point of $f$, i.e.~$\nabla f(\bar x)=0$.
\end{thm}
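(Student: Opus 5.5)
The plan is to follow the classical argument of \cite[Prop.~1.2.4]{DPB15}. I will derive a per-step descent inequality from the descent Lemma~\ref{lem:descentlemma} and sum it, which shows that $\sum_k \alpha_k\|\nabla f(x_k)\|^2<\infty$ when $f(x_k)$ stays bounded below. I will then upgrade this summability to $\nabla f(x_k)\to0$ by a Lipschitz oscillation argument.

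\textbf{Step 1 (descent).} Apply Lemma~\ref{lem:descentlemma} with $x\equiv x_k$ and $y\equiv-\alpha_k\nabla f(x_k)$:
\[
f(x_{k+1})\le f(x_k)-\alpha_k\Big(1-\frac{L\alpha_k}{2}\Big)\|\nabla f(x_k)\|^2 .
\]
Since $\alpha_k\to0$, there is $k_0$ with $L\alpha_k\le 1$ for $k\ge k_0$. For those $k$,
\[
f(x_{k+1})\le f(x_k)-\tfrac{\alpha_k}{2}\|\nabla f(x_k)\|^2,
\]
so $(f(x_k))_{k\ge k_0}$ is nonincreasing and has a limit in $[-\infty,\infty)$. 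If the limit is $-\infty$, we are in the first alternative. Otherwise, telescoping gives
\[
\sum_{k\ge k_0}\alpha_k\|\nabla f(x_k)\|^2\le 2\big(f(x_{k_0})-\lim_k f(x_k)\big)<\infty .
\]
Together with $\sum_k\alpha_k=\infty$, this already yields $\liminf_k\|\nabla f(x_k)\|=0$.

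\textbf{Step 2 (from liminf to lim), the main obstacle.} Suppose, for contradiction, that $\limsup_k\|\nabla f(x_k)\|>\varepsilon>0$. Because the liminf is $0$, the sequence crosses between the levels $\varepsilon/2$ and $\varepsilon$ infinitely often. So we can choose index pairs $m_j<n_j$ such that
\begin{itemize}
\item $\|\nabla f(x_{m_j})\|<\varepsilon/2$,
\item $\|\nabla f(x_{n_j})\|>\varepsilon$,
\item $\varepsilon/2\le\|\nabla f(x_k)\|\le\varepsilon$ for $m_j<k<n_j$.
\end{itemize}
(The roles of the two levels may be swapped; the argument is symmetric.) Using $L$-smoothness and the update rule,
\[
\frac{\varepsilon}{2}\le\|\nabla f(x_{n_j})-\nabla f(x_{m_j})\|\le L\|x_{n_j}-x_{m_j}\|\le L\sum_{k=m_j}^{n_j-1}\alpha_k\|\nabla f(x_k)\|.
\]
The first term $k=m_j$ of the sum is at most $\alpha_{m_j}\varepsilon/2\to0$. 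Every remaining term satisfies $\|\nabla f(x_k)\|\le\varepsilon\le \tfrac{2}{\varepsilon}\|\nabla f(x_k)\|^2$, since $\|\nabla f(x_k)\|\ge\varepsilon/2$ there. Hence these terms are bounded by $\tfrac{2L}{\varepsilon}\sum_{k=m_j+1}^{n_j-1}\alpha_k\|\nabla f(x_k)\|^2$. This is a tail of the convergent series from Step 1, so it tends to $0$ as $j\to\infty$. The right-hand side therefore tends to $0$ while the left-hand side stays at $\varepsilon/2$, a contradiction. Thus $\nabla f(x_k)\to0$.

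The delicate part is the bookkeeping at the endpoint index $m_j$, which is handled by $\alpha_k\to0$ as above. A second subtlety is that the dichotomy must be phrased via $f(x_k)\to-\infty$ versus bounded below, which monotonicity after $k_0$ guarantees.

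\textbf{Step 3 (accumulation points).} Let $\bar x$ be an accumulation point, with $x_{k_n}\to\bar x$. By continuity, $f(x_{k_n})\to f(\bar x)>-\infty$. Since $(f(x_k))_{k\ge k_0}$ is monotone, its full limit equals $f(\bar x)$, so the first alternative is excluded. Step 2 then gives $\nabla f(x_k)\to0$, and continuity of $\nabla f$ yields $\nabla f(\bar x)=\lim_n\nabla f(x_{k_n})=0$.
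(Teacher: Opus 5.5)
Your proposal is correct and follows essentially the same route as the paper: the descent lemma gives eventual monotonicity and $\sum_k\alpha_k\|\nabla f(x_k)\|^2<\infty$, and a Lipschitz crossing argument then upgrades $\liminf$ to $\lim$. The paper uses a single threshold $\varepsilon/3$ and shows $\|\nabla f(x_m)\|\le 2\varepsilon/3$ eventually, whereas your two-level upcrossings use $\alpha_k\to0$ only to discard the endpoint term. One harmless slip: the chain $\|\nabla f(x_k)\|\le\varepsilon\le\frac{2}{\varepsilon}\|\nabla f(x_k)\|^2$ fails in its middle step, but the bound you actually need, $\|\nabla f(x_k)\|\le\frac{2}{\varepsilon}\|\nabla f(x_k)\|^2$, follows directly from $\|\nabla f(x_k)\|\ge\varepsilon/2$.
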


\begin{proof}
Similarly to the proof of Theorem~\ref{thm:GD_conststep} we first apply Lemma~\ref{lem:descentlemma} to derive
\begin{align*}
f(x_{k+1}) = f(x_k-\alpha_k\nabla f(x_k)) \le f(x_k) - (\alpha_k - \frac{L\alpha_k^2}{2}) \|\nabla f(x_k)\|^2 = f(x_k) - \alpha_k (1 - \frac{L\alpha_k}{2}) \|\nabla f(x_k)\|^2.
\end{align*}
Since we assumed $\lim_{k\to\infty} \alpha_k = 0$, there exists $k_0\ge0$ such that
\begin{equation} \label{eq:iterative_descent}
f(x_{k+1}) \le f(x_k) - \alpha_k c \|\nabla f(x_k)\|^2 
\end{equation}
for some $c>0$ and all $k\ge k_0$. Hence, the sequence $(f(x_k))_{k\ge k_0}$ is decreasing, and either $\lim_{k\to\infty} f(x_k) = -\infty$ or $\lim_{k\to\infty} f(x_k) = M$ for some $M<\infty$.
Suppose that we are in the case where $\lim_{k\to\infty} f(x_k) = M$. From \eqref{eq:iterative_descent} it follows 
\begin{equation*} 
\sum_{k=k_0}^K \alpha_k c \|\nabla f(x_k)\|^2 \le \sum_{k=k_0}^K \left\{f(x_k) - f(x_{k+1})\right\}
\end{equation*}
 for all $K>k_0$. Using that the right-hand side is a telescoping sum, we obtain  
 \[\sum_{k=k_0}^K \left\{f(x_k) - f(x_{k+1})\right\} = f(x_{k_0}) - f(x_K)\]
 and for $K\to\infty$
 \begin{equation}\label{eq:graddegeneration}
 c\sum_{k=k_0}^\infty \alpha_k \|\nabla f(x_k)\|^2 \le f(x_{k_0}) - M < \infty\,. 
 \end{equation}
 Since $\sum_{k=k_0}^\infty \alpha_k = \infty$,  there cannot exist any $\varepsilon>0$ such that $\|\nabla f(x_k)\|^2>\varepsilon$ for all $k\ge \hat k\ge 0$. However, this only means that 
 \[\liminf_{k\to\infty} \|\nabla f(x_k)\| = 0. \]
In order to prove $\lim_{k\to\infty} \|\nabla f(x_k)\| = 0$ we will use \eqref{eq:graddegeneration} to prove that $\limsup_{k\to\infty} \|\nabla f(x_k)\| = 0$. Suppose that $\limsup_{k\to\infty} \| \nabla f(x_k)\|\ge \varepsilon$ for some $\varepsilon>0$ and consider two sub-sequences $(m_j)_{j\in\N}$, $(n_j)_{j\in\N}$, $n_j,m_j\in\N$, with $m_j <n_j<m_{j+1}$ such that 
 \[\frac{\varepsilon}{3}< \|\nabla f(x_k)\|, \quad \text{for}\ m_j\le k <n_j \]
and 
\[\|\nabla f(x_k)\| \le \frac{\varepsilon}{3}, \quad \text{for}\ n_j\le k<m_{j+1}. \]
Moreover, let $\bar j\in\N$ be sufficiently large such that
\[ \sum_{k=m_{\bar j}}^\infty \alpha_k c \|\nabla f(x_k)\|^2 \le \frac{\varepsilon^2}{9L}. \]
Using $L$-smoothness for $j\ge \bar j$ and $m_j\le m  \le n_{j}-1$ it holds true that
\begin{align*}
\|\nabla f(x_{n_j})-\nabla f(x_m)\| &\le \sum_{k=m}^{n_j-1} \|\nabla f(x_{k+1})-\nabla f(x_k)\|\le L\sum_{k=m}^{n_j-1} \|x_{k+1}-x_k\|\\ &= \frac{3\varepsilon}{3\varepsilon} L \sum_{k=m}^{n_j-1} \alpha_k\|\nabla f(x_k)\|\le L\frac{3}{\varepsilon} \sum_{k=m}^{n_j-1}\alpha_k \|\nabla f(x_k)\|^2\le L\frac{3}{\varepsilon} \frac{\varepsilon^2}{9L} = \frac{\varepsilon}{3},
\end{align*}
where we have used that $\|\nabla f(x_k)\|> \frac{\varepsilon}{3}$ for $m_j \le k \le n_j-1$.  This implies that
\[\|\nabla f(x_m)\| \le \|\nabla f(x_{n_j})\| + \|\nabla f(x_{n_j})-\nabla f(x_m)\| \le \|\nabla f(x_{n_j})\| + \frac{\varepsilon}{3} \le \frac{2\varepsilon}{3}  \]
and therefore $\|\nabla f(x_m)\|\le \frac{2\varepsilon}{3}$ for all $m\ge m_{\bar j}$. This is in contradiction to $\limsup_{k\to\infty} \|\nabla f(x_k)\|\ge \varepsilon$ and we proved
\[\limsup_{k\to\infty} \|\nabla f(x_k)\| = \liminf_{k\to\infty} \|\nabla f(x_k)\| = \lim_{k\to\infty} \|\nabla f(x_k)\| = 0. \]
Finally, let $\bar x\in\R^d$ be an accumulating point of $(x_k)_{k\in\N}$. Since $(f(x_k))_{k\ge k_0}$ is decreasing, it follows by continuity that
\[\lim_{k\to\infty} f(x_k) = f(\bar x)<\infty \]
and then also
\[\nabla f(\bar x) = \lim_{k\to\infty} \nabla f(x_k) = 0. \]
\end{proof}

\begin{remark}
Under the conditions of Theorem~\ref{thm:GD_diminishingstep} there is no guarantee of decrease in the objective function $f(x_k)$ along the initial iterations $k\le k_0$, which we have only verified for $k_0$ sufficiently large.  The decrease can be forced under the additional assumption $\alpha_k < \frac{2}{L}$, which was also used in Theorem~\ref{thm:GD_conststep} for an upper bound on the constant step size $\bar \alpha$.
\end{remark}

Although in the previous theorems we have characterized accumulation points of the gradient descent method, we are not able to characterize the existence of limit points or even the convergence rate. However, under the additional assumption of lower bounded objective functions, we can quantify the speed of degeneration of the gradients along the iterations of gradient descent. The following results are a direct consequence of the proofs of Theorem~\ref{thm:GD_conststep} and Theorem~\ref{thm:GD_diminishingstep}

\begin{cor}\label{cor:gradient_convergence}
Let $f:\R^d\to\R$ be $L$-smooth and $(x_k)_{k\in\N}$ be generated by
\[x_{k+1} = x_k - \alpha_k \nabla f(x_K),\quad \alpha_k>0.\]
We define $g_K^\ast := \min_{k\in\N,\ 0\le k \le K} \|\nabla f(x_k)\|^2$ and $\bar g_K := \frac{1}{K+1}\sum_{k=0}^K\|\nabla f(x_k)\|^2$ for $K\in\N$.
\begin{enumerate}
\item[(i)] For the choice of a constant step size $\bar\alpha \in(0,\frac{2}L)$ it holds $g_K^\ast,\bar g_K \in \mathcal O(\frac{1}{K})$.
\item[(ii)] Consider a diminishing step size $\alpha_k = \frac{C}{\sqrt{k+1}},\ k\ge0,$ for some $C>0$. Suppose that $\lim_{k\to\infty} f(x_k) = M \in(-\infty,\infty)$, then it holds $g_K^\ast \in \mathcal O(\frac{1}{\sqrt{K}})$.
\end{enumerate}
\end{cor}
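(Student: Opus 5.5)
The plan is to turn the one-step descent inequality from the proofs of Theorem~\ref{thm:GD_conststep} and Theorem~\ref{thm:GD_diminishingstep} into a bound on a telescoping sum. Then I bound the minimum of the squared gradient norms by a (weighted) average. Throughout, I read the update as $x_{k+1}=x_k-\alpha_k\nabla f(x_k)$. For (i) I use the assumption announced just before the corollary, namely that $f$ is bounded from below, with $f^\ast:=\inf_{x} f(x)>-\infty$; without it, $\bar g_K\in\cO(1/K)$ cannot hold in general.

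\emph{Part (i).} By Lemma~\ref{lem:descentlemma} with $y=-\bar\alpha\nabla f(x_k)$, exactly as in \eqref{eq:lem:descentlemma1}, I get
\[
f(x_{k+1})\le f(x_k)-c\,\|\nabla f(x_k)\|^2,\qquad c:=\bar\alpha\Bigl(1-\frac{L\bar\alpha}{2}\Bigr)>0 ,
\]
where $c>0$ because $\bar\alpha<2/L$. Summing over $k=0,\dots,K$ telescopes the right-hand side, and using $f(x_{K+1})\ge f^\ast$ yields
\[
\sum_{k=0}^K\|\nabla f(x_k)\|^2\le \frac{f(x_0)-f^\ast}{c}.
\]
Dividing by $K+1$ gives $\bar g_K\le \frac{f(x_0)-f^\ast}{c(K+1)}$. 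Since a minimum never exceeds an average, $g_K^\ast\le \bar g_K$, so both quantities are in $\cO(1/K)$.

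\emph{Part (ii).} Since $\alpha_k=C/\sqrt{k+1}\to0$, I choose $k_0$ with $L\alpha_k\le 1$ for all $k\ge k_0$, i.e.\ $k_0+1\ge (LC)^2$. For these $k$ the descent lemma gives
\[
f(x_{k+1})\le f(x_k)-\frac{\alpha_k}{2}\|\nabla f(x_k)\|^2 .
\]
Hence $(f(x_k))_{k\ge k_0}$ is decreasing, and because it converges to $M$, we have $f(x_{K+1})\ge M$ for every $K$. Telescoping from $k_0$ to $K$ then yields
\[
\sum_{k=k_0}^K\alpha_k\|\nabla f(x_k)\|^2\le 2\bigl(f(x_{k_0})-M\bigr).
\]
Next I bound the minimum by the $\alpha$-weighted average over this tail:
\[
g_K^\ast\le \min_{k_0\le k\le K}\|\nabla f(x_k)\|^2\le \frac{2(f(x_{k_0})-M)}{\sum_{k=k_0}^K\alpha_k}.
\]
Because $\alpha_k\ge C/\sqrt{K+1}$ for $k\le K$, the denominator satisfies
\[
\sum_{k=k_0}^K\alpha_k\ge \frac{C\,(K+1-k_0)}{\sqrt{K+1}}.
\]
For $K\ge 2k_0$ this is at least $\tfrac{C}{2}\sqrt{K+1}$, which gives $g_K^\ast\in\cO(1/\sqrt K)$.

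\emph{Main obstacle.} The delicate point is in (ii): the first $k_0$ iterations need not decrease $f$. I handle this by working only on the tail $k\ge k_0$, where monotonicity holds and so $f(x_{K+1})\ge M$ is justified. The remaining care is to check that discarding the first $k_0$ indices only changes constants, since $k_0$ is fixed, independent of $K$, and enters only through $K+1-k_0\ge (K+1)/2$.
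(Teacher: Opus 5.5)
Your proof is correct and follows the paper's route: telescope the descent-lemma inequality, bound the minimum by an average, and use $\alpha_k\ge\alpha_K$ to get the $\sqrt{K+1}$ denominator. Your argument is slightly more complete than the paper's in two places. You treat the transient phase $k<k_0$ explicitly, which the paper sidesteps by assuming $k_0=0$. You also state the lower bound $f^\ast>-\infty$ needed in (i), which the paper uses without listing it in the corollary.
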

\begin{proof}
    For the first claim, let $\bar\alpha \in[\varepsilon,\frac{2-\varepsilon}L]$ with $\varepsilon<\frac{2}{L+1}$. Similarly as in the proof of Theorem~\ref{thm:GD_conststep} we have that
    \[ \frac{\varepsilon^2}{2}\|\nabla f(x_k)\|^2 \le f(x_{k})-f(x_{k+1})\,.\]
    Taking a sum from $k=0,\dots, K$ for $K\in\N$, we deduce that
    \[ \frac{\varepsilon^2}{2}\sum_{k=0}^K \|\nabla f(x_k)\|^2 \le f(x_0)-f(x_{K+1})\le f(x_0)-f^\ast\,.\]
    This implies the first assertion
    \[g_K^\ast := \min_{k\in\N, 0\le k \le K} \|\nabla f(x_k)\|^2 \le \bar g_K = \frac1{K+1} \sum_{k=0}^{K} \|\nabla f(x_k)\|^2 \le \frac{2(f(x_0)-f^\ast)}{\varepsilon^2(K+1)}\,.\]

    For the second claim, we assume that the sequence of step sizes $\alpha_k$ is decreasing such that $\alpha_k\ge \alpha_K$ for all $0\le k\le K$. Similarly as in the proof of Theorem~\ref{thm:GD_diminishingstep}, we start with \eqref{eq:graddegeneration} and for simplicity we suppose that $k_0 = 0$. Hence, we have
    \[\alpha_K \min_{k\in\N,\ 0\le k\le K}\ \|\nabla f(x_k)\|^2 \le \min_{k\in\N,\ 0\le k\le K}\ \alpha_k\|\nabla f(x_k)\|^2 \le \frac{1}{K+1}\sum_{k=0}^K \alpha_k\|\nabla f(x_k)\|^2 \le \frac{f(x_0)-M}{c(K+1)}\,.\]
    Setting $\alpha_k = \frac{C}{\sqrt{k+1}}$ for some $C>0$ implies that
    \[g_K^\ast \le \frac{(f(x_0)-M)\sqrt{K+1}}{C\,c\,(K+1)}\in\cO(\frac{1}{\sqrt{K}})\,.\]
\end{proof}

\subsection{Convergence for convex and smooth objective functions}\label{sec:GD_convex}
In the following, we study the convergence behavior of gradient descent under the additional assumption of (strong) convex objective functions. While in the previous section we have quantified possible accumulation points, we will now consider the description of convergence through some error function.  Let $(x_k)_{k\in\N}$ be the sequence generated through some optimization scheme for a objective function $f$.  We consider an error function $e:\R^d\to\R$ with the property $e(x)\ge0$ for all $x\in\R^d$ and,  $e(x_\ast) = 0$ for some $x_\ast\in\R^d$, e.g.~$x_\ast \in\arg\min_{x\in\R} f(x)$ assuming it exists.  Typical examples include
\[e(x) = \|x-x_\ast\| \quad \text{or}\quad e(x) = |f(x)-f(x_\ast)|. \]
We define the following type of convergence behavior.

\begin{defi}
We say that the sequence of errors $(e(x_k))_{k\in\N}$ converges
linearly, if there exists $c\in(0,1)$ such that \[e(x_{k+1}) \le c e(x_k) \quad \text{for all}\ k\in\N\,.\]
\end{defi}
Since the focus of this lecture course lies in first-order methods, we do not expect faster convergence than linear such as super-linear or quadratic convergence.  

We will now study the convergence of gradient descent for general convex and $L$-smooth objective functions.  Since for general convex functions there is no guarantee for existence of a unique global minimum, we only expect convergence of the error function $e(x_k) = f(x_k)-f(x_\ast)$ for some global minimum $x_\ast\in\R^d$. The convergence is slower than linear, sometimes also referred to sub-linear convergence. 
\begin{thm}[GD for convex and smooth objective function]\label{thm:GD_convex}
Let $f:\R^d\to\R$ be convex and $L$-smooth, and let $(x_k)_{k\in\N}$ be generated by 
\[x_{k+1} = x_k - \bar\alpha \nabla f(x_k) \]
with 
$\bar\alpha\le \frac{1}{L}$. Moreover, we assume that the set of all global minima of $f$ is non-empty. Then the sequence $(x_k)_{k\in\N}$ converges in the sense that
\[e(x_k):=f(x_k)-f^\ast \le \frac{c}{k},\quad k\ge 1\]
for some constant $c>0$ and $f^\ast = \min_{x\in\R^d} f(x)$. 
\end{thm}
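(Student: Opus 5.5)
The plan is the standard potential-function argument for gradient descent on convex, $L$-smooth functions. We combine the descent lemma (Lemma~\ref{lem:descentlemma}) with the first-order convexity inequality, and use the distance to a fixed minimizer $x_\ast$ as the potential. Throughout, fix a global minimum $x_\ast$, which exists by assumption, so that $f^\ast=f(x_\ast)$.

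\textbf{Step 1: sufficient decrease.} Apply Lemma~\ref{lem:descentlemma} with $x\equiv x_k$ and $y\equiv -\bar\alpha\nabla f(x_k)$, exactly as in the proof of Theorem~\ref{thm:GD_conststep}. Using $\bar\alpha\le \frac1L$, this gives
\[ f(x_{k+1})\le f(x_k)-\bar\alpha\Big(1-\frac{L\bar\alpha}{2}\Big)\|\nabla f(x_k)\|^2\le f(x_k)-\frac{\bar\alpha}{2}\|\nabla f(x_k)\|^2 . \]
In particular, $e(x_k)$ is non-increasing in $k$.

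\textbf{Step 2: one-step inequality in terms of distances.} Convexity yields $f(x_k)\le f^\ast+\nabla f(x_k)^\top(x_k-x_\ast)$. Substituting this into Step~1 gives
\[ f(x_{k+1})-f^\ast\le \nabla f(x_k)^\top(x_k-x_\ast)-\frac{\bar\alpha}{2}\|\nabla f(x_k)\|^2 . \]
The right-hand side can be written through the identity
\[ \|x_{k+1}-x_\ast\|^2=\|x_k-x_\ast\|^2-2\bar\alpha\nabla f(x_k)^\top(x_k-x_\ast)+\bar\alpha^2\|\nabla f(x_k)\|^2 , \]
which is just the expansion of the update. Combining the two, we get
\[ f(x_{k+1})-f^\ast\le \frac{1}{2\bar\alpha}\big(\|x_k-x_\ast\|^2-\|x_{k+1}-x_\ast\|^2\big). \]
This completing-the-square step is the crux of the argument. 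Everything else is routine, and one only needs to check that the factor $\frac{\bar\alpha}{2}$ in Step~1 matches the $\bar\alpha^2/(2\bar\alpha)$ produced by the expansion.

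\textbf{Step 3: telescoping and monotonicity.} Sum the inequality of Step~2 over $k=0,\dots,K-1$. The right-hand side telescopes, so
\[ \sum_{k=1}^{K}\big(f(x_k)-f^\ast\big)\le \frac{1}{2\bar\alpha}\|x_0-x_\ast\|^2 . \]
Since $e(x_k)$ is non-increasing by Step~1, the left-hand side is at least $K\,e(x_K)$. Hence
\[ e(x_K)\le \frac{\|x_0-x_\ast\|^2}{2\bar\alpha K},\qquad K\ge1 , \]
which is the claim with $c=\|x_0-x_\ast\|^2/(2\bar\alpha)$. The constant depends on the chosen minimizer; taking the infimum over the set of minima only improves it.
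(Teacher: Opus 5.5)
Your proof is correct and follows the same route as the paper: the descent lemma with $\bar\alpha\le \frac1L$, the convexity inequality at $x_\ast$, completing the square via the expansion of $\|x_{k+1}-x_\ast\|^2$, and then telescoping combined with monotonicity of $f(x_k)$. The only cosmetic difference is that you use $1-\frac{L\bar\alpha}{2}\ge\frac12$ already in the descent step, whereas the paper carries the term $\bar\alpha\big(\frac{L}{2}\bar\alpha-\frac12\big)\|\nabla f(x_k)\|^2$ along and drops it at the end; both arrive at the same constant $c=\|x_0-x_\ast\|^2/(2\bar\alpha)$.
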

\begin{proof}
We again apply the descent Lemma~\ref{lem:descentlemma} (with $t=1$, $y=(x_{k+1}-x_k)$ and $x=x_k$) to derive
\[f(x_{k+1})\le f(x_k) + \nabla f(x)^\top (x_{k+1}-x_k) + \frac{L}{2} \|x_{k+1}-x_k\|^2. \]
With $x_{k+1}-x_k=-\bar\alpha \nabla f(x_k)$ we obtain
\begin{align*}
f(x_{k+1}) &\le f(x_k) - \bar\alpha \|\nabla f(x_k)\|^2 + \frac{L}{2}\bar\alpha^2 \|\nabla f(x_k)\|^2\\
&= f(x_k) +(\frac{L}{2}\bar\alpha - 1)\bar \alpha \|\nabla f(x_k)\|^2.
\end{align*}
Since $\bar\alpha\le \frac{1}{L}$, we have $(\frac{L}{2}\bar\alpha - 1)\le -\frac{1}{2}<0$ and therefore, the sequence $(f(x_k))_{k\in\N}$ is decreasing. Now, let $x_\ast\in\R^d$ be some global minimum of $f$ such that due to convexity it holds true that
\[f(x_k) + (x_\ast-x_k)^\top \nabla f(x_k) \le f(x_\ast). \]
We plug this in into the above inequality to imply
\begin{align*}
f(x_{k+1}) &\le f(x_k) + \bar \alpha(\frac{L}{2}\bar\alpha - 1) \|\nabla f(x_k)\|^2\\
&\le f(x_\ast) - \frac{\bar\alpha}{\bar\alpha} (x_\ast-x_k)^\top \nabla f(x_k) + \bar \alpha(\frac{L}{2}\bar\alpha - 1) \|\nabla f(x_k)\|^2\\
&=f(x_\ast) + \frac{1}{\bar\alpha}\left\{\frac{1}{2}\|x_\ast-x_k\|^2 + \frac{\bar\alpha^2}{2}\|\nabla f(x_k)\|^2 - \frac{1}{2}\|(x_\ast-x_k)+ \bar\alpha\nabla f(x_k)\|^2 \right\}\\&\quad + \bar \alpha(\frac{L}{2}\bar\alpha - 1) \|\nabla f(x_k)\|^2,
\end{align*}
where we have used $-a^\top b = \frac12 \|a\|^2 + \frac12\|b\|^2 - \frac12\|a+b\|^2$ for $a,b\in\R^d$. Rearranging the rhs leads to
\begin{align*}
f(x_{k+1}) &\le f(x_\ast) + \frac{1}{2\bar \alpha} \left(\|x_\ast-x_k\|^2 - \|x_\ast - x_{k+1}\|^2\right) + \bar\alpha(\frac{L}{2}\bar\alpha-\frac12) \|\nabla f(x_k)\|^2 \\ &\le f(x_\ast) + \frac{1}{2\bar \alpha} \left(\|x_\ast-x_k\|^2 - \|x_\ast - x_{k+1}\|^2\right),
\end{align*}
where we have used again $\bar\alpha\le\frac{1}{L}$. Summing over $k=0,\dots,K$ gives
\[\sum_{k=0}^K \{f(x_{k+1}) - f(x_\ast)\} \le \frac{1}{2\bar\alpha}\sum_{k=0}^{K} \{\|x_\ast-x_k\|^2 - \|x_\ast-x_{k+1}\|^2\}\le \frac{1}{2\bar\alpha} \{\|x_\ast-x_0\|^2-\|x_\ast-x_{K+1}\|^2\},\]
where we have applied a telescoping sum.
Since $(f(x_k))_{k\in\mathbb N}$ is decreasing, we have
\[\sum_{k=0}^K f(x_{k+1})\ge (K+1) f(x_{K+1})\,.\]
Therefore, the assertion follows with
\[f(x_{K+1})-f(x_\ast) \le \frac{1}{K+1} \frac{1}{2\bar\alpha} \|x_\ast-x_0\|^2 =: \frac{c}{K+1}. \]
\end{proof}

\begin{remark}[Larger constant step sizes]
The condition $\bar \alpha\leq \frac{1}{L}$ in Theorem~\ref{thm:GD_convex} is convenient for the proof, but it is not the largest admissible range for convergence of gradient descent. In fact, the conclusion remains true for every constant step size
\[
0<\bar \alpha<\frac{2}{L}.
\]
However, the proof is slightly more involved. The key step in the proof is
\[
f(x_{k+1})-f(x_\ast)
\leq
\frac{1}{2\bar \alpha}
\left(
\|x_k-x_\ast\|^2-\|x_{k+1}-x_\ast\|^2
\right)
+
\frac{L\bar \alpha-1}{2\bar \alpha}
\|x_{k+1}-x_k\|^2\,.
\]
For $\bar \alpha\leq 1/L$, the last term is non-positive and can simply be discarded. For
$\bar \alpha>1/L$, this term is positive, but it can still be controlled because the descent estimate gives
\[
f(x_k)-f(x_{k+1})
\geq
\left(\frac{1}{\bar \alpha}-\frac{L}{2}\right)
\|x_{k+1}-x_k\|^2\,.
\]
The coefficient on the right-hand side is positive when $\bar \alpha<2/L$.
Consequently, one still obtains the last-iterate rate
\[
f(x_k)-f(x_\ast)=\mathcal{O}\left(\frac{1}{k}\right),
\]
but with a constant that deteriorates as $\bar \alpha$ approaches $2/L$. 
\end{remark}

\subsection{Convergence for strongly convex and smooth objective functions}\label{sec:GD_strongconvex}
Under the additional assumption that $f$ is $\mu$-strongly convex for some $\mu>0$, gradient
descent converges linearly for sufficiently small constant step sizes. In this case, convergence can
be shown not only for the objective values, but also for the iterates themselves.
\begin{thm}[GD for strongly convex and smooth objective function]\label{thm:GD_strongconvex}
Let $f:\R^d\to\R$ be $\mu$-strongly convex and $L$-smooth, and let $(x_k)_{k\in\N}$ be generated by 
\[x_{k+1} = x_k - \bar\alpha \nabla f(x_k) \]
with 
$\bar\alpha\le \frac{1}{L}$. Then the sequence $(x_k)_{k\in\N}$ converges linearly in the sense that there exists $c\in(0,1)$ such that
\[e(x_k):=\|x_k-x_\ast\| \le c^k\|x_0-x_\ast\|,\quad k\in\N\]
where $x_\ast\in\R^d$ is the unique global minimum of $f$ with $f(x_\ast) = \min_{x\in\R^d} f(x)$. 
\end{thm}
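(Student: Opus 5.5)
Existence and uniqueness of the minimizer $x_\ast$ follow from $\mu$-strong convexity together with continuity: the quadratic lower bound \eqref{eq:upperbound_strongconvex} makes $f$ coercive, and strong convexity gives strict convexity. Moreover, $\nabla f(x_\ast)=0$. The goal is a one-step contraction $\|x_{k+1}-x_\ast\|\le c\|x_k-x_\ast\|$ with some $c\in(0,1)$ that does not depend on $k$. Induction then gives $\|x_k-x_\ast\|\le c^k\|x_0-x_\ast\|$.

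To obtain the contraction, expand the square using $x_{k+1}-x_\ast = x_k-x_\ast-\bar\alpha(\nabla f(x_k)-\nabla f(x_\ast))$:
\[
\|x_{k+1}-x_\ast\|^2=\|x_k-x_\ast\|^2-2\bar\alpha\,\nabla f(x_k)^\top(x_k-x_\ast)+\bar\alpha^2\|\nabla f(x_k)\|^2 .
\]
The inner product term is controlled from below by two ingredients, each used with half weight.

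\textbf{Strong convexity.} Applying \eqref{eq:upperbound_strongconvex} with the roles $(x,y)=(x_k,x_\ast)$ gives
\[
\nabla f(x_k)^\top(x_k-x_\ast)\ge f(x_k)-f(x_\ast)+\tfrac{\mu}{2}\|x_k-x_\ast\|^2 .
\]

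\textbf{Smoothness and convexity.} Lemma~\ref{lem:convex_and_smooth} with $y=x_\ast$, using $\nabla f(x_\ast)=0$, gives
\[
f(x_k)-f(x_\ast)\ge \tfrac{1}{2L}\|\nabla f(x_k)\|^2 .
\]

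Combining the two yields
\[
\nabla f(x_k)^\top(x_k-x_\ast)\ge \tfrac{\mu}{2}\|x_k-x_\ast\|^2+\tfrac{1}{2L}\|\nabla f(x_k)\|^2 .
\]
Substituting this into the expansion gives
\[
\|x_{k+1}-x_\ast\|^2\le(1-\bar\alpha\mu)\|x_k-x_\ast\|^2+\bar\alpha\left(\bar\alpha-\tfrac1L\right)\|\nabla f(x_k)\|^2 .
\]
Since $\bar\alpha\le 1/L$, the last term is non-positive and can be dropped. This gives the contraction with $c=\sqrt{1-\bar\alpha\mu}$.

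It remains to check that $c\in(0,1)$. Comparing the upper bound \eqref{eq:upperbound_smooth} with the lower bound \eqref{eq:upperbound_strongconvex} forces $\mu\le L$, so $0<\bar\alpha\mu\le\mu/L\le1$. The edge case $\bar\alpha\mu=1$ only happens when $\mu=L$ and $\bar\alpha=1/L$, and then $c=0$. In that case the statement still holds with any $c\in(0,1)$, since the error bound becomes trivial once $x_1=x_\ast$.

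The main obstacle is the bookkeeping in the combination step: the inner product has to be split so that the positive $\bar\alpha^2\|\nabla f(x_k)\|^2$ term is exactly absorbed under the constraint $\bar\alpha\le 1/L$. Using Lemma~\ref{lem:convex_and_smooth} alone would lose the $\mu$ contribution, and using strong convexity alone would leave the gradient term uncontrolled. An alternative route, if this one failed, is to use only the $L$-Lipschitz bound $\|\nabla f(x_k)\|\le L\|x_k-x_\ast\|$ together with the strong monotonicity $\nabla f(x_k)^\top(x_k-x_\ast)\ge\mu\|x_k-x_\ast\|^2$. This gives the factor $1-2\bar\alpha\mu+\bar\alpha^2L^2$. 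That factor is below $1$ only for $\bar\alpha<2\mu/L^2$, which is more restrictive than the assumption $\bar\alpha\le 1/L$ whenever $\mu<L/2$. This is why the approach above is preferred.
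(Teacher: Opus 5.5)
Your proof is correct, but it takes a different route from the paper's. The paper does not expand $\|x_{k+1}-x_\ast\|^2$ directly. Instead it reuses the key inequality from the proof of Theorem~\ref{thm:GD_convex},
\[
f(x_{k+1})-f(x_\ast)\le\tfrac{1}{2\bar\alpha}\bigl(\|x_k-x_\ast\|^2-\|x_{k+1}-x_\ast\|^2\bigr),
\]
which comes from the descent lemma, plain convexity at $x_k$, and $\bar\alpha\le 1/L$. It then uses strong convexity only at the minimizer, as quadratic growth $\tfrac{\mu}{2}\|x_{k+1}-x_\ast\|^2\le f(x_{k+1})-f(x_\ast)$ applied to the new iterate. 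This yields the implicit recursion $(1+\mu\bar\alpha)\|x_{k+1}-x_\ast\|^2\le\|x_k-x_\ast\|^2$, so $c=(1+\mu\bar\alpha)^{-1/2}$.

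You instead use strong convexity at the current iterate and the co-coercivity bound of Lemma~\ref{lem:convex_and_smooth} to absorb the $\bar\alpha^2\|\nabla f(x_k)\|^2$ term. This is a lighter version of the paper's argument for Theorem~\ref{thm:GD_strongconvex2}, with Lemma~\ref{lem:convex_and_smooth} in place of Lemma~\ref{lem:auxiliary_bound}.

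What the paper's route buys: it is a two-line corollary of the convex analysis. Also, $(1+\mu\bar\alpha)^{-1/2}\in(0,1)$ for every $\bar\alpha>0$, so it needs neither $\mu\le L$ nor a degenerate case. What your route buys: a sharper constant. Since $(1-t)(1+t)\le 1$, you get $\sqrt{1-\bar\alpha\mu}\le(1+\bar\alpha\mu)^{-1/2}$. For $\bar\alpha=1/L$ this is $\sqrt{1-1/\kappa}$ versus the paper's $\sqrt{\kappa/(\kappa+1)}$.

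Your extra checks are handled correctly: $\mu\le L$ follows from comparing \eqref{eq:upperbound_smooth} with \eqref{eq:upperbound_strongconvex}, and in the case $\bar\alpha\mu=1$ you have $x_1=x_\ast$.
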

\begin{proof}
Let $x_\ast\in\R^d$ be the unique global minimum of $f$ with $\nabla f(x_\ast)=0$. Since $f$ is assumed to be $\mu$-strongly convex,  by Definition~\ref{def:strong_convexity} it holds true that 
\[\frac{\mu}{2}\|x_{k+1}-x_\ast\|^2 = \nabla f(x_\ast)^\top (x_{k+1}-x_\ast) + \frac{\mu}{2}\|x_{k+1}-x_\ast\|^2 \le f(x_{k+1})-f(x_\ast).\]
On the other hand, the proof of Theorem~\ref{thm:GD_convex} yields
\[f(x_{k+1})-f(x_\ast) \le \frac{1}{2\bar\alpha} \{ \|x_k-x_\ast\|^2-\|x_{k+1}-x_\ast\|^2\}\,. \]
Combining the two estimates gives
\[(\frac{\mu}{2} + \frac{1}{2\bar\alpha}) \|x_{k+1}-x_\ast\|^2 \le \frac{1}{2\bar\alpha} \|x_k-x_\ast\|^2.\]
Iterating this inequality gives
\[
\|x_k-x_\ast\|
\leq
\left(\frac{1}{\sqrt{1+\mu\bar\alpha}}\right)^k
\|x_0-x_\ast\|\,.
\]
\end{proof}
\begin{remark}
For the choice $\bar\alpha = \frac1L$ the upper bound of gradient descent is given by
\[\|x_k-x_\ast\|\le \left(\sqrt{\frac{L}{L+\mu}}\right)^k \|x_0-x_\ast\|. \]
Thus, the convergence speed is governed by the ratio between the smoothness constant $L$ and
the strong convexity constant $\mu$. The quantity
$
\kappa:=\frac{L}{\mu}
$
is called the condition number of the problem. A large condition number corresponds to an
ill-conditioned problem and leads to slower convergence. The rate of convergence can be described as:
\[c= \sqrt{\frac{1}{1+\frac{\mu}L}} = \sqrt{\frac{\kappa}{\kappa+1}}\,, \]
which decreases for decreasing $L$ and increasing $\mu$.
\end{remark}

The proof of the previous Theorem~\ref{thm:GD_strongconvex} for convergence under strong convexity builds directly upon the proof of Theorem~\ref{thm:GD_convex} under convexity.  However, we can even improve the rate of convergence if we do not go the direct way from convex to strongly convex. Therefore, we consider the following improved convergence result \cite[Theorem~2.1.15]{N2018}.

\begin{thm}\label{thm:GD_strongconvex2}
Assume that the same conditions as in Theorem~\ref{thm:GD_strongconvex} are satisfied, and additionally that $\bar\alpha\le \min(\frac{2}{\mu+L},\frac{\mu+L}{2\mu L})$. Then the sequence $(x_k)_{k\in\N}$ generated by gradient descent
\[x_{k+1} = x_k - \bar\alpha \nabla f(x_k) \]
 converges linearly in the sense that 
\[e(x_k):=\|x_k-x_\ast\| \le \Big(1-2\bar\alpha \frac{\mu L}{\mu+L}\Big)^{\frac{k}{2}}\|x_0-x_\ast\|,\quad k\in\N\]
where $x_\ast\in\R^d$ is the unique global minimum of $f$ with $f(x_\ast) = \min_{x\in\R^d} f(x)$. 
\end{thm}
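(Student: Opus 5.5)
The plan follows Nesterov's argument. Its key ingredient is the strengthened co-coercivity inequality for functions that are both $\mu$-strongly convex and $L$-smooth:
\[
(\nabla f(x)-\nabla f(y))^\top (x-y) \ge \frac{\mu L}{\mu+L}\|x-y\|^2 + \frac{1}{\mu+L}\|\nabla f(x)-\nabla f(y)\|^2,\quad x,y\in\R^d.
\]
Once this is available, the contraction follows by expanding one step of gradient descent around $x_\ast$.

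\textbf{Step 1 (the inequality).} Set $\phi(x):=f(x)-\frac{\mu}{2}\|x\|^2$.
\begin{itemize}
\item $\phi$ is convex, since $f$ is $\mu$-strongly convex.
\item $\phi$ is $(L-\mu)$-smooth. The descent-type upper bound \eqref{eq:upperbound_smooth} for $f$ gives $D^{(B)}_\phi(x,y)\le \frac{L-\mu}{2}\|x-y\|^2$. Together with convexity of $\phi$, this yields Lipschitz continuity of $\nabla\phi$ with constant $L-\mu$.
\end{itemize}
This Lipschitz step is the part I expect to be the main technical obstacle, since the notes have only proved one direction of such equivalences. I would try to establish it via the standard argument that convexity plus the quadratic upper bound implies co-coercivity, i.e.\ essentially the proof of Lemma~\ref{lem:convex_and_smooth}, which only uses the upper bound.

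If $\mu=L$, the inequality is checked directly, since then $\nabla f(x)=\mu(x-x_\ast)$ is affine. Otherwise, apply Lemma~\ref{lem:convex_and_smooth} to $\phi$ with constant $L-\mu$, once for $(x,y)$ and once for $(y,x)$, and add the two. This gives the co-coercivity
\[
(\nabla\phi(x)-\nabla\phi(y))^\top(x-y)\ge \frac{1}{L-\mu}\|\nabla\phi(x)-\nabla\phi(y)\|^2 .
\]
Now substitute $\nabla\phi=\nabla f-\mu\,\mathrm{Id}$. Expanding and rearranging, by multiplying by $L-\mu$ and collecting terms, produces exactly the displayed inequality.

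\textbf{Step 2 (one-step contraction).} Use $\nabla f(x_\ast)=0$, write $r_k:=\|x_k-x_\ast\|$, and expand
\[
r_{k+1}^2=r_k^2-2\bar\alpha\nabla f(x_k)^\top(x_k-x_\ast)+\bar\alpha^2\|\nabla f(x_k)\|^2 .
\]
Insert the inequality from Step 1 with $x=x_k$ and $y=x_\ast$:
\[
r_{k+1}^2\le\Big(1-\frac{2\bar\alpha\mu L}{\mu+L}\Big)r_k^2+\bar\alpha\Big(\bar\alpha-\frac{2}{\mu+L}\Big)\|\nabla f(x_k)\|^2 .
\]
The assumption $\bar\alpha\le\frac{2}{\mu+L}$ makes the last term nonpositive, so it can be dropped.

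\textbf{Step 3 (conclusion).} The second assumption $\bar\alpha\le\frac{\mu+L}{2\mu L}$ guarantees that the factor $1-2\bar\alpha\frac{\mu L}{\mu+L}$ lies in $[0,1)$. Hence its square root is well defined and $r_{k+1}\le(1-2\bar\alpha\frac{\mu L}{\mu+L})^{1/2} r_k$. Iterating this from $k=0$ gives the claimed bound.
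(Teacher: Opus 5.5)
Your proposal is correct and follows the paper's proof. Steps~2 and~3 are exactly the paper's argument: expand $\|x_{k+1}-x_\ast\|^2$, insert the strengthened co-coercivity bound with $y=x_\ast$, drop the gradient term via $\bar\alpha\le\frac{2}{\mu+L}$, and use $\bar\alpha\le\frac{\mu+L}{2\mu L}$ to keep the contraction factor in $[0,1)$. Your Step~1 adds a valid proof of Lemma~\ref{lem:auxiliary_bound}, which the notes leave as an exercise, via the standard reduction to the convex function $\phi=f-\frac{\mu}{2}\|\cdot\|^2$; its Bregman divergence satisfies the quadratic upper bound with constant $L-\mu$, so the proof of Lemma~\ref{lem:convex_and_smooth} applies to it verbatim.
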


\begin{proof}
Let $x_\ast\in\R^d$ be the unique global minimizer of $f$, i.e.~$\nabla f(x_\ast) = 0$.For $k\in\N$ and $x_k\in\R^d$ it holds true that
\begin{align*}
\|x_{k+1}-x_\ast\|^2 = \|x_{k}-\bar \alpha \nabla f(x_k)-x_\ast\|^2 = \|x_k-x_\ast\|^2 - 2\langle x_k-x_\ast,\bar\alpha \nabla f(x_k)\rangle + \bar\alpha^2\|\nabla f(x_k)\|^2.
\end{align*}  
We will make use of the inequality
\[\langle \nabla f(x)-\nabla f(y),x-y\rangle \ge \frac{\mu L}{\mu+L} \|x-y\|^2 + \frac{1}{\mu+L}\|\nabla f(x)-\nabla f(y)\|^2\]
for any $x,y\in\R^d$, which is left as exercise in Lemma~\ref{lem:auxiliary_bound}.  It then follows that
\[\langle \nabla f(x_k), x_k-x_\ast\rangle \ge \frac{\mu L}{\mu+L} \|x_k-x_\ast\|^2 + \frac{1}{\mu+L} \|\nabla f(x_k)\|^2, \]
since $\nabla f(x_\ast) = 0$. We obtain the bound
\begin{align*}
\|x_{k+1}-x_\ast \|^2 &\le \|x_k-x_\ast\|^2 - 2\bar\alpha (\frac{\mu L}{\mu+L} \|x_k - x_\ast\|^2 + \frac{1}{\mu+L} \|\nabla f(x_k)\|^2) + \bar\alpha^2 \|\nabla f(x_k) \|^2\\
&=(1-2\bar\alpha \frac{\mu L}{\mu+L})\|x_k-x_\ast\|^2 + \bar\alpha (\bar\alpha-\frac{2}{\mu+L})\|\nabla f(x_k)\|^2\\
&\le (1-2\bar\alpha \frac{\mu L}{\mu+L})\|x_k-x_\ast\|^2,
\end{align*}
where we have used that $(\bar\alpha-\frac{2}{\mu+L})\le 0$.  Since we have assumed that $\bar\alpha \le \frac{\mu+L}{2\mu L}$, we finally obtain linear convergence with $c=\sqrt{1-2\bar\alpha \frac{\mu L}{\mu+L}}\in(0,1)$ in the sense that
\[\|x_k-x_\ast\| \le c^k \|x_0-x_\ast\|. \]
\end{proof}
In the proof of Theorem~\ref{thm:GD_strongconvex2} we have used the following auxiliary bound for smooth and strongly convex objective functions. The proof is left as an exercise, and for more details, we refer to \cite[Theorem~2.1.12]{N2018}.
\begin{lemma}\label{lem:auxiliary_bound}
Let $f:\R^d\to\R$ be $L$-smooth and $\mu$-strongly convex. Then it holds true that
\[\langle \nabla f(x)-\nabla f(y), x-y\rangle \ge \mu\|x-y\|^2 \]
and 
\[\langle \nabla f(x)-\nabla f(y),x-y\rangle \ge \frac{\mu L}{\mu+L} \|x-y\|^2 + \frac{1}{\mu+L} \|\nabla f(x)- \nabla f(y)\|^2. \]
\end{lemma}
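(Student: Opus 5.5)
For the first inequality I will use strong convexity in the form \eqref{eq:upperbound_strongconvex}. Applying it once at the pair $(x,y)$ and once at $(y,x)$ gives
\[
f(y)\ge f(x)+\nabla f(x)^\top(y-x)+\frac{\mu}{2}\|x-y\|^2,\qquad
f(x)\ge f(y)+\nabla f(y)^\top(x-y)+\frac{\mu}{2}\|x-y\|^2.
\]
Adding them cancels the function values and yields $\langle \nabla f(x)-\nabla f(y),x-y\rangle\ge\mu\|x-y\|^2$ directly.

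For the second inequality I will reduce to the merely convex case and then apply Lemma~\ref{lem:convex_and_smooth}. Set $\varphi(x):=f(x)-\frac{\mu}{2}\|x\|^2$, so that $\nabla\varphi(x)=\nabla f(x)-\mu x$. Two properties of $\varphi$ are needed. First, $\varphi$ is convex, since \eqref{eq:upperbound_strongconvex} for $f$ is exactly the first-order convexity inequality for $\varphi$. Second, $\varphi$ is $(L-\mu)$-smooth. I expect this second property to be the main obstacle, because the bound $\|\nabla\varphi(x)-\nabla\varphi(y)\|\le(L-\mu)\|x-y\|$ does not follow from the triangle inequality alone.

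To establish it, I will first show that $D_\varphi^{(B)}(x,y)\le\frac{L-\mu}{2}\|x-y\|^2$. This holds because $D_\varphi^{(B)}(x,y)=D_f^{(B)}(x,y)-\frac{\mu}{2}\|x-y\|^2$, and $D_f^{(B)}(x,y)\le\frac{L}{2}\|x-y\|^2$ by the descent Lemma~\ref{lem:descentlemma}. A convex function whose Bregman divergence satisfies such a quadratic upper bound has a Lipschitz gradient with that constant. The proof of Lemma~\ref{lem:convex_and_smooth} only ever uses the quadratic upper bound \eqref{eq:upperbound_smooth}, never Lipschitz continuity of the gradient itself, so it goes through verbatim with $L-\mu$ in place of $L$. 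This gives co-coercivity for $\varphi$:
\[
\langle\nabla\varphi(x)-\nabla\varphi(y),x-y\rangle\ge\frac{1}{L-\mu}\|\nabla\varphi(x)-\nabla\varphi(y)\|^2,
\]
obtained by adding the two Bregman inequalities, as in the first part. If $\mu=L$, the function $\varphi$ is affine and the claim reduces to a direct computation, so that case can be treated separately.

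Finally, I substitute $\nabla\varphi=\nabla f-\mu\,\mathrm{Id}$ and write $g:=\nabla f(x)-\nabla f(y)$ and $h:=x-y$. The co-coercivity inequality becomes
\[
(L-\mu)\bigl(\langle g,h\rangle-\mu\|h\|^2\bigr)\ge\|g\|^2-2\mu\langle g,h\rangle+\mu^2\|h\|^2 .
\]
Collecting terms gives $(L+\mu)\langle g,h\rangle\ge\|g\|^2+\mu L\|h\|^2$. Dividing by $\mu+L$ yields exactly the asserted inequality.
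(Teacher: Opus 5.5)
Your proof is correct and follows the same route as the reference the notes point to for this exercise, \cite[Theorem~2.1.12]{N2018}: pass to $\varphi=f-\frac{\mu}{2}\|\cdot\|^2$, which is convex and $(L-\mu)$-smooth, apply co-coercivity to $\varphi$, expand, and treat $\mu=L$ separately. Your replacement of Lipschitz continuity of $\nabla\varphi$ by the quadratic upper bound is sound, because the proof of Lemma~\ref{lem:convex_and_smooth} only uses $0\le D_\varphi^{(B)}(x,y)\le\frac{L-\mu}{2}\|x-y\|^2$ for all $x,y$, and subtracting the affine part $\varphi(y)+\nabla\varphi(y)^\top(\cdot-y)$ leaves the Bregman divergence unchanged.
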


\begin{remark}\label{rem:opt_step}
The motivation behind the alternative convergence result for gradient descent under $\mu$-strong convexity and $L$-smoothness is the following. The optimal convergence rate is in the sense of minimizing $c^2(\bar\alpha)=(1-2\bar\alpha \frac{\mu L}{\mu+L}) \in(0,1)$ is obtained for the choice $\bar\alpha = \frac{2}{\mu+L}$. The upper bound of the error of gradient descent is then given by
\[\|x_k-x_\ast\| \le c^{k} \|x_0-x_\ast\| \]
with 
\[c = \sqrt{\frac{(\mu+L)^2}{(\mu+L)^2} - \frac{4\mu L}{(\mu+L)^2}} = \frac{L-\mu}{L+\mu} = \frac{\kappa-1}{\kappa+1}, \]
where $\kappa:= \frac{L}{\mu}$ denotes the ratio between smoothness and strong convexity. We sometimes also refer to $\kappa$ as the condition number of $f$. Usually, we have $L\ge\mu$,  such that $\kappa\ge1$. Finally, we can rewrite $c$ through
\[c = \frac{\kappa-1}{\kappa+1} = 1-\frac{2}{\kappa+1}, \]
which again decreases for decreasing $L$ and increasing $\mu$. As will turn out in Example~\ref{ex:quadratic}, this rate matches the optimal convergence rate of gradient descent on quadratic objective functions and hence cannot be further improved. 
\end{remark}

\subsection{Convergence under PL-condition and smooth objective functions}
In the previous subsection, strong convexity allowed us to prove linear convergence of gradient descent. However, strong convexity is often too restrictive, especially in non-convex optimization problems. The key property used for linear convergence is not convexity itself, but the fact that
the gradient norm controls the objective value gap. This motivates the Polyak--{\L}ojasiewicz condition:
\begin{equation}\label{eq:PL}
\|\nabla f(x)\|^2 \ge 2r (f(x)-f^\ast)
\end{equation}
for some $r\in(0,L)$ and all $x\in\R^d$ with $f^\ast = \min_{x\in\R^d} f(x)>-\infty$. This condition can be used to prove linear convergence of gradient descent (see, for instance, \cite{10.1007/978-3-319-46128-1_50}) and can be seen as a natural relaxation of strong convexity: instead of requiring a
global quadratic lower bound on the function itself, it only requires that the gradient norm controls
the distance of the function value to the optimum. It is important to note that the PL condition does not imply convexity. This makes the condition particularly useful in
non-convex settings, where strong convexity is too restrictive but where one still wants to rule out
flat non-optimal regions and obtain global convergence rates. 

\begin{prop}\label{prop:strongconvex_PL}
Let $f:\R^d\to\R$ be differentiable and $\mu$-strongly convex, and assume that $f$ has a
minimizer $x_\ast$. Then $f$ satisfies the PL condition with parameter $\mu$.
\end{prop}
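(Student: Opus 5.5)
The plan is to minimize both sides of the strong convexity lower bound \eqref{eq:upperbound_strongconvex} over $y$. Fix $x\in\R^d$. By $\mu$-strong convexity (Definition~\ref{def:strong_convexity}, in the first-order form \eqref{eq:upperbound_strongconvex}), for every $y\in\R^d$
\[
f(y)\ \ge\ f(x)+\nabla f(x)^\top(y-x)+\frac{\mu}{2}\|y-x\|^2 .
\]
We take the infimum over $y$ on both sides. The left-hand side then becomes $f^\ast=f(x_\ast)=\min_{y} f(y)$, which is finite because a minimizer exists by assumption.

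The right-hand side is a strictly convex quadratic in $y$. Exactly as in the proof of Lemma~\ref{lem:convex_and_smooth}, where the upper quadratic model was minimized, it is minimized at $y=x-\frac1\mu\nabla f(x)$ with minimal value
\[
f(x)-\frac1\mu\|\nabla f(x)\|^2+\frac{\mu}{2}\cdot\frac{1}{\mu^2}\|\nabla f(x)\|^2
= f(x)-\frac{1}{2\mu}\|\nabla f(x)\|^2 .
\]
Hence $f^\ast\ge f(x)-\frac{1}{2\mu}\|\nabla f(x)\|^2$. Rearranging gives $\|\nabla f(x)\|^2\ge 2\mu\,(f(x)-f^\ast)$, which is \eqref{eq:PL} with $r=\mu$.

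There is no real obstacle here. The only point needing care is that the inequality must hold for every $y$ simultaneously, so taking the infimum on the right does not need $y=x_\ast$. A more direct route would set $y=x_\ast$ and use Cauchy--Schwarz together with Young's inequality $-\langle g,h\rangle\ge -\frac{1}{2\mu}\|g\|^2-\frac\mu2\|h\|^2$; this gives the same bound. One small remark: \eqref{eq:PL} asks for $r\in(0,L)$. If $f$ is additionally $L$-smooth, then $\mu\le L$ follows by combining \eqref{eq:upperbound_smooth} and \eqref{eq:upperbound_strongconvex}. Without smoothness, the statement is read simply as the inequality \eqref{eq:PL} with constant $\mu$.
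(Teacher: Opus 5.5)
Your proof is correct and is essentially the paper's argument: minimize the right-hand side of the strong-convexity lower bound over $y$ (attained at $y=x-\frac1\mu\nabla f(x)$) to get $f^\ast\ge f(x)-\frac{1}{2\mu}\|\nabla f(x)\|^2$, then rearrange. Your alternative route, setting $y=x_\ast$ and applying Young's inequality, is also valid and gives the same bound.
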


\begin{proof}
By strong convexity,
\[
f(y)\geq f(x)+\nabla f(x)^\top(y-x)+\frac{\mu}{2}\|y-x\|^2
\]
for all $x,y\in\R^d$. Minimizing the right-hand side with respect to $y$ gives
\[
f^\ast
\geq
f(x)-\frac{1}{2\mu}\|\nabla f(x)\|^2\,.
\]
Rearranging yields
\[
\frac12\|\nabla f(x)\|^2\geq \mu(f(x)-f^\ast)\,.
\]
\end{proof}
As mentioned above, the gradient descent scheme converges linearly under the PL condition.
\begin{thm}[GD under the PL condition]\label{thm:GDPL}
Let $f:\R^d\to\R$ be $L$-smooth and assume that $f$ satisfies the PL condition with parameter
$\mu>0$. Let $(x_k)_{k\in\N}$ be generated by gradient descent
\[
x_{k+1}=x_k-\bar\alpha\nabla f(x_k)
\]
with $0<\bar\alpha\leq 1/L$. Then
\[
f(x_k)-f^\ast
\leq
(1-\bar\alpha\mu)^k\big(f(x_0)-f^\ast\big).
\]
In particular, gradient descent converges linearly in objective value.
\end{thm}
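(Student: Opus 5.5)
The plan is to combine the descent lemma (Lemma~\ref{lem:descentlemma}) with the PL inequality \eqref{eq:PL} into a one-step contraction of the objective gap, and then iterate. First, I apply Lemma~\ref{lem:descentlemma} with $x\equiv x_k$ and $y\equiv -\bar\alpha\nabla f(x_k)$, exactly as in the proof of Theorem~\ref{thm:GD_conststep}. This gives
\[
f(x_{k+1}) \le f(x_k) - \bar\alpha\Big(1-\frac{L\bar\alpha}{2}\Big)\|\nabla f(x_k)\|^2 .
\]
Since $0<\bar\alpha\le 1/L$, we have $1-\frac{L\bar\alpha}{2}\ge \frac12$. Hence
\[
f(x_{k+1}) \le f(x_k) - \frac{\bar\alpha}{2}\|\nabla f(x_k)\|^2 .
\]

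Second, I subtract $f^\ast$ from both sides and invoke the PL condition with parameter $\mu$, i.e.\ $\|\nabla f(x_k)\|^2\ge 2\mu(f(x_k)-f^\ast)$. This yields
\[
f(x_{k+1})-f^\ast \le (1-\bar\alpha\mu)\big(f(x_k)-f^\ast\big).
\]
Before iterating, I must check that the factor satisfies $1-\bar\alpha\mu\in[0,1)$, so that multiplying inequalities preserves their direction. This holds because $\bar\alpha\mu\le \mu/L\le 1$. To see $\mu\le L$, I would note that PL together with $L$-smoothness forces it. Indeed, the first part of the proof of Lemma~\ref{lem:convex_and_smooth}, which uses only smoothness, gives $f^\ast\le f(x)-\frac{1}{2L}\|\nabla f(x)\|^2$, i.e.\ $\|\nabla f(x)\|^2\le 2L(f(x)-f^\ast)$. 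Comparing with PL at any point where $f(x)>f^\ast$ gives $\mu\le L$. If no such point exists, the claim is trivial. The paper's convention $r\in(0,L)$ also covers this directly.

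Third, I induct on $k$ to obtain $f(x_k)-f^\ast\le(1-\bar\alpha\mu)^k(f(x_0)-f^\ast)$. Linear convergence in the sense of the definition in Section~\ref{sec:GD_convex} then follows with $c=1-\bar\alpha\mu$. The degenerate case $c=0$ (when $\bar\alpha\mu=1$) only means the gap vanishes after one step.

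The only mildly delicate point is the nonnegativity of $1-\bar\alpha\mu$ discussed above; all other steps are direct substitutions.
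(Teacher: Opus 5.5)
Your proof is correct and follows the paper's argument: the descent lemma with $1-\frac{L\bar\alpha}{2}\ge\frac12$, then the PL inequality to get $f(x_{k+1})-f^\ast\le(1-\bar\alpha\mu)(f(x_k)-f^\ast)$, then iteration. Your extra check that $\bar\alpha\mu\le 1$, obtained from the smoothness bound $\|\nabla f(x)\|^2\le 2L(f(x)-f^\ast)$, is valid; the paper leaves this point implicit.
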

\begin{proof}
By the descent condition \eqref{eq:descent_condition},
\[
f(x_{k+1})
\leq
f(x_k)-\bar\alpha\left(1-\frac{L\bar\alpha}{2}\right)\|\nabla f(x_k)\|^2.
\]
Since $\bar\alpha\leq 1/L$, we have
\[
1-\frac{L\bar\alpha}{2}\geq \frac12.
\]
Thus,
\[
f(x_{k+1})-f^\ast
\leq
f(x_k)-f^\ast-\frac{\bar\alpha}{2}\|\nabla f(x_k)\|^2.
\]
Using the PL condition yields
\[
f(x_{k+1})-f^\ast
\leq
(1-\bar\alpha\mu)(f(x_k)-f^\ast).
\]
Iterating this inequality proves the claim.
\end{proof}

The PL condition above is the strongest case in a broader family of gradient domination inequalities of the form
\[
\|\nabla f(x)\|\geq 2r(f(x)-f^\ast)^\beta,
\qquad \beta\in[\tfrac12,1].
\]
The PL condition corresponds to $\beta=\frac12$. The endpoint $\beta=1$ gives the weaker
condition
\[
\|\nabla f(x)\|\geq 2r(f(x)-f^\ast).
\]
In contrast to the PL case, this condition generally leads only to sublinear convergence rates for
gradient descent. It can be viewed as a limiting case of a (global) {\L}ojasiewicz-type gradient inequality.
\begin{thm}\label{thm:GDwPL}
Let $f:\R^d\to\R$ be $L$-smooth and satisfies the "weak" PL condition 
\begin{equation}\label{eq:wPL}
\|\nabla f(x)\| \ge 2r (f(x)-f^\ast)
\end{equation}
for some $r\in(0,L)$ and all $x\in\R^d$ with $f^\ast = \min_{x\in\R^d} f(x)>-\infty$. Then the sequence $(x_k)_{k\in\N}$ generated by gradient descent
\[x_{k+1} = x_k - \bar\alpha \nabla f(x_k) \]
with $\bar\alpha = \frac1L$ converges with
\[ e(x_k):= f(x_k)-f^\ast\le \frac{L}{2r^2(k+1)}.\]
\end{thm}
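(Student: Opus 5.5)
The plan is to combine the descent estimate for $\bar\alpha = \frac1L$ with the weak PL condition \eqref{eq:wPL}. This yields a quadratic recursion for the error $e_k := f(x_k)-f^\ast$, which we then solve through the reciprocals $1/e_k$.

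First, apply the descent Lemma~\ref{lem:descentlemma} with $x = x_k$ and $y = -\frac1L\nabla f(x_k)$, exactly as in the proof of Theorem~\ref{thm:GD_conststep}. This gives
\[ f(x_{k+1}) \le f(x_k) - \frac{1}{2L}\|\nabla f(x_k)\|^2 . \]
In particular $(e_k)_{k\in\N}$ is non-increasing, and $e_k\ge 0$ since $f^\ast$ is the minimum. Inserting \eqref{eq:wPL} squared, $\|\nabla f(x_k)\|^2 \ge 4r^2 e_k^2$, we obtain
\[ e_{k+1} \le e_k - \frac{2r^2}{L} e_k^2 . \]

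Second, we convert this into a linear bound on $1/e_k$. If some $e_k=0$, monotonicity gives $e_j = 0$ for all $j\ge k$ and the claim is trivial, so assume $e_k>0$. Dividing the recursion by $e_k e_{k+1}$ yields
\[ \frac{1}{e_{k+1}} \ge \frac{1}{e_k} + \frac{2r^2}{L}\frac{e_k}{e_{k+1}} \ge \frac{1}{e_k} + \frac{2r^2}{L}, \]
where the last step uses $e_{k+1}\le e_k$. Summing from $0$ to $k-1$ gives
\[ \frac1{e_k} \ge \frac1{e_0} + \frac{2r^2 k}{L} . \]

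Third, we need to absorb the initial term to reach the stated constant $\frac{L}{2r^2(k+1)}$. This amounts to showing $\frac1{e_0} \ge \frac{2r^2}{L}$, i.e.\ $e_0 \le \frac{L}{2r^2}$; this is the main obstacle, since it is a statement about an arbitrary starting point. The recursion itself will handle it: nonnegativity $e_1\ge 0$ forces $e_0 - \frac{2r^2}{L}e_0^2 \ge 0$, hence $e_0 \le \frac{L}{2r^2}$ whenever $e_0>0$. The same argument applies at every index, but we only need it at $k=0$.

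Combining the three steps gives
\[ \frac1{e_k} \ge \frac{2r^2}{L}(k+1), \]
which is the claim. If the absorption step turned out to fail, the fallback would be to state the bound with $\frac1{e_0}$ kept explicitly, or to start the summation at $k=1$.
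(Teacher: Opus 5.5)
Your proof is correct and follows the paper's argument: the descent step with $\bar\alpha=\frac1L$ combined with the squared weak PL condition gives $e_{k+1}\le(1-\frac{2r^2}{L}e_k)e_k$, the reciprocals $1/e_k$ are telescoped, and the initial term is absorbed using $e_0\le \frac{L}{2r^2}$. The only difference is cosmetic: the paper proves $f(x)-f^\ast\le\frac{L}{2r^2}$ for every $x$ from $f^\ast\le f(x)-\frac{1}{2L}\|\nabla f(x)\|^2$, and at $x=x_0$ this is exactly your observation $0\le e_1\le e_0-\frac{2r^2}{L}e_0^2$.
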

\begin{proof}
    First, under smoothness and \eqref{eq:wPL} it is straightforward to show that $f$ remains bounded. By the same argument as in the proof of Lemma~\ref{lem:convex_and_smooth} we have 
    \[ f^\ast \le f(x) -\frac1{2L} \|\nabla f(x)\|^2\]
    for all $x\in\R^d$. Combined with \eqref{eq:wPL}, this implies that
    \[f(x)-f^\ast \ge \frac{1}{2L}\|\nabla f(x)\|^2 \ge \frac{2r^2}{L} (f(x)-f^\ast)^2 \]
    and hence,
    \[f(x)-f^\ast\le \frac{L}{2r^2}\,.\]
    In the second step, we again apply the descent condition \eqref{eq:descent_condition} to deduce that
    \[ e(x_{k+1}) \le e(x_k) - \frac{1}{2L} \|\nabla f(x_k)\|^2 \le (1-\frac{2r^2}{L}e(x_k))e(x_k)\,.\]
    With $q:=\frac{2r^2}{L}$ we have that that $e(x_0)\in[0,1/q]$ and $(e(x_k))_{k\in\N_0}$ satisfies 
    \begin{equation}\label{eq:recursioncompact} 0\le e(x_{k+1}) \le (1-qe(x_k))(e(x_k))\,.
    \end{equation}
    It is a direct consequence that $(e(x_{k}))_{k\in\N_0}$ is decreasing, since $e(x_k)\le q$. Without loss of generality, suppose that $e(x_{k})>0$ for all $k\in\N_0$. Then, from \eqref{eq:recursioncompact} it follows that
    \[\frac{1}{e(x_{k+1})} \ge \frac{1}{e(x_k)} + q \frac{e(x_k)}{e(x_{k+1})} \ge \frac{1}{e(x_k)} + q\,.\]
    Finally, by a telescoping sum argument, it holds
    \[ \frac{1}{e(x_k)} - \frac{1}{e(x_0)} \ge nq\,, \]
    for all $k\ge1$, implying that
    \[ e(x_k) \le \frac{1}{q(k+1)} = \frac{L}{2r^2(k+1)}\,.\]
\end{proof}

\section{Sub-gradient descent method}
As the last class of objective functions in this chapter, we consider convex functions that are not
necessarily differentiable. Since the gradient may not exist, the gradient descent scheme has to be
reformulated. This leads to the \textit{sub-gradient descent method}. For further details, we refer the interested reader to~\cite[Section~3.1]{Lan2021}. 

\subsection{Sub-differential}
We begin with the definition of \textit{sub-gradients} and the corresponding \textit{sub-differential}.

\begin{defi}
Let $f:\R^d\to\R$ and $x\in\R^d$. A vector $g_x\in\R^d$ is called a \textit{sub-gradient} of $f$ at $x$ if
\begin{equation}\label{eq:def_subgrad}
f(y)\geq f(x)+g_x^\top(y-x)
\qquad \text{for all } y\in\R^d.
\end{equation}
The set of all sub-gradients of $f$ at $x$ is called the \textit{sub-differential} of $f$ at $x$ and is denoted by
$\partial f(x)$.
\end{defi}
Condition \eqref{eq:def_subgrad} is closely related to convexity. Indeed, if $f$ is differentiable and convex, then
\[f(y) \ge f(x) + \nabla f(x)^\top (y-x)\,. \]
Thus, for differentiable convex functions, the gradient $\nabla f(x)$ is a sub-gradient of $f$ at $x$.
There is a close connection between sub-gradients and convexity as also stated in the next proposition.
\begin{prop}
Let
$f:\R^d\to\R$.
\begin{enumerate}
\item If $\partial f(x)\neq \emptyset$ for all $x\in \R^d$, then $f$ is convex. 
\item If $f$ is convex, then $\partial f(x)\neq \emptyset$ for all $x\in\R^d$.
\end{enumerate}
\end{prop}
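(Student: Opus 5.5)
We prove the two parts separately. The first is a short argument directly from the sub-gradient inequality \eqref{eq:def_subgrad}. The second needs a separation (supporting hyperplane) argument for the epigraph of $f$, and this is where the real work lies.

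\emph{Part 1.} Fix $x,y\in\R^d$ and $\lambda\in[0,1]$, and set $z=\lambda x+(1-\lambda)y$. By assumption there is $g_z\in\partial f(z)$. Applying \eqref{eq:def_subgrad} at the base point $z$ with the test points $x$ and $y$ gives
\[
f(x)\ge f(z)+g_z^\top(x-z),\qquad f(y)\ge f(z)+g_z^\top(y-z).
\]
Multiply the first inequality by $\lambda$ and the second by $1-\lambda$, then add. Since $\lambda(x-z)+(1-\lambda)(y-z)=0$, the linear terms cancel and we obtain
\[
\lambda f(x)+(1-\lambda)f(y)\ge f(z),
\]
which is convexity.

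\emph{Part 2.} Let $f$ be convex and real-valued on $\R^d$. We use the following steps.

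\begin{enumerate}
\item[(a)] \textbf{Epigraph.} The epigraph $\mathrm{epi}(f)=\{(y,t)\in\R^d\times\R : t\ge f(y)\}$ is convex, by convexity of $f$.
\item[(b)] \textbf{Continuity.} A convex function that is finite on all of $\R^d$ is continuous. To see this, note that on a small simplex (or cube) around $x$, $f$ is bounded above by the maximum of its values at the vertices. Convexity then also gives a lower bound by reflection, and local Lipschitz continuity follows by the standard argument. If this is already in \Cref{app:convex}, we simply cite it.
\item[(c)] \textbf{Interior and boundary.} By continuity, $\mathrm{int}\,\mathrm{epi}(f)=\{(y,t): t>f(y)\}$ is nonempty. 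The point $(x,f(x))$ is not in this interior, so it lies on the boundary of the convex set $\mathrm{epi}(f)$.
\item[(d)] \textbf{Separation.} By the supporting hyperplane theorem (finite-dimensional separation of a point from the open convex set $\mathrm{int}\,\mathrm{epi}(f)$), there is $(a,\beta)\neq(0,0)$ with
\[
a^\top y+\beta t\ \ge\ a^\top x+\beta f(x)\qquad\text{for all }(y,t)\in \mathrm{epi}(f).
\]
\end{enumerate}

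We then analyze the sign of $\beta$.

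\begin{enumerate}
\item[(e)] \textbf{$\beta\ge 0$.} Take $y=x$ and let $t\to\infty$; the inequality forces $\beta\ge0$.
\item[(f)] \textbf{$\beta\neq 0$.} Suppose $\beta=0$. Then $a\neq0$ and $a^\top(y-x)\ge0$ for all $y\in\R^d$. The choice $y=x-a$ gives $-\|a\|^2\ge0$, a contradiction. This is exactly where finiteness of $f$ on the whole space is needed.
\item[(g)] \textbf{Sub-gradient.} With $\beta>0$, divide by $\beta$, take $t=f(y)$, and set $g_x=-a/\beta$. This gives
\[
f(y)\ge f(x)+g_x^\top(y-x)\qquad\text{for all }y\in\R^d,
\]
so $g_x\in\partial f(x)$.
\end{enumerate}

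The main obstacle is step (d). Its two ingredients are the existence of the supporting hyperplane at a boundary point and the nonemptiness of the interior from step (b). For the hyperplane, we would take the separation of a point from a convex set in $\R^{d+1}$ as known. If necessary, it can be obtained by projecting the points $(x,f(x)-1/n)$ onto the closure of the epigraph, normalizing the resulting normals, and passing to a limit along a convergent subsequence.
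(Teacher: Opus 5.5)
Your Part 1 is the paper's argument: take a sub-gradient at $z=\lambda x+(1-\lambda)y$, write the sub-gradient inequality at the test points $x$ and $y$, and combine with weights $\lambda$ and $1-\lambda$ so the linear terms cancel.

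For Part 2 the paper gives no proof and only refers to Proposition~2.5 in \cite{Lan2021}. Your epigraph/supporting-hyperplane argument is correct and is the standard proof of this fact. Steps (e)--(g) are right, and (f) is indeed where finiteness of $f$ on all of $\R^d$ is used to rule out a vertical hyperplane.

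What your route buys is self-containedness. The cost is that it imports two facts that appear nowhere in Appendix~\ref{app:convex}: the finite-dimensional separation theorem and continuity of finite convex functions. So you cannot ``simply cite'' (b) from the appendix; keep your cube argument, with Jensen over the $2^d$ vertices for the upper bound and the reflection $y\mapsto 2x-y$ for the lower bound.

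One step is left implicit in (d). Separating $(x,f(x))$ from the open set $\mathrm{int}\,\mathrm{epi}(f)$ gives the inequality only for points with $t>f(y)$. To extend it to all of $\mathrm{epi}(f)$, apply it at $(y,t+\varepsilon)$ and let $\varepsilon\downarrow 0$. Your fallback avoids this issue entirely: projecting $(x,f(x)-1/n)$ onto the epigraph, which is closed by continuity, yields the inequality on $\mathrm{epi}(f)$ directly.
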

\begin{proof}
We will only prove the first assertion; for the second one we refer to Proposition~2.5 in \cite{Lan2021}. Suppose that $\partial f(x)\neq\emptyset$ and let $z_\lambda = \lambda x+(1-\lambda) y\in \R^d$ for $\lambda\in(0,1)$ and $x,y\in \R^d$. Moreover, consider an arbitrary sub-gradient $g_{z_\lambda}\in\partial f(z_\lambda)$, then by definition of the sub-gradient it holds true that
\begin{align*}
f(y) \ge f(z_\lambda) + g_{z_\lambda}^\top (y-z_\lambda)=f(\lambda x+(1-\lambda)y)+ \lambda g_{z_\lambda}^\top (y-x)
\end{align*}
and similarly
\begin{align*}
f(x) \ge f(z_\lambda) + g_{z_\lambda}^\top (x-z_\lambda)=f(\lambda x+(1-\lambda)y)+ (1-\lambda) g_{z_\lambda}^\top (x-y)-
\end{align*}
We combine both inequalities to obtain
\begin{align*} 
(1-\lambda) f(y) + \lambda f(x) &\ge (1-\lambda) f(\lambda x+(1-\lambda)y)+(1-\lambda) \lambda g_{z_\lambda}^\top (y-x)\\&\quad + \lambda f(\lambda x+ (1-\lambda)y) + \lambda (1-\lambda) g_{z_\lambda}^\top (x-y)\\ &= f((1-\lambda) y + \lambda x) 
\end{align*}
which proves convexity of $f$. 
\end{proof}

\begin{figure}[!htb]
  \centering \hspace{-3cm}\includegraphics[width=0.5\textwidth]{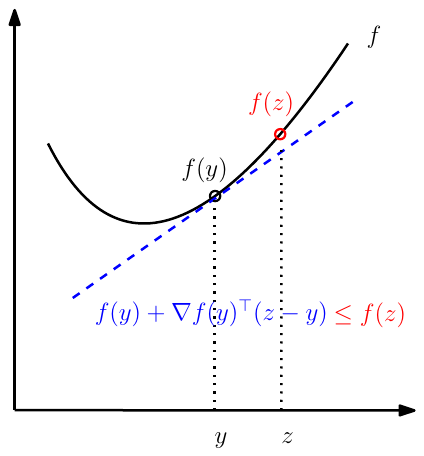}~~ \includegraphics[width=0.5\textwidth]{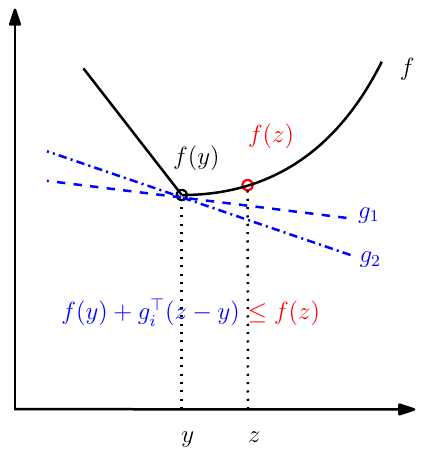}
 \caption{Illustration of sub-gradients for differentiable convex (left) and non-differentiable convex (right) functions.  For continuously differentiable and convex $f$, the sub-gradient is unique, i.e.~$\partial f(x) = \{\nabla f(x)\}$.} \label{fig:subgradient_convex}
\end{figure}

\begin{example}\label{ex:subG_max}
Let $f_i:\R^d\to\R$ be a family of convex and differentiable functions $i = 1,\dots,N$ for some $N\in\N$.  We define $F(x) = \max_{i=1,\dots,N}\ f_i(x)$ and for given $x\in\R^d$ we consider $j \in\arg\max_{i=1,\dots,N}\ f_i(x)$. Then we can compute a sub-gradient of $F$ in $x$ through $\nabla f_j(x)$, i.e.~it holds true that $\nabla f_j(x)\in \partial F(x)$. To prove this, we observe that by convexity we have 
\[f_j(y)\ge f_j(x) + \nabla f_j(x)^\top (y-x) \]
for all $y\in\R^d$. This implies
\[F(y) \ge f_j(y) \ge f_j(x) + \nabla f_j(x)^\top(y-x) = F(x) + \nabla f_j(x)^\top (y-x),\]
where we have used that $F(x) = f_j(x)$ and $F(y)\ge f_j(y)$ for $y\neq x$. This proves that $\nabla f_j(x) \in \partial F(x)$.
\begin{figure}[!htb]
  \centering \hspace{-3cm} \includegraphics[width=0.5\textwidth]{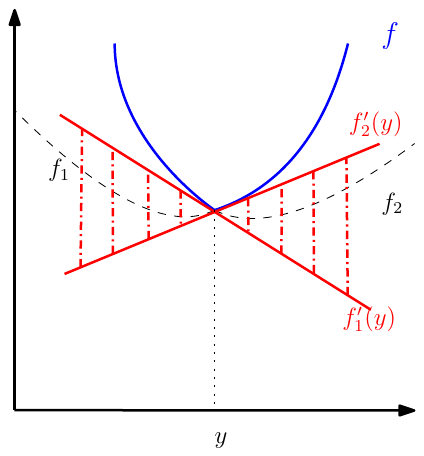}
 \caption{Illustration of Example~\ref{ex:subG_max}. We consider $N=2$ and define $f(x) = \max(f_1(x),f_2(x))$ for two convex and differentiable functions $f_1,f_2$. In the above Situation, we find unique sub-gradients for $f_1(x)>f_2(x)$ given by $\partial f(x) = \{f_1'(x)\}$, and similarly for $f_1(x)<f_2(x)$ given by $\partial f(x) = \{f_2'(x)\}$. In the case of $f_1(x)=f_2(x)$, the sub-differential is given by $\partial f(x) = [f_2'(x), f_1'(x)]$.} \label{fig:subgradient_maximum}
\end{figure} 
\end{example}

There are similar rules for the computation of sub-gradients, which are left as an exercise:

\begin{exercise}
Let $f,f_1,f_2:\R^d\to\R$ be a convex functions. Then the following holds true:
\begin{itemize}
\item Prove that $\partial f(x)$ is a convex set for all $x\in\R^d$.
\item Prove for $a>0$ that $\partial (af)(x) = a\partial f(x)$.
\item Prove that $\partial f_1(x) + \partial f_2(x) \subset \partial (f_1+f_2)(x)$ for any $x\in\R^d$.
\item Let $h(x) = f(Ax+b)$ for $A\in\R^{d\times d}$, $b\in\R^d$. Prove that $A^\top\partial f(Ax+b)\subset \partial h(x)$.  Prove equality for invertible $A$.
\end{itemize}
\end{exercise}

In case of continuously differentiable functions, the sub-gradient is unique and corresponds to the gradient. 
\begin{prop}
Let $f:\R^d\to\R$ be continuously differentiable and convex in $x\in\R^d$. Then the sub-differential is a one-point set $\partial f(x) = \{\nabla f(x)\}$.
\end{prop}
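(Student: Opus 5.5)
We prove the two inclusions $\{\nabla f(x)\}\subset\partial f(x)$ and $\partial f(x)\subset\{\nabla f(x)\}$ separately.

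The first inclusion is already available from the discussion above. For a differentiable convex function the first-order characterization of convexity gives
\[f(y)\ge f(x)+\nabla f(x)^\top(y-x)\quad\text{for all } y\in\R^d,\]
which is exactly the defining inequality \eqref{eq:def_subgrad} with $g_x=\nabla f(x)$. Hence $\nabla f(x)\in\partial f(x)$, and in particular $\partial f(x)\neq\emptyset$.

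For the reverse inclusion, fix an arbitrary $g\in\partial f(x)$ and an arbitrary direction $v\in\R^d$. Plug $y=x+tv$ with $t>0$ into \eqref{eq:def_subgrad}, divide by $t$, and obtain
\[\frac{f(x+tv)-f(x)}{t}\ge g^\top v .\]
Differentiability of $f$ at $x$ (as in the Taylor argument of Lemma~\ref{lem:descent_condition}) lets us send $t\downarrow 0$, so the left-hand side tends to the directional derivative $\nabla f(x)^\top v$. This yields
\[(\nabla f(x)-g)^\top v\ge 0\quad\text{for all } v\in\R^d .\]

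The step that actually forces uniqueness is the choice of direction. Take $v=-(\nabla f(x)-g)$. Then the inequality becomes $-\|\nabla f(x)-g\|^2\ge 0$, which gives $g=\nabla f(x)$. Equivalently, one can use both $v$ and $-v$ to conclude that the linear functional $(\nabla f(x)-g)^\top(\cdot)$ vanishes identically.

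Nothing here is genuinely difficult. The one point to state carefully is the limit $t\downarrow0$ using only one-sided increments; differentiability (indeed, even just the existence of directional derivatives that are linear in $v$) handles it directly. Continuity of $\nabla f$ is not actually needed beyond differentiability at $x$.
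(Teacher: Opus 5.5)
Your proof is correct and follows the paper's argument essentially step by step: $\nabla f(x)\in\partial f(x)$ by the first-order characterization of convexity, and for any $g\in\partial f(x)$ the difference quotient along $y=x+tv$ gives $(\nabla f(x)-g)^\top v\ge 0$, after which the choice $v=-(\nabla f(x)-g)$ forces $g=\nabla f(x)$. Your side remark is also accurate: the argument uses only differentiability of $f$ at $x$, not continuity of $\nabla f$.
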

\begin{proof}
Firstly, it is obvious to see that $\nabla f(x)\in\partial f(x)$, since $f$ is convex and it holds
\[f(y)\ge f(x) + \nabla f(x)^\top(y-x)\]
for all $y\in\R^d$.
Let us consider any $g_x\in\partial f(x)$. We will prove that it necessarily follows that $g_x=\nabla f(x)$.  Let $y = x+\lambda z$ for $\lambda>0$, such that 
\[f(x+\lambda z) \ge f(x)+g_x^\top (\lambda z)\]
or rewritten
\[\frac{f(x+\lambda z) - f(x)}{\lambda} \ge g_x^\top z. \]
Since $f$ is assumed to be continuously differentiable, we obtain taking the limit $\lambda\to0$
\[\lim_{\lambda\to0} \frac{f(x+\lambda z) - f(x)}{\lambda} = \nabla f(x)^\top z \ge g_x^\top z, \]
which implies that
\[(\nabla f(x) - g_x)^\top z \ge 0. \]
Since $z\in\R^d$ is arbitrary, we can choose $z=-(\nabla f(x)-g_x)$ in order to prove that
\[-(\nabla f(x)-g_x)^\top (\nabla f(x) - g_x) = - \|\nabla f(x)-g_x\|^2\ge0 \]
which proves that $\nabla f(x) = g_x$. 
\end{proof}

We now formulate an additional optimality condition for non-differentiable but convex objective functions, which can be characterized through the sub-differential.
\begin{prop}\label{prop:opti_nondiff}
Let $f:\R^d\to\R$ be convex. Then $x_\ast\in\R^d$ is a global minimum of $f$ if and only if $0\in\partial f(x_\ast)$.
\end{prop}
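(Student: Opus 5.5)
This is a direct consequence of the definition of the sub-differential \eqref{eq:def_subgrad}. I will prove the two implications separately and invoke nothing beyond that definition. Convexity of $f$ is used only implicitly, through the second part of the preceding proposition, which guarantees $\partial f(x_\ast)\neq\emptyset$. The characterization itself, however, is a purely definitional unpacking.

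For the direction ``$\Leftarrow$'', suppose $0\in\partial f(x_\ast)$. Setting $g_{x_\ast}=0$ in \eqref{eq:def_subgrad} gives
\[
f(y)\ge f(x_\ast)+0^\top(y-x_\ast)=f(x_\ast)\quad\text{for all } y\in\R^d,
\]
which is precisely the statement that $x_\ast$ is a global minimum of $f$.

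For the direction ``$\Rightarrow$'', suppose $x_\ast$ is a global minimum, so that $f(y)\ge f(x_\ast)$ for all $y\in\R^d$. This inequality can be rewritten as
\[
f(y)\ge f(x_\ast)+0^\top(y-x_\ast)\quad\text{for all } y\in\R^d.
\]
This is exactly the sub-gradient inequality with $g_{x_\ast}=0$, so $0\in\partial f(x_\ast)$.

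Neither step presents a real obstacle. The only point worth remarking on is that the convexity assumption is not actually needed for this equivalence as the sub-differential is defined here, namely through a global inequality. Convexity matters only in that it ensures the sub-differential is nonempty everywhere, so that the criterion is meaningful as an optimality condition. I would add a one-line remark to this effect after the proof.
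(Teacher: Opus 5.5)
Your proof is correct and matches the paper's: both directions follow directly from the sub-gradient inequality \eqref{eq:def_subgrad} with $g_{x_\ast}=0$. Your closing remark that convexity is not needed for the equivalence, given the global definition of $\partial f$, is also accurate. Note, though, that your opening claim that convexity is used ``implicitly'' overstates things: the argument never invokes the nonemptiness of $\partial f(x_\ast)$ either.
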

\begin{proof}
We start with $x_\ast\in\R^d$ being a global minimum of $f$, i.e.~for all $y\in\R^d$ we have $f(x_\ast)\le f(y)$. Therefore, we directly obtain $0\in\partial f(x_\ast)$, since 
\[f(y) \ge f(x_\ast)=f(x_\ast) + 0^\top (y-x_\ast). \]
Now let $0\in\partial f(x_\ast)$ for some $x_\ast\in\R^d$, then by definition of the sub-differential we have
\[f(y) \ge f(x_\ast) + 0^\top (y-x_\ast) = f(x_\ast) \]
for all $y\in\R^d$ and it follows that $x_\ast$ is a global minimum of $f$.
\end{proof}

We can apply the previous stated optimality condition for solving the next exercise:
\begin{exercise}
Let
\[
f(x)=\frac12\|x-y\|^2+\lambda\|x\|_1,\qquad x\in\R^d,
\]
with $\lambda>0$. This is a simple Lasso-type objective.  Note that $\|\cdot\|_1$ denotes the $1$-norm defined as $\|x\|_1 = \sum_{i=1}^d|x_i|$ for $x=(x_1,\dots,x_d)^\top\in\R^d$. 
\begin{enumerate}
\item Compute a sub-gradient of $f$.
\item Prove that $f$ is a convex function.
\item Apply Proposition~\ref{prop:opti_nondiff} to find a global minimum of $f$.
\end{enumerate}
\end{exercise}

We now formulate a gradient-descent-type method for non-differentiable convex objective functions, called the sub-gradient descent method. Instead of moving in the direction of the negative gradient, the method chooses an arbitrary sub-gradient $g_{x_k}\in\partial f(x_k)$ and moves in the direction $-g_{x_k}$, see Algorithm~\ref{alg:SubGD}. Note that, despite its name, sub-gradient descent is not necessarily a descent method in the sense of Definition~\eqref{defi:descentdirection}.

\begin{algorithm}[htb!]
\begin{algorithmic}[1]
\State \textbf{Input:} \begin{itemize}
 \item objective function $f:\R^d\to\R$
 \item initial $x_0\in\R^d$
 \item sequence of step sizes $(\alpha_k)_{k\in\N}$, $\alpha_k>0$
 \end{itemize}
 \State set $k=0$
\While{"convergence/stopping criterion not met"}
	\State choose a sub-gradient $g_{x_k}\in\partial f(x_k)$
	\State set $x_{k+1} = x_k -\alpha_k g_{x_k}$, $k \mapsto k+1$
\EndWhile
\end{algorithmic}
 \caption{Sub-gradient descent method}\label{alg:SubGD}
\end{algorithm}

\subsection{Convergence for convex and non-smooth objective functions}
Since we do not assume differentiability of $f$ and in particular, we do not assume $L$-smoothness of $f$, we are not able to directly apply the descent Lemma~\ref{lem:descentlemma}.

Before going into details of the proof of convergence for sub-gradient descent methods, we derive the following useful property. 
\begin{lemma}\label{lem:bounded_subG}
Let $f:\R^d\to\R$ be convex and $M$-Lipschitz continuous, i.e.~
\[ |f(x)-f(y)|\le M\|x-y\|\]
for all $x,y\in\R^d$. Then for all $x\in\R^d$ every sub-gradient $g_x\in\partial f(x)$ is uniformly bounded by
$\|g_x\|\le M$.
\end{lemma}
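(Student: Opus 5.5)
The plan is to test the sub-gradient inequality \eqref{eq:def_subgrad} along the direction of the sub-gradient itself, and then use the Lipschitz bound to cap the resulting increase of $f$. Fix $x\in\R^d$ and $g_x\in\partial f(x)$. If $g_x=0$ there is nothing to prove, so assume $g_x\neq 0$.

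I would choose the test point
\[
y = x + \frac{g_x}{\|g_x\|},
\]
so that $\|y-x\|=1$. The sub-gradient inequality then gives
\[
f(y)-f(x) \ge g_x^\top (y-x) = \frac{\|g_x\|^2}{\|g_x\|} = \|g_x\|.
\]
For the upper bound, $M$-Lipschitz continuity yields
\[
f(y)-f(x)\le |f(y)-f(x)| \le M\|y-x\| = M.
\]
Chaining the two inequalities gives $\|g_x\|\le M$. Since $x$ and $g_x$ were arbitrary, the bound holds uniformly.

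There is no real obstacle here; the only point needing care is the choice of test direction. Taking $y-x$ parallel to $g_x$ turns $g_x^\top(y-x)$ into a norm by the equality case of Cauchy--Schwarz, as in the steepest-descent discussion earlier. Convexity is used only to guarantee $\partial f(x)\neq\emptyset$, so that the statement is not vacuous; the inequality itself holds for any sub-gradient.
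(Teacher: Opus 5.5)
Your proof is correct and is essentially the paper's argument. The paper tests the sub-gradient inequality with $z=g_x$ to get $\|g_x\|^2\le M\|g_x\|$ and then divides by $\|g_x\|$, whereas you normalize the direction first, so the division happens in the choice of test point instead of at the end.
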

\begin{proof}
Let $g_x\in\partial f(x)\setminus\{0\}$ for any $x\in\R^d$. Then by definition of the sub-gradient it follows that 
\[f(x+z) \ge f(x)+g_x^\top z\]
for any $z\in\R^d$. We can reformulate the inequality such that
\[g_x^\top z \le f(x+z)-f(x) \le |f(x+z)-f(x)|\le M\|z\|. \]
Since $z\in\R^d$ is arbitrary, we set $z=g_x$ implying that
\[g_x^\top g_x = \|g_x\|^2 \le M\|g_x\|\]
and therefore $\|g_x\|\le M$.
\end{proof}

We are now ready to formulate the convergence of the sub-gradient descent method. 
\begin{thm}[Convergence of sub-gradient descent]\label{thm:subGD}
Let $f:\R^d\to\R$ be convex and $M$-Lipschitz continuous, i.e.~~
\[|f(x)-f(y)|\le M\|x-y\| \]
for all $x,y\in\R^d$, and assume that $f$ has a global minimizer $x^\ast\in\R^d$.  Moreover, let $(x_k)_{k\in\N}$ be generated by sub-gradient descent
\[x_{k+1} = x_k - \alpha_k g_{x_k}, \]
with $\alpha_k>0$ and an arbitrary sub-gradient $g_{x_k}\in\partial f(x_k)$. Then it holds
\[e(\bar x_K) = f(\bar x_K)-f(x_\ast)\le \frac{\|x_0-x_\ast\|^2+M^2 \sum_{k=0}^K\alpha_k^2}{2\sum_{k=0}^K\alpha_k}, \]
where $\bar x_K:=\sum_{k=0}^K w_k x_k$ is the weighted average over all iterations with weights \[w_k = \frac{\alpha_k}{\sum_{s=0}^K\alpha_s},\quad  k=0,\dots,K\,.\]
\end{thm}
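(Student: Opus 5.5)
The plan is the standard distance-to-minimizer argument, with the sub-gradient inequality \eqref{eq:def_subgrad} replacing the descent lemma, which is unavailable without smoothness. First expand the squared distance after one step:
\[
\|x_{k+1}-x_\ast\|^2=\|x_k-x_\ast\|^2-2\alpha_k g_{x_k}^\top(x_k-x_\ast)+\alpha_k^2\|g_{x_k}\|^2 .
\]
Then bound the two terms separately. For the middle term, \eqref{eq:def_subgrad} at the point $x_k$ with $y=x_\ast$ gives $g_{x_k}^\top(x_k-x_\ast)\ge f(x_k)-f(x_\ast)$. For the last term, Lemma~\ref{lem:bounded_subG} gives $\|g_{x_k}\|\le M$. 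Together these yield the one-step inequality
\[
2\alpha_k\big(f(x_k)-f(x_\ast)\big)\le \|x_k-x_\ast\|^2-\|x_{k+1}-x_\ast\|^2+\alpha_k^2M^2 .
\]

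Next, sum this inequality over $k=0,\dots,K$. The distance terms telescope to $\|x_0-x_\ast\|^2-\|x_{K+1}-x_\ast\|^2$, and dropping the nonpositive term $-\|x_{K+1}-x_\ast\|^2$ gives
\[
2\sum_{k=0}^K\alpha_k\big(f(x_k)-f(x_\ast)\big)\le \|x_0-x_\ast\|^2+M^2\sum_{k=0}^K\alpha_k^2 .
\]
Dividing by $2\sum_{s=0}^K\alpha_s$ turns the left-hand side into $\sum_{k=0}^K w_k f(x_k)-f(x_\ast)$, because the weights $w_k$ are nonnegative and sum to one.

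The last step compares this weighted sum with the weighted average iterate. Since $\bar x_K=\sum_k w_kx_k$ is a convex combination, Jensen's inequality for the convex function $f$ (obtained from the definition of convexity by induction on the number of points) gives $f(\bar x_K)\le\sum_k w_kf(x_k)$, which proves the claimed bound.

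I do not expect a real obstacle. The points needing care are the sign in the sub-gradient inequality (it must be evaluated at $x_k$ and tested at $y=x_\ast$), and the fact that $f(x_k)$ need not decrease, since sub-gradient descent is not a descent method. This is why the statement is about the averaged iterate: Jensen's inequality replaces the monotonicity argument used in the proof of Theorem~\ref{thm:GD_convex}.
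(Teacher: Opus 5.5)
Your proposal is correct and follows the paper's proof essentially step for step. You expand $\|x_{k+1}-x_\ast\|^2$, bound the cross term by the sub-gradient inequality at $x_k$ tested at $x_\ast$ and the last term by Lemma~\ref{lem:bounded_subG}, telescope over $k=0,\dots,K$ while discarding $-\|x_{K+1}-x_\ast\|^2$, and finish with Jensen's inequality for the convex combination $\bar x_K$.
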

\begin{proof}
Following the iteration of $(x_k)_{k\in\N}$, we have
\begin{align*}
\|x_{k+1}-x_\ast\|^2 = \|x_k-\alpha_kg_{x_k}-x_\ast\|^2 &= \|x_k-x_\ast\|^2 -2\alpha_k\langle g_{x_k},x_k-x_\ast\rangle + \alpha_k^2 \|g_{x_k}\|^2\\
&\le  \|x_k-x_\ast\|^2 - 2\alpha_k (f(x_k)-f(x_\ast)) + \alpha_k^2 M^2,
\end{align*}
where we have used that $g_{x_k}$ is a sub-gradient of $f$ and $\|g_{x_k}\|^2\le M^2$ by Lemma~\ref{lem:bounded_subG}. Rearranging the above inequality and summing over $k=0,\dots,K$ gives
\begin{align*}
2\sum_{k=0}^K\alpha_k (f(x_k)-f(x_\ast)) &\le \sum_{k=0}^K \left(\|x_k-x_\ast\|^2-\|x_{k+1}-x_\ast\|^2 + \alpha_k^2 M^2\right)\\ 
&= \|x_0-x_\ast\|^2 - \|x_{K+1}-x_\ast\|^2 + M^2\sum_{k=0}^K \alpha_k^2\\
&\le \|x_0-x_\ast\|^2+ M^2\sum_{k=0}^K \alpha_k^2,
\end{align*}
where we have used that the first two terms are a telescoping sum. Using convexity of $f$ and Jensen's inequality, Proposition~\ref{prop:Jensen}, we obtain
\begin{align*}
f(\bar x_K)-f(x_\ast) \le \sum_{k=0}^N w_k \left(f(x_K)-f(x_\ast)\right)\le \frac{\|x_0-x_\ast\|^2 + M^2\sum_{k=0}^N \alpha_k^2 }{2\sum_{k=0}^K\alpha_k}\,,
\end{align*}
where $\bar x_K:=\sum_{k=0}^K w_k x_k$ with $w_k=\frac{\alpha_k}{\sum_{s=0}^K\alpha_k}\in(0,1)$ and $\sum_{k=0}^K w_k=1$.
\end{proof}

\begin{remark}
If the step sizes satisfy
\[
\sum_{k=0}^{\infty}\alpha_k=\infty,
\qquad
\sum_{k=0}^{\infty}\alpha_k^2<\infty,
\]
then the bound in Theorem~\ref{thm:subGD} implies $f(\bar x_K)-f(x^\ast)\to 0$. Thus, convergence is obtained for the weighted averaged iterates. It is left as an exercise to quantify the speed of convergence for different choices of step sizes $(\alpha_k)_{k\in\N}$. A specific example will be considered in the convergence analysis of stochastic gradient descent, in particular, in the proof of Corollary~\ref{cor:SGDnonconvex}. If we use a constant step size $\alpha_k=\alpha$ for $k=0,\dots,K$, then Theorem~\ref{thm:subGD} gives
\[
f(\bar x_K)-f(x^\ast)
\leq
\frac{\|x_0-x^\ast\|^2}{2(K+1)\alpha}
+\frac{M^2\alpha}{2}\,.
\]
Choosing $\alpha=\frac{\|x_0-x^\ast\|}{M\sqrt{K+1}}$
yields
\[
f(\bar x_K)-f(x^\ast)
\leq
\frac{M\|x_0-x^\ast\|}{\sqrt{K+1}}.
\]
Thus, sub-gradient descent achieves a $\mathcal O(1/\sqrt{N})$ convergence rate for averaged iterates.
\end{remark}

\chapter{Accelerated gradient descent methods } \label{ch:accmethods}
In this chapter, we consider first-order optimization schemes that are designed to accelerate the
convergence of gradient descent. The basic idea is to incorporate information from previous
iterations into the update rule. This additional information is called \textit{momentum}. Instead
of moving only in the direction of the current negative gradient, momentum methods combine the
current gradient direction with information accumulated along the previous trajectory. We will motivate the effects of momentum using the example of minimizing a quadratic objective function presented in \cite{QIAN1999145}. As discussed in Section~\ref{sec:GD_strongconvex}, in particular in Theorem~\ref{thm:GD_strongconvex2},  the convergence rate of gradient descent on strongly convex and smooth objective functions, given by
\[c = \frac{\kappa-1}{\kappa+1} = 1-\frac{2}{\kappa+1}\,,\]
scales poorly when the condition number $\kappa = L/\mu$ is large. Here, $L$ denotes the smoothness parameter and $\mu$ the strong convexity parameter. In case of quadratic objective functions of the form $f(x) = \frac12x^\top Q x$ with positive definite matrix $Q\in\R^{d\times d}$, the ratio $\kappa$ corresponds to the condition number of $Q$. We make this more precise in the following example.

\begin{example}[Quadratic objective function]\label{ex:quadratic}
Let $Q\in\R^{d\times d}$ be a symmetric and positive definite matrix with eigenvalues $\lambda_{\max}=\lambda_1\ge \dots\ge \lambda_d=\lambda_{\min} >0$. We aim to solve
\[\min_{x\in\R^d}\ f(x),\quad f(x) = \frac12x^\top Q x \]
using the gradient descent method. Let us consider an eigendecomposition of $Q$ such that
$Q=UDU^\top$ where $U\in\R^{d\times d}$ is an orthogonal matrix and $D={\mathrm{diag}}(\lambda_1,\dots,\lambda_d)\in\R^{d\times d}$ a diagonal matrix with the eigenvalues along the diagonal. The inner product scaled by $Q$ can be rewritten as 
\[\frac12x^\top Q x = \frac12 x^\top (UDU^\top) x = \frac12 (U^\top x)^\top D (U^\top x) = \frac12 z^\top D z \]
with $z=U^\top x$. We can then consider the equivalent optimization problem (since all eigenvalues are positive) of the form
\[\min_{x\in\R^d}\ f(x),\quad f(x) = \frac12 x^\top Dx. \]
The gradient and Hessian compute as
\[\nabla f(x) = Dx \quad \text{and}\quad \nabla^2f(x) = D\]
and the unique global minimum is given by $x_\ast=0\in\R^d$. Let $x_0\in\R^d$, $x_0\neq0$ be the initialization and $\alpha_k = \alpha \in(0,\frac{2}{\lambda_{\max}})$ (smoothness parameter $L=\lambda_{\max}$) a fixed step size. Recall that the gradient descent method is written as
\[x_{k+1} = x_k -\alpha \nabla f(x_k) = x_k - \alpha D x_k. \]
The distance to the global minimum is given by
\begin{align*}
\|x_{k+1}-x_\ast\| = \|x_{k+1} \| = \|x_k-\alpha Dx_k\| = \|(I-\alpha D) x_k\| \le \max(|1-\alpha\lambda_{\min}|,|1-\alpha\lambda_{\max}|) \|x_k\|.
\end{align*}
We can now compute the step size such that the derived upper bound is minimized in the sense that
\[\min_{\alpha\in(0,\frac{2}{\lambda_{\max}})}\  \max(|1-\alpha\lambda_{\min}|,|1-\alpha\lambda_{\max}|). \]
It is an exercise to prove that the resulting optimal step size is given by $\alpha_\ast = \frac{2}{\lambda_{\min}+\lambda_{\max}}$ (note that this step size coincides with the one derived for general $\mu$-strongly convex and $L$-smooth objective functions in Remark~\ref{rem:opt_step}). Let $\kappa = \frac{\lambda_{\max}}{\lambda_{\min}}$ be the condition number of $Q$ and $D$ respectively. Then we have
\begin{align*}
\max(|1-\alpha_\ast \lambda_{\min}|,|1-\alpha_\ast\lambda_{\max}|) &= |1-\alpha_\ast\lambda_{\min}| = |1-\alpha_\ast\lambda_{\max}| \\ &= \left|1-\frac{2\lambda_{\max}}{\lambda_{\min}+\lambda_{\max}}\right| = \frac{\kappa-1}{\kappa+1} = 1-\frac{2}{\kappa+1}.
\end{align*}
Therefore, the gradient descent method with fixed step size $\alpha_\ast$ converges linearly with
\[\|x_k-x_\ast\|\le \left(\frac{\kappa-1}{\kappa+1}\right)^k \|x_0-x_\ast\|. \]
In order to achieve an error of tolerance $\varepsilon>0$, we need to iterate a certain amount of steps:
\begin{align*}
\left(\frac{\kappa-1}{\kappa+1}\right)^k < \varepsilon\quad &\Leftrightarrow \quad \log\left(\frac{1}{\varepsilon}\right) < k\log\left(\frac{\kappa+1}{\kappa-1}\right)\\
&\Leftrightarrow \quad k> \log\left(\frac{1}{\varepsilon}\right) \log\left(1+\frac{2}{\kappa-1}\right)^{-1},
\end{align*}
which increases with increasing condition number $\kappa$. Hence, gradient descent may perform poorly for quadratic functions with high condition number $\kappa$. 
\end{example}
\begin{figure}[!htb]
  \centering  \includegraphics[width=1\textwidth]{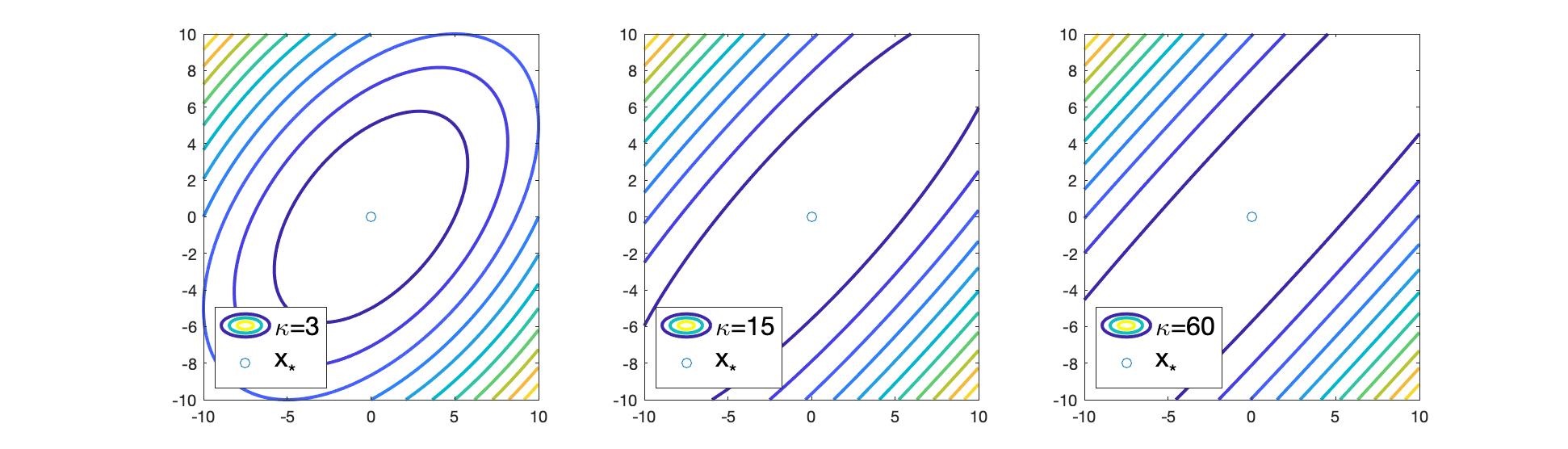}
 \caption{Contour lines of a quadratic function with increasing condition number $\kappa$.} \label{fig:condition_number}
\end{figure} 

\section{Polyak's heavy ball method}
The idea of incorporating momentum into iterative optimization schemes goes back to Polyak's
heavy ball method~\cite{POLYAK19641}. The name is motivated by the physical picture of a heavy
ball rolling down a hill towards a valley. A light ball is strongly affected by local curvature and may oscillate in narrow valleys. A heavier
ball, in contrast, carries momentum from previous positions and can move more steadily through
such regions. In optimization algorithms, this idea is modeled by adding a multiple of the previous
step to the current gradient step. Mathematically, momentum is incorporated through an inertial term depending on the previous displacement of the iterates. We formulate Polyak's heavy ball method (HBM) in the following algorithm:

\begin{figure}[!htb]
\centering
\vspace{-1.5cm}
\begin{tikzpicture}[scale=1.6, >=stealth]
\clip (-2.0,-1.5) rectangle (4.8,2.6);

\foreach \a/\b in {1.4/0.26,2.1/0.38,2.9/0.52,3.8/0.68,4.8/1.0,5.8/1.2}
{
    \draw[gray!65] (0,0) ellipse ({\a} and {\b});
}

\filldraw[black] (0,0) circle (1.4pt);
\node[below] at (0,0) {$x^\ast$};

\coordinate (g0) at (3.5,1.0);
\coordinate (g1) at (3.0,-0.8);
\coordinate (g2) at (2.7,0.7);
\coordinate (g3) at (2.3,-0.6);
\coordinate (g4) at (1.8,0.5);
\coordinate (g5) at (1.4,-0.5);
\coordinate (g6) at (1.2,0.4);
\coordinate (g7) at (1.0,-0.35);

\draw[->, thick, blue!65!black] (g0) -- (g1);
\draw[->, thick, blue!65!black] (g1) -- (g2);
\draw[->, thick, blue!65!black] (g2) -- (g3);
\draw[->, very thick, blue!65!black] (g3) -- (g4);
\draw[->, very thick, blue!65!black] (g4) -- (g5);
\draw[->, very thick, blue!65!black] (g5) -- (g6);
\draw[->, very thick, blue!65!black] (g6) -- (g7);

\foreach \p in {g0,g1,g2,g3,g4}
    \filldraw[blue!65!black] (\p) circle (1.2pt);

\coordinate (h0)  at (3.5,1.0);
\coordinate (h1)  at (3.0,-0.8);
\coordinate (h2)  at (2.3,0.2);
\coordinate (h3)  at (1.6,-0.05);   
\coordinate (h4)  at (0.7,0.10);   

\draw[->, thick, orange!85!black] (h0) -- (h1);
\draw[->, thick, orange!85!black] (h1) -- (h2);
\draw[->, thick, orange!85!black] (h2) -- (h3);
\draw[->, thick, orange!85!black] (h3) -- (h4);

\foreach \p in {h0,h1,h2,h3,h4}
    \filldraw[orange!85!black] (\p) circle (1.2pt);

\node[blue!65!black, above right] at (2.2,0.75) {GD};
\node[orange!85!black, below right] at (2.2,-0.75) {HBM};


\coordinate (momtip) at (1.95,0.7);
\draw[->, dotted, very thick, green!50!black] (h2) -- (momtip);

\coordinate (gradtip) at (2.05,-0.20);
\draw[->, dotted, very thick, blue!70!black] (h2) -- (gradtip);

\end{tikzpicture}
\caption{Comparison of gradient descent and the heavy-ball method in a flat valley.
The blue path illustrates the typical zigzag behavior of gradient descent.
The orange path illustrates the smoother trajectory induced by momentum.
At the current iterate $x_k$, the heavy-ball update combines a gradient force
(blue dotted arrow) with a momentum force (green dotted arrow), resulting in the new step
(orange arrow) towards $x_{k+1}$.}
\label{fig:heavy-ball-vs-gd}
\end{figure}

\begin{algorithm}[htb!]
\begin{algorithmic}[1]
\State \textbf{Input:} \begin{itemize}
 \item objective function $f:\R^d\to\R$
 \item initial $x_0\in\R^d$
 \item sequence of step sizes $(\alpha_k)_{k\in\N}$, $\alpha_k>0$, and sequence of momentum parameters $(\beta_k)_{k\in\N}$, $\beta_k\ge0$.
 \end{itemize}
 \State set $x_1=x_0-\alpha_0\nabla f(x_0)$, and $k=1$
\While{"convergence/stopping criterion not met"}
	\State set $x_{k+1} = x_k -\alpha_k \nabla f(x_k)+\beta_k (x_k-x_{k-1})$, $k \mapsto k+1$
\EndWhile
\end{algorithmic}
 \caption{Heavy ball method}\label{alg:HBM}
\end{algorithm}

Let us take a closer look into the iterative update of HBM: 
\[x_{k+1} \quad = \quad \underbrace{x_k - \alpha_k \nabla f(x_k)}_{\text{gradient descent}} \quad + \quad \underbrace{\beta_k(x_k-x_{k-1})}_{\text{Heavy ball momentum}}. \]
This means, we incorporate the update from the previous iteration through
\[x_k-x_{k-1} = -\alpha_{k-1}\nabla f(x_{k-1}) + \beta_{k-1} (x_{k-1}-x_{k-2}) \]
into the next iteration. For example, in the second iteration, the HBM update is given by
\[x_2 = x_1 - \alpha_1\nabla f(x_1) - \beta_1 \alpha_0\nabla f(x_0). \]
The hyperparameter $\beta_k\ge0$ controls the strength of the influence of momentum and can be seen as damping parameter. Moreover, we can choose $\beta_k =0$ to recover the gradient descent method (without momentum).  Of course, the performance of the HBM method highly depends on a good choice the parameter $\beta_k$. In general, HBM is no descent method and therefore, we do not expect a monotonic decrease of the objective function along the iterations.

We will continue with Example~\ref{ex:quadratic} and derive an optimal choice of step size $\alpha_k$ and momentum parameter $\beta_k$ for quadratic objective function.
\begin{example}[Continuation of Example~\ref{ex:quadratic}] \label{ex:quadratic_HBM}
Let us come back to minimizing the quadratic objective function $f(x) = \frac{1}{2}x^\top D x$. Consider HBM with fixed step size $\alpha_k=\alpha>0$ and fixed momentum parameter $\beta_k=\beta>0$. The iterative scheme is given by
\[x_{k+1} = x_k - \alpha D x_k + \beta (x_k-x_{k-1}). \]
In this case, we consider the joint update of the vector 
\[\begin{pmatrix}
x_{k+1}-x_\ast \\ x_k-x_\ast
\end{pmatrix} = \begin{pmatrix}
x_{k+1} \\ x_k
\end{pmatrix}\in\R^{2d} .\]
The iteration can be written as
\begin{align*}
 \begin{pmatrix}
x_{k+1} \\ x_k
\end{pmatrix} = \begin{pmatrix}
x_k-\alpha Dx_k + \beta (x_k-x_{k-1})\\ x_k
\end{pmatrix} &= \begin{pmatrix}
(1+\beta) I x_k -\alpha D x_k -\beta I x_{k-1}\\
I x_k
\end{pmatrix} \\ &= \begin{pmatrix}
(1+\beta)I -\alpha D & -\beta I\\ I & 0 
\end{pmatrix} \begin{pmatrix}
x_k\\x_{k-1}
\end{pmatrix} =: T \begin{pmatrix}
x_k\\x_{k-1}
\end{pmatrix}. 
\end{align*}
Since the matrix $T\in\R^{2d\times 2d}$ is independent of the iteration $k\in\N$, we obtain
\[ \begin{pmatrix}
x_{k+1} \\ x_k
\end{pmatrix} = T^k \begin{pmatrix}
x_1\\ x_0
\end{pmatrix}, \]
and therefore,  the error can be written as
\[\left\|\begin{pmatrix}
x_{k+1}-x_\ast \\ x_k-x_\ast
\end{pmatrix}\right\| =\left\| T^k \begin{pmatrix}
x_1-x_\ast\\ x_0-x_\ast
\end{pmatrix}\right\| \le \|T^k\| \left\| \begin{pmatrix}
x_1-x_\ast\\ x_0-x_\ast
\end{pmatrix} \right\|,\]
for some matrix norm which is consistent with the euclidean 2-norm. Let $\rho(T)=\max_{i=1,\dots,2d} |\lambda_i(T)|$ be the largest eigenvalue $\lambda_i(T)$ (in absolute value) of $T$, also called the spectral radius of $T$. We will make use of the Gelfand formula which states that $\rho(T) = \lim_{k\to\infty} \|T^k\|^{1/k}$ for any matrix norm,  and in particular there exists a sequence $(\varepsilon_k)_{k\in\N}$ converging to $0$, such that
\[\|T^k\| \le (\rho(T) + \varepsilon_k)^k. \]
Before going into details, we transform $T$ to a block diagonal matrix without changing the corresponding eigenvalues (note that the eigenvectors change). From Exercise~\ref{ex:selfsim_matrix} we observe that $T$ is self-similar to a block diagonal matrix 
\[\widehat T = \begin{pmatrix}
T_1 & \ & \ \\
\ & \ddots & \ \\
\ & \ & T_d
\end{pmatrix}\qquad \text{with blocks}\quad 
 T_i = \begin{pmatrix}
1+\beta - \alpha\lambda_i & -\beta \\ 1 & 0
\end{pmatrix}\in\R^{2\times 2}. \]
Let $\mu_i$ be an eigenvalue of $\widehat T$ with corresponding eigenvector $v_i\in\R^{2d}$, then we can compute
\[\widehat T v_i = \mu_i v_i = S^{-1}T S v_i \quad \Leftrightarrow\quad  \mu_i Sv_i = TSv_i,\]
which means that $\mu_i$ is also an eigenvalue of $T$ with corresponding eigenvector $Sv_i$. Without loss of generality we will compute the eigenvalues of $\widehat T$ instead of $T$.  Due to the block structure, we can deduce the computation of the eigenvalues from the computation of the eigenvalues of each block $T_i\in\R^{2\times 2}$. To do so, we aim to find $\mu_i\in\mathbb C$ satisfying 
\[\det(T_i - \mu_i I) = \det\left(\begin{pmatrix}
1+\beta-\alpha\lambda_i -\mu_i & -\beta \\ 1 & -\mu_i
\end{pmatrix}\right) = \mu_i^2 - \mu_i(1+\beta -\alpha \lambda_i)+\beta \overset{!}{=} 0.\]
For each block we obtain two eigenvalues given as
\begin{align*}
\mu_i^{(1)} = \frac{1+\beta-\alpha\lambda_i}{2} - \sqrt{\left(\frac{1+\beta-\alpha\lambda_i}{2}\right)^2 - \beta}\\
\mu_i^{(2)} = \frac{1+\beta-\alpha\lambda_i}{2} + \sqrt{\left(\frac{1+\beta-\alpha\lambda_i}{2}\right)^2 - \beta}.
\end{align*}
We restrict our self to $\beta>0$ such that $\left(\frac{1+\beta-\alpha\lambda_i}{2}\right)^2 - \beta\le 0$ and therefore, the eigenvalues are complex-valued. It then holds true that the absolute value is given by
\begin{align*} 
|\mu_i^{(1)}| = |\mu_i^{(2)}| &= \frac{1}{2}\sqrt{(1-\alpha\lambda_i+\beta)^2 + |\underbrace{(1-\alpha\lambda_i+\beta)^2-4\beta}_{\le0}|}\\ &= \frac{1}{2}\sqrt{(1-\alpha\lambda_i+\beta)^2 - (1-\alpha\lambda_i+\beta)^2+4\beta} = \sqrt{\beta}. 
\end{align*}
Motivated by this observation, we aim to satisfy $\left(\frac{1+\beta-\alpha\lambda_i}{2}\right)^2 - \beta\le 0$ for all $i$, such that the spectral radius is given by $\rho(T)=\sqrt{\beta}$. Let us consider 
\[\Delta(\beta):= \left(\frac{1+\beta-\alpha\lambda_i}{2}\right)^2 - \beta = \frac{1}{4}(\beta^2 - 2(1+\alpha\lambda_i)\beta + (1-\alpha\lambda_i)^2). \]
The mapping $\beta\mapsto\Delta(\beta)$, $\beta\in\R$, describes a parabola, such that $\Delta(\beta)<0$ between two points $\beta^{(1)},\beta^{(2)}$ satisfying $\Delta(\beta^{(1)}) = \Delta(\beta^{(2)}) =0$ (if two exist). Therefore, we solve $\Delta(\beta)\overset{!}{=}0$. We focus on the cases where $\beta<1$ and $\alpha<\frac{4}{\lambda_{\max}}$ such that it holds
\[|1-\sqrt{\alpha \lambda_{\min}}|,|1-\sqrt{\alpha\lambda_{\max}}|\in(0,1). \]
Moreover, we observe that
\[(1-\sqrt{\alpha\lambda_i})^2 \le \max\left((1-\sqrt{\alpha\lambda_{\min}})^2,(1-\sqrt{\alpha\lambda_{\max}})^2\right) \]
such that it is sufficient to choose
\[1>\beta \ge  \max\left((1-\sqrt{\alpha\lambda_{\min}})^2,(1-\sqrt{\alpha\lambda_{\max}})^2\right). \] 
in order to force $\rho(T) = \sqrt{\beta}$. With the specific choice 
\begin{align*}
\alpha = \frac{4}{(\sqrt{\lambda_{\min}}+\sqrt{\lambda_{\max}})^2} \quad \text{and}\quad 
\beta =  \max\left((1-\sqrt{\alpha\lambda_{\min}})^2,(1-\sqrt{\alpha\lambda_{\max}})^2\right)
\end{align*}
we deduce that
\[ \beta = (1-\sqrt{\alpha\lambda_{\max}})^2 = (1-\sqrt{\alpha\lambda_{\min}})^2 = \left(\frac{\sqrt{\lambda_{\max}}-\sqrt{\lambda_{\min}}}{\sqrt{\lambda_{\max}}+\sqrt{\lambda_{\min}}} \right)^2 = \left(\frac{\sqrt{\kappa}-1}{\sqrt{\kappa}+1} \right)^2 \]
and therefore, $\rho(T)= \frac{\sqrt{\kappa}-1}{\sqrt{\kappa}+1}$. Finally,  using the Gelfand formula we obtain an improved upper bound (for sufficiently large $k$) on the error compared to gradient descent method
\[\left\|\begin{pmatrix}
x_{k+1}-x_\ast \\ x_k-x_\ast
\end{pmatrix}\right\|  \le \left(\frac{\sqrt{\kappa}-1}{\sqrt{\kappa}+1}+\varepsilon_k\right)^k \left\| \begin{pmatrix}
x_1-x_\ast\\ x_0-x_\ast
\end{pmatrix} \right\|\, .\]
Similarly as before,  in order to achieve an error of tolerance $\varepsilon>0$, we need to iterate a certain amount of steps: 
\begin{align*}
\left(\frac{\sqrt{\kappa}-1}{\sqrt{\kappa}+1}\right)^k <\varepsilon \Leftrightarrow k> \log\left(\frac{1}{\varepsilon}\right) \log\left(1+\frac{2}{\sqrt{\kappa}-1}\right)^{-1},
\end{align*}
where $ \log\left(1+\frac{2}{\sqrt{\kappa}-1}\right)^{-1}\le  \log\left(1+\frac{2}{\kappa-1}\right)^{-1}$, since $\kappa\ge1$ and therefore $\sqrt{\kappa}\le \kappa$. 
\end{example}

\begin{exercise}\label{ex:selfsim_matrix}
Let $T\in\R^{2d\times 2d}$ be defined as 
\[T=\begin{pmatrix}
(1+\beta)I -\alpha D & -\beta I\\ I & 0 
\end{pmatrix}, \]
with diagonal matrix $D={\mathrm{diag}}(\lambda_1,\dots,\lambda_d)$, $\alpha,\beta>0$. Prove that there exists a regular matrix $S\in\R^{2d\times 2d}$ such that
\[S^{-1}T S = \widehat T = \begin{pmatrix}
T_1 & \ & \ \\
\ & \ddots & \ \\
\ & \ & T_d
\end{pmatrix}, \]
where $\widehat T$ is a block diagonal matrix with 
\[ T_i = \begin{pmatrix}
1+\beta - \alpha\lambda_i & -\beta \\ 1 & 0
\end{pmatrix}\in\R^{2\times 2}. \]
\end{exercise}

\medskip
We observed that in the case of quadratic objective function we are able to improve the rate of convergence compared to gradient descent. This suggests that the convergence behavior of gradient descent, as derived in Section~\ref{sec:GD_convex} and Section~\ref{sec:GD_strongconvex}, might be sub-optimal. 

\section{Discussion about optimality of first-order methods}

For quadratic objective functions we have seen that the rate of convergence of the gradient descent method can be improved through the incorporation of momentum in the form of HBM. This raises the question about optimality of gradient descent as a first-order method. Or the other way around, what is the best possible convergence behavior we can expect using only first-order information.  We consider the following class of first-order iterative methods.
\begin{ass}\label{ass:firstorder}
The sequence $(x_k)_{k\in\N}$ (generated by some iterative scheme) satisfies the condition 
\[ x_k \in x_0 + {\mathrm{span}}\{\nabla f(x_0),\dots,\nabla f(x_{k-1})\}\]
for all $k\ge1$.
\end{ass}
Assumption~\ref{ass:firstorder} means that each iteration $x_k$ can be expressed as a linear combination of the initialization $x_0$ and all previous gradients $\nabla f(x_0),\dots,\nabla f(x_{k-1})$.  Both gradient descent and HBM are examples that satisfy Assumption~\ref{ass:firstorder}. We recall that for objective functions which are $\mu$-strongly convex and $L$-smooth, we obtain linear convergence of gradient descent with fixed step size $\bar \alpha = \frac{2}{\mu+L}$ of the form
\[\|x_k-x_\ast\|^2 \le \left(\frac{\kappa-1}{\kappa+1}\right)^{2k}\|x_0-x_\ast\|^2, \]
see Section~\ref{sec:GD_strongconvex}. For the specific case of quadratic objective function, the upper bound can be improved through HBM to 
\[ \|x_k-x_\ast\|^2 \le \left(\frac{\sqrt{\kappa}-1}{\sqrt{\kappa}+1}\right)^{2k}\|x_0-x_\ast\|^2.\]
The following lower bound from Nesterov, see e.g.~\cite{N2018}, shows that we cannot expect more than this improvement as long as we do not include more than first-order information, i.e.~as long as our iterative scheme satisfies Assumption~\ref{ass:firstorder}. However, the lower bound is derived for a specifically constructed function with high- or even infinite-dimensional domain, to be more precise, for a function $f:\ell^2(\R)\to\R$, where 
\[\ell^2(\R) := \{(z_i)_{i\in\N}\mid z_i\in\R,\ \sum_{i=1}^\infty |z_i|^2<\infty\}. \]
Note that $\ell^2(\R)$ can be equipped with a norm as well as an inner product, such that it forms a Banach and even a Hilbert space. The proof of Theorem~\ref{thm:lowerbound_strongconvex} in \cite{N2018} is constructive, i.e.~one can construct a certain $\mu$-strongly convex and $L$-smooth function $f:\ell^2(\R)\to\R$ satisfying the lower bound. 
\begin{thm}[Lower bound strong convex and smooth, Theorem~2.1.13 in\cite{N2018}]\label{thm:lowerbound_strongconvex}
For each $x_0\in\ell^2(\R)$, $\mu,L>0$ with $\kappa = \frac{L}{\mu}>1$, there exists a $\mu$-strongly convex and $L$-smooth function $f:\ell^2(\R)\to\R$ such that every iterative scheme $(x_k)_{k\in\N}$ satisfying Assumption~\ref{ass:firstorder} satisfies a lower bound on the error given by
\[e(x_k) := \|x_k-x_\ast\|^2\ge \left(\frac{\sqrt{\kappa}-1}{\sqrt{\kappa}+1}\right)^{2k} \|x_0-x_\ast\|^2, \]
where $x_\ast\in\ell^2(\R)$ denotes the unique global minimum of $f$.
\end{thm}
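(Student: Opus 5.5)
We follow Nesterov's construction of a "worst function in the world" on $\ell^2(\R)$. Since any first-order scheme in the sense of Assumption~\ref{ass:firstorder} is invariant under translation, we assume $x_0=0$; the general case follows by replacing $f$ with $f(\cdot-x_0)$. Let $A:\ell^2(\R)\to\ell^2(\R)$ be the tridiagonal operator with $2$ on the diagonal and $-1$ on the two off-diagonals. Then $0\preceq A\preceq 4I$, since $\langle Ax,x\rangle = x_1^2+\sum_{i\ge1}(x_i-x_{i+1})^2\le 4\|x\|^2$. Define
\[
f(x)=\frac{L-\mu}{8}\Big(\langle Ax,x\rangle-2x_1\Big)+\frac{\mu}{2}\|x\|^2 .
\]
The Hessian is $\frac{L-\mu}{4}A+\mu I$, which lies between $\mu I$ and $LI$. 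Hence $f$ is $\mu$-strongly convex and $L$-smooth; this uses Lemma~\ref{lem:auxiliary_bound} and the Hessian criterion, extended to $\ell^2$.

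The first step is to solve for the minimizer. The optimality condition $\big(\frac{L-\mu}{4}A+\mu I\big)x_\ast=\frac{L-\mu}{4}e_1$ becomes, componentwise for $i\ge2$,
\[
x_{i+1}-2\tfrac{\kappa+1}{\kappa-1}x_i+x_{i-1}=0,
\]
and the first row supplies the boundary condition. The characteristic equation $q^2-2\frac{\kappa+1}{\kappa-1}q+1=0$ has roots $q=\frac{\sqrt\kappa\pm1}{\sqrt\kappa\mp1}$. We take the root with $|q|<1$, namely $q=\frac{\sqrt\kappa-1}{\sqrt\kappa+1}$, so that $(x_\ast)_i=q^i$ lies in $\ell^2$. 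We then check directly that the first equation holds, which amounts to checking the identity $2\frac{\kappa+1}{\kappa-1}q-q^2=1$. Uniqueness of the minimizer comes from strong convexity.

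The second step is the Krylov structure. Write $\R^{k,\infty}=\{x\in\ell^2: x_i=0 \text{ for } i>k\}$. Since $\nabla f(x)=\frac{L-\mu}{4}(Ax-e_1)+\mu x$ and $A$ is tridiagonal, $x\in\R^{k,\infty}$ implies $\nabla f(x)\in\R^{k+1,\infty}$. By induction on Assumption~\ref{ass:firstorder}, starting from $x_0=0$, we obtain $x_k\in\R^{k,\infty}$.

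The third step gives the bound. Since the first $k$ coordinates of $x_k$ are the only possibly nonzero ones,
\[
\|x_k-x_\ast\|^2\ge\sum_{i>k}q^{2i}=q^{2k}\sum_{i\ge1}q^{2i}=q^{2k}\|x_\ast\|^2=q^{2k}\|x_0-x_\ast\|^2 ,
\]
which is exactly the claim.

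The main obstacle is the algebra in the first step: choosing the scaling constant $\frac{L-\mu}{8}$ and the linear term so that the minimizer is exactly geometric with ratio $q$, including the boundary row. We would first fix the recursion constant from the requirement that the characteristic roots are $\frac{\sqrt\kappa\pm1}{\sqrt\kappa\mp1}$, and afterwards adjust the coefficient of $x_1$ to match the boundary condition. The remaining steps are routine.
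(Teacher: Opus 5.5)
Your proposal is correct and is essentially the proof the paper defers to, namely Nesterov's construction in \cite[Theorem~2.1.13]{N2018}: translation to $x_0=0$, the tridiagonal quadratic whose minimizer is $(q^i)_{i\ge1}$ with $q=\frac{\sqrt{\kappa}-1}{\sqrt{\kappa}+1}$ (the boundary row holds automatically because $q$ solves the characteristic equation), the inclusion $x_k\in\R^{k,\infty}$, and the tail bound $\sum_{i>k}q^{2i}=q^{2k}\|x_\ast\|^2$. One small correction: Lemma~\ref{lem:auxiliary_bound} states consequences of smoothness and strong convexity rather than a test for them, and no such tool is needed here, since with $H=\frac{L-\mu}{4}A+\mu I$ self-adjoint and $\mu I\preceq H\preceq LI$ the identities $\nabla f(x)-\nabla f(y)=H(x-y)$ and $f(y)-f(x)-\langle\nabla f(x),y-x\rangle=\frac12\langle H(y-x),y-x\rangle$ give both properties directly.
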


\begin{remark}
We note that the lower bound in Theorem~\ref{thm:lowerbound_strongconvex} covers a more general setting than the one  consider in this lecture course so far. Up to now, we have only considered objective functions $f:\R^d\to\R$ with finite dimensional domain $\R^d$.  Strictly speaking, we would need to go back to the beginning of this course in order to move from finite dimensional to infinite dimensional domains.  One needs to re-define derivatives and consider more general optimality conditions. This would lead to the so-called Fréchet derivative, which are required to formulate gradient descent methods in Hilbert spaces.  However, this is beyond the scope of this lecture course.
\end{remark}

We now return to the setting of Section~\ref{sec:GD_convex}, where we have assumed general convex and $L$-smooth functions (without a strong convexity assumption).  Under these properties, gradient descent with a fixed step size $\bar\alpha\le\frac{1}{L}$ converges with upper bound of the form
\[f(x_k)-f^\ast \le \frac{c}{k+1},\quad k\in\N,c>0,\]
where $f^\ast=\min_{x\in\R^d}\ f(x)$.  Indeed, also in this scenario, it is possible to derive a lower bound on iterative schemes satisfying Assumption~\ref{ass:firstorder}, which suggest a gap between upper and lower bound. 

\begin{thm}[Lower bound convex and smooth, Theorem~2.1.7 in\cite{N2018}]\label{thm:lowerbound_convex}
For every $k\in\N$ with $1\le k \le \frac{1}{2}(d-1)$, $L>0$ and every $x_0\in\R^d$ ($d$ denotes the dimension of the domain), there exists a convex and $L$-smooth function $f:\R^d\to\R$ such that every iterative scheme $(x_k)_{k\in\N}$ satisfying Assumption~\ref{ass:firstorder} satisfies  a lower bound on the error given by
\[e(x_k) := f(x_k)-f^\ast \ge \frac{3L\|x_0-x_\ast\|^2}{32(k+1)^2}, \]
where $f^\ast = \min_{x\in\R^d}\ f(x)>-\infty$ exists.
\end{thm}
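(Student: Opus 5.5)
We follow Nesterov's construction of a ``worst function in the world''. Fix $k$ with $1\le k\le \frac12(d-1)$. After translating coordinates we may take $x_0=0$; this is allowed because Assumption~\ref{ass:firstorder} is invariant under the shift $x\mapsto x-x_0$. Set $p:=2k+1\le d$ and define the tridiagonal quadratic
\[
f_p(x)=\frac{L}{4}\Big(\frac12\Big[x_1^2+\sum_{i=1}^{p-1}(x_i-x_{i+1})^2+x_p^2\Big]-x_1\Big),\qquad x\in\R^d .
\]
Here the coordinates $x_{p+1},\dots,x_d$ do not appear. We can write $f_p(x)=\frac{L}{8}x^\top A_p x-\frac L4 e_1^\top x$, where $A_p$ is the standard tridiagonal matrix padded with zeros. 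It satisfies $0\preceq A_p\preceq 4I$, because $x^\top A_p x\le 2x_1^2+2x_p^2+2\sum_i(x_i^2+x_{i+1}^2)\le 4\|x\|^2$. Hence $f_p$ is convex and $L$-smooth. For the comparison with $x_0$ we use a general $x_0$ and replace $x$ by $x-x_0$.

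The first step is to compute the minimizer explicitly. Solving $A_p x=e_1$ gives $\bar x_i=1-\frac{i}{p+1}$ for $i\le p$ and $\bar x_i=0$ otherwise. The optimal value is $f_p^\ast=-\frac{L}{8}e_1^\top\bar x=\frac{L}{8}\big(-1+\frac1{p+1}\big)$. We then bound the initial distance:
\[
\|\bar x\|^2=\sum_{i=1}^p\Big(1-\frac{i}{p+1}\Big)^2\le \frac{p+1}{3}.
\]
This uses the identity $\sum_{j=1}^p j^2=\frac{p(p+1)(2p+1)}{6}$.

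The key structural step is the Krylov argument. Let $\R^{d}_{i}$ denote the vectors whose nonzero entries lie only in the first $i$ coordinates. Because $A_p$ is tridiagonal and $\nabla f_p(x)=\frac L4(A_px-e_1)$, we have $\nabla f_p(x)\in\R^d_{i+1}$ whenever $x\in\R^d_i$. Induction with Assumption~\ref{ass:firstorder} then gives $x_k\in\R^d_k$. Consequently
\[
f_p(x_k)\ge \min_{x\in\R^d_k}f_p(x)=f_k^\ast=\frac L8\Big(-1+\frac1{k+1}\Big).
\]
This holds because restricting $f_p$ to $\R^d_k$ yields exactly $f_k$: the term $(x_k-x_{k+1})^2$ becomes $x_k^2$ when $x_{k+1}=0$.

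Finally we combine the pieces with $p=2k+1$:
\[
\frac{f_p(x_k)-f_p^\ast}{\|x_0-\bar x\|^2}\ge \frac{\frac L8\big(\frac1{k+1}-\frac1{2k+2}\big)}{\frac{2k+2}{3}}=\frac{3L}{32(k+1)^2}.
\]
The main obstacle is the bookkeeping rather than any deep idea. We must check the spectral bound $A_p\preceq4I$, which gives the smoothness constant exactly $L$, and carry out the sum estimate with the right constant. We must also verify carefully that the restriction of $f_p$ to the first $k$ coordinates coincides with $f_k$, since the induction giving $x_k\in\R^d_k$ depends on the tridiagonal sparsity.
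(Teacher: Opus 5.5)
Your argument is correct and is Nesterov's construction from \cite[Theorem~2.1.7]{N2018}, which the paper only cites without reproducing a proof: the quadratic $f_{2k+1}$, the Krylov-subspace inclusion $x_k\in\R^d_k$, and the comparison of $f_k^\ast$ with $f_{2k+1}^\ast$ against $\|\bar x\|^2\le\frac{2(k+1)}{3}$ all check out. The one point to state explicitly is that for $2k+1<d$ the minimizer of $f_{2k+1}$ is not unique (every point of $\bar x+\mathrm{span}\{e_{2k+2},\dots,e_d\}$ is a minimizer), so the bound must be read with $x_\ast:=x_0+\bar x$, the minimizer nearest to $x_0$, exactly as in Nesterov.
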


\begin{remark}
The considered lower bound in Theorem~\ref{thm:lowerbound_convex} is only satisfied for $k\le \frac{1}{2}(d-1)$, which again, particularly for high dimensional ($d\gg 1$) optimization tasks, suggests a gap between lower and upper bound.  The proof in \cite{N2018} is again via construction.
\end{remark}

\section{Nesterov's acceleration method}
We ask our self if we can improve the upper bounds derived for gradient descent methods (both for convex and strong-convex setting) through momentum methods. This will be part of the next section. Recall that the iteration of HBM is given by
\[ x_{k+1} = x_k-\alpha_k\nabla f(x_k) +\beta_k (x_k-x_{k-1}).\]
We have seen that we can obtain linear convergence for quadratic objective function $f(x) = \frac12 x^\top Qx$ with rate $c=\frac{\sqrt{\kappa}-1}{\sqrt{\kappa}+1}$. To do so, we have derived 
\begin{equation}\label{eq:optHBM_step}
\alpha_k=\bar\alpha = \frac{4}{(\sqrt{L}+\sqrt{\mu})^2}\quad \text{and}\quad \beta_k= \bar\beta = \left( \frac{\sqrt{\kappa}-1}{\sqrt{\kappa} + 1}\right)^2, 
\end{equation}
where $L>0$ denotes the largest eigenvalue and $\mu>0$ the smallest eigenvalue of $Q$, and $\kappa=\frac{L}{\mu}$ is the condition number. We wonder if it is possible to extend this result to general $L$-smooth and $\mu$-strongly convex functions. If we similarly set $\alpha$, $\beta$ fixed as in \eqref{eq:optHBM_step}, do we still obtain linear convergence with rate $c=\frac{\sqrt{\kappa}-1}{\sqrt{\kappa}+1}$? Unfortunately, this is not true.  We consider the following counter example presented in \cite{Lessard2016}, where it turns out that one can construct a one-dimensional $L$-smooth and $\mu$-strongly convex function, for which HBM runs into a circle and does not converge. This example is formulated as exercise:
\begin{exercise}
\begin{enumerate}
\item Find a continuous function $f:\R\to\R$ such that
\[f'(x) = \begin{cases} 25x, &x\le 1\\
x+24, &1< x<2\\
25x-24, &2\le x 
\end{cases}\, . \]
Prove that $f$ is $\mu$-strongly convex with $\mu=1$, $L$-smooth with $L=25$ and has a unique global minimum in $x_\ast=0$.
\item Implement HBM with the optimal step size $\alpha$ and momentum parameter $\beta$ following \eqref{eq:optHBM_step}.
\item Prove that the application of HBM on $f$ with the parameters in \eqref{eq:optHBM_step} result in the recursion
\[x_{k+1} = \frac{13}{9} x_k - \frac{4}{9} x_{k-1} - \frac{1}{9}\nabla f(x_k). \]
\item Find a cycle of points $p\to q\to r\to p$, such that for $x_0=p$ we have 
\[x_{3k} = p,\ x_{3k+1} = q, \ x_{3k+2} = r \]
for all $k\in\N$. To do so, assume $p,q<1$ and $r>2$, apply the heavy ball recursion to create a linear equation for $p,q,r$ and solve it. What does it mean for the convergence behavior?
\end{enumerate}
\end{exercise}

\textbf{Motivation:} We motivate a different approach of incorporating momentum known as Nesterov's acceleration method (NAM).  Recall that the iteration of HBM first computes the gradient at the current location $\nabla f(x_k)$ and then moves into direction of a weighted sum of all previous gradients
\[d_k = -\alpha_k \nabla f(x_k) + \beta_k (x_k-x_{k-1}) = -\alpha_k \nabla f(x_k) -\beta_k \alpha_{k-1}\nabla f(x_{k-1}) + \beta_k\beta_{k-1}(x_{k-1}-x_{k-2})=\dots\, . \]
In NAM, the momentum step and the gradient evaluation are separated. The method first forms an extrapolated point using information from the previous step and then
computes the gradient at this extrapolated point. We can describe this method through a coupled system of two vectors $[p_k,q_k]\in\R^d\times\R^d$:
\begin{enumerate}
\item Assume that we are in location $p_k\in\R^d$, such that we can obtain information from the previous iterations through the computation
\[q_k = p_k + \beta (p_k-p_{k-1}), \]
for some momentum parameter $\beta>0$.
\item In location $q_k$ we compute the next gradient information in order to correct the previously gained information
\[p_{k+1} = q_k - \alpha\nabla f(q_k), \]
where $\alpha>0$ denotes a step size.
\item Compute the iterated weighted information for the next iteration \[q_{k+1} = p_{k+1} + \beta (p_{k+1} - p_k).\]
\end{enumerate}

We summarize NAM in Algorithm~\ref{alg:NAM}.
 \begin{algorithm}[htb!]
\begin{algorithmic}[1]
\State \textbf{Input:} \begin{itemize}
 \item objective function $f:\R^d\to\R$
 \item initial $q_0,p_0\in\R^d$
 \item sequence of step sizes $(\alpha_k)_{k\in\N}$, $\alpha_k>0$, and sequence of momentum parameters $(\beta_k)_{k\in\N}$, $\beta_k\ge0$.
 \end{itemize}
 \State set $p_1 = q_0 - \alpha_0 \nabla f(q_0)$
 \State set $q_1 = p_1 + \beta_0 (p_1-p_0)$
 \State set $k=1$
\While{"convergence/stopping criterion not met"}
	\State set $p_{k+1} = q_k - \alpha_k \nabla f(q_k)$
	\State set $q_{k+1} = p_{k+1} + \beta_k (p_{k+1}-p_k)$ 
	\State set $k \mapsto k+1$
\EndWhile
\end{algorithmic}
 \caption{Nesterov's accelerated gradient descent method}\label{alg:NAM}
\end{algorithm}

\subsection{Convergence for convex and smooth objective functions}\label{sec:NAM_convex}
We start the convergence analysis of Nesterov's method under the assumption that the objective
function is convex and $L$-smooth. In order to analyze NAM, we use a slightly more general formulation of the iteration in terms of three sequences $(x_k,y_k,z_k)_{k\in\mathbb N}$.
The convergence analysis is based on a Lyapunov argument for accelerated optimization schemes; see~\cite{Wilson2021}.
The analysis presented in \cite{Wilson2021} covers a wide range of acceleration algorithms in continuous and discrete-time setting. In terms of the scope of this lecture course, we will focus on a very simplified setting, where we write NAM as system of the form
\begin{equation}\label{eq:NAMconvex}
\begin{split}
x_k &= \tau_k z_k + (1-\tau_k) y_k,\\
y_{k+1} &= x_k - \alpha_k \nabla f(x_k),\\
z_{k+1} &= z_k - \gamma_k \nabla f(x_k),
\end{split}
\end{equation}
with parameters $\alpha_k, \gamma_k>0$ and $\tau_k\in(0,1)$. The sequence $(y_k)$ represents the points obtained after gradient steps, while $(z_k)$ accumulates the momentum information. The point $x_k$ combines both components.
Consider the following example to see that this system can be seen as NAM.
\begin{example}\label{ex:connectionthreevar}  
We now show how the formulation~\eqref{eq:NAMconvex} can be related to Algorithm~\ref{alg:NAM}. Indeed, one can choose the parameters $\gamma_k,\tau_k>0$ such that the system reduces into the form of Algorithm~\ref{alg:NAM}. Therefore, we rewrite the update 
\begin{align*}
z_{k+1}& = z_k + \frac{1}{\tau_k}(x_k-x_k) - \gamma_k\nabla f(x_k)=y_k + \frac{1}{\tau_k}(x_k-y_k) -\gamma_k \nabla f(x_k)\\
&=y_k + \frac{1}{\tau_k} (x_k-\gamma_k \tau_k \nabla f(x_k) - y_k)= y_k+ \frac{1}{\tau_k} (y_{k+1}-y_k),
\end{align*}
where we have chosen $(\alpha_k,\gamma_k,\tau_k)$ such that $\gamma_k \tau_k = \alpha_k$. We can plug this into the update of $x_{k+1}$ in order to eliminate $z_{k+1}$ from \eqref{eq:NAMconvex}: 
\begin{align*}
x_{k+1} = \tau_{k+1} z_{k+1} + (1-\tau_{k+1}) y_{k+1} = y_{k+1} + \frac{\tau_{k+1}(1-\tau_k)}{\tau_k} (y_{k+1}-y_k).
\end{align*}
Finally, we have written the system \eqref{eq:NAMconvex} as update of two variables $(x_k,y_k)_{k\in\N}$ described through
\begin{align*}
y_{k+1} &= x_k - \alpha_k \nabla f(x_k),\\
x_{k+1} &= y_{k+1} + \beta_k (y_{k+1}-y_k),
\end{align*}
where $\beta_k := \frac{\tau_{k+1}(1-\tau_k)}{\tau_k}>0$.
\end{example} 

As mentioned earlier, we will consider a simplified analysis based on Lyapunov methods as presented in \cite{Wilson2021}. There are many different ways of applying Lyapunov methods for analyzing optimization methods. See also Appendix~\ref{app:Lyapunov} for a brief motivation of Lyapunov methods in optimization. We will follow a specific strategy for proving convergence of an iterative scheme $(x_k)_{k\in\N}$ based on Lyapunov theory:
\begin{enumerate}
\item We firstly choose an error function $e:\R^d\to\R_+$ for which we want to prove convergence towards $0$, i.e.~$\lim_{k\to\infty}e(x_k)=0$. 
\item Construct a \textit{Lyapunov function} of the form
\[E_k :=E(x_k) = r(x_k) + A_k e(x_k), \]
where $r:\R^d\to\R_+$ is some auxiliary function, and $(A_k)_{k\in\N}$ is a monotonically increasing sequence with $A_0\ge0$ devoted to describe the speed of convergence. 
\item We aim to bound the increments of the sequence $(E_k)_{k\in\N}$ by
\[E_{k+1} - E_k \le \varepsilon_{k+1}, \]
where $(\varepsilon_{k})_{k\in\N}$ is a real-valued sequence with $\limsup_{k\to\infty}\varepsilon_k<+\infty$.  In our specific case, we aim to prove that $\varepsilon_{k+1}\le 0$ for all $k\in\N$ such that $(E_k)_{k\in\N}$ is non-increasing and particularly bounded by $E_k\le E_0$. It then follows, that 
\[A_k e(x_k) \le r(x_k) + A_k e(x_k) = E_k\le E_0\]
and therefore, we obtain
\[ e(x_k) \le \frac{E_0}{A_k}\]
which illustrates why $(A_k)_{k\in\N}$ describes the speed of convergence.
\end{enumerate}

\medskip

Motivated by \cite{Wilson2021}, in order to analyze the convergence of the system \eqref{eq:NAMconvex}, we will construct the Lyapunov function of the form
\begin{equation}\label{eq:Lyapunov_convex}
E_k = \frac{1}{2}\|z_k-x_\ast\|^2 + A_k (f(y_k)-f^\ast),
\end{equation}
where $f$ is assumed to satisfy $f^\ast =\min_{x\in\R^d}>-\infty$, $x_\ast\in\R^d$ is some global minimum of $f$ and $(A_k)_{k\in\N}$ is a monotonically increasing sequence with $A_0>0$.  We will firstly derive the following upper bound on the increments of $(E_k)_{k\in\N}$.
\begin{lemma}\label{lem:NAMconvex_aux}
Let $f:\R^d\to\R$ be $L$-smooth and convex with $\min_{x\in\R^d} f>-\infty$, and assume there exists at least one global minimum $x_\ast\in\R^d$ of $f$.  Moreover, let $(x_k,y_k,z_k)_{k\in\N}$ be generated by \eqref{eq:NAMconvex} with parameters $\alpha_k = \frac1L,$ $\gamma_k = A_{k+1}-A_k$ and $\tau_k = \frac{\gamma_k}{A_{k+1}}=\frac{A_{k+1}-A_k}{A_{k+1}} \in(0,1)$. i.e. 
\begin{equation*}
\begin{split}
x_k &= y_k + \frac{A_{k+1}-A_k}{A_{k+1}}(z_k-y_k),\\
y_{k+1} &= x_k - \frac{1}{L} \nabla f(x_k),\\
z_{k+1} &=z_k -(A_{k+1}-A_k)\nabla f(x_k),
\end{split}
\end{equation*}
initialized with $(y_0,z_0)\in\R^{d}\times\R^d$. Then the increments of the sequence $(E_k)_{k\in\N}$ defined in \eqref{eq:Lyapunov_convex} satisfy
\[E_{k+1} - E_k \le \varepsilon_{k+1}:= \frac12 (A_{k+1}-A_k)^2 \|\nabla f(x_k)\|^2 + A_{k+1} (f(y_{k+1})-f(x_k))\]
for all $k\in\N$.
\end{lemma}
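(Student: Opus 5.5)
We expand the increment $E_{k+1}-E_k$ into a distance part and a function-value part, and bound each by convexity so that only the terms in $\varepsilon_{k+1}$ remain. Throughout, write $\gamma_k = A_{k+1}-A_k$.

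\textbf{Step 1: the distance term.} Using $z_{k+1}=z_k-\gamma_k\nabla f(x_k)$,
\[
\tfrac12\|z_{k+1}-x_\ast\|^2-\tfrac12\|z_k-x_\ast\|^2 = -\gamma_k\langle \nabla f(x_k), z_k-x_\ast\rangle + \tfrac12\gamma_k^2\|\nabla f(x_k)\|^2 .
\]
The last term is exactly the first summand of $\varepsilon_{k+1}$. It therefore remains to show
\[
-\gamma_k\langle \nabla f(x_k), z_k-x_\ast\rangle + A_{k+1}(f(y_{k+1})-f^\ast) - A_k(f(y_k)-f^\ast) \le A_{k+1}(f(y_{k+1})-f(x_k)).
\]

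\textbf{Step 2: rewrite the function-value part.} Split
\[
A_{k+1}(f(y_{k+1})-f^\ast) - A_k(f(y_k)-f^\ast) = A_{k+1}(f(y_{k+1})-f(x_k)) + A_{k+1}(f(x_k)-f^\ast) - A_k(f(y_k)-f^\ast).
\]
The first piece matches the second summand of $\varepsilon_{k+1}$. We are left to show $R_k\le 0$, where
\[
R_k := -\gamma_k\langle \nabla f(x_k), z_k-x_\ast\rangle + A_{k+1}(f(x_k)-f^\ast) - A_k(f(y_k)-f^\ast).
\]
Writing $A_{k+1}=\gamma_k+A_k$ gives
\[
R_k = \gamma_k\bigl(f(x_k)-f^\ast - \langle\nabla f(x_k), z_k-x_\ast\rangle\bigr) + A_k\bigl(f(x_k)-f(y_k)\bigr).
\]

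\textbf{Step 3: handle the coupling term and apply convexity.} The key step is to eliminate $z_k$. From $x_k=y_k+\tau_k(z_k-y_k)$ with $\tau_k=\gamma_k/A_{k+1}$ we get
\[
\gamma_k(z_k-x_k) = \gamma_k\frac{1-\tau_k}{\tau_k}(x_k-y_k) = A_k(x_k-y_k).
\]
Split $z_k-x_\ast=(z_k-x_k)+(x_k-x_\ast)$. This gives
\[
R_k = \gamma_k\bigl(f(x_k)-f^\ast-\langle\nabla f(x_k),x_k-x_\ast\rangle\bigr) + A_k\bigl(f(x_k)-f(y_k)-\langle\nabla f(x_k),x_k-y_k\rangle\bigr).
\]
Both brackets are nonpositive by the first-order convexity inequality $f(y)\ge f(x)+\nabla f(x)^\top(y-x)$, applied with $y=x_\ast$ and with $y=y_k$ respectively. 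Since $\gamma_k\ge0$ and $A_k\ge0$, we conclude $R_k\le 0$, which proves the claimed bound.

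The only delicate point is the algebraic identity $\gamma_k(z_k-x_k)=A_k(x_k-y_k)$ produced by the choice of $\tau_k$. Note that $L$-smoothness and $\alpha_k=1/L$ are not used here; they are presumably needed later to show $\varepsilon_{k+1}\le 0$ via the descent lemma.
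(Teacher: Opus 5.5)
Your proof is correct and follows essentially the same route as the paper: expand the distance term, split off $A_{k+1}(f(y_{k+1})-f(x_k))$, eliminate $z_k$ via the coupling identity coming from the choice of $\tau_k$, and close with two applications of first-order convexity. The differences are cosmetic. You compute the distance increment as an exact identity, whereas the paper's Young-type inequality is tight at that step. You also split $z_k-x_\ast$ through $x_k$ directly, while the paper first passes through $y_k$ using $y_k-z_k=\frac{A_{k+1}}{A_{k+1}-A_k}(y_k-x_k)$.
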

The proof of Lemma~\ref{lem:NAMconvex_aux} is quite technical and therefore deferred to Appendix~\ref{app:omittedproofs}.
\begin{remark}
In order to show monotonic behavior of the error $(E_k)_{k\in\N}$, i.e.~$\varepsilon_{k+1}\le0$, we will later apply convexity and $L$-smoothness of $f$ to derive 
\[f(y_{k+1})-f(x_k)\propto -\|\nabla f(x_k)\|^2. \]
It will turn out, that the choice of $(A_k)_{k\in\N}$ is the key to prove convergence of the error $e_k = f(y_k) -f^\ast$.
\end{remark}

The previous Lemma guarantees an upper bound on the increments of the form
\[E_{k+1}-E_k \le \frac12(A_{k+1}-A_k)^2 \|\nabla f(x_k)\|^2 + A_{k+1} (f(y_{k+1})-f(x_k)). \]
Next, we want to apply $L$-smoothness in order to derive
\[f(y_{k+1})-f(x_k)\propto -\|\nabla f(x_k)\|^2 \]
and therefore, imply the decrease of $(E_k)_{k\in\N}$. In particular, we will then obtain convergence of the error $e_k = f(y_k)-f^\ast$ of the order $\mathcal O(\frac{1}{(k+1)k})$. Note that for gradient descent under similar assumptions on $f$ we did only prove convergence of order $\mathcal O(\frac{1}{k+1})$.

\begin{thm}[NAM for convex and smooth objective function]
Let $f:\R^d\to\R$ be $L$-smooth and convex with $\min_{x\in\R^d} f>-\infty$, and assume there exists at least one global minimum $x_\ast\in\R^d$ of $f$.  Moreover, let $(x_k,y_k,z_k)_{k\in\N}$ be generated by \eqref{eq:NAMconvex} with parameters $\alpha_k = \frac1L,$ $\gamma_k = A_{k+1}-A_k$ and $\tau_k = \frac{\gamma_k}{A_{k+1}}=\frac{A_{k+1}-A_k}{A_{k+1}} \in(0,1)$. i.e. 
\begin{equation*}
\begin{split}
x_k &= y_k + \frac{A_{k+1}-A_k}{A_{k+1}}(z_k-y_k),\\
y_{k+1} &= x_k - \frac{1}{L} \nabla f(x_k),\\
z_{k+1} &=z_k -(A_{k+1}-A_k)\nabla f(x_k),
\end{split}
\end{equation*}
initialized with $(y_0,z_0)\in\R^{d}\times\R^d$. Then the increments of the sequence $(E_k)_{k\in\N}$ defined in \eqref{eq:Lyapunov_convex} satisfy
\[E_{k+1} - E_k \le \left(\frac12 (A_{k+1}-A_k)^2 -\frac{1}{2L}A_{k+1}\right)\|\nabla f(x_k)\|^2\]
for all $k\in\N$. For the particular choice $A_k = \frac{1}{4L}(k+1)k$, $k\ge1$, and $A_0 = A_1$, we obtain
\[e_k = f(y_k) -f^\ast\le \frac{4LE_0}{(k+1)k},\quad k\ge1. \]
\end{thm}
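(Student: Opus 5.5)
We start from Lemma~\ref{lem:NAMconvex_aux}, which already gives
\[E_{k+1}-E_k\le \tfrac12(A_{k+1}-A_k)^2\|\nabla f(x_k)\|^2 + A_{k+1}\big(f(y_{k+1})-f(x_k)\big).\]
The remaining task is to bound the second term using only $L$-smoothness. Apply the descent Lemma~\ref{lem:descentlemma} with $x=x_k$ and $y=-\frac1L\nabla f(x_k)$, so that $x+y=y_{k+1}$. This gives
\[f(y_{k+1})\le f(x_k)-\tfrac1L\|\nabla f(x_k)\|^2+\tfrac{1}{2L}\|\nabla f(x_k)\|^2 = f(x_k)-\tfrac{1}{2L}\|\nabla f(x_k)\|^2 .\]
Since $A_{k+1}>0$, we may multiply by $A_{k+1}$ and insert the result into the lemma. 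This yields the first claim:
\[E_{k+1}-E_k\le\Big(\tfrac12(A_{k+1}-A_k)^2-\tfrac{1}{2L}A_{k+1}\Big)\|\nabla f(x_k)\|^2 .\]

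For the rate, we verify that the coefficient is nonpositive for the choice $A_k=\frac{1}{4L}(k+1)k$ with $k\ge1$ and $A_0=A_1$. The case $k=0$ needs special care. Here $A_1-A_0=0$, so $\gamma_0=0$ and $\tau_0=0$. The coefficient equals $-\frac{1}{2L}A_1\le 0$, so the inequality still holds; the iteration is then a plain gradient step from $y_0$, with $x_0=y_0$. For $k\ge1$ we compute
\[A_{k+1}-A_k=\tfrac{1}{4L}\big((k+2)(k+1)-(k+1)k\big)=\tfrac{k+1}{2L}.\]
Hence
\[\tfrac12(A_{k+1}-A_k)^2=\tfrac{(k+1)^2}{8L^2}, \qquad \tfrac{1}{2L}A_{k+1}=\tfrac{(k+2)(k+1)}{8L^2}.\]
The difference is $\frac{(k+1)\,((k+1)-(k+2))}{8L^2}=-\frac{k+1}{8L^2}<0$. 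Therefore $E_{k+1}\le E_k$ for all $k$, and so $E_k\le E_0$.

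Finally, drop the nonnegative term $\frac12\|z_k-x_\ast\|^2$ in \eqref{eq:Lyapunov_convex}. This gives $A_k(f(y_k)-f^\ast)\le E_k\le E_0$, hence
\[e_k\le \frac{E_0}{A_k}=\frac{4LE_0}{(k+1)k}\qquad\text{for } k\ge1.\]

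The main obstacle is the endpoint $k=0$. With $A_0=A_1$ we get $\tau_0=0\notin(0,1)$, which is formally outside the lemma's hypotheses. I would handle this either by checking directly that the lemma's derivation remains valid for $\gamma_0=0$, or by starting the monotonicity argument at $k=1$ and comparing $E_1$ with $E_0$ through the single gradient step above. Everything else is routine algebra.
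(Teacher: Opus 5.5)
Your proof is correct and follows the paper's argument: the descent lemma gives $f(y_{k+1})-f(x_k)\le -\frac{1}{2L}\|\nabla f(x_k)\|^2$, you insert this into Lemma~\ref{lem:NAMconvex_aux}, and you check that the coefficient is nonpositive for $A_k=\frac{1}{4L}(k+1)k$ before concluding $E_k\le E_0$. Your concern about $k=0$ (where $\tau_0=0$) is a point the paper passes over, and it resolves in one line: since $z_1=z_0$, $x_0=y_0$ and $A_1=A_0$, one has $E_1-E_0=A_1\big(f(y_1)-f(y_0)\big)\le -\frac{A_1}{2L}\|\nabla f(y_0)\|^2\le 0$ directly.
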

\begin{proof}
We define $G_L(x_k) = x_k - \frac1L \nabla f(x_k)$ and apply $L$-smoothness of $f$ to deduce
\begin{align*}
f(G_L(x_k)) &\le f(x_k) + \langle \nabla f(x_k), G_L(x_k) - x_k\rangle + \frac{L}2 \|G_L(x_k)-x_k\|^2 \\
&=f(x_k) - \frac1{2L} \|\nabla f(x_k)\|^2,
\end{align*} 
implying that
\[f(y_{k+1})-f(x_k) \le -\frac{1}{2L}\|\nabla f(x_k)\|^2. \]
With the previous Lemma~\ref{lem:NAMconvex_aux} we obtain
\[E_{k+1} - E_k \le \left(\frac12 (A_{k+1}-A_k)^2 -\frac{1}{2L}A_{k+1}\right)\|\nabla f(x_k)\|^2.\]
With $A_k = \frac1{4L}(k+1)k$ it follows that
\[\frac{1}{2}(A_{k+1}-A_k)^2 - \frac{1}{2L}A_{k+1}\le 0, \]
since \[\frac{(A_{k+1}-A_k)^2}{A_{k+1}}=\frac{1}{L} \frac{(k+1)^2}{(k+2)(k+1)}\le \frac1L.\]
It follows that $E_{k+1}-E_k \le 0$ and therefore, we have
\[\frac12 \|z_k-x_\ast\|^2 + A_k (f(y_k)-f^\ast)\le E_0 \]
which implies convergence
\[f(y_k)-f^\ast \le \frac{4LE_0}{(k+1)k},\quad k\ge1.\]
\end{proof}
\begin{remark}
To draw the connection to Algorithm~\ref{alg:NAM} we have to choose
\[ \beta_k = \frac{\tau_{k+1}(1-\tau_k)}{\tau_k}, \]
where \[\tau_k = \frac{A_{k+1}-A_k}{A_{k+1}} = \frac{2k+2}{(k+1)(k+2)} = \frac{2}{k+2}\]
and hence,
\[\beta_k = \frac{\frac{2}{k+3} (1-\frac{2}{k+2})}{\frac2{k+2}} = \frac{k}{k+3}.\]
Note that 
\[\gamma_k\tau_k = \frac{(A_{k+1}-A_k)^2}{A_{k+1}} = \frac{1}{4L}\frac{(2k+2)^2}{(k+1)(k+2)} \to \frac1L\,,\]
which is in minor contrast to the choice $\gamma_k\tau_k = \alpha_k = \frac1L$ in Example~\ref{ex:connectionthreevar}.
\end{remark}

\subsection{Convergence for strongly convex and smooth objective functions}

We have seen that NAM leads to an improvement of the convergence for convex and smooth functions. We now want to consider the strongly convex and smooth setting, where we again show improvement compared to the optimal convergence behavior of the gradient descent method discussed in Remark~\ref{rem:opt_step}. Let $f:\R^d\to\R$ be a $\mu$-strongly convex and $L$-smooth objective function. Motivated from \cite{Wilson2021} we again consider a slightly different system for NAM of three variable $(x_k,y_k,z_k)_{k\in\N}$ generated by
\begin{equation}\label{eq:NAM_strongconvex}
\begin{split}
x_k &= \frac{\tau}{1+\tau} z_k + \frac{1}{1+\tau} y_k\\
y_{k+1} &= x_k - \frac{1}{L}\nabla f(x_k)\\
z_{k+1} &= z_k +\tau(x_k-z_k)-\frac{\tau}{\mu}\nabla f(x_k)
\end{split}\, 
\end{equation}
with $\tau>0$. Similarly as before, the authors in \cite{Wilson2021} consider a more general family of accelerated gradient methods, but due to the scope of this lecture course we again consider only the simplified formulation. In the following example, we make the connection to NAM formulated in Algorithm~\ref{alg:NAM}. We show that the system \eqref{eq:NAM_strongconvex} with fixed choice $\tau = \sqrt{\frac{\mu}{L}}$ can be viewed as special case of Algorithm~\ref{alg:NAM} with fixed $\alpha = \frac{1}{L}$ and $\beta = \frac{\sqrt{L}-\sqrt{\mu}}{\sqrt{L}+\sqrt{\mu}}$.

\begin{example}
We formulate system \eqref{eq:NAM_strongconvex} as update of two variables $(x_k,y_k)_{k\in\N}$ written as
\begin{equation*}
\begin{split}
y_{k+1} &= x_k - \alpha_k\nabla f(x_k)\\
x_{k+1} &= y_{k+1} + \beta_k (y_{k+1}-y_k)
\end{split}\, .
\end{equation*}
Firstly, we observe that
\[(1-\tau) z_k = (1-\tau)\left(\frac{1+\tau}{\tau} x_k -\frac{1}{\tau} y_k \right) = \left(\frac{1}{\tau}-\tau\right) x_k + \frac{\tau-1}{\tau}y_k, \]
such that we can write the update of $z_{k+1}$ through
\begin{align*}
z_{k+1} = z_k + \tau (x_k-z_k) - \frac{\tau}{\mu} \nabla f(x_k)
&=(1-\tau) z_k + \tau x_k-\frac{\tau}{\mu} \nabla f(x_k)\\
&=\left(\frac{1}{\tau} x_k -\tau+\tau\right) x_k +\left(1-\frac{1}{\tau}\right)y_k - \frac{\tau}{\tau}\frac{\tau}{\mu}\nabla f(x_k).
\end{align*}
We set $\tau = \sqrt{\frac{\mu}{L}}$ such that $\frac{\tau^2}{\mu} = \frac{1}L$ and therefore,
\begin{align*}
z_{k+1} = \frac1\tau x_k - \frac1\tau \frac1L\nabla f(x_k) + \left(1-\frac1\tau\right)y_k &= y_k + \frac1\tau \left(x_k-\frac1L\nabla f(x_k) -y_k \right)\\
&= y_k + \frac1\tau (y_{k+1}-y_k).
\end{align*}
We plug this into the update of $x_{k+1}$ to obtain
\begin{align*}
x_{k+1} &= \frac{\tau}{\tau+1}z_{k+1} +\frac{1}{\tau+1} y_{k+1} = \frac{\tau}{\tau+1} \left(y_k+\frac1\tau (y_{k+1}-y_k)\right) + \frac1{\tau+1} y_{k+1}\\
&=y_{k+1} + \frac{\tau-1}{\tau+1} y_k + \frac{1-\tau}{1+\tau}= y_{k+1} + \frac{1-\tau}{1+\tau} (y_{k+1}-y_k).
\end{align*}
Finally, with the choice $\tau = \sqrt{\frac{\mu}{L}}$ we can write the iterative update scheme as
\begin{align*}
y_{k+1} &= x_k - \frac1L \nabla f(x_k)\,,\\
x_{k+1} &=  y_{k+1} + \frac{\sqrt{L}-\sqrt{\mu}}{\sqrt{L}+\sqrt{\mu}} (y_{k+1}-y_k),
\end{align*}
such that we recover Algorithm~\ref{alg:NAM} with fixed $\alpha_k=\alpha = \frac{1}{L}$ and $\beta_k = \beta = \frac{\sqrt{L}-\sqrt{\mu}}{\sqrt{L}+\sqrt{\mu}}$.
\end{example}

We are now ready to prove linear convergence of Algorithm~\ref{alg:NAM} through system \eqref{eq:NAM_strongconvex} in the strongly convex and smooth setting.

\begin{thm}[NAM for strongly convex and smooth objective function] 
Let $f:\R^d\to\R$ be $\mu$-strongly convex and $L$-smooth with $L>\mu$, and let $x_\ast\in\R^d$ be the corresponding unique global minimum of $f$. Moreover, let $(x_k,y_k,z_k)_{k\in\N}$ be generated by \eqref{eq:NAM_strongconvex} with $\tau = \sqrt{\frac{\mu}{L}}\in(0,1)$ and initialized by $(y_0,z_0)\in\R^d\times\R^d$. Then {NAM} converges linearly in the sense that
\[e_k := f(y_k)-f(x_\ast) + \frac{\mu}{2}\|z_k-x_\ast\|^2 \le \left(1-\sqrt{\frac{\mu}{L}}\right)^k \left(f(y_0)-f(x_\ast) +\frac{\mu}{2}\|z_0-x_\ast\|^2\right). \]
\end{thm}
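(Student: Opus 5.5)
The plan is a Lyapunov argument in the spirit of \cite{Wilson2021}. Take
\[E_k := f(y_k)-f(x_\ast)+\frac{\mu}{2}\|z_k-x_\ast\|^2,\]
which is exactly $e_k$. I aim to prove the one-step contraction
\[E_{k+1}\le (1-\tau)E_k,\qquad \tau=\sqrt{\mu/L}.\]
Iterating it gives the claim. Throughout write $g:=\nabla f(x_k)$. Two identities will be used repeatedly:
\begin{itemize}
\item from the definition of $x_k$, namely $(1+\tau)x_k=\tau z_k+y_k$, we get $x_k-y_k=\tau(z_k-x_k)$;
\item the choice of $\tau$ gives $\tau^2/\mu=1/L$.
\end{itemize}

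\emph{Step 1: the distance term.} Write
\[z_{k+1}-x_\ast=(1-\tau)(z_k-x_\ast)+\tau\bigl(x_k-x_\ast-g/\mu\bigr)\]
and apply the exact identity $\|(1-t)a+tb\|^2=(1-t)\|a\|^2+t\|b\|^2-t(1-t)\|a-b\|^2$. This gives
\[\frac{\mu}{2}\|z_{k+1}-x_\ast\|^2=(1-\tau)\frac{\mu}{2}\|z_k-x_\ast\|^2+\tau\Bigl(\frac{\mu}{2}\|x_k-x_\ast\|^2-\langle g,x_k-x_\ast\rangle+\frac{\|g\|^2}{2\mu}\Bigr)-\tau(1-\tau)\frac{\mu}{2}\Bigl\|z_k-x_k+\frac{g}{\mu}\Bigr\|^2 .\]
Strong convexity, $f(x_\ast)\ge f(x_k)+\langle g,x_\ast-x_k\rangle+\frac{\mu}{2}\|x_k-x_\ast\|^2$, bounds the bracketed middle terms by $f(x_\ast)-f(x_k)+\frac{\|g\|^2}{2\mu}$.

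\emph{Step 2: the function term.} As in the proof of the convex NAM theorem, the descent lemma (Lemma~\ref{lem:descentlemma}) with step $1/L$ gives
\[f(y_{k+1})\le f(x_k)-\frac{1}{2L}\|g\|^2 .\]
Split $f(x_k)-f(x_\ast)=(1-\tau)(f(x_k)-f(x_\ast))+\tau(f(x_k)-f(x_\ast))$. The $\tau$-part cancels the $\tau(f(x_\ast)-f(x_k))$ from Step 1. For the remaining part write $(1-\tau)(f(x_k)-f(x_\ast))=(1-\tau)(f(y_k)-f(x_\ast))+(1-\tau)(f(x_k)-f(y_k))$. By convexity and the first identity,
\[(1-\tau)(f(x_k)-f(y_k))\le(1-\tau)\langle g,x_k-y_k\rangle=\tau(1-\tau)\langle g,z_k-x_k\rangle .\]

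\emph{Step 3: collecting terms.} Summing Steps 1 and 2 yields
\[E_{k+1}\le(1-\tau)E_k+R_k,\]
where
\[R_k=\tau(1-\tau)\langle g,z_k-x_k\rangle+\Bigl(\frac{\tau}{2\mu}-\frac{1}{2L}\Bigr)\|g\|^2-\tau(1-\tau)\frac{\mu}{2}\Bigl\|z_k-x_k+\frac{g}{\mu}\Bigr\|^2 .\]
Using $1/L=\tau^2/\mu$, the gradient coefficient is $\frac{\tau(1-\tau)}{2\mu}$. Expanding the last square, the cross term and the $\|g\|^2$ term cancel exactly, leaving
\[R_k=-\tau(1-\tau)\frac{\mu}{2}\|z_k-x_k\|^2\le 0,\]
since $\tau\in(0,1)$ because $L>\mu$.

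The main obstacle is this bookkeeping. One has to choose the convex-combination decomposition of $z_{k+1}-x_\ast$ with weight exactly $\tau$, and keep the exact identity, including its negative $t(1-t)$ term, rather than a plain convexity inequality. That term is what absorbs the inner product $\langle g,z_k-x_k\rangle$ coming from convexity. If the cancellation fails because of a sign slip, my fallback is to expand $\|z_{k+1}-x_\ast\|^2$ directly from $z_{k+1}-z_k=\tau(x_k-z_k)-\frac{\tau}{\mu}g$ and match coefficients.
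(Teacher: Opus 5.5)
Your proof is correct and follows essentially the same Lyapunov argument as the paper. It uses the same function $e_k$, the same target $e_{k+1}\le(1-\tau)e_k$, and the same three ingredients: the descent lemma for $f(y_{k+1})$, strong convexity between $x_k$ and $x_\ast$, and convexity between $x_k$ and $y_k$ via $x_k-y_k=\tau(z_k-x_k)$. The differences are only in bookkeeping: you handle the distance term with the exact convex-combination identity where the paper expands $\|z_{k+1}-x_\ast\|^2$ directly, and you use plain rather than strong convexity between $x_k$ and $y_k$, so your remainder is $-\tau(1-\tau)\frac{\mu}{2}\|z_k-x_k\|^2$ instead of the paper's $\frac{\mu}{2}(\tau-\frac{1}{\tau})\|y_k-x_k\|^2$, and both are nonpositive.
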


\begin{proof}
We define $e_k := f(y_k)-f(x_\ast) + \frac{\mu}{2}\|z_k-x_\ast\|^2$ and aim to prove 
\[e_{k+1}\le (1-\tau) e_k. \]
Firstly, applying $L$-smoothness of $f$ gives
\begin{equation}\label{eq:NAM_strongconvex_aux1}
f(y_{k+1}) \le f(x_k) + \langle \nabla f(x_x), y_{k+1}-x_k\rangle + \frac{L}{2}\|x_k-y_{k+1}\|^2 = f(x_k) -\frac{1}{2L}\|\nabla f(x_k)\|^2.
\end{equation}
Next, we consider the update of $e_k^{(1)} = \|z_k-x_\ast\|^2$:
\begin{align*}
e_{k+1}^{(1)} &= \|z_k +\tau(x_k-z_k) -\frac{\tau}{\mu}\nabla f(x_k) - x_\ast\|^2\\
&= \|z_k-x_\ast\|^2 + 2\langle \tau (x_k-z_k)-\frac{\tau}{\mu}\nabla f(x_k), z_k - x_\ast\rangle + \|\underbrace{\tau(x_k-z_k) - \frac{\tau}{\mu}\nabla f(x_k)}_{=z_{k+1}-z_k} \|^2\\
&= e_k^{(1)} + 2\tau \langle x_k-z_k,\underbrace{z_k-x_\ast}_{=z_k-x_k+x_k-x_\ast}\rangle - 2\frac{\tau}{\mu} \langle \nabla f(x_k),\underbrace{z_k-x_\ast}_{=z_k-x_k+x_k-x_\ast}\rangle +\|z_{k+1}-z_k\|^2\\
&= e_k^{(1)}+ \|z_{k+1}-z_k\|^2 + 2\tau \langle x_k-z_k,z_k-x_k\rangle - 2\frac{\tau}{\mu}\langle \nabla f(x_k),z_k-x_k\rangle \\
&\quad+2\tau \langle x_k-z_k,x_k-x_\ast\rangle -2\frac{\tau}{\mu}\langle \nabla f(x_k), x_k-x_\ast\rangle 
\end{align*}
Recall that by strong convexity it holds true that
\[f(x_\ast)-f(x_k)\ge \langle x_\ast-x_k,\nabla f(x_k)\rangle + \frac{\mu}2 \|x_k-x_\ast\|^2, \]
such that
\begin{align*}
e_{k+1}^{(1)} &\le e_k^{(1)}+ \|z_{k+1}-z_k\|^2 + 2\tau \langle x_k-z_k,z_k-x_k\rangle - 2\frac{\tau}{\mu}\langle \nabla f(x_k),z_k-x_k\rangle \\
&\quad + 2\frac{\tau}{\mu} (f(x_\ast)-f(x_k)-\frac{\mu}2\|x_k-x_\ast\|^2) + 2\tau \langle x_k-z_k,x_k-x_\ast\rangle.
\end{align*}
We observe that
\[\tau(x_k-z_k) = \tau x_k -((1+\tau) x_k -y_k) = y_k-x_k \]
and obtain
\begin{align*} e_{k+1}^{(1)} &\le e_k^{(1)}+ \|z_{k+1}-z_k\|^2-2\frac{\tau}{\mu} (f(x_k)-f(x_\ast))+ \frac{2}{\mu}\langle \nabla f(x_k),y_k-x_k\rangle\\
&\quad+ 2\tau\langle x_k-z_k,x_k-x_\ast\rangle -2\tau\|x_k-z_k\|^2 -\tau\|x_k-x_\ast\|^2.
\end{align*}
Note that
\begin{align*}2\tau\langle x_k-z_k,x_k-x_\ast\rangle -2\tau\|x_k-z_k\|^2 -\tau\|x_k-x_\ast\|^2 &= -\tau \|x_k-x_\ast-(x_k-z_k)\|^2 - \tau \|x_k-z_k\|^2\\ &= -\tau e_k^{(1)} -\tau\|x_k-z_k\|^2,
\end{align*}
which yields
\begin{align*} 
e_{k+1}^{(1)} &\le (1-\tau) e_k^{(1)}+ \|z_{k+1}-z_k\|^2-2\frac{\tau}{\mu} (f(x_k)-f(x_\ast))+ \frac{2}{\mu}\langle \nabla f(x_k),y_k-x_k\rangle-\tau\|x_k-z_k\|^2.
\end{align*}
Finally, with $e_k^{(2)}=f(y_k)-f(x_\ast)$ we consider the evolution of $e_k$ through
\begin{align*}
e_{k+1} &= \frac{\mu}{2}e_{k+1}^{(1)}+ e_{k+1}^{(2)}\le (1-\tau)\frac{\mu}{2}e_k^{(1)}+\frac{\mu}{2}\|z_{k+1}-z_k\|^2-\tau (f(x_k)-f(x_\ast))+\langle \nabla f(x_k),y_k-x_k\rangle\\ &\quad-\tau\frac{\mu}{2}\|x_k-z_k\|^2 + f(y_{k+1})-f(x_\ast)\\
&\le  (1-\tau)\frac{\mu}{2}e_k^{(1)}+\frac{\mu}{2}\|z_{k+1}-z_k\|^2-\tau (f(x_k)-f(x_\ast))+\langle \nabla f(x_k),y_k-x_k\rangle\\ &\quad-\tau\frac{\mu}{2}\|x_k-z_k\|^2 + f(x_{k})-f(x_\ast)-\frac{1}{2L}\|\nabla f(x_k)\|^2+\{(1-\tau)e_k^{(2)}-(1-\tau)e_k^{(2)}\}  \}\\
&= (1-\tau) e_k -(1-\tau) (f(y_k)-f(x_\ast))-\tau (f(x_k)-f(x_\ast)) + f(x_k)-f(x_\ast)-\frac{1}{2L}\|\nabla f(x_k)\|^2\\
&\quad +\frac{\mu}{2}\|z_{k+1}-z_k\|^2-\tau\frac{\mu}{2}\|x_k-z_k\|^2+\langle \nabla f(x_k),y_k-x_k\rangle\\
&=:  (1-\tau) e_k  + \mathcal R,
\end{align*}
where we have used \eqref{eq:NAM_strongconvex_aux1} in the first inequality and added a zero $(1-\tau) (f(y_k)-f(x_\ast))-(1-\tau) (f(y_k)-f(x_\ast))=0$. It is left to prove that $\mathcal R\le 0$. First note, that $\mathcal R$ simplifies to
\[ \mathcal R = (1-\tau)(f(x_k)-f(y_k)) - \frac1{2L}\|\nabla f(x_k)\|^2 +\frac{\mu}{2}\|z_{k+1}-z_k\|^2-\tau\frac{\mu}{2}\|x_k-z_k\|^2+(1-\tau+\tau)\langle \nabla f(x_k),y_k-x_k\rangle.\]
We apply strong convexity in the form of 
\[\langle \nabla f(x_k),y_k-x_k\rangle \le f(y_k) - f(x_k) -\frac{\mu}{2}\|y_k-x_k\|^2 \]
such that
\begin{align*}
\mathcal R &\le (1-\tau)(f(x_k)-f(y_k))+(1-\tau)(f(y_k)-f(x_k))-\frac{\mu}{2}\|y_k-z_k\|^2\\
&\quad -\frac{1}{2L}\|\nabla f(x_k)\|^2 +\frac{\mu}{2}\|z_{k+1}-z_k\|^2-\tau\frac{\mu}{2}\|x_k-z_k\|^2+\tau\langle \nabla f(x_k),y_k-x_k\rangle \\
&=\tau\langle \nabla f(x_k),y_k-x_k\rangle-(1-\tau)\frac{\mu}{2}\|y_k-x_k\|^2-\frac{1}{2L}\|\nabla f(x_k)\|^2\\
&\quad+\frac{\mu}{2}\|z_{k+1}-z_k\|^2-\tau\frac{\mu}{2}\|x_k-z_k\|^2.
\end{align*}
Recall that we have used $z_{k+1}-z_k = \tau(x_k-z_k)-\frac{\tau}{\mu}\nabla f(x_k)$, which with $\tau(x_k-z_k) = y_k-x_k$ gives
\[\frac{\mu}{2} \|z_{k+1}-z_k\|^2 = \frac{\mu}2\|y_k-x_k\|^2 -\tau \langle \nabla f(x_k),y_k-x_k\rangle + \frac{\tau^2}{2\mu}\|\nabla f(x_k)\|^2, \]
and therefore, we obtain
\begin{align*}
\mathcal R &\le -(1-\tau)\frac{\mu}{2} \|y_k-x_k\|^2 +\frac{\mu}2\|y_k-x_k\|^2 + \tau\langle \nabla f(x_k),y_k-x_k\rangle -\tau \langle \nabla f(x_k),y_k-x_k\rangle\\
&\quad +\frac{\tau^2}{2\mu}\|\nabla f(x_k)\|^2 - \frac{1}{2L}\|\nabla f(x_k)\|^2 - \tau\frac{\mu}{2}\frac{\tau}{\tau}\|x_k-z_k\|^2\\
&=\tau\frac{\mu}{2}\|y_k-x_k\|^2 - \frac{\mu}{2\tau}\|\tau(x_k-z_k)\|^2 + \left(\frac{\tau^2}{2\mu}-\frac{1}{2L} \right)\|\nabla f(x_k)\|^2\\
&= \left(\frac{\tau\mu}{2}-\frac{\mu}{2\tau}\right) \|y_k-x_k\|^2 +\left(\frac{\tau^2}{2\mu}-\frac{1}{2L} \right)\|\nabla f(x_k)\|^2,
\end{align*}
where we have used again that $\tau(x_k-z_k) = y_k-x_k$. With the choice $\tau=\sqrt{\frac{\mu}{L}}\in(0,1)$ we observe that
\[ \frac{\tau^2}{2\mu}-\frac{1}{2L} = \frac{1}{2L}-\frac{1}{2L} = 0\]
and
\[ \frac{\tau\mu}{2}-\frac{\mu}{2\tau} = \frac{\mu}2(\tau-\frac1\tau) \le 0.\]
This proves that $\mathcal R\le 0$ and the assertion follows:
\[e_{k+1} \le (1-\tau)e_k = \left(1-\sqrt{\frac{\mu}{L}}\right)e_k. \]
\end{proof}

\begin{remark}
We can rewrite the error bound for NAM of the previous theorem through
\[e_{k} =\frac{\mu}2\|z_k-x_\ast\|^2+f(y_k)-f(x_\ast) \le (1-\tau)^k e_0 = \left(1-\frac{1}{\sqrt{\kappa}}\right)^k e_0 = \left(\frac{\sqrt{\kappa}-1}{\sqrt{\kappa}}\right)^k e_0.  \]
The corresponding optimal convergence rate of the simple gradient descent scheme was given by
\[ e_{k}^{\mathrm{GD}} :=\|x_k-x_\ast\|^2\le \left(\frac{\kappa-1}{\kappa+1}\right)^{2k} e_0^{\mathrm{GD}}.\]
In Figure~\ref{fig:comp_NAM_GD} we compare both rates of convergence for increasing condition number $\kappa = \frac{L}{\mu}$ illustrating the improvement through NAM.
\end{remark}

\begin{figure}[!htb]
  \centering  
  \includegraphics[width=1\textwidth]{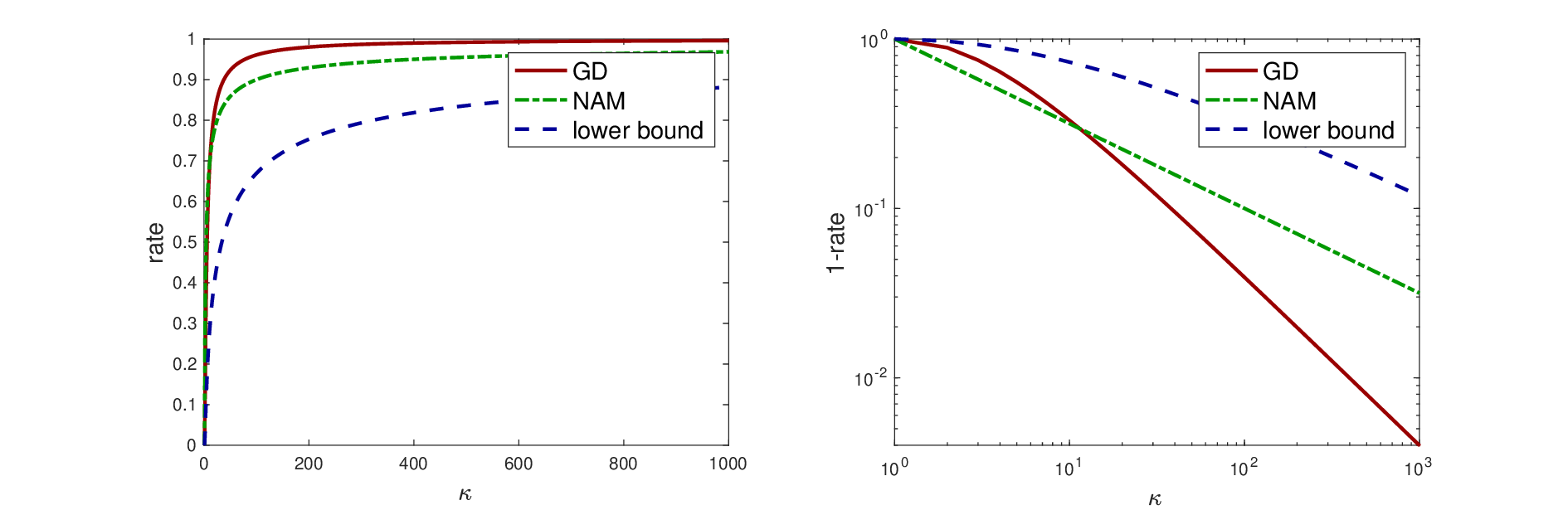}
 \caption{Illustration of the linear convergence rate depending on the condition number $\kappa =\frac{\mu}{L}$ for GD and NAM. The left plot shows the convergence rate $c^{\mathrm{GD}}(\kappa) =\left(\frac{\kappa-1}{\kappa+1}\right)^{2}$ and $c^{\mathrm{NAM}}(\kappa)= \left(\frac{\sqrt{\kappa}-1}{\sqrt{\kappa}}\right)$, whereas the right plot shows the difference to $1$, i.e.~$1-c(\kappa)$, in logarithmic scale. } \label{fig:comp_NAM_GD}
\end{figure}

\chapter{Stochastic gradient methods}\label{ch:sgd}
In this chapter, we introduce stochastic variants of gradient descent. The basic example is stochastic gradient descent (SGD). While deterministic gradient descent uses the full gradient of
the objective function in each iteration, SGD replaces this gradient by a stochastic approximation.

\section{Stochastic gradient descent method}
\subsection{Motivation}
We will start with a motivating example to introduce the problem of minimizing expected and empirical risk. This will motivate the consideration of stochastic variants of gradient descent methods.

\begin{example}
We revisit the regression problem discussed in Chapter~\ref{ch:Intro}, which arises in supervised learning. Recall that we aim to approximate an unknown model 
\[z\mapsto \varphi(z) = y,\quad \varphi:\R^{d_z}\to\R^{d_y} \]
through a parametrized family of functions $g_\theta:\R^{d_z}\to\R^{d_y}$, $\theta\in\Theta$. Given a training data set $\{(z^{(i)},y^{(i)})\}_{i=1}^N$, we have described the training task as optimization problem
\[\min_{\theta\in\Theta}\ f_N(\theta,\{(z^{(i)},y^{(i)})\}_{i=1}^N), \]
where $f_N:\Theta\times (\times_{i=1}^N (\R^{d_z}\times \R^{d_y})\to\R$ denotes the corresponding objective function. In the example of regression, we have considered 
\begin{equation}\label{eq:regression_loss}
f_N(\theta,\{(z^{(i)},y^{(i)})\}_{i=1}^N) = \frac{1}{N}\sum_{i=1}^N \|g_\theta(z^{(i)})-y^{(i)}\|^2 + \mathcal R(\theta),
\end{equation}
where $\mathcal R:\Theta\to\R$ is some regularization function. We aim to incorporate a probabilistic framework in order to introduce (empirical) risk minimization. Let $(\Omega,\mathcal A,\P)$ be our underlying probability space. We model the input and output variable as jointly distributed random variables $(Z,Y)$ on $(\Omega,\mathcal A,\PP)$ with state space $(\R^{d_z}\times \R^{d_y}, \mathcal B(\R^{d_z})\otimes\mathcal B(\R^{d_y}))$. The goal is to find $\theta\in\Theta$ such that $g_\theta$ represents the stochastic model
\[Y = \varphi(Z) [+ \xi],\quad Z\sim\mu_Z,\]
where $\xi$ denotes possible noise. 

\textbf{Challenge:} We assume that distribution $\mu_Z$ of $Z$ and the joint distribution $\mu_{(Z,Y)}$ respectively are unknown. Instead, we assume that we have access to i.i.d.~samples $(Z^{(i)},Y^{(i)})\sim\mu_{(Z,Y)}$.\medskip

We are now interested in how to choose an optimal approximation $g_\theta$. The natural population version of~\eqref{eq:regression_loss} is the expected risk 
\[F(\theta) = \E_{(Z,Y)\sim\mu_{(Z,Y)}}[\|g_\theta(Z)-Y\|^2] + \mathcal R(\theta), \]
where $\E_{(Z,Y)\sim\mu_{(Z,Y)}}$ denotes the expectation w.r.t.~$(Z,Y)$. Given a training data set of i.i.d.~random variables $\{(Z^{(i)},Y^{(i)})\}_{i=1}^N$ distributed according to $(Z^{(1)},Y^{(1)})\sim\mu_{(Z,Y)}$, we can apply a Monte Carlo approximation of the above expectation
\[ \E_{(Z,Y)\sim\mu_{(Z,Y)}}[\|g_\theta(Z)-Y\|^2] \approx \frac{1}{N}\sum_{i=1}^N \|g_\theta(Z^{(i)})-Y^{(i)}\|^2 \]
to construct the empirical objective function
\[F_N(\theta) = \frac{1}{N}\sum_{i=1}^N \|g_\theta(Z^{(i)})-Y^{(i)}\|^2+\mathcal R(\theta), \]
which coincides with \eqref{eq:regression_loss}.
\end{example}

Motivated by this example we introduce the definition of (empirical) risk minimization problems.

\begin{defi}
Let $f:\R^{d}\times\R^p\to \R$ be $\mathcal B(\R^d)\otimes\mathcal B(\R^p)/\mathcal B(\R)$ measurable and $Z:\Omega\to\R^p$ a random variable with distribution $\mu$ such that $\E[|f(x,Z)|]<\infty$ for all $x\in\R^d$.
\begin{enumerate}
\item We define the \textit{expected risk} $F:\R^d\to\R$ as
\[F(x) = \E_{Z\sim\mu}[f(x,Z)]=:\int_{\R^p}f(x,z)\,\mu(\dd z),\quad x\in\R^d. \]
We call the minimization problem
\[\min_{x\in\R^d}\ F(x),\quad F(x) = \E_{Z\sim\mu}[f(x,Z)] \]
\textit{risk minimization problem}.
\item Let $Z_1,\dots,Z_N$ be i.i.d.~random variables with $Z_1\sim\mu$. We define the \textit{empirical risk} $F_N:\R^d\to\R$ by
\[F_N(x) = \frac{1}{N}\sum_{i=1}^N f(x,Z^{(i)}). \]
We call the minimization problem 
\[\min_{x\in\R^d}\ F_N(x),\quad F_N(x) = \frac{1}{N}\sum_{i=1}^N f(x,Z^{(i)}) \]
\textit{empirical risk minimization problem}.
\end{enumerate}
\end{defi}

This chapter will focus on analyzing stochastic optimization methods for solving (empirical and expected) risk minimization problems.  It is important to note that this discussion only addresses a subset of the typical challenges that arise in machine/supervised learning.

\begin{outlook}[Inverse problem perspective]
For a fixed number of data points $N\in\N$ the empirical risk minimization problem is typically ill-posed and it is necessary to include regularization. This is the topic of the research area \textit{inverse problems}. As motivation, we will treat the training task of supervised learning as an inverse problem. Recall, that we are interested to approximate a model $\varphi:\R^{d_z}\to\R^{d_y}$ by a parametrized family of functions $g_\theta:\R^{d_z}\to\R^{d_y}$, $\theta\in\Theta$. Given a training data set $\{(Z^{(i)},Y^{(i)})\}$ we want to minimize the empirical risk
\[\min_{\theta\in\Theta}\ \frac1N\|g_\theta(Z^{(i)})-Y^{(i)}\|^2. \]
An alternative persepective/interpretation is the following. Define a \textit{forward map} $H:\Theta\to\R^{N\cdot d_y}$,
\[H(\theta) := (g_\theta(Z^{(1)},\dots,g_\theta(Z^{(N)}))^\top\in\R^{N\cdot d_y}. \]
With observations $\widehat Y = (Y^{(1)},\dots,Y^{(N)})^\top\in\R^{N\cdot d_y}$, we aim to solve the inverse problem of recovering the parameter $\theta\in\Theta$ such that
\begin{equation}\label{eq:IP}
\widehat Y = H(\theta).
\end{equation}
This problem is typically ill-posed (in the sense of a well-posed problem following Hadamard \cite{H1902}), due to the following reasons:
\begin{enumerate}
\item There might not exist any $\theta\in\Theta$ solving \eqref{eq:IP}.
\item The solution of \eqref{eq:IP} is not necessarily unique, i.e.~there might be $\theta_1,\theta_2\in\Theta$ with $H(\theta_1) = H(\theta_2) = \widehat Y$.
\item The solution of \eqref{eq:IP} might be unstable w.r.t.~changes in $\widehat Y$ (e.g.~due to measurement noise).
\end{enumerate}
Therefore, it is not the best idea to simply solve $\min_{\theta\in\Theta}\ \|H(\theta)-\widehat Y\|^2$. We illustrate the resulting issues in Figure~\ref{fig:ip_existence}--\ref{fig:ip_stability}. In inverse problems a large focus lies in the study of regularization methods, which can, for example, be incorporated as a penalty function. Instead of simply minimizing the data misfit functional, one considers solving the regularized optimization problem 
\[\min_{\theta\in\Theta}\ \|H(\theta)-\widehat Y\|^2 + \mathcal R(\theta),\]
where $\mathcal R: \Theta \to \R$ is a regularization function.
\end{outlook}

\begin{figure}[!htb]
  \centering \includegraphics[width=0.45\textwidth]{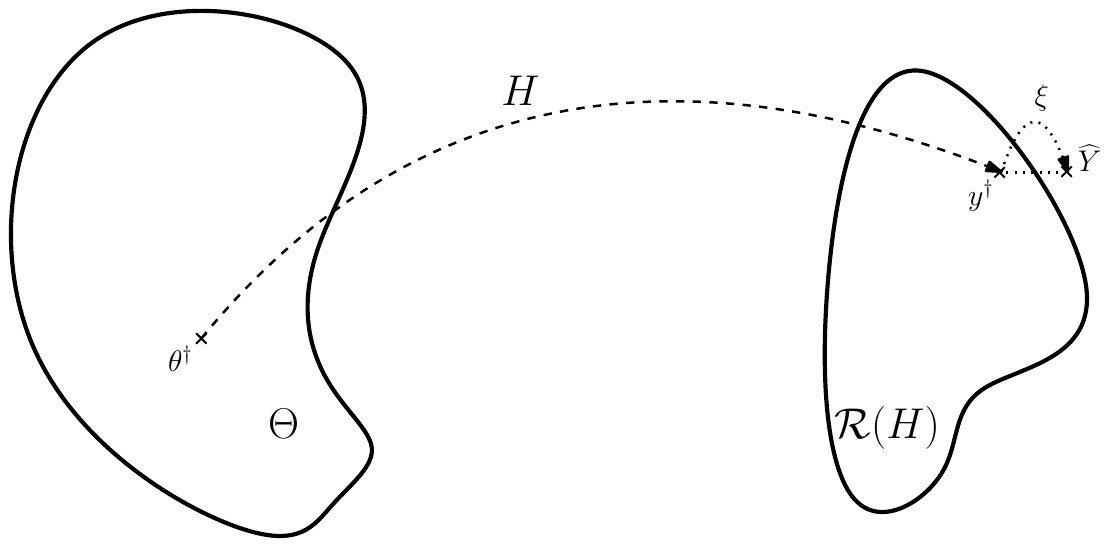}
 \caption{Illustration of ill-posedness through observational noise. The occurrence of noise might shift the observed data outside of the range of the forward map $H$. } \label{fig:ip_existence}
\end{figure} 
\begin{figure}[!htb]
  \centering \includegraphics[width=0.45\textwidth]{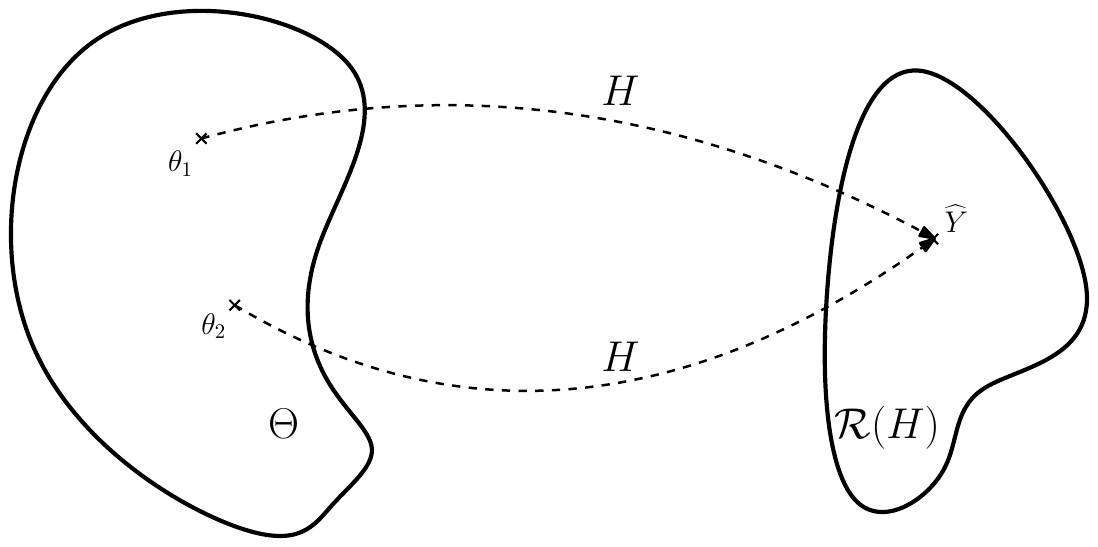}
 \caption{Illustration of ill-posedness through multiple solutions. Two different parameters $\theta_1,\theta_2\in\Theta$ might map onto the observed data $\widehat Y$.} \label{fig:ip_uniqueness}
\end{figure} 
\begin{figure}[!htb]
  \centering \includegraphics[width=0.45\textwidth]{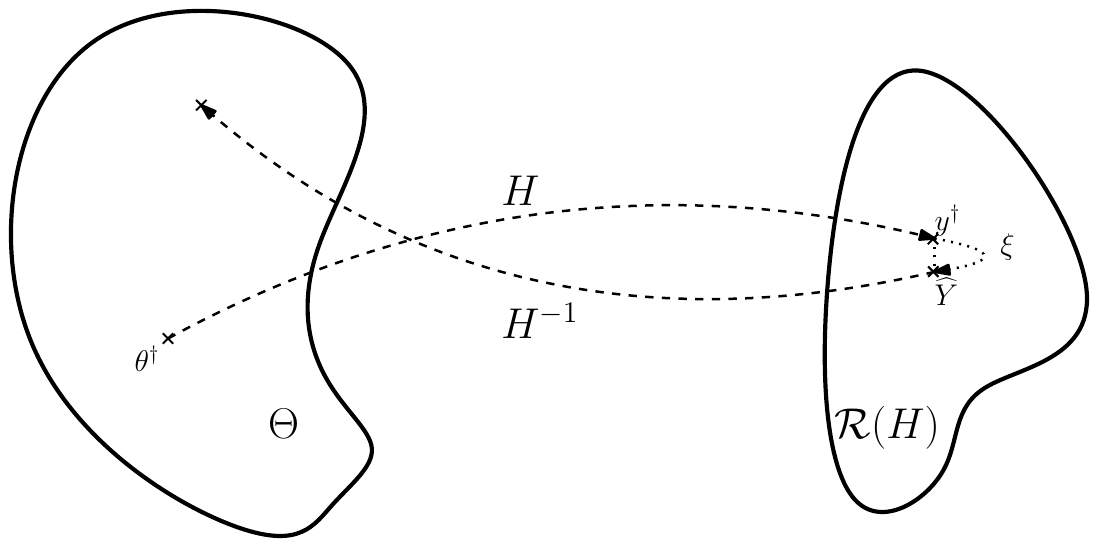}
 \caption{Illustration of ill-posedness through discontinuity. Even if $H$ is invertible, the instability may occur in the solution of the inverse problem resulting from possible discontinuity of the inverse operator.} \label{fig:ip_stability}
\end{figure} 

\begin{outlook}[Statistical learning perspective]
For example, in the area of statistical learning among other questions one is interested in the consistency of solutions of the empirical risk minimization problem. Let $\widehat X_N$ be the minimizer of the empirical risk and $x_\ast$ be the minimizer of the corresponding expected risk (provided both exist), then we can decompose the error
\[F(\widehat X_N) - F(x_\ast) = F(\widehat X_N) -F_N(\widehat X_N) + F_N(\widehat X_N)- F_N(x_\ast) + F_N(x_\ast) - F(x_\ast) \le 2 \sup_{x\in\R^d}\ |F_N(x)-F(x)|. \]
We emphasize that $F_N$ as function depends on the random variables $Z^{(1)},\dots, Z^{(N)}$ and is therefore random. Hence, the minimizer $\widehat X_N$ itself is a random variable. In statistical learning theory one is concerned with questions such as the consistency of $\widehat X_N$ for number of data points $N$ approaching infinity. 
\end{outlook}

In the following chapter we are going to study optimization methods for solving the expected and empirical risk minimization problem. We do not consider questions around generalization and regularization, which are beyond the scope of this lecture course.

\subsection{The stochastic gradient descent algorithm}
We now want to introduce and analyze the stochastic variant of the gradient descent method for solving the expected and empirical risk minimization problem. For a comprehensive overview of stochastic gradient methods, the interested reader may refer to \cite{Bottou2018,garrigos2023handbook,ruder2017overview}.

In the following, let $(\Omega,\mathcal A,\P)$ be the underlying probability space, $Z:\Omega\to\R^p$ be a random variable on $(\Omega,\mathcal A,\P)$ with distribution $\mu_Z$. We are interested in solving the optimization problem
\[\min_{x\in\R^d}\ F(x), \]
where the objective function $F:\R^d\to\R$ is defined as the expectation function in form
\[F(x) = \E_{Z\sim\mu_Z}[f(x,Z)] = \int_{\R^p} f(x,z)\mu_Z(\dd z),\quad x\in\R^d, \]
for a function $f:\R^d\times \R^p \to\R$. Throughout this lecture course we make the following assumption:
\begin{ass}\label{ass:stochapprx}
\begin{enumerate}
\item The function $f:\R^{d}\times\R^p\to\R$ is $\mathcal B(\R^d)\otimes\mathcal B(\R^p)/\mathcal B(\R)$-measurable.
\item For every $z\in\R^p$ the function $x\mapsto f(x,z)$ is continuously differentiable.
\item For every $x\in\R^d$ we have \[\E[|f(x,Z)| + \|\nabla_x f(x,Z)\|]<\infty\]
and \[\E[\|\nabla_x f(x,Z)-\E[\nabla_x f(x,Z)]\|^q]\le b(1+\|x\|^q)  \]
for some $q\ge1$ and $b>0$.
\end{enumerate}
\end{ass}

In order to apply the gradient methods introduced in the previous sections, we run into the question: How do we compute the derivative of $F$? More fundamentally, we can ask under which conditions $F$ is differentiable?

The following result from \cite{Jentzen2020} addresses the question about differentiability of $F$ and illustrates that we can use the random variable $Z$ to construct an unbiased estimator of $\nabla_x F(x)$ for every fixed $x\in\R^d$. The result provides confirmation that under Assumption~\ref{ass:stochapprx} we are allowed to interchange derivative and expectation for the computation of $\nabla_x F(x)$.

\begin{lemma}[Lemma 4.8 in \cite{Jentzen2020}]\label{lem:condexp}
Suppose Assumption~\ref{ass:stochapprx} is satisfied, then it holds true that
\begin{enumerate}
\item the function $F(x) =\E[f(x,Z)]$ is continuously differentiable,
\item $\nabla_x f(x,Z)$ is an unbiased estimator of $\nabla_x F(x)$ for every $x\in\R^d$, i.e.~it holds true that \[\nabla_x F(x) = \E[\nabla_x f(x,Z)]. \]
\end{enumerate}
\end{lemma}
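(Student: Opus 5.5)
The plan is to work with the candidate gradient $G(x):=\E[\nabla_x f(x,Z)]$, which is well defined and finite by Assumption~\ref{ass:stochapprx}(3). I first prove an integral representation of the increments of $F$ along segments. Then I show that $G$ is continuous. Together these give both assertions of Lemma~\ref{lem:condexp}.

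\textbf{Step 1 (increment formula).} Fix $x,h\in\R^d$. Since $f(\cdot,z)$ is $C^1$, the fundamental theorem of calculus (as in the proof of the descent Lemma~\ref{lem:descentlemma}) gives, for every $z$,
\[f(x+h,z)-f(x,z)=\int_0^1 \nabla_x f(x+th,z)^\top h\,\dd t .\]
One cannot simply take expectations and apply Fubini, because there is no integrable majorant of $\|\nabla_x f\|$ along the segment. Instead I split off the centred part $W_t(z):=\nabla_x f(x+th,z)-G(x+th)$.

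By the moment bound and Jensen, $\E\|W_t\|\le (b(1+\|x+th\|^q))^{1/q}$, which is bounded uniformly for $t\in[0,1]$. Joint measurability of $(t,z)\mapsto W_t(z)$ comes from continuity in $t$ and measurability of $G$, where $G$ is measurable by Tonelli applied to positive and negative parts. Hence Fubini applies to $W$ and gives $\E\int_0^1 W_t^\top h\,\dd t=0$, with $\int_0^1\|W_t\|\,\dd t<\infty$ a.s.

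Consequently $\int_0^1 G(x+th)^\top h\,\dd t$ is finite, being the a.s.\ difference of two finite quantities. It is also deterministic, so taking expectations yields
\[F(x+h)-F(x)=\int_0^1 G(x+th)^\top h\,\dd t .\]

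\textbf{Step 2 (continuity of $G$).} Let $h_n\to0$ and write $\nabla_x f(x+h_n,Z)=G(x+h_n)+W^{(n)}$, where $\sup_n\E\|W^{(n)}\|^q<\infty$ by Assumption~\ref{ass:stochapprx}(3). The left-hand side converges a.s.\ to $\nabla_x f(x,Z)$ by continuity in $x$.

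\emph{Local boundedness.} Suppose $\|G(x+h_{n})\|\to\infty$ along a subsequence. Dividing by $\|G(x+h_n)\|$, the left-hand side tends to $0$ a.s. The term $W^{(n)}/\|G(x+h_n)\|$ tends to $0$ in $L^1$, hence in probability. This would force the unit vectors $G(x+h_n)/\|G(x+h_n)\|$ to tend to $0$, a contradiction. So $G$ is bounded near $x$.

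\emph{Identification of limits.} Take a subsequence with $G(x+h_n)\to c$. Then $W^{(n)}\to\nabla_x f(x,Z)-c$ a.s. Since $\E W^{(n)}=0$, passing to the limit in the expectation gives $G(x)-c=0$. This passage is justified when $q>1$, because the $L^q$ bound makes $(W^{(n)})$ uniformly integrable. As every subsequential limit equals $G(x)$, it follows that $G(x+h_n)\to G(x)$.

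\textbf{Step 3 (conclusion).} Using the continuity of $G$ in the increment formula,
\[\Big|F(x+h)-F(x)-G(x)^\top h\Big|\le \|h\|\sup_{t\in[0,1]}\|G(x+th)-G(x)\|=o(\|h\|).\]
Hence $F$ is differentiable with $\nabla F=G$, which is continuous. This proves both assertions.

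\textbf{Main obstacle.} The hard part is Step 2, namely controlling $G$ near $x$ when only the centred gradient has a moment bound. The borderline case $q=1$ is where uniform integrability of $W^{(n)}$ is not automatic. There I would first try to derive uniform integrability from the increment formula of Step 1 together with Fatou's lemma. If that fails, I would localise the argument, or strengthen the hypothesis to a uniform-integrability condition as in \cite{Jentzen2020}.
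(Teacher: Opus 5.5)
Your argument is a complete and correct proof when $q>1$. Step~1 and the local-boundedness part of Step~2 hold for every $q\ge 1$, and Step~3 only needs continuity of $G$. The one place $q>1$ enters is the identification of subsequential limits, where the $L^q$ bound gives uniform integrability of $(W^{(n)})$. The notes give no proof of this lemma and simply defer to \cite{Jentzen2020}, so there is no competing argument to compare with.

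The case $q=1$, however, is not a gap that Fatou's lemma or a localisation could close. For $q=1$ the statement is false, and it fails exactly at your identification step. Take $d=p=1$, $Z\sim\mathcal U(0,1)$, and a smooth $\beta\ge0$ supported in $[1,2]$ with $\int\beta=1$. Set $B(v):=\int_{-\infty}^v\beta(u)\,\dd u$, and define $f(x,z):=B(x/z)$ for $z>0$ and $f(x,z):=0$ for $z\le0$. This $f$ is Borel, $C^\infty$ in $x$, satisfies $|f|\le1$, and has $\nabla_x f(x,z)=z^{-1}\beta(x/z)\ge0$. The substitution $u=x/z$ gives
\[
G(x)=\E[\nabla_x f(x,Z)]=\int_x^\infty\frac{\beta(u)}{u}\,\dd u\quad(x>0),\qquad G(x)=0\quad(x\le0).
\]
Hence $G\equiv c_0:=\int_1^2\beta(u)u^{-1}\,\dd u\in[\tfrac12,1]$ on $(0,1]$, and $\E|\nabla_x f(x,Z)-G(x)|\le 2G(x)\le2$. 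So Assumption~\ref{ass:stochapprx} holds with $q=1$ and $b=2$.

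Yet $F(x)=0$ for $x\le0$ and $F(x)=c_0x$ on $[0,1]$, which follows from your own increment formula. So $F$ is not differentiable at $0$. Along $h_n\downarrow0$ you get $W^{(n)}\to-c_0$ almost surely while $\E W^{(n)}=0$. The reason is that $\nabla_x f(h_n,Z)$ is supported on $\{h_n/2\le Z\le h_n\}$ with height of order $1/h_n$, so $(W^{(n)})$ is $L^1$-bounded but not uniformly integrable. Consistently, for $x\in(0,1)$ one has $\E|\nabla_x f(x,Z)|^q=x^{1-q}\int_1^2\beta(u)^q u^{q-2}\,\dd u$, which tends to $\infty$ as $x\downarrow0$ for every $q>1$. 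So the example is excluded precisely by your hypothesis.

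Drop the attempt to rescue $q=1$. The lemma should be stated with $q>1$, or alternatively with local uniform integrability of $\{\nabla_x f(y,Z):\|y-x\|\le\delta\}$. In either form your proof goes through as written.
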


The main consequence of Lemma~\ref{lem:condexp} for the analysis of SGD is the following:
although $\nabla_x f(X_k,Z_{k+1})$ is random, it is an unbiased estimator of the full gradient conditionally on the past,
\[
\E[\nabla_x f(X_k,Z_{k+1})\mid \mathcal F_k] =
\nabla_x F(X_k).
\]
This is the key property that allows us to interpret SGD as gradient descent perturbed by a martingale difference noise.
Each update guides the current iteration in the direction of a (stochastic) approximation of the negative gradient. Since $\nabla_x f(x,Z)$ is an unbiased estimator of $\nabla_x F(x)$, we expect the scheme to perform well on average. As part of this lecture course, we will verify this expectation. The algorithm is formulated in Algorithm~\ref{alg:SGD}. Throughout this chapter, we assume the following scenario.
\begin{ass}\label{ass:iid_sample}
We assume that we have access to a sequence $(Z_k)_{k\in\N}$ of i.i.d. random variables with common distribution $\mu_Z$.
\end{ass}

\begin{algorithm}[htb!]
\begin{algorithmic}[1]
\State \textbf{Input:} \begin{itemize}
 \item loss function $f:\R^d\times \R^p\to\R$
 \item initial random variable $X_0:\Omega\to\R^d$
 \item sequence of step sizes positive $(\alpha_k)_{k\in\N}$ (deterministic or $\mathcal F$-adapted)
 \item sequence of i.i.d.~random variables $(Z_k)_{k\in\N}$ with $Z_1\sim\mu_Z$. 
 \end{itemize}
 \State set $k=0$
\While{"convergence/stopping criterion not met"}
	\State approximate the gradient $\nabla_x F(X_k)$ through
	\[G_k = \nabla_x f(X_k,Z_{k+1}) \]
	\State set $X_{k+1} = X_k -\alpha_k G_k$, $k \mapsto k+1$
\EndWhile
\end{algorithmic}
 \caption{Stochastic gradient descent method (SGD)}\label{alg:SGD}
\end{algorithm}

\begin{remark}
In Algorithm~\ref{alg:SGD}, the random variable $Z_{k+1}$ is independent of the past iterates $X_0,\dots,X_k$. In the definition of $G_k$ we have used the index $k+1$ for $Z_{k+1}$ such that $X_k$ remains measurable w.r.t.~$\sigma(Z_m,m\le k)$ for all $k\ge1$. Indeed, we can then consider one of the filtrations $\mathcal F_k^X = \sigma(X_m,0\le m\le k)$ or $ \mathcal F_k^{Z}= \sigma(X_0,Z_m,m\le k)$.
\end{remark}

\begin{remark}
We note that Algorithm~\ref{alg:SGD} in practice is often used to minimize objective functions in the form of empirical risks,
\[F_N(x) = \frac1N\sum_{i=1}^N f(x,z^{(i)}) = \E_{Z\sim\widehat \mu_N}[f(x,Z)], \]
where $\widehat \mu_N=\frac1N\sum_{i=1}^N \delta_{z^{(i)}}$ denotes the empirical measure over the fixed data set $\{z^{(i)},i=1,\dots,N\}$. To apply SGD to this finite-sum objective, one samples in each iteration an index $\mathfrak i_k$ uniformly from $\{1,\dots,N\}$, independently of the past, and uses
\[
G_k=\nabla_x f(x_k,z^{(\mathfrak i_k)})
\]
as an unbiased estimator of $\nabla F_N(x_k)$. Mini-batch variants sample a random subset $\mathfrak I_k\subset\{1,\dots,N\}$ and average the corresponding gradients. We describe this scheme in Algorithm~\ref{alg:SGD2}.
\end{remark}

\begin{algorithm}[htb!]
\begin{algorithmic}[1]
\State \textbf{Input:} \begin{itemize}
 \item loss function $f:\R^d\times \R^p\to\R$
 \item initial random variable $X_0:\Omega\to\R^d$
 \item sequence of step sizes $(\alpha_k)_{k\in\N}$, $\alpha_k>0$ (deterministic or $\mathcal F$-adapted)
 \item fixed realization of fixed deterministic data set $\{z^{(i)}\}_{i=1}^N$ with $z^{(i)}\in\R^p$. 
 \end{itemize}
 \State set $k=0$
\While{"convergence/stopping criterion not met"}
	\State generate independently $\mathfrak i_{k+1}\sim\mathcal U(\{1,\dots,N\})$
	\State approximate the gradient $\nabla_x F_N(X_k)$ through
	\[G_k = \nabla_x f(X_k,z^{\mathfrak i_{k+1}}) \]
	\State set $X_{k+1} = X_k -\alpha_k G_k$, $k \mapsto k+1$
\EndWhile
\end{algorithmic}
 \caption{Stochastic gradient descent method with finite data}\label{alg:SGD2}
\end{algorithm}

While Algorithm~\ref{alg:SGD} generates a new independent realization of $Z$ in each iteration, Algorithm~\ref{alg:SGD2} first fixes the number of realized independent samples of $Z$ and then randomly iterates through this data set during SGD. The randomness in Algorithm~\ref{alg:SGD2} occurs through the realization of the random indices $(\mathfrak i_k)_{k\in\N}$. For both algorithms, the resulting iteration $(X_k)_{k\in\N}$ is a stochastic process, which is path-wise constructed via $(Z_k)_{k\in\N}$ (Algorithm~\ref{alg:SGD}) and $(\mathfrak i_k)_{k\in\N}$ (Algorithm~\ref{alg:SGD2}) respectively. In both cases, we can view the stochastic process as an adapted process with respect to the natural filtration
\[\mathcal F_k^X = \sigma(X_m,m\le k)\quad \text{or}\quad \mathcal F_k^Z = \sigma(X_0,\ Z_m,m\le k) \]
and 
\[\mathcal F_k^X = \sigma(X_m,m\le k)\quad\text{or}\quad \mathcal F_k^{\mathfrak i} = \sigma(X_0,\ \mathfrak i_m,m\le k)\,.\]
This filtration will be relevant when analyzing the convergence behavior of SGD, where we take the expectation conditioned on the information from the past. 

We will first discuss SGD from an stochastic approximation perspective motivated by the Robbins \& Monro algorithm \cite{RM1951}. 
\begin{outlook}(Robbins \& Monro Algorithm)  
We consider a brief outlook to the original stochastic approximation method introduced in \cite{RM1951} aiming to root-finding. The authors considered the following question: Given a family of real-valued random variables $(Y_x)_{x\in\R}$ one can define the expectation function (in $x$)
\[M(x) = \E[Y_x] = \int_R y\, \mu(dy;x) \]
where $\mu(\cdot; x)$ denotes the distribution of $Y_x$. Given $z\in\R$, the aim is construct an algorithm to find the (unique) solution of the equation
\[M(x) = z.\]
The challenging aspect in this question is the unknown expectation function $M$. However, the authors assume that one is able to sample independently from $(Y_x)_{x\in\R}$ (or from  $\mu(\cdot; x)$ respectively). The Robbins \& Monro algorithm in its original from iterates through 
\[X_{k+1} = X_k + \alpha_k (z-Y_k), \]
where $Y_k$, $k\in\N$ are independent random variables with distribution $\mu(\cdot; X_k)$ and $(\alpha_k)_{k\in\N}$ is a sequence of step sizes $\alpha_k>0$. Robbins \& Monro proved convergence of $(X_k)_{k\in\N}$ in $L^2$ towards $z$ under certain assumptions on $M$ and $(\mu(\cdot; x))_{x\in\R}$. Slightly later Blum \cite{Blum1954} (1954) proved almost sure convergence under additional condition on the sequence of step sizes
\[\sum_{k=0}^\infty \alpha_k =\infty\quad \text{and}\quad \sum_{k=0}^\infty \alpha_k^2 <\infty. \]
\end{outlook}

We will apply the Robbins \& Siegmund Theorem based on Doob's supermartingale convergence theorem in order to derive an almost sure convergence result for SGD. However, the direct application of this theorem only leads to an asymptotic convergence result without quantification of the convergence speed. A more careful analysis conducted in Section~\ref{sec:as_rates} allows to describe almost sure convergence rates.

In principle, we can view the approximation $G_k$ in Algorithm~\ref{alg:SGD} and \ref{alg:SGD2} as a form of Monte Carlo approximation, which quantifies an error to the (exact/full) gradient descent method. A smaller variance of the estimator $G_k$ suggests a better convergence behavior of SGD. Therefore, in Section~\ref{sec:SGD_varred}, we will consider variance reduced versions of the SGD method.

In order to analyze SGD, we rewrite the iterative scheme as follows
\begin{align*}
X_{k+1} = X_k -\alpha_k \nabla_x f(X_k,Z_{k+1}) = X_k - \alpha_k\nabla_x F(X_k) + \alpha_k \left(\nabla_x F(X_k) - \nabla_x f(X_k,Z_{k+1})\right)\\
=:  X_k - \alpha_k\nabla_x F(X_k) + \alpha_k M_{k+1}.
\end{align*}
Recall, that we consider $(X_k)_{k\in\N}$ as an adapted process with respect to its natural filtration
$\mathcal F_k = \sigma(X_0,\ Z_m,m\le k)$. In the next section, it will turn out, that the  process $(M_k)_{k\in\N}$ satisfies
\begin{equation}\label{eq:unbiased_filtration}
\E[M_{k+1}\mid \mathcal F_k] = \E[\nabla_x F(X_k) - \nabla_x f(X_k,Z_{k+1})\mid \mathcal F_k] = 0,
\end{equation}
since we always assume that we are able to apply Lemma~\ref{lem:condexp} to derive $\nabla_x F(x) = \E[ \nabla_x f(x,Z)]$. Note that this property holds for fixed $x\in\R^d$ and we will extend this behavior to the conditional expectation $\E[ \cdot \mid \mathcal F]$, where it will be particularly relevant that $(X_k)_{k\in\N}$ is $\mathcal F$-adapted and $Z_{k+1}$ is independent of $\mathcal F_k$.

\subsection{Technical detail: Factorization of conditional expectation}
In this section, we will discuss one important property which is needed for the verification of $\nabla_x f(X_k,Z_{k+1})$ as an unbiased estimator of $\nabla_x F(X_k)$ conditioned on the iterations of SGD represented through the natural filtration $(\mathcal F_k)_{k\in\N}$. In the literature of SGD it is frequently used that $\E[\nabla_x F(X_k)-\E[\nabla_x f(X_k,Z_{k+1})\mid \mathcal F_k]=0$, since it is assumed that $\nabla_x f(x,Z_{k+1})$ is an unbiased estimator of $\nabla_x F(x)$ for every $x\in\R^d$. Although this identity is intuitive, in general this implication is non-trivial and we will need to investigate some additional work. The verification will require technical tools from measure theory such as the monotone class theorem in order to derive some form of factorization of the conditional expectation. We will prove the following lemma which can be found in \cite[Proposition~1.12]{da1992stochastic} and \cite[Corollary~2.9]{Jentzen2020}.

\begin{lemma}\label{lem:condexp_factorization}
Let
\begin{itemize}
\item $(\Omega,\mathcal A,\mathbb P)$ be a probability space, $\mathcal F\subset \mathcal A$ be some sub-$\sigma$-algebra of $\mathcal A$ on $\Omega$,
\item $(\mathbb Y_1,\mathcal Y_1)$ and $(\mathbb Y_2,\mathcal Y)_2$ be measurable spaces, $Y_1:\Omega\to\mathbb Y_1$ be $\mathcal A$/$\mathcal Y_1$-measurable and independent of $\mathcal F$, and $Y_2:\Omega\to\mathbb Y_2$ be $\mathcal F$/$\mathcal Y_2$-measurable.
\item $\Phi:\mathbb Y_1\times \mathbb Y_2\to\R$ be $(\mathcal Y_1\otimes \mathcal Y_2)/\mathcal B(\R)$-measurable with $\E[|\Phi(Y_1,Y_2)|]<\infty$, and $\E[|\Phi(Y_1,y_2)|]<\infty$ for all $y_2\in\mathbb Y_2$.
\end{itemize}
With $\varphi:\mathbb Y_2\to\R$ defined by $\varphi(y_2) = \E[\Phi(Y_1,y_2)]$, $y_2\in\mathbb Y_2$  we have
\begin{enumerate}
\item $\varphi$ is $\mathcal Y_2$/$\mathcal B(\R)$-measurable,
\item for all $A\in\mathcal F$ it holds
\(\E[\Phi(Y_1,Y_2)\mathds{1}_A] = \E[\varphi(Y_2)\mathds{1}_A]. \)
\end{enumerate}
\end{lemma}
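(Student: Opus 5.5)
We will use the monotone class theorem (a Dynkin $\pi$-$\lambda$ argument for sets, extended to functions), combined with Fubini's theorem on the product of the laws of $Y_1$ and $(Y_2,\mathds 1_A)$. The guiding observation is that independence of $Y_1$ and $\mathcal F$ makes the joint law of $(Y_1,Y_2,\mathds 1_A)$, for $A\in\mathcal F$, the product of the law $\mu_1$ of $Y_1$ with the law of $(Y_2,\mathds 1_A)$. Both assertions then reduce to Fubini–Tonelli. To avoid integrability issues for general $\Phi$, we first work with bounded (or nonnegative) $\Phi$.

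\textbf{Step 1 (indicator rectangles).} Take $\Phi=\mathds 1_{B_1\times B_2}$ with $B_i\in\mathcal Y_i$. Then $\varphi(y_2)=\PP(Y_1\in B_1)\mathds 1_{B_2}(y_2)$, which is clearly measurable. Moreover,
\[
\E[\Phi(Y_1,Y_2)\mathds 1_A]=\PP(\{Y_1\in B_1\}\cap\{Y_2\in B_2\}\cap A)=\PP(Y_1\in B_1)\,\PP(\{Y_2\in B_2\}\cap A),
\]
because $\{Y_2\in B_2\}\cap A\in\mathcal F$ is independent of $Y_1$. The right-hand side equals $\E[\varphi(Y_2)\mathds 1_A]$.

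\textbf{Step 2 (monotone class).} Let $\mathcal H$ be the class of bounded $\mathcal Y_1\otimes\mathcal Y_2$-measurable $\Phi$ for which (1) and (2) hold. Then:
\begin{itemize}
\item $\mathcal H$ is a vector space, by linearity of expectation.
\item $\mathcal H$ contains the constants.
\item $\mathcal H$ is closed under bounded monotone limits. If $0\le\Phi_n\uparrow\Phi$ with $\Phi$ bounded, then $\varphi_n\uparrow\varphi$ pointwise by monotone convergence, so $\varphi$ is measurable as a pointwise limit, and identity (2) passes to the limit by monotone convergence on both sides.
\end{itemize}
The rectangles form a $\pi$-system generating $\mathcal Y_1\otimes\mathcal Y_2$, so the functional monotone class theorem shows that $\mathcal H$ contains all bounded product-measurable functions.

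\textbf{Step 3 (integrable $\Phi$).} For nonnegative $\Phi$, apply Step 2 to the truncations $\Phi\wedge n$ and pass to the limit with monotone convergence; this gives (1) and (2) with values in $[0,\infty]$. For general $\Phi$, split $\Phi=\Phi^+-\Phi^-$. The hypothesis $\E|\Phi(Y_1,y_2)|<\infty$ guarantees that $\varphi=\varphi^+-\varphi^-$ is finite and well defined, with $\varphi^\pm(y_2)=\E[\Phi^\pm(Y_1,y_2)]$ measurable. The hypothesis $\E|\Phi(Y_1,Y_2)|<\infty$, together with (2) applied to $\Phi^\pm$ and $A=\Omega$, gives $\E[\varphi^\pm(Y_2)]<\infty$. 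Hence both sides of (2) are finite for $\Phi^+$ and for $\Phi^-$, and subtracting yields (2) for $\Phi$.

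\textbf{Main obstacle.} The delicate step is Step 2, specifically the measurability of $\varphi$ and the closure of $\mathcal H$ under monotone limits. To handle it, we will state explicitly the version of the monotone class theorem being used: bounded functions, a multiplicative or $\pi$-system generator, and closure under increasing bounded limits. The passage to integrable $\Phi$ in Step 3 then only requires careful bookkeeping of the positive and negative parts.
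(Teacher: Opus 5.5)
Your proposal is correct and follows essentially the same route as the paper. You verify the identity on measurable rectangles using independence of $Y_1$ and $\mathcal F$, extend by a $\pi$-$\lambda$/monotone class argument, and then pass to nonnegative and finally integrable $\Phi$ via monotone convergence and the splitting $\Phi=\Phi^+-\Phi^-$.

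The differences are only in packaging. You use the functional monotone class theorem for bounded $\Phi$ instead of the paper's Dynkin-system argument for indicators followed by simple functions, and you obtain measurability of $\varphi$ inside that argument rather than citing Fubini. You also explicitly check $\E[\varphi^\pm(Y_2)]<\infty$ before subtracting, which the paper leaves implicit.
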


\begin{remark}
We consider Lemma~\ref{lem:condexp_factorization} as a generalization of the following rule for conditional expectation. Let $Y_1,Y_2$ be real-valued random variables on $(\Omega,\mathcal A,\mathbb P)$, $\mathcal F\subset \mathcal A$ be some sub-$\sigma$-algebra and $Y_1$ be independent of $\mathcal F$. Then we can compute the conditional expectation
\[\E[Y_1\cdot Y_2\mid\mathcal F] = \E[Y_1]\cdot \E[Y_2\mid \mathcal F].\] 
For $Y_2$ being $\mathcal F$-measurable, we deduce the assertion of Lemma~\ref{lem:condexp_factorization} with $\Phi(y_1,y_2) = y_1\cdot y_2$.

We can apply Lemma~\ref{lem:condexp} and Lemma~\ref{lem:condexp_factorization} to verify \eqref{eq:unbiased_filtration}. When applying Lemma~\ref{lem:condexp_factorization} to SGD, the random variables $Z_k$ will take the role of $Y_1$ and the random variables $X_k$ will take the role of $Y_2$. Note that for simplicity, we work with a real-valued mapping $\Phi$, but the result can be extended to $\R^d$-valued mappings.
\end{remark}

\begin{proof}[Proof of Lemma~\ref{lem:condexp_factorization}]
Firstly, note that it follows from Fubini's theorem that $\varphi$ is $\cY_2/\cB(\R)$-measurable. Let us start with a brief outline of the proof for the second assertion:
\begin{enumerate}[label = {\textbf{Step \arabic*.}},align=left]
\item We prove that the assertion holds for $\Phi(y_1,y_2) = \mathds{1}_B(y_1,y_2)$ with arbitrary $B\in\mathcal Y_1\otimes \mathcal Y_2$.
\item We use \textbf{step 1} in order to prove the assertion for step functions \[\Phi_N(y_1,y_2) = \sum_{k=1}^N d_k\mathds{1}_{D_k}(y_1,y_2)\] for $d_k\ge0$ and $D_k\in\mathcal Y_1\otimes\cY_2$.
\item We prove the assertion for positive functions $\Phi$, which can be expressed as limit of monotonically increasing step functions.
\item We finish the proof by splitting $\Phi$ into positive and negative part. 
\end{enumerate}

We go through all of the steps.

\textbf{Step 1:} Let $B\in \cX\otimes \cY$ be arbitrary and consider $\Phi(y_1,y_2) = \mathds{1}_B(y_1,y_2)$ as well as $\varphi(y_2) = \E[\mathds{1}_B(Y_1,y_2)]$. We will apply the strategy discussed in Remark~\ref{rem:monotoneclass} in order to verify that the property
\[\E[\mathds{1}_B(Y_1,Y_2)\mathds{1}_A] = \E[\varphi(Y_2)\mathds{1}_A],\quad \text{for all}\ A\in\cF \]
is satisfied for any $B\in\cY_1\otimes \cY_2$. Following the strategy described in \Cref{app:measuretheory}, we define the set 
\[\cM = \{D\in\cY_1\otimes \cY_2\mid \E[\mathds{1}_D(Y_1,Y_2)\mathds{1}_A] = \E[\left(\E[\mathds{1}_D(Y_1,y_2)]\right)|_{y_2=Y_2} \mathds{1}_A],\quad \text{for all}\ A\in\cF\}, \]
which will be the candidate for the Dynkin system. Moreover, we consider the $\cap$-stable generator
\[ \cE = \{S\in \cY_1\otimes \cY_2\mid S=E_1\times E_2,\ E_1\in\cY_1,\ E_2\in\cY_2\} \]
with $\sigma(\cE) = \cY_1\otimes \cY_2$. We need to prove that $\cM$ is a Dynkin system, and $\cE\subset \cM$. We begin with the latter property. Let $E_1\in\cY_1$ and $E_2\in\cY_2$, then we have for all $A\in \cF$ that
\begin{align*}
\E[\mathds{1}_{E_1\times E_2}(Y_1,Y_2) \mathds{1}_A] &= \P(\{Y_1\in E_1\} \cap \{Y_2\in E_2\} \cap A)\\ &\overset{Y_1\ \text{indep.}\ \cF}= \P(\{Y_1\in E_1\}) \P(\{Y_2\in E_2\} \cap A)\\
&=\E[ \E[\mathds{1}_{E_1}(Y_1)] \mathds{1}_{E_2}(Y_2)\mathds{1}_A]\\
&=\E[ \left(\E[\mathds{1}_{E_1}(Y_1)\mathds{1}_{E_2}(y_2)]\right)|_{y_2=Y_2} \mathds{1}_A]\\
&=\E[ \left(\E[\mathds{1}_{E_1\times E_2}(Y_1,y_2)]\right)|_{y_2=Y_2} \mathds{1}_A],
\end{align*}
which proves that $E_1\times E_2\in \cM$, i.e.~$\cE\subset \cM$. Next, it is easy to verify, that $\cM$ is a Dynkin system (we will skip the details here). 
Therefore, using Theorem~\ref{thm:monclass}, we imply that
\[\cY_1\otimes \cY_2 = \sigma(\cE) \overset{\cE\ \cap-\text{stable}}{=} d(\cE) \overset{\cE\subset\cM}{\subset} d(\cM) \overset{\cM\ \text{Dynkin system}}= \cM \subset \cY_1\otimes \cY_2, \]
which means that for all $B\in\cY_1\otimes \cY_2$ we have
\[\E[\mathds{1}_B(Y_1,Y_2)\mathds{1}_A] = \E[\varphi(Y_2)\mathds{1}_A],\quad \text{for all}\ A\in\cF, \]
which finishes \textbf{step 1}.

\textbf{Step 2:} Let $\Phi(y_1,y_2) = \sum_{k=1}^N d_k \mathds{1}_{D_k}(y_1,y_2)$ for $d_k\ge0$ and $D_k\in\cY_1\otimes \cY_2$. We apply linearity of the expectation and \textbf{step 1} to obtain
\begin{align*}
\E[\Phi(Y_1,Y_2)\mathds{1}_A] &= \sum_{k=1}^N d_k \E[\mathds{1}_{D_k}(Y_1,Y_2)\mathds{1}_A]\\ &\overset{\textbf{step 1}}=\sum_{k=1}^N d_k \E[\left(\E[\mathds{1}_{D_k}(Y_1,y_2) \right)|_{y_2=Y_2}\mathds{1}_A]\\
&=\E[\left(\E[\sum_{k=1}^N d_k\mathds{1}_{D_k}(Y_1,y_2) \right)|_{y_2=Y_2}\mathds{1}_A] = \E[\left(\E[\Phi(Y_1,y_2)]\right)|_{y_2=Y_2} \mathds{1}_A] = \E[\varphi(Y_2)\mathds{1}_A].
\end{align*}

\textbf{Step 3:} Let $\Phi:\mathbb Y_1\times \mathbb Y_2\to[0,\infty)$ be $(\cY_1\otimes\cY_2)/\cB([0,\infty))$-measurable, then we can find a monotonically increasing sequence of step functions $(\Phi_n)_{n\in\N}$,
\[\Phi_n(y_1,y_2) = \sum_{k=1}^{N_n} d_k^{(n)} \mathds{1}_{D_k^{(n)}}(y_1,y_2), \]
such that $\lim_{n\to\infty} \Phi_n(y_1,y_2) = \Phi(y_1,y_2)$ point-wise. Monotonicity is to understand in the sense of
$\Phi_n(y_1,y_2) \le \Phi_{n+1}(y_1,y_2)$
for all $(y_1,y_2)\in\mathbb Y_1\times \mathbb Y_2$ and all $n\in\N$. We apply monotone convergence and the findings of \textbf{step 2} to imply
\begin{align*}
\E[\Phi(Y_1,Y_2)\mathds{1}_A] \E[ \lim_{n\to\infty} \Phi_n(Y_1,Y_2) \mathds{1}_A] &= \lim_{n\to\infty} \E[\Phi_n(Y_1,Y_2)\mathds{1}_A]\\ &=  \lim_{n\to\infty} \E[\left(\E[\Phi_n(Y_1,y_2)]\right)|_{y_2=Y_2}\mathds{1}_A] \\ &= \E[\left(\E[\Phi(Y_1,y_2)]\right)|_{y_2=Y_2}\mathds{1}_A] = \E[\varphi(Y_2)\mathds{1}_A],
\end{align*}
for all $A\in\cF$.

\textbf{Step 4:} Let $\Phi:\mathbb Y_1\times\mathbb Y_2\to\R$ be $(\cY_1\otimes\cY_2)/\cB(\R)$-measurable and consider the decomposition 
\[\Phi(y_1,y_2) = \Phi^+(y_1,y_2) - \Phi^-(y_1,y_2) \]
for positive and $(\cY_1\otimes\cY_2)/\cB(\R)$-measurable functions $\Phi^+,\Phi^-$. Moreover, we define
\[\varphi^+(y_2) = \E[\Phi^+(Y_1,y_2)] \le \E[|\Phi(Y_1,y_2)|]<\infty,\quad \varphi^-(y_2) = \E[\Phi^-(Y_1,y_2)] \le \E[|\Phi(Y_1,y_2)|]<\infty \]
where we can write 
\[ \varphi(y_2)= \E[\Phi(Y_1,y_2)] = \E[\Phi^+(Y_1,y_2) - \Phi^-(Y_1,y_2)] = \varphi^+(y_2)-\varphi^-(y_2).\]
We apply linearity of the expectation and the findings of \textbf{step 3} to deduce
\begin{align*}
\E[\Phi(Y_1,Y_2)\mathds{1}_A]= \E[\Phi^+(Y_1,Y_2)\mathds{1}_A] - \E[\Phi^-(Y_1,Y_2)\mathds{1}_A] = \E[\varphi^+(Y_2)\mathds{1}_A]-\E[\varphi^-(Y_2)\mathds{1}_A] = \E[\varphi(Y_2)\mathds{1}_A].
\end{align*}
\end{proof}

\section{Almost sure convergence for non-convex objective functions}\label{sec:SGDas}

In the following section, we will analyze the almost sure convergence behavior of SGD. We will make use of an almost sure convergence theorem of Robbins \& Siegmund \cite{RS1971} which is based on Doob's supermartingale convergence theorem. We refer to Appendix~\ref{app:martingales} for a brief summary on martingales. In the analysis of SGD, the objective value does not decrease deterministically in every iteration. However, it often satisfies a descent inequality after taking conditional expectations with respect
to the past. The Robbins--Siegmund theorem is designed precisely for this situation: it allows us to conclude almost sure convergence from such conditional descent estimates, provided that the
error terms are summable.

\begin{thm}[Robbins \& Siegmund]\label{thm:RS}
Let $(\Omega,\mathcal A,\mathcal F,\mathbb P)$ be a filtered probability space, $(Z_k)_{k\in\N}$, $(A_k)_{k\in\N}$, $(B_k)_{k\in\N}$ and $(C_k)_{k\in\N}$ be non-negative and $\mathcal F$-adapted stochastic processes, such that
\[\sum_{k=0}^\infty A_k<\infty \quad \text{and}\quad \sum_{k=0}^\infty B_k<\infty\]
almost surely. Moreover, suppose
\begin{equation} \label{eq:RS_recursion}
\E[Z_{k+1}\mid \mathcal F_k] \le Z_k (1+A_k) + B_k - C_k.
\end{equation}
Then 
\begin{enumerate}
\item there exists an almost surely finite random variable $Z_\infty$ such that $Z_k\to Z_\infty$ almost surely for $k\to\infty$,
\item it holds true that $\sum_{k=0}^\infty C_k<\infty$ almost surely. 
\end{enumerate}
\end{thm}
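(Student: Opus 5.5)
The plan is to remove the multiplicative factor $(1+A_k)$ by normalization and then apply Doob's supermartingale convergence theorem (Appendix~\ref{app:martingales}) to a suitably stopped process.

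\textbf{Normalization.} Define the $\mathcal F_{k-1}$-measurable weights
\[
\pi_k := \prod_{j=0}^{k-1}(1+A_j)^{-1}, \qquad \pi_0 := 1 .
\]
Set
\[
Z_k' := \pi_k Z_k,\qquad B_k' := \pi_{k+1}B_k,\qquad C_k' := \pi_{k+1}C_k .
\]
Multiplying \eqref{eq:RS_recursion} by the $\mathcal F_k$-measurable factor $\pi_{k+1}$ gives
\[
\E[Z_{k+1}'\mid\mathcal F_k] \le Z_k' + B_k' - C_k'.
\]
Because $\sum_k A_k<\infty$ almost surely, we have $\log(1/\pi_k) \le \sum_j A_j<\infty$. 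Hence $\pi_k$ decreases to a limit $\pi_\infty\in(0,1]$ almost surely. It follows that $\sum_k B_k'\le \sum_k B_k<\infty$. It also suffices to prove the two claims for $Z_k'$ and $C_k'$, since $Z_k = Z_k'/\pi_k$ and $C_k\le C_k'/\pi_\infty$.

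\textbf{Supermartingale.} Next define
\[
U_k := Z_k' + \sum_{j<k} C_j' - \sum_{j<k} B_j' .
\]
The recursion above gives $\E[U_{k+1}\mid\mathcal F_k]\le U_k$, so $U$ is a supermartingale, provided integrability holds. However, $U_k$ is only bounded below by $-\sum_j B_j'$, which is almost surely finite but not integrable in general. This is the main obstacle.

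\textbf{Localization.} To handle it, I localize. For $a>0$ let
\[
\tau_a := \inf\Bigl\{k : \sum_{j\le k} B_j' > a\Bigr\}.
\]
Since $B'$ is adapted, $\tau_a$ is a stopping time, and on $\{k<\tau_a\}$ we have $\sum_{j\le k}B_j'\le a$. The stopped process $U_{k\wedge\tau_a}+a$ is then a non-negative supermartingale. One checks this by conditioning on the $\mathcal F_k$-measurable event $\{\tau_a>k\}$ and using that $\sum_{j<k+1}B_j'\le a$ there.

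If $\E[Z_0]$ may be infinite, I would additionally work on $\{Z_0\le m\}\in\mathcal F_0$ by multiplying everything by the indicator of this event. Alternatively, I would use the conditional form of non-negative supermartingale convergence, which needs no integrability for non-negative processes.

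\textbf{Conclusion.} Doob's theorem gives almost sure convergence of $U_{k\wedge\tau_a}$ to a finite limit. On $\{\tau_a=\infty\}$ this means $U_k$ converges. Since $\sum_j B_j'<\infty$ almost surely, we have $\Omega=\bigcup_{a\in\N}\{\tau_a=\infty\}$ up to a null set. Hence $U_k\to U_\infty$ almost surely with $U_\infty$ finite.

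Consequently, $Z_k'+\sum_{j<k}C_j'$ converges almost surely, because $\sum_{j<k}B_j'$ converges. Both summands are non-negative and the partial sums $\sum_{j<k}C_j'$ are non-decreasing. Convergence of the sum therefore forces $\sum_j C_j'<\infty$, and then $Z_k'$ itself converges to a finite limit.

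Translating back via $\pi_k\to\pi_\infty>0$ yields $Z_k\to Z_\infty:=\lim_k Z_k'/\pi_\infty$, which is finite almost surely. It also yields $\sum_k C_k<\infty$ almost surely. This proves both assertions.
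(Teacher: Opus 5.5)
Your proof is correct and follows the paper's argument essentially step for step: the same normalization by $\prod_{j<k}(1+A_j)$, the same supermartingale $\widehat Z_k-\sum_{i<k}(\widehat B_i-\widehat C_i)$, the same localization at the first time the normalized $B$-sums exceed a level, and the same removal of the localization at the end. Your extra localization on $\{Z_0\le m\}$ is a worthwhile addition, since the paper's bound $\sup_k\E[|M_{k\wedge\tau_\varepsilon}|]<\infty$ tacitly requires $\E[Z_0]<\infty$, which the statement does not assume.
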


\begin{proof}
We want to apply Doob's martingale convergence theorem, Theorem~\ref{thm:martingaleconv}, in order to prove the assertion. Therefore, we are going to construct a supermartingale based on the stated stochastic processes.

\textbf{Step 1 (construction of a supermartingale):} We define the auxiliary random variables
\[\widehat Z_k = \frac{Z_k}{\prod_{i=0}^{k-1} (1+ A_i)}, \quad \widehat B_k = \frac{B_k}{\prod_{i=0}^{k} (1+ A_i)},\quad  \widehat C_k = \frac{C_k}{\prod_{i=0}^{k} (1+ A_i)} \]
and observe that
\begin{equation}\label{eq:aux_RS1}
\begin{split}
\mathbb E[\widehat Z_{k+1}\mid \mathcal F_k] = \left(\prod_{i=0}^{k} (1+A_i)^{-1}\right) \E[Z_{k+1}\mid \mathcal F_k] &\leq \left(\prod_{i=0}^{k}(1+A_i)^{-1}\right)\left(Z_k(1+A_k) + B_k - C_k \right)\\ & = \widehat Z_k + \widehat B_k - \widehat C_k.
\end{split}
\end{equation}
Our candidate for the supermartingale is
\[M_k = \widehat Z_k - \sum_{i=0}^{k-1}(\widehat B_i - \widehat C_i),\]
for which we observe
\begin{align*}
\E[M_{k+1}\mid \mathcal F_k]  = \E[\widehat Z_{k+1}\mid\mathcal F_k] - \sum_{i=0}^k \left(\E[\widehat B_i\mid \mathcal F_k] - \E[\widehat C_i\mid \mathcal F_k] \right)
&\le \widehat Z_k + \widehat B_k- \widehat C_k - \sum_{i=0}^k (\widehat B_i - \widehat C_i)\\
&= \widehat Z_k - \sum_{i=0}^{k-1} (\widehat B_i- \widehat C_i) = M_k,
\end{align*}
where we have used \eqref{eq:aux_RS1} and that $\widehat B_i$, $\widehat C_i$ are $\mathcal F_k$-measurable for $i\le k$. In order to apply Doob's martingale convergence theorem, we need to verify $\sup_{k\in\N}\ \E[M_k^{-}]<\infty$. Since in general, it is not obvious that this property will hold, we introduce a localization

\textbf{Step 2 (localization):} We define the stopping time $\tau_\varepsilon = \inf\{k\ge 1: \ \sum_{i=0}^k \widehat B_i >\varepsilon\}$ for $\varepsilon>0$. Since $(B_k)_{k\in\N}$ is $\mathcal F$-adapted, $\tau_\varepsilon$ is a stopping time with respect to $\mathcal F$. Moreover, $(M_{k\wedge \tau_\varepsilon})_{k\in\N}$ is still a supermartingale, and additionally satisfies 
\[M_{k\wedge \tau_\varepsilon} = \widehat Z_{k\wedge \tau_\varepsilon} - \sum_{i=0}^{(k\wedge \tau_\varepsilon)-1}\widehat B_i + \sum_{i=0}^{(k\wedge\tau_\varepsilon)-1} \widehat C_i\ge -\sum_{i=0}^{(k\wedge\tau_\varepsilon)-1}\widehat B_i \ge -\varepsilon, \]
since $\sum_{i=0}^{(k\wedge\tau_\varepsilon)-1}\widehat B_i\le \varepsilon$ by construction of the stopping time $\tau_\varepsilon$. Since $(M_{k\wedge \tau_\varepsilon})_{k\in\N}$ is uniformly bounded from below (and due to the monotonic decrease of the expectation for supermartingales) we obtain
\[\sup_{k\in\N}\ \E[|M_{k\wedge\tau_\varepsilon}|]<\infty. \]
We are now ready to apply Theorem~\ref{thm:martingaleconv} to find a integrable random variable $M^\varepsilon_\infty$ with $\lim_{k\to\infty} M_{k\wedge\tau_{\varepsilon}} = M_\infty^\varepsilon$ almost surely. Next, we have to remove the stopping time.

\textbf{Step 3 (remove localization):} Let $(\varepsilon_n)_{n\in\N}$ be an increasing sequence with $\lim_{n\to\infty} \varepsilon_n = \infty$. First note, that for each $n\in\N$ we have
\[\lim_{k\to\infty} M_{k\wedge\tau_{\varepsilon_n}}(\omega) = M_\infty^{\varepsilon_n}(\omega) \]
for almost all $\omega\in\Omega$. We observe that for each $\omega\in\Omega$ with 
$\sum_{i=0}^\infty\widehat B_i(\omega) <\infty$ there exists $N\in\N$ such that $\omega\in\{\tau_{\varepsilon_N}=\infty\}$, i.e.~for this $\omega$ it holds
\[M_{k\wedge \tau_{\varepsilon_N}}(\omega) = M_k(\omega) \]
for all $k\in\N$, but similarly
\[\lim_{k\to\infty} M_k(\omega) = \lim_{k\to\infty} M_{k\wedge\tau_{\varepsilon_N}}(\omega) = M_\infty^{\varepsilon_N}(\omega) <\infty, \]
where the last inequality $<\infty$ holds since $\E[|M_\infty^{\varepsilon_N}|] <\infty$.

\textbf{Step 4 (conclusion):} Finally, we move back to the assertion regarding $(Z_k)_{k\in\N}$ and $(C_k)_{k\in\N}$. Observe that
\[-\infty < - \sum_{i=0}^\infty \widehat B_i(\omega) \le \lim_{k\to\infty} M_k(\omega) = \lim_{k\to\infty} \widehat Z_k(\omega) - \sum_{i=0}^{k-1}(\widehat B_i(\omega)- \widehat C_i(\omega)) <\infty, \]
where $\widehat Z_k(\omega), \widehat B_i(\omega), \widehat C_i(\omega) \ge 0$ implying that
\[\lim_{k\to\infty} \widehat Z_k(\omega) <\infty \quad \text{and}\quad \sum_{i=0}^\infty \widehat C_i(\omega) <\infty \]
for almost all $\omega\in\Omega$.
Moreover, it holds true that
\[Z_k(\omega) = \widehat Z_k(\omega) \prod_{i=0}^{k-1} (1+A_i(\omega)),\]
where both $\widehat Z_k(\omega)$ and $\prod_{i=0}^{k-1} (1+A_i(\omega))$ converge for almost all $\omega\in\Omega$. The latter one follows by monotonicity and
\[ 0 \le \prod_{i=0}^{k-1} (1+A_i(\omega)) \le \exp(\sum_{i=0}^{k-1} A_i(\omega)), \]
where the upper bound converges by assumption. Therefore, $\lim_{k\to\infty} Z_k(\omega) = Z_\infty(\omega)$ exists for almost all $\omega\in\Omega$. Similarly, we have
\[\sum_{i=0}^k C_i(\omega) = \sum_{i=0}^k \widehat C_i(\omega) \prod_{j=0}^i (1+A_j(\omega)) \le \left(\prod_{j=0}^\infty (1+A_j(\omega))\right) \sum_{i=0}^k \widehat C_i(\omega) \]
which implies
\[\sum_{i=0}^\infty C_i(\omega) <\infty \]
for almost all $\omega\in\Omega$. 
\end{proof}

The conclusion of the Robbins--Siegmund theorem is qualitative: it yields almost sure convergence and the summability of the accumulated descent terms. This summability property will later be
used once more in Section~\ref{sec:as_rates}, where we refine the argument and derive almost sure convergence rates from it. For the moment, we only use it to deduce convergence to zero of suitable non-negative quantities. The following corollary is an easy, but very useful, extension of Theorem~\ref{thm:RS}.
\begin{cor}\label{cor:RS}
Let $(\Omega,\mathcal A,\mathcal F,\mathbb P)$ be a filtered probability space, $(Z_k)_{k\in\N}$, $(A_k)_{k\in\N}$, $(B_k)_{k\in\N}$ and $(D_k)_{k\in\N}$ be non-negative and $\mathcal F$-adapted stochastic processes, such that
\[\sum_{k=0}^\infty A_k<\infty, \quad \sum_{k=0}^\infty B_k<\infty \quad \text{and}\quad \sum_{k=0}^\infty D_k =\infty \]
almost surely. Moreover, suppose
\[\E[Z_{k+1}\mid \mathcal F_k] \le Z_k (1+A_k-D_k) + B_k. \]
Then $Z_k$ converges almost surely to $0$ for $k\to\infty$.
\end{cor}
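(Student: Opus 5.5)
We apply Theorem~\ref{thm:RS} with the choice $C_k := Z_k D_k$, which is non-negative and $\mathcal F$-adapted. The recursion in the corollary is exactly \eqref{eq:RS_recursion} in the form
\[
\E[Z_{k+1}\mid\mathcal F_k]\le Z_k(1+A_k)+B_k-Z_kD_k .
\]
Theorem~\ref{thm:RS} then yields two facts. First, there is an almost surely finite random variable $Z_\infty$ with $Z_k\to Z_\infty$ almost surely. Second, $\sum_{k=0}^\infty Z_kD_k<\infty$ almost surely.

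It remains to identify $Z_\infty=0$. We fix an $\omega$ in the full-measure set on which all of the following hold simultaneously:
\begin{itemize}
\item $Z_k(\omega)\to Z_\infty(\omega)<\infty$;
\item $\sum_k Z_k(\omega)D_k(\omega)<\infty$;
\item $\sum_k D_k(\omega)=\infty$.
\end{itemize}
This set has full measure as a countable intersection of almost sure events.

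We argue by contradiction and suppose $Z_\infty(\omega)=2c>0$. Then there is $k_0$ with $Z_k(\omega)\ge c$ for all $k\ge k_0$. This gives
\[
\sum_{k\ge k_0}Z_k(\omega)D_k(\omega)\ge c\sum_{k\ge k_0}D_k(\omega)=\infty,
\]
which contradicts the second fact. Hence $Z_\infty=0$ almost surely, so $Z_k\to0$ almost surely.

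The only point needing care is that $C_k=Z_kD_k$ is admissible in Theorem~\ref{thm:RS}. Non-negativity and adaptedness are immediate, since both $Z_k$ and $D_k$ are non-negative and adapted. The theorem imposes no further requirement on $C_k$. We do not need $D_k\le 1+A_k$, because the right-hand side is only used as an upper bound.
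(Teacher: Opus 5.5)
Your proof is correct and follows the paper's argument. You set $C_k := D_k Z_k$ in Theorem~\ref{thm:RS} to obtain almost sure convergence of $Z_k$ and $\sum_k D_k Z_k<\infty$, and then use $\sum_k D_k=\infty$ to force the limit to be zero; the paper phrases this last step as $\liminf_{k\to\infty} Z_k = 0$ rather than by contradiction, but the idea is the same.
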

\begin{proof}
In the setting of Theorem~\ref{thm:RS}, we have $Z_k = Z_k$, $A_k = A_k$, $B_k = B_k$ and $C_k = D_kZ_k$. By Theorem~\ref{thm:RS} we there exists $Z_\infty$ which is almost surely finite, integrable and satisfies $Z_k\to Z_\infty$ almost surely. In addition, we also have that $\sum_{k=1}^\infty C_k = \sum D_k Z_k<\infty$ implying that $\liminf_{k\to\infty} Z_k = 0$ due to the assumption $\sum_{k=1}^\infty D_k = \infty$ almost surely. Consequently, as the limit inferior and limit coincide for converging sequences, we have
     \[ Z_\infty = \lim_{k\to\infty} Z_k = \liminf_{k\to\infty} Z_k = 0\quad \text{almost surely}\,.\]
\end{proof}
We are now ready to prove almost sure convergence of SGD in the non-convex setting. We will assume that the objective function $F$ is $L$-smooth and lower bounded, i.e.~$\inf_{x\in\R^d}\ F(x)>-\infty$. Similar to the case of gradient descent, we do not expect more than convergence to stationary points. In particular, we are able to extend Theorem~\ref{thm:GD_diminishingstep} to the stochastic version. 
The proof is based on a one-step conditional descent estimate, \eqref{eq:smooth_condexpest}. This estimate is the stochastic analogue of the deterministic descent condition~\eqref{eq:descent_condition}: although the objective value does not decrease path-wise in every iteration, it decreases after taking conditional expectations, up to an additional
error term caused by the stochastic gradient noise. This yields a recursion of the form
\eqref{eq:RS_recursion} required to apply the Robbins--Siegmund theorem. The estimate~\eqref{eq:smooth_condexpest} is the central technical step in the analysis of SGD. It will reappear throughout the rest of this chapter.
\begin{thm}[SGD almost sure convergence]\label{thm:SGD_as}
Let $F:\R^d\to\R$ be $L$-smooth and bounded from below by $F^\ast = \inf_{x\in\R^d}\ F(x)>-\infty$, let $(\alpha_k)_{k\in\N}$ (deterministic or $\mathcal F$-adapted) satisfy 
\[\alpha_k>0,\quad \sum_{k=0}^\infty \alpha_k =\infty\quad \text{and}\quad \sum_{k=0}^\infty \alpha_k^2<\infty \]
(almost surely). We assume that the assumptions of Lemma~\ref{lem:condexp} are satisfied, and 
\[\E[\|\nabla_x f(x,Z)-\E[\nabla_x f(x,Z)]\|^2] \le c(1+(F(x)-F^\ast)) \] 
for some constant $c>0$ and all $x\in\R^d$. Moreover, let $X_0$ be a random variable such that $\E[F(X_0)]<\infty$ and $(X_k)_{k\in\N}$ be the sequence of random variables generated by Algorithm~\ref{alg:SGD}. Then it holds true that the sequence of random variables $(F(X_k))_{k\in\N}$ converges almost surely to some random variable $F_\infty$, almost surely finite, and 
\[\liminf_{k\to\infty} \|\nabla_x F(X_k)\|^2 = 0 \]
almost surely.
\end{thm}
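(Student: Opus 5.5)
The plan is to derive a one-step conditional descent estimate for the non-negative process $Z_k := F(X_k)-F^\ast$ and then apply the Robbins--Siegmund theorem, Theorem~\ref{thm:RS}. We work with the filtration $\mathcal F_k=\sigma(X_0,Z_m,\ m\le k)$, so that $X_k$ and $\alpha_k$ are $\mathcal F_k$-measurable and $Z_{k+1}$ is independent of $\mathcal F_k$.

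\textbf{Step 1 (pathwise descent).} By the descent Lemma~\ref{lem:descentlemma} with $x=X_k$ and $y=-\alpha_k\nabla_x f(X_k,Z_{k+1})$,
\[
F(X_{k+1})\le F(X_k)-\alpha_k\nabla F(X_k)^\top \nabla_x f(X_k,Z_{k+1})+\frac{L\alpha_k^2}{2}\|\nabla_x f(X_k,Z_{k+1})\|^2 .
\]

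\textbf{Step 2 (conditional expectation).} We take $\E[\cdot\mid\mathcal F_k]$ in this inequality. Lemma~\ref{lem:condexp_factorization} is applied with $Y_1=Z_{k+1}$ and $Y_2=X_k$, componentwise for the vector-valued terms. Combined with Lemma~\ref{lem:condexp}, it gives
\[
\E[\nabla_x f(X_k,Z_{k+1})\mid\mathcal F_k]=\nabla F(X_k).
\]
For the second moment we use the bias--variance split
\[
\E[\|\nabla_x f(x,Z)\|^2]=\|\nabla F(x)\|^2+\E[\|\nabla_x f(x,Z)-\nabla F(x)\|^2].
\]
Together with the variance assumption and Lemma~\ref{lem:condexp_factorization} this yields
\[
\E[\|\nabla_x f(X_k,Z_{k+1})\|^2\mid \mathcal F_k]\le \|\nabla F(X_k)\|^2+c(1+Z_k).
\]
Since $\alpha_k$ is $\mathcal F_k$-measurable, it can be pulled out of the conditional expectation, and we obtain the central estimate \eqref{eq:smooth_condexpest}:
\[
\E[Z_{k+1}\mid\mathcal F_k]\le Z_k\Big(1+\frac{Lc\alpha_k^2}{2}\Big)+\frac{Lc\alpha_k^2}{2}-\alpha_k\Big(1-\frac{L\alpha_k}{2}\Big)\|\nabla F(X_k)\|^2 .
\]

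\textbf{Step 3 (Robbins--Siegmund).} Set $A_k=B_k=\frac{Lc}{2}\alpha_k^2$. Both are summable almost surely because $\sum_k\alpha_k^2<\infty$.

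The term $\alpha_k(1-\frac{L\alpha_k}{2})$ could be negative for the first finitely many $k$, which prevents a direct choice of $C_k$. To handle this, write $-\alpha_k(1-\frac{L\alpha_k}{2})=-\alpha_k+\frac{L}{2}\alpha_k^2$. Then bound $\frac{L}{2}\alpha_k^2\|\nabla F(X_k)\|^2\le L^2\alpha_k^2 Z_k$ using $\|\nabla F(x)\|^2\le 2L(F(x)-F^\ast)$, which was shown in the proof of Lemma~\ref{lem:convex_and_smooth} and only requires $L$-smoothness. This term is absorbed into $A_k$, which remains summable, and we take $C_k=\alpha_k\|\nabla F(X_k)\|^2\ge 0$.

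With these choices, Theorem~\ref{thm:RS} gives $Z_k\to Z_\infty$ almost surely, with $Z_\infty$ almost surely finite. Hence $F(X_k)\to F_\infty:=Z_\infty+F^\ast$ almost surely. It also gives $\sum_k\alpha_k\|\nabla F(X_k)\|^2<\infty$ almost surely.

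\textbf{Step 4 (conclusion).} If $\liminf_k\|\nabla F(X_k)\|^2>0$ on an event of positive probability, then on that event $\|\nabla F(X_k)\|^2\ge\varepsilon(\omega)>0$ for all large $k$. Since $\sum_k\alpha_k=\infty$ almost surely, this contradicts the summability from Step 3. Therefore $\liminf_k\|\nabla F(X_k)\|^2=0$ almost surely.

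\textbf{Main obstacle.} The delicate step is Step 2: justifying the conditional unbiasedness and the conditional second-moment bound through Lemma~\ref{lem:condexp_factorization}. This needs integrability of $\Phi(Z_{k+1},X_k)$, where $\Phi(z,x)=\|\nabla_x f(x,z)\|^2$ or the inner-product term. We would obtain it inductively: $\E[F(X_0)]<\infty$, and the estimate of Step 2 applied to $\Phi\ge0$ (via Step 3 of that lemma, which needs no integrability) shows $\E[Z_{k+1}]<\infty$ whenever $\E[Z_k]<\infty$ and the $\alpha_k$ are bounded. For $\mathcal F$-adapted step sizes, we would treat $\alpha_k$ as part of $Y_2$.
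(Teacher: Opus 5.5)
Your proposal is correct and follows the paper's proof essentially step by step. Both go through the descent lemma, then conditional unbiasedness and the conditional second-moment bound via Lemma~\ref{lem:condexp} and Lemma~\ref{lem:condexp_factorization} to reach \eqref{eq:smooth_condexpest}, then Theorem~\ref{thm:RS}, and finally the $\liminf$ argument using $\sum_k\alpha_k=\infty$. The only deviation is how you handle a possibly negative factor $1-\frac{L}{2}\alpha_k$: the paper just assumes without loss of generality that $\alpha_k\le(1-\varepsilon)\frac{2}{L}$ for large $k$, whereas you absorb $\frac{L}{2}\alpha_k^2\|\nabla F(X_k)\|^2\le L^2\alpha_k^2(F(X_k)-F^\ast)$ into $A_k$. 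Your version is cleaner, and it avoids the random, non-stopping-time starting index that the paper's reduction would require when the step sizes are only adapted.
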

\begin{proof}
We define the natural filtration $\cF = (\cF_k)_{k\in\N}$ through $\cF_k=\sigma(X_m,m\le k)= \sigma(X_0,\ Z_m,m\le k)$ and note that $(\alpha_k)_{k\in\N}$ is $\cF$-adapted per construction. Using the $L$-smoothness of $F$ we obtain (path-wise) that
\begin{align*}
F(X_{k+1}) &= F(X_k-\alpha_k \nabla_x f(X_k,Z_{k+1})) \\ &\le F(X_k) -\alpha_k \langle \nabla_x F(X_k),\nabla_x f(X_k,Z_{k+1})\rangle + \alpha_k^2 \frac{L}2 \|\nabla_x f(X_k,Z_{k+1})\|^2\\
& = F(X_k) -\alpha_k \|\nabla_x F(X_k)\|^2+\alpha_k \langle \nabla_x F(X_k),M_{k+1}\rangle\\&\quad + \alpha_k^2 \frac{L}2(\|\nabla_x F(X_k)\|^2-2\langle \nabla_x F(X_k), M_{k+1}\rangle + \|M_{k+1}\|^2),
\end{align*}
where $M_{k+1} := \nabla_x F(X_k) - \nabla_x f(X_k,Z_{k+1})$. By Lemma~\ref{lem:condexp} and Lemma~\ref{lem:condexp_factorization} we obtain \[\E[M_{k+1}\mid \cF_k] = 0\] and
\[ \E[\|M_{k+1}\|^2\mid \cF_k]\le c (1+ (F(X_k)-F^\ast)). \]
This yields
\begin{align}\label{eq:smooth_condexpest}
\E[F(X_{k+1})-F^\ast\mid \cF_k] &\le (F(X_k) - F^\ast) + (\frac{L}{2}\alpha_k^2-\alpha_k) \|\nabla_x F(X_k)\|^2 + \frac{L}{2}\alpha_k^2 c (1 + (F(X_k)-F^\ast)) \notag\\
&=(1+c\frac{L}{2}\alpha_k^2) (F(X_k)-F^\ast) + c\frac{L}{2}\alpha_k^2 - \alpha_k (1-\frac{L}{2}\alpha_k)\|\nabla_x F(X_k)\|^2.
\end{align}
W.l.o.g.~we assume that $\alpha_k \le (1-\varepsilon)\frac{2}{L}$ for some $\varepsilon\in(0,1)$ (else let $k$ be sufficiently large), such that
$(1-\frac{L}{2}\alpha_k)\ge\varepsilon> 0$. We can now apply Theorem~\ref{thm:RS} to imply that $\lim_{k\to\infty} F(X_k)-F^\ast$ exists almost surely and is finite, as well as
\[\varepsilon \sum_{k=0}^\infty \alpha_k\|\nabla_x F(X_k)\|^2\le \sum_{k=0}^\infty \alpha_k (1-\frac{L}{2}\alpha_k)\|\nabla_x F(X_k)\|^2<\infty  \]
almost surely. Since we have assumed $\sum_{k=0}^\infty \alpha_k =\infty$ almost surely, 
we obtain
\[\liminf_{k\to\infty} \|\nabla_x F(X_k)\|^2 = 0 \]
almost surely.
\end{proof}

\begin{remark}
    In the literature of SGD, the so-called ABC-condition has been established as a popular assumption on the variance of the stochastic gradient estimators. The assumption is an important relaxation of bounded noise and can be verified in many applications \cite{gower2021sgd,khaled2022better}. Here, we assume that there exist constants $A,B,C\ge0$ such that
    \[\E[\|\nabla_x f(x,Z)-\E[\nabla_x f(x,Z)]\|^2] \le A(F(x)-F^\ast) + B\|\nabla_x F(x)\|^2+C\,. \]
    All the results, presented in these lecture notes based on bounded variance assumptions can be extended to the ABC-condition.
\end{remark}

Before delving into the derivation of convergence rates for SGD, we formulate the following Corollary which states almost sure convergence under same assumptions of Theorem~\ref{thm:SGD_as}, but with the additional property of strong convexity of $F$. 

\begin{cor}\label{cor:SGD_as}
Suppose that the assumptions of Theorem~\ref{thm:SGD_as} are satisfied and additionally, assume that $F$ is $\mu$-strongly convex. Then the sequence of random variables $(X_k)_{k\in\N}$ converges almost surely to the unique global minimum $x_\ast\in\R^d$ of $F$.
\end{cor}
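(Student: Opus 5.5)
We combine the conditional descent estimate \eqref{eq:smooth_condexpest} from the proof of Theorem~\ref{thm:SGD_as} with the Polyak--{\L}ojasiewicz inequality, which holds for strongly convex $F$ by Proposition~\ref{prop:strongconvex_PL}, and then apply Corollary~\ref{cor:RS}. Strong convexity converts gradient information into control of the objective gap, and the objective gap into control of the distance to $x_\ast$.

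\textbf{Step 1: existence of the minimizer.} Since $F$ is $\mu$-strongly convex and continuously differentiable, it has a unique global minimizer $x_\ast$. Hence $F^\ast=F(x_\ast)$, and by Proposition~\ref{prop:strongconvex_PL},
\[
\|\nabla_x F(x)\|^2\ge 2\mu\,(F(x)-F^\ast)\qquad\text{for all } x\in\R^d.
\]

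\textbf{Step 2: a contracting recursion for $Z_k:=F(X_k)-F^\ast\ge 0$.} Since $\alpha_k\to0$ almost surely (because $\sum_k\alpha_k^2<\infty$), we may, as in the proof of Theorem~\ref{thm:SGD_as}, restrict to $k$ large enough that $1-\frac L2\alpha_k\ge\varepsilon$. Inserting the PL inequality into \eqref{eq:smooth_condexpest} gives
\[
\E[Z_{k+1}\mid\cF_k]\le Z_k\Big(1+c\tfrac L2\alpha_k^2-2\mu\varepsilon\alpha_k\Big)+c\tfrac L2\alpha_k^2 .
\]
We then set $A_k=c\frac L2\alpha_k^2$, $B_k=c\frac L2\alpha_k^2$ and $D_k=2\mu\varepsilon\alpha_k$. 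These are non-negative and $\cF$-adapted, with $\sum_k A_k<\infty$, $\sum_k B_k<\infty$ and $\sum_k D_k=\infty$ almost surely.

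\textbf{Step 3: the initial indices.} Corollary~\ref{cor:RS} is stated from $k=0$, so the finitely many early indices need care. If $\alpha_k$ is deterministic, we simply shift the time index to start at the first $k_0$ with $\alpha_k\le(1-\varepsilon)\frac2L$ for all $k\ge k_0$. If $\alpha_k$ is only adapted, $k_0$ is random. In that case we would define $D_k:=2\mu\varepsilon\alpha_k\mathds 1_{\{\alpha_k\le (1-\varepsilon)2/L\}}$, which is still adapted and still has $\sum_k D_k=\infty$ almost surely. For indices where the indicator vanishes, the term $-\alpha_k(1-\frac L2\alpha_k)\|\nabla F\|^2$ may be positive. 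We bound it using $L$-smoothness, $\|\nabla_x F(x)\|^2\le 2L(F(x)-F^\ast)$ (Lemma~\ref{lem:convex_and_smooth} with $y=x_\ast$), and absorb it into $A_k$; this happens only finitely often, so the sums remain finite. Corollary~\ref{cor:RS} then gives $F(X_k)\to F^\ast$ almost surely. I expect this bookkeeping, together with checking integrability of $Z_k$ (which follows from $\E[F(X_0)]<\infty$ and the recursion), to be the main technical nuisance.

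\textbf{Step 4: convergence of the iterates.} By strong convexity with $\nabla_x F(x_\ast)=0$,
\[
\frac\mu2\|X_k-x_\ast\|^2\le F(X_k)-F^\ast\to0\qquad\text{almost surely},
\]
so $X_k\to x_\ast$ almost surely.
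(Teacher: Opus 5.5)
Your proposal is correct and follows essentially the same route as the paper. The PL inequality from Proposition~\ref{prop:strongconvex_PL} is inserted into \eqref{eq:smooth_condexpest}, Corollary~\ref{cor:RS} then gives $F(X_k)-F^\ast\to0$ almost surely, and $\frac{\mu}{2}\|X_k-x_\ast\|^2\le F(X_k)-F^\ast$ transfers this to the iterates. The one difference is your Step~3, which handles the initial indices through the indicator in $D_k$ and by absorbing the finitely many bad terms into $A_k$; this is more careful than the paper, which simply assumes $\alpha_k\le 1/L$ without loss of generality.
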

\begin{proof}
    Let $F$ be $\mu$-strongly convex and denote $x_\ast= \arg\min_{x\in\R^d}\ F(x)$. Following Proposition~\ref{prop:strongconvexPL}, we have that
    \[\frac12\|\nabla F(x)\|^2 \ge \mu(F(x)-F(x_\ast))\,.\]
    W.l.o.g.~we assume this time that that $\alpha_k\le \frac{1}{L}$ so that from \eqref{eq:smooth_condexpest} we have that
    \[\E[F(X_{k+1}) - F(x_\ast)\mid \cF_k] \le (1+c\frac{L}2\alpha_k^2 - \alpha_k\mu) (F(X_k)-F(x_\ast)) + c\frac{L}2\alpha_k^2\,,\]
    where we have used that $(1-\frac{L}2\alpha_k)\ge \frac12$. By Robbins-Siegmund (Corollary~\ref{cor:RS}) we deduce that $F(X_k)-F(x_\ast) \to 0$ almost surely as $k\to\infty$. Again by strong convexity we recall that $\frac{\mu}2\|X_k-x_\ast\|^2 \le F(X_k)-F(x_\ast)$ (almost surely) which yields the claim. 
\end{proof}

\section{Convergence in expectation}\label{sec:SGDexp}
Similarly as in the setting of the deterministic gradient descent scheme, the previous result states convergence to a stationary point without explicit rate of convergence, but under rather mild assumptions on the objective function $F$. In order to obtain a speed of convergence, additional properties such as convexity must be assumed for $F$. We will observe that the convergence behavior is worse than in the setting of deterministic gradient descent methods. However, in order to make a fair comparison between deterministic and stochastic gradient descent schemes, one must consider the complexity of both algorithms, including the computational cost of implementation.
In the following section, we consider a row of convergence results in expectation.
\subsection{Convergence for non-convex and smooth objective functions}
We begin with the convergence of the expected gradient norm under a non-convex but smooth setting. The proof builds up on the estimates derived in the proof of Theorem~\ref{thm:SGD_as}.

\begin{thm}\label{thm:SGDnonconvex}
    Suppose that the assumptions of Theorem~\ref{thm:SGD_as} and the additional assumption that $\alpha_k\in(0,1/L]$ almost surely are in place. Then for all $K>0$ it holds true that
    \[ \min_{0\le k\le K}\E[\|\nabla F(X_k)\|^2] \le  \frac{2(\E[F(X_0)]- F^\ast)}{\sum_{j=0}^K\alpha_j} + \frac{c L  \sum_{k=0}^K\alpha_k^2}{\sum_{j=0}^K\alpha_j} \,.\]
\end{thm}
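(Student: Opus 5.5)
The plan is to reuse the one-step conditional descent estimate \eqref{eq:smooth_condexpest} from the proof of Theorem~\ref{thm:SGD_as}. I then take full expectations, telescope over $k=0,\dots,K$, and lower bound the weighted sum of expected squared gradient norms by its minimum. This is the stochastic analogue of the proof of Corollary~\ref{cor:gradient_convergence}.

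\textbf{Step 1 (one-step bound).} Since $\alpha_k\le 1/L$, we have $1-\frac L2\alpha_k\ge \frac12$. Hence \eqref{eq:smooth_condexpest} gives
\[
\E[F(X_{k+1})-F^\ast\mid\cF_k]\le (F(X_k)-F^\ast)-\frac{\alpha_k}{2}\|\nabla F(X_k)\|^2+\frac{cL}{2}\alpha_k^2+\frac{cL}{2}\alpha_k^2(F(X_k)-F^\ast).
\]
For the bound as stated, I treat the step sizes as deterministic. Then $\alpha_k$ can be pulled out of expectations, and $\min_k \E[\|\nabla F(X_k)\|^2]$ is compared with $\sum_k\alpha_k\E[\|\nabla F(X_k)\|^2]/\sum_j\alpha_j$. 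Taking expectations via the tower property, with $e_k:=\E[F(X_k)]-F^\ast$ and $g_k:=\E[\|\nabla F(X_k)\|^2]$, gives
\[
\frac{\alpha_k}{2}g_k\le e_k-e_{k+1}+\frac{cL}{2}\alpha_k^2+\frac{cL}{2}\alpha_k^2e_k .
\]
Finiteness of all $e_k$ follows inductively from $\E[F(X_0)]<\infty$ and this recursion.

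\textbf{Step 2 (the main obstacle: the multiplicative noise term).} The term $\frac{cL}{2}\alpha_k^2e_k$ is what prevents a clean telescoping.

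If the variance bound is the constant one, $\E\|M_{k+1}\|^2\le c$, this term does not appear. Summing and using $e_{K+1}\ge 0$ then gives
\[
\sum_{k=0}^K\alpha_kg_k\le 2e_0+cL\sum_{k=0}^K\alpha_k^2 ,
\]
and dividing by $\sum_j\alpha_j$ together with $\min_k g_k\le \sum_k\alpha_kg_k/\sum_j\alpha_j$ yields exactly the claimed inequality.

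For the affine bound $c(1+(F-F^\ast))$, I would first control $e_k$. Dropping the gradient term gives $e_{k+1}\le(1+\frac{cL}{2}\alpha_k^2)e_k+\frac{cL}{2}\alpha_k^2$. Iterating shows
\[
\sup_{k\le K}(1+e_k)\le (1+e_0)\prod_{j}\bigl(1+\tfrac{cL}{2}\alpha_j^2\bigr)\le (1+e_0)\exp\Bigl(\tfrac{cL}{2}\textstyle\sum_j\alpha_j^2\Bigr)=:\Gamma .
\]
Inserting this into the sum gives the same inequality with $e_0$ and $c$ replaced by $\Gamma$-inflated constants. Alternatively, one can divide the recursion by the weights $w_{k+1}:=\prod_{i\le k}(1+\frac{cL}{2}\alpha_i^2)$, as in Step~1 of the proof of Theorem~\ref{thm:RS}, and telescope the rescaled quantities.

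I therefore expect the stated constants to hold literally only when the factor $(F(x)-F^\ast)$ in the variance assumption is dropped. Otherwise, the constants carry the factor $\exp(\frac{cL}{2}\sum_j\alpha_j^2)$, which is finite exactly because $\sum\alpha_k^2<\infty$. The remaining steps, telescoping and bounding the minimum by the weighted average, are routine.
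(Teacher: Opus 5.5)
Your route matches the paper's: take full expectations in \eqref{eq:smooth_condexpest}, use $1-\frac L2\alpha_k\ge\frac12$, telescope, and bound the minimum by the $\alpha$-weighted average. Like you, the paper treats the $\alpha_k$ as deterministic, implicitly, when it pulls them out of the expectations.

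The one difference is in how the expectation step is written. The paper writes $\E[F(X_{k+1})]\le\E[F(X_k)]-\alpha_k(1-\frac{L\alpha_k}{2})\E[\|\nabla_x F(X_k)\|^2]+c\frac L2\alpha_k^2$. This silently drops the term $c\frac L2\alpha_k^2\,(\E[F(X_k)]-F^\ast)$ that \eqref{eq:smooth_condexpest} contains. The paper's argument therefore only covers the constant variance bound, which is exactly your first case.

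Your suspicion about the affine case is justified: the inequality as stated actually fails there. Take the following setup, which satisfies all assumptions of Theorem~\ref{thm:SGD_as}, including Assumption~\ref{ass:stochapprx} with $q=2$:
\begin{itemize}
\item $d=1$ and $F(x)=\frac12x^2$, so $L=1$ and $F^\ast=0$;
\item $f(x,z)=\frac12x^2+z\,h(x)$ with $h(x)=\int_0^x\sqrt{2+t^2}\,\dd t$, and $Z$ Rademacher;
\item $X_0=\sqrt5$, $\alpha_0=\alpha_1=1$, and $\alpha_k=1/k$ for $k\ge2$.
\end{itemize}
The noise variance is then $2+x^2=2\bigl(1+(F(x)-F^\ast)\bigr)$, so $c=2$. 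One step gives $X_1=-Z_1\sqrt7$, and for $K=1$
\[
\min_{0\le k\le 1}\E[\|\nabla F(X_k)\|^2]=\min\{5,7\}=5>\frac92=\frac{2\cdot\frac52}{2}+\frac{2\cdot1\cdot 2}{2}.
\]

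So some inflation of the constants is genuinely needed in the affine case. Your factor $\Gamma$ does this, as does the weighted version, which multiplies the stated right-hand side by $\prod_{i\le K}(1+\frac{cL}{2}\alpha_i^2)$. Your proposal, as you formulated it, is correct.
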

\begin{proof}
    Let $\cF_k = \sigma(X_m,m\le k)$ be the natural filtration and consider the one-step conditional descent condition \eqref{eq:smooth_condexpest}. We take another expectation on both sides which yields
    \begin{align*}
    \E[F(X_{k+1})] = \E[\E[F(X_{k+1})\mid \cF_k]] \le \E[F(X_k)] - \alpha_k(1-\frac{L\alpha_k}2) \E[\|\nabla_x F(X_k)\|^2] + c\frac{L}2 \alpha_k^2.
    \end{align*}
    Reordering the inequality and iterating over $k=0,\dots,K$ gives
    \begin{align*}
        \sum_{k=0}^K\alpha_k(1-\frac{L\alpha_k}2) \E[\|\nabla_x F(X_k)\|^2] &\le \sum_{k=0}^K\Big(\E[F(X_k)] - \E[F(X_{k+1})]\Big) +c\frac{L}2  \sum_{k=0}^K\alpha_k^2 \\
        &= (\E[F(X_0)]- \E[F(X_{K+1})]) + c\frac{L}2  \sum_{k=0}^K\alpha_k^2\\
        &\le (\E[F(X_0)]- F^\ast) + c\frac{L}2  \sum_{k=0}^K\alpha_k^2
    \end{align*}
    Under the assumption $\alpha_k\le 1/L$ for all $k\ge0$, we have that
    \begin{align*}
        \frac12\sum_{k=0}^K\alpha_k \E[\|\nabla_x F(X_k)\|^2] &\le  (\E[F(X_0)]- F^\ast) + c\frac{L}2  \sum_{k=0}^K\alpha_k^2\,.
    \end{align*}
    Similar to the proof of Corollary~\ref{cor:gradient_convergence}, we have
    \[ \min_{0\le k\le K}\E[\|\nabla F(X_k)\|^2] \le \frac1{\sum_{j=0}^K\alpha_j}\sum_{k=0}^K\alpha_k \E[\|\nabla_x F(X_k)\|^2] \le  \frac{2(\E[F(X_0)]- F^\ast)}{\sum_{j=0}^K\alpha_j} + \frac{cL  \sum_{k=0}^K\alpha_k^2}{\sum_{j=0}^K\alpha_j} \,.\]
\end{proof}

From the derived upper bound we obtain convergence under the sufficient condition that
\[ \sum_{k=0}^{\infty} \alpha_k = \infty \quad \text{and}\quad \sum_{k=0}^\infty \alpha_k^2 <\infty.  \]
In order to quantify the speed of convergence, we provide a specific choice of the step sizes. Note that in the case below, we have $\sum_{k=0}^\infty \alpha_k^2 = \infty$ but
\[\frac{\sum_{k=0}^K \alpha_k^2}{\sum_{k=0}^K \alpha_k} \to 0 \,.\]

\begin{cor}\label{cor:SGDnonconvex}
Suppose that the same conditions of Theorem~\ref{thm:SGDnonconvex} are satisfied. Moreover, let $\alpha_k := \frac{1}{L\sqrt{k+1}}$. Then it holds true that
\[ \min_{0\le k\le K}\E[\|\nabla F(X_k)\|^2] \in \cO\left(\frac{\log(K)}{\sqrt{K}}\right). \]
\end{cor}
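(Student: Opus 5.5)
The plan is to plug the specific step sizes $\alpha_k = \frac{1}{L\sqrt{k+1}}$ into the bound of Theorem~\ref{thm:SGDnonconvex} and estimate the two sums by comparison with integrals. First observe that $\alpha_k \le \frac1L$ for all $k\ge0$, so the additional hypothesis $\alpha_k\in(0,1/L]$ of Theorem~\ref{thm:SGDnonconvex} holds. The theorem then gives
\[
\min_{0\le k\le K}\E[\|\nabla F(X_k)\|^2] \le \frac{2(\E[F(X_0)]-F^\ast)}{\sum_{j=0}^K\alpha_j} + \frac{cL\sum_{k=0}^K\alpha_k^2}{\sum_{j=0}^K\alpha_j}.
\]
The remaining task is to bound $\sum_{j=0}^K\alpha_j$ from below and $\sum_{k=0}^K\alpha_k^2$ from above.

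For the denominator, the map $t\mapsto (t+1)^{-1/2}$ is decreasing. Hence
\[
\sum_{j=0}^K \frac{1}{\sqrt{j+1}} \ge \int_0^{K+1}\frac{\dd t}{\sqrt{t+1}} = 2(\sqrt{K+2}-1),
\]
and this is at least $\sqrt{K+1}$ for $K\ge 1$; one can check this directly, or simply use $\sum_{j=0}^K (j+1)^{-1/2}\ge (K+1)(K+1)^{-1/2}$. So $\sum_{j=0}^K\alpha_j \ge \frac{\sqrt{K+1}}{L}$.

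For the numerator, use the harmonic sum bound
\[
\sum_{k=0}^K\alpha_k^2 = \frac{1}{L^2}\sum_{k=1}^{K+1}\frac1k \le \frac{1}{L^2}\bigl(1+\log(K+1)\bigr).
\]

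Combining these estimates gives
\[
\min_{0\le k\le K}\E[\|\nabla F(X_k)\|^2] \le \frac{2L(\E[F(X_0)]-F^\ast)}{\sqrt{K+1}} + \frac{c\,(1+\log(K+1))}{\sqrt{K+1}}.
\]
For $K\ge 2$ both terms are bounded by a constant times $\frac{\log K}{\sqrt K}$, using $\log(K+1)\le 2\log K$ and $\sqrt{K+1}\ge\sqrt K$. This yields the $\cO\bigl(\frac{\log K}{\sqrt K}\bigr)$ statement.

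There is no real obstacle. The only points needing care are the elementary integral comparisons, and noting that the first term, which is $\cO(1/\sqrt K)$, is dominated by the logarithmic one. One should also note that $\E[F(X_0)]<\infty$ by assumption, so the constant in front of the first term is finite.
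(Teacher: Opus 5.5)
Your proposal is correct and follows the paper's proof: you substitute $\alpha_k=\frac{1}{L\sqrt{k+1}}$ into the bound of Theorem~\ref{thm:SGDnonconvex}, use $\sum_{j=0}^K\alpha_j\ge \frac{\sqrt{K+1}}{L}$ and $\sum_{j=0}^K\alpha_j^2\le\frac{1+\log(K+1)}{L^2}$, and combine. Your termwise bound $\sum_{j=0}^K (j+1)^{-1/2}\ge\sqrt{K+1}$ even removes the paper's restriction $K\ge 7$, and your constant $c$ in the logarithmic term is the one the theorem actually yields. As in the paper, $\sum_k\alpha_k^2=\infty$ for this choice, so you are really using the finite-$K$ bound of Theorem~\ref{thm:SGDnonconvex} rather than that theorem as literally stated with the hypotheses inherited from Theorem~\ref{thm:SGD_as}; that bound's derivation needs only $\alpha_k\le 1/L$ for $k\le K$ together with a uniform variance bound.
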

\begin{proof}
Firstly, we observe that 
\[\sum_{j=0}^{K} \alpha_j = \frac1L \sum_{j=0}^{K} \frac{1}{\sqrt{j+1}} \ge \frac1L \int_{1}^{K} \frac{1}{\sqrt{t+1}}\, \dd t = \frac2L (\sqrt{K+1}-\sqrt{2}) \ge \frac1L\sqrt{K+1}, \]
for sufficiently large $K$ ($K\ge 7$). On the other side, we have
\[\sum_{j=0}^{K}\alpha_j^2 = \frac{1}{L^2} \sum_{j=0}^{K}\frac{1}{j+1} \le \frac1{L^2}\left(1+\int_0^{K} \frac{1}{t+1}\,\dd t\right) = \frac{1}{L^2}(1+\log(K+1)). \]
By the upper bound derived in Theorem~\ref{thm:SGDnonconvex}, we obtain
\[\min_{0\le k\le K}\E[\|\nabla F(X_k)\|^2] \le \frac{2L(\E[F(X_0)]-F^\ast) + \frac{c}{L} (1+\alpha_0 L)}{\sqrt{K+1}} + \frac{c}{L}(1+\alpha_0 L) \frac{\log(K+1)}{\sqrt{K+1}}\in \cO\left(\frac{\log(K)}{\sqrt{K}}\right)\, . \]
\end{proof}

\subsection{Convergence for convex and smooth objective functions}
In the following, we will prove convergence of SGD under the assumption that the objective function is convex. Compared to the convergence result for gradient descent schemes, we will prove a slower convergence behavior. The following Theorem presents the resulting error bound in expectation.

\begin{thm}[SGD for convex and smooth objective function] \label{thm:SGD_convex}
Let $F:\R^d\to\R$ be convex and $L$-smooth, and assume that the set of global minima of $F$ is non-empty. We assume that the assumptions of Lemma~\ref{lem:condexp} are satisfied and that there exists $c>0$ such that
\[\E[\|\nabla_x f(x,Z)-\E[\nabla_x f(x,Z)]\|^2]\le c \]
for all $x\in\R^d$. Let $X_0$ be a random variable such that $\E[|F(X_0)|+\|X_0-x_\ast\|^2]<\infty$ for some $x_\ast\in\arg\min_{x\in\R^d}\ F(x)$. Moreover, let $(X_k)_{k\in\N}$ be generated by Algorithm~\ref{alg:SGD} with deterministic and decreasing sequence of step sizes $(\alpha_k)_{k\in\N}$ such that $\alpha_k\in(0,\frac1L]$. Then for
\[\bar X_K := \sum_{k=0}^{K-1}w_k^K X_{k+1},\quad w_k^K:= \frac{\alpha_k}{\sum_{j=0}^{K-1}\alpha_j}, \quad K\ge2, \]
it holds true that
\[\E[F(\bar X_K)-F(x_\ast)] \le \frac{\E[\|X_0-x_\ast\|^2]}{2\sum_{j=0}^{K-1}\alpha_j} + \frac{c(1+\alpha_0 L)\sum_{k=0}^{K-1} \alpha_k^2 }{2\sum_{j=0}^{K-1}\alpha_j}.\]
\end{thm}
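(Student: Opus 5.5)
The plan is to mimic the deterministic argument of Theorem~\ref{thm:GD_convex}: derive a one-step inequality relating $F(X_{k+1})-F(x_\ast)$ to the decrease of $\|X_k-x_\ast\|^2$, take expectations, telescope, and finish with Jensen's inequality for the weighted average $\bar X_K$. Since $\bar X_K$ averages $X_{k+1}$ rather than $X_k$, we want a bound on $F(X_{k+1})$, not on $F(X_k)$. The route is to combine the smoothness upper bound at $X_k$ with the convexity lower bound at $X_k$.

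\textbf{Step 1 (one-step bound).} Write $G_k=\nabla_x f(X_k,Z_{k+1})=\nabla F(X_k)-M_{k+1}$. By the descent Lemma~\ref{lem:descentlemma},
\[F(X_{k+1})\le F(X_k)-\alpha_k\langle \nabla F(X_k),G_k\rangle+\tfrac{L}{2}\alpha_k^2\|G_k\|^2.\]
Convexity gives $F(X_k)\le F(x_\ast)+\langle\nabla F(X_k),X_k-x_\ast\rangle$. Substituting $\nabla F(X_k)=G_k+M_{k+1}$ in the inner products yields
\[F(X_{k+1})-F(x_\ast)\le \langle G_k,X_k-x_\ast\rangle-\alpha_k\|G_k\|^2+\tfrac{L}{2}\alpha_k^2\|G_k\|^2+\langle M_{k+1},X_k-x_\ast-\alpha_k G_k\rangle.\]
The polarization identity gives
\[\langle G_k,X_k-x_\ast\rangle=\tfrac{1}{2\alpha_k}\big(\|X_k-x_\ast\|^2-\|X_{k+1}-x_\ast\|^2\big)+\tfrac{\alpha_k}{2}\|G_k\|^2.\]
Since $\alpha_k\le 1/L$, the coefficient of $\|G_k\|^2$ becomes $\alpha_k(\tfrac{L\alpha_k}{2}-\tfrac12)\le0$, so that term can be dropped. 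The remaining noise term $\langle M_{k+1},X_{k+1}-x_\ast\rangle$ is the delicate one, because $X_{k+1}$ is not $\cF_k$-measurable.

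\textbf{Step 2 (the noise term, main obstacle).} Split the noise term as
\[\langle M_{k+1},X_{k+1}-x_\ast\rangle=\langle M_{k+1},X_k-x_\ast-\alpha_k\nabla F(X_k)\rangle+\alpha_k\|M_{k+1}\|^2.\]
The first part is an inner product of $M_{k+1}$ with an $\cF_k$-measurable vector. By Lemma~\ref{lem:condexp} and Lemma~\ref{lem:condexp_factorization} its conditional expectation vanishes; integrability follows from $\E\|X_k-x_\ast\|^2<\infty$, which holds inductively since $\alpha_k$ is deterministic and the variance is bounded. The second part has conditional expectation at most $\alpha_k c$.

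Doing the bookkeeping carefully, I do not drop the $\|G_k\|^2$ term in Step 1 but keep $\E\|G_k\|^2\le \E\|\nabla F(X_k)\|^2+c$. I then check that all terms combine to $\tfrac{1}{2\alpha_k}\E(\|X_k-x_\ast\|^2-\|X_{k+1}-x_\ast\|^2)+\tfrac{c}{2}(1+\alpha_kL)\alpha_k$. Since $\alpha_k\le\alpha_0$ because the step sizes are decreasing, this gives the stated factor $c(1+\alpha_0L)\alpha_k/2$. Getting exactly this constant is the fiddly part. If the direct split overshoots, I would instead keep $-\alpha_k(1-\tfrac{L\alpha_k}{2})\|\nabla F(X_k)\|^2$, which is nonpositive, and absorb the cross terms into it.

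\textbf{Step 3 (telescoping and Jensen).} Multiply the expected inequality by $\alpha_k$ and sum over $k=0,\dots,K-1$. This gives
\[\sum_{k=0}^{K-1}\alpha_k\,\E[F(X_{k+1})-F(x_\ast)]\le\tfrac12\E\|X_0-x_\ast\|^2+\tfrac{c(1+\alpha_0L)}{2}\sum_{k=0}^{K-1}\alpha_k^2.\]
Dividing by $\sum_{j=0}^{K-1}\alpha_j$ and applying Jensen's inequality (Proposition~\ref{prop:Jensen}) to the convex function $F$ with weights $w_k^K$ yields $F(\bar X_K)\le\sum_k w_k^K F(X_{k+1})$. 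This gives the claim.
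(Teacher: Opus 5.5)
Your strategy is the paper's. You combine the descent lemma at $X_k$ with the convexity bound $F(X_k)\le F(x_\ast)+\langle\nabla F(X_k),X_k-x_\ast\rangle$ and the expansion of $\|X_{k+1}-x_\ast\|^2$, then multiply by $\alpha_k$, telescope, and apply Jensen. The paper takes conditional expectations of the descent inequality and of the distance recursion separately before combining them, which is your fallback; your pathwise polarization leads to the same one-step bound once it is done correctly.

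The gap is in the step meant to produce the constant $\frac{c}{2}(1+\alpha_kL)$. The coefficient $\alpha_k(\frac{L\alpha_k}{2}-\frac12)$ of $\|G_k\|^2$ is nonpositive. Substituting the \emph{upper} bound $\E\|G_k\|^2\le\E\|\nabla F(X_k)\|^2+c$ into that term is therefore an inequality in the wrong direction: the arithmetic happens to give the right number, but the deduction is invalid. If you instead drop the term, as in your Step~1, the noise contributes only $\alpha_k\E\|M_{k+1}\|^2\le\alpha_k c$. That gives the constant $c$ in place of $\frac{c(1+\alpha_0L)}{2}$, which is strictly weaker unless $\alpha_0=1/L$.

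To repair this, use the exact identity $\E[\|G_k\|^2\mid\cF_k]=\|\nabla F(X_k)\|^2+\E[\|M_{k+1}\|^2\mid\cF_k]$ and postpone the variance bound. The identity follows from $G_k=\nabla F(X_k)-M_{k+1}$ and $\E[M_{k+1}\mid\cF_k]=0$. The $\|G_k\|^2$ term then splits into two pieces:
\begin{itemize}
\item $\alpha_k(\frac{L\alpha_k}{2}-\frac12)\E\|\nabla F(X_k)\|^2\le 0$, which you drop;
\item $\alpha_k(\frac{L\alpha_k}{2}-\frac12)\E\|M_{k+1}\|^2$.
\end{itemize}
Adding the second piece to the $\alpha_k\E\|M_{k+1}\|^2$ from your noise split gives the positive coefficient $\frac{\alpha_k}{2}(1+L\alpha_k)$. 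Only at this point do you apply $\E\|M_{k+1}\|^2\le c$. This yields
\[
\E[F(X_{k+1})-F(x_\ast)]\le\frac{1}{2\alpha_k}\E\big(\|X_k-x_\ast\|^2-\|X_{k+1}-x_\ast\|^2\big)+\frac{c}{2}(1+L\alpha_k)\alpha_k,
\]
which is exactly the paper's one-step inequality. Your Step~3 then goes through unchanged, using $\alpha_k\le\alpha_0$.
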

\begin{proof}
Let $x_\ast\in\arg\min_{x\in\R^d}\ F(x)$ and $\cF_k=\sigma(X_m,m\le k)$ be the natural filtration. Similar as in the proof of Theorem~\ref{thm:SGD_as} we have
\begin{align*}
\E[F(X_{k+1})] = \E[\E[F(X_{k+1})\mid \cF_k]] \le \E[F(X_k)] - \alpha_k(1-\frac{L\alpha_k}2) \E[\|\nabla_x F(X_k)\|^2] + c\frac{L}2 \alpha_k^2.
\end{align*}
We can also derive the following,
\begin{align*}
\|X_{k+1}-x_\ast\|^2 &= \|X_k-x_\ast\|^2 - 2\alpha_k\langle \nabla_x f(X_k,Z_{k+1}), X_k-x_\ast\rangle + \alpha_k^2\|\nabla_x f(X_k,Z_{k+1})\|^2\\
&=\|X_k-x_\ast\|^2 - 2\alpha_k \langle \nabla_x F(X_k), X_k-x_\ast\rangle + \alpha_k^2\|\nabla_x F(X_k)\|^2\\
&\quad + 2\alpha_k\langle M_{k+1} , X_k-x_\ast\rangle + \alpha_k^2 \|M_{k+1}\|^2 - 2\alpha_k^2 \langle M_{k+1},\nabla_x F(X_k)\rangle,
\end{align*}
where again $M_{k+1}=\nabla_x F(X_k)-\nabla_x f(X_k,Z_{k+1}) $. Taking the conditional expectation wrt. $\cF_k$ results in the bound
\begin{align*}
\E[\|X_{k+1}-x_\ast\|^2\mid \cF_k] &= \|X_k-x_\ast\|^2 - 2\alpha_k \langle \nabla_x F(X_k), X_k-x_\ast\rangle + \alpha_k^2\|\nabla_x F(X_k)\|^2+\alpha_k^2 \E[\|M_{k+1}\|^2\mid \cF_k]\\
&\le \|X_k-x_\ast\|^2 - 2\alpha_k \langle \nabla_x F(X_k), X_k-x_\ast\rangle + \alpha_k^2\|\nabla_x F(X_k)\|^2+\alpha_k^2 c\, .
\end{align*}
We take again expectation and rewrite the derived inequality in form
\begin{align*}
2\alpha_k \E[\langle \nabla_x F(X_k), X_k-x_\ast\rangle] \le \E[\|X_{k}-x_\ast\|^2]-\E[\|X_{k+1}-x_\ast\|^2] + \alpha_k^2\E[\|\nabla_x F(X_k)\|^2] + \alpha_k^2c \, .
\end{align*}
By convexity of $F$ we have almost surely that
\[F(X_k) \le F(x_\ast) + \langle X_k-x_\ast, \nabla_x F(X_k)\rangle,\]
such that
\begin{align*}
\E[F(X_{k+1})] &\le F(x_\ast) + \E[\langle X_k-x_\ast,\nabla_x F(X_k)\rangle]-\alpha_k(1-\frac{L\alpha_k}2)\E[\|\nabla_x F(X_k)\|^2] + c\frac{L}2\alpha_k^2\\
&\le F(x_\ast) + \frac{1}{2\alpha_k} \left( \E[\|X_k-x_\ast\|^2]-\E[\|X_{k+1}-x_\ast\|^2]\right)\\ &\quad -\alpha_k\left(\frac12-\frac{L\alpha_k}2\right)\E[\|\nabla_x F(X_k)\|^2] + (\frac{\alpha_k}2+\frac{L\alpha_k^2}2)c\\
&\le  F(x_\ast) + \frac{1}{2\alpha_k} \left( \E[\|X_k-x_\ast\|^2]-\E[\|X_{k+1}-x_\ast\|^2]\right)+ \left(\frac{\alpha_k}2+\frac{L\alpha_k^2}2\right)c,
\end{align*}
where we have used that $(\frac12-\frac{L\alpha_k}2)\ge0$. Note that $\sum_{k=0}^{K-1} w_k^K = 1$, such that by Jensen's inequality it follows that
\begin{align*}
\E[F(\bar X_K)-F(x_\ast)] &\le \frac{1}{\sum_{j=0}^{K-1}\alpha_j}\sum_{k=0}^{K-1}\alpha_k \E[F(X_{k+1})-F(x_\ast)]\\
&\le \frac{1}{2\sum_{j=0}^{K-1}\alpha_j}\sum_{k=0}^{K-1} \left(\E[\|X_k-x_\ast\|^2]-\E[\|X_{k+1}-x_\ast\|^2] \right)\\
&\quad + \frac{1}{\sum_{j=0}^{K-1}\alpha_j}\sum_{k=0}^{K-1}\left(\frac{\alpha_k^2}{2}+\frac{\alpha_k^3L}{2}\right) c\\
&\le \frac{\E[\|X_0-x_\ast\|^2]}{2\sum_{j=0}^{K-1}\alpha_j} + \frac{c(1+\alpha_0 L)\sum_{k=0}^{K-1} \alpha_k^2}{2\sum_{j=0}^{K-1}\alpha_j},
\end{align*}
where we have used that $\alpha_k$ is decreasing and therefore, $\alpha_k\le\alpha_0$. 
\end{proof}

\begin{remark}
The convergence statement in Theorem~\ref{thm:SGD_convex} is formulated for the weighted average
\(\bar X_K\)
rather than for the last iterate $X_K$. This is one of the main differences compared to the
deterministic gradient descent proof of Theorem~\ref{thm:GD_convex}. In the deterministic convex setting with a
fixed step size, the descent estimate leads to a simple telescoping sum and, since the objective values decrease monotonically, one directly obtains a last-iterate rate. For SGD, the objective values do not decrease monotonically along individual trajectories because of the stochastic gradient noise. After taking expectations, the one-step estimate still gives a telescoping bound, but only for the weighted sum
\[
\sum_{k=0}^N \alpha_k \E\big[F(X_k)-F(x^\ast)\big].
\]
To convert this bound into a convergence rate for objective values, we therefore introduced the weighted average $\bar X_K$ and used convexity of $F$ together with Jensen's inequality.
\end{remark}

Similarly as in the previous non-convex setting, we obtain the following rate of convergence.

\begin{cor}
Suppose that the same conditions of Theorem~\ref{thm:SGD_convex} are satisfied. Moreover, let $\alpha_k := \frac{1}{L\sqrt{k+1}}$. Then it holds true that
\[ \E[F(\bar X_K)-F(x_\ast)] \in \cO\left(\frac{\log(K)}{\sqrt{K}}\right). \]
\end{cor}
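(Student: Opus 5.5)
The plan is to plug the step size $\alpha_k=\frac{1}{L\sqrt{k+1}}$ into the bound of Theorem~\ref{thm:SGD_convex}, in the same way as in the proof of Corollary~\ref{cor:SGDnonconvex}. First I check that the hypotheses are met. The sequence $(\alpha_k)$ is deterministic and decreasing, and $\alpha_k\le \alpha_0=\frac1L$, so $\alpha_k\in(0,\frac1L]$. Theorem~\ref{thm:SGD_convex} then gives, for $K\ge2$,
\[
\E[F(\bar X_K)-F(x_\ast)] \le \frac{\E[\|X_0-x_\ast\|^2]}{2\sum_{j=0}^{K-1}\alpha_j} + \frac{c(1+\alpha_0 L)\sum_{k=0}^{K-1}\alpha_k^2}{2\sum_{j=0}^{K-1}\alpha_j},
\]
where $1+\alpha_0L=2$.

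The main work is to estimate the two sums, by comparing them with integrals of monotone functions. For the denominator, $t\mapsto (t+1)^{-1/2}$ is decreasing, so
\[
\sum_{j=0}^{K-1}\frac{1}{\sqrt{j+1}}\ge \int_0^{K}\frac{\dd t}{\sqrt{t+1}} = 2(\sqrt{K+1}-1).
\]
This is at least $\sqrt{K}$ once $K$ is moderately large, for instance $K\ge 3$. Hence $\sum_{j=0}^{K-1}\alpha_j\ge \frac{\sqrt K}{L}$. For the numerator, the harmonic bound gives
\[
\sum_{k=0}^{K-1}\alpha_k^2=\frac1{L^2}\sum_{k=1}^{K}\frac1k\le \frac{1}{L^2}\bigl(1+\log K\bigr).
\]

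Inserting both estimates yields
\[
\E[F(\bar X_K)-F(x_\ast)] \le \frac{L\,\E[\|X_0-x_\ast\|^2]}{2\sqrt K} + \frac{c\,(1+\log K)}{L\sqrt K},
\]
and the right-hand side is $\cO\!\left(\frac{\log K}{\sqrt K}\right)$. The only care needed is to restrict to $K$ large enough that the lower bound on $\sum\alpha_j$ holds and $\log K\ge 1$. The finitely many small $K$ only change the constant in the $\cO$-statement.

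I do not expect any real obstacle, since the argument simply combines Theorem~\ref{thm:SGD_convex} with elementary sum–integral comparisons. The one point to watch is bookkeeping of the indices. In the theorem the sums run up to $K-1$ and the average $\bar X_K$ uses the iterates $X_1,\dots,X_K$. The integral bounds must therefore be taken over the range $j=0,\dots,K-1$, not $0,\dots,K$ as in Corollary~\ref{cor:SGDnonconvex}.
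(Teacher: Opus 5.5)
Your proposal is correct and follows essentially the same route as the paper. Both arguments plug $\alpha_k=\frac{1}{L\sqrt{k+1}}$ into the bound of Theorem~\ref{thm:SGD_convex}, use sum--integral comparisons to get $\sum_{j=0}^{K-1}\alpha_j\ge \sqrt{K}/L$ and $\sum_{k=0}^{K-1}\alpha_k^2\le (1+\log K)/L^2$ over the shifted index range, and then read off the $\cO(\log K/\sqrt{K})$ rate (your constants, including the explicit $1+\alpha_0L=2$ and the factor $\tfrac12$ in the first term, are if anything slightly sharper than the paper's).
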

\begin{proof}
Recall from Corollary~\ref{cor:SGDnonconvex} that 
\[\sum_{j=0}^{K-1} \alpha_j \ge \frac1L\sqrt{K}, \]
for sufficiently large $K$ ($K\ge 8$) and 
\[\sum_{j=0}^{K-1}\alpha_j^2 \le \frac{1}{L^2}(1+\log(K)). \]
By the upper bound derived in Theorem~\ref{thm:SGD_convex}, we obtain
\[\E[F(\bar X_K)-F(x_\ast)] \le \frac{L\E[\|X_0-x_\ast\|^2] + \frac{c}{2L} (1+\alpha_0 L)}{\sqrt{K}} + \frac{c}{2L}(1+\alpha_0 L) \frac{\log(K)}{\sqrt{K}}\in \cO\left(\frac{\log(K)}{\sqrt{K}}\right)\, . \]
\end{proof}

\subsection{Convergence for strongly convex and smooth objective functions}\label{sec:SGD_strcnvx}

If we additionally assume strong convexity, we can further improve the derived upper bound. Moreover, we can even prove convergence to a unique global minimum of $F$, as we also did for the deterministic gradient descent scheme. However, due to the stochastic approximation of the gradient, we lose the behavior of linear convergence. 

\begin{thm}[SGD for strong convex and smooth objective function]\label{thm:SGD_strngconvex}
Let $F:\R^d\to\R$ be $\mu$-strongly convex and $L$-smooth. We assume that the assumptions of Lemma~\ref{lem:condexp} are satisfied and that there exists $c>0$ such that
\[\E[\|\nabla_x f(x,Z) - \E[\nabla_x f(x,Z)] \|^2] \le c \]
for all $x\in\R^d$. Let $X_0$ be a random variable such that $\E[|F(X_0)| + \|X_0-x_\ast\|^2]<\infty$, where $x_\ast\in\R^d$ is the unique global minimum of $F$. Moreover, let $(X_k)_{k\in\N}$ be generated by Algorithm~\ref{alg:SGD} with deterministic and decreasing sequence of step sizes $(\alpha_k)_{k\in\N}$ such that $\alpha_k\in(0,\frac1L]$. Then it holds true that
\[\E[\|X_{k+1}-x_\ast\|^2] \le (1-\alpha_k \mu) \E[\|X_k-x_\ast\|^2] + c\alpha_k^2 \]
for all $k\ge0$.
\end{thm}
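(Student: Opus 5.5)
We expand the squared distance along one SGD step, take the conditional expectation, and then use the curvature of $F$ to absorb the gradient terms. As in the proof of Theorem~\ref{thm:SGD_convex}, set $M_{k+1}=\nabla_x F(X_k)-\nabla_x f(X_k,Z_{k+1})$ and $\cF_k=\sigma(X_m,m\le k)$. By Lemma~\ref{lem:condexp} and Lemma~\ref{lem:condexp_factorization} we have $\E[M_{k+1}\mid\cF_k]=0$ and $\E[\|M_{k+1}\|^2\mid\cF_k]\le c$. The computation in the proof of Theorem~\ref{thm:SGD_convex} therefore gives
\[
\E[\|X_{k+1}-x_\ast\|^2\mid\cF_k]\le \|X_k-x_\ast\|^2-2\alpha_k\langle\nabla_x F(X_k),X_k-x_\ast\rangle+\alpha_k^2\|\nabla_x F(X_k)\|^2+\alpha_k^2c .
\]

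The key step is to bound the inner product from below so that it dominates the term $\alpha_k^2\|\nabla_x F(X_k)\|^2$ and still leaves a contraction factor $(1-\alpha_k\mu)$. We will combine two estimates, using $\nabla F(x_\ast)=0$:
\begin{itemize}
\item $\mu$-strong convexity, $F(x_\ast)\ge F(X_k)+\langle\nabla F(X_k),x_\ast-X_k\rangle+\frac{\mu}{2}\|X_k-x_\ast\|^2$;
\item the smooth-convex bound of Lemma~\ref{lem:convex_and_smooth}, $F(X_k)-F(x_\ast)\ge\frac{1}{2L}\|\nabla F(X_k)\|^2$.
\end{itemize}
Together they yield
\[
\langle\nabla F(X_k),X_k-x_\ast\rangle\ge F(X_k)-F(x_\ast)+\tfrac{\mu}{2}\|X_k-x_\ast\|^2\ge \tfrac{1}{2L}\|\nabla F(X_k)\|^2+\tfrac{\mu}{2}\|X_k-x_\ast\|^2 .
\]
Multiplying by $-2\alpha_k$, the gradient terms become $\alpha_k(\alpha_k-\frac1L)\|\nabla F(X_k)\|^2\le 0$ because $\alpha_k\le 1/L$, and can be dropped. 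What remains is
\[
\E[\|X_{k+1}-x_\ast\|^2\mid\cF_k]\le(1-\alpha_k\mu)\|X_k-x_\ast\|^2+c\alpha_k^2 .
\]
Alternatively one could use the second inequality of Lemma~\ref{lem:auxiliary_bound}, but that gives the constant $2\mu L/(\mu+L)$ and needs an extra case distinction, so the route above is preferable.

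Finally, we take expectations on both sides via the tower property. All quantities are integrable by induction: $\E\|X_0-x_\ast\|^2<\infty$, the noise has bounded second moment, and $\|\nabla F(X_k)\|\le L\|X_k-x_\ast\|$. This gives the claim. The main obstacle is precisely this second step: the naive bound $\langle\nabla F,X_k-x_\ast\rangle\ge\mu\|X_k-x_\ast\|^2$ from Lemma~\ref{lem:auxiliary_bound} controls the gradient term only through $\|\nabla F\|^2\le L^2\|X_k-x_\ast\|^2$. That produces the factor $1-2\alpha_k\mu+\alpha_k^2L^2$, which is not $\le 1-\alpha_k\mu$ for every $\alpha_k\le 1/L$ (it would require $\alpha_k\le\mu/L^2$). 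Splitting the inner product via strong convexity plus Lemma~\ref{lem:convex_and_smooth} avoids this. Note also that $\alpha_k\mu\le\mu/L\le1$, so the contraction factor is nonnegative.
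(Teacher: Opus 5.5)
Your proof is correct and follows essentially the same route as the paper. You start from the one-step expansion in the proof of Theorem~\ref{thm:SGD_convex}, use $\mu$-strong convexity to bound $\langle \nabla F(X_k), X_k-x_\ast\rangle \ge F(X_k)-F(x_\ast)+\frac{\mu}{2}\|X_k-x_\ast\|^2$, and then use the smoothness bound $F(X_k)-F(x_\ast)\ge \frac{1}{2L}\|\nabla F(X_k)\|^2$ to cancel the $\alpha_k^2\|\nabla F(X_k)\|^2$ term via $\alpha_k\le 1/L$. The only cosmetic difference is that you work conditionally on $\cF_k$ and take expectations at the end, while the paper uses full expectations throughout.
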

\begin{proof}
Let $\cF_k=\sigma(X_m,m\le k)$ be again the natural filtration and recall that we have derived in the proof of Theorem~\ref{thm:SGD_convex} that
\[\E[\|X_{k+1}-x_\ast\|^2] \le \E[\|X_k-x_\ast\|^2] - 2\alpha_k \E[\langle \nabla_x F(X_k), X_k-x_\ast\rangle ] + \alpha_k^2\E[\|\nabla_x F(X_k)\|^2] + \alpha_k^2 c. \]
By $\mu$-strong convexity we have that
\[F(x_\ast) - F(X_k) \ge \langle x_\ast-X_k,\nabla_x F(X_k)\rangle + \frac{\mu}2\|X_k-x_\ast\|^2, \]
which can be rewritten as
\[-\langle X_k-x_\ast, \nabla_x F(X_k)\rangle\le -\left(F(X_k)-F(x_\ast)\right) - \frac{\mu}2\|X_{k}-x_\ast\|^2 \quad \text{almost surely}\, .  \]
Combining both inequalities, we obtain that
\begin{align*}
\E[\|X_{k+1}-x_\ast\|^2] \le (1-\alpha_k\mu)\E[\|X_k-x_\ast\|^2] - 2\alpha_k\E[F(X_k)-F(x_\ast)] + \alpha_k^2\E[\|\nabla_x F(X_k)\|^2] + \alpha_k^2 c\, .
\end{align*}
The assumption of $L$-smoothness implies that
\[-(F(X_k)-F(x_\ast)) \le -\frac{1}{2L}\|\nabla_x F(X_k)\|^2\quad \text{almost surely}, \]
such that 
\begin{align*}
\E[\|X_{k+1}-x_\ast\|^2] &\le (1-\alpha_k\mu)\E[\|X_k-x_\ast\|^2] + \alpha_k (\alpha_k-\frac{1}{L})\E[\|\nabla_x F(X_k)\|^2] + \alpha_k^2 c\\
&\le (1-\alpha_k\mu) \E[\|X_k-x_\ast\|^2] + \alpha_k^2 c,
\end{align*}
where we have used that $\alpha_k\le \frac1L$.
\end{proof}

From the above derived error bound we observe that the iterated error decomposes into the error arising due to the optimization error from the exact gradient descent scheme applied to strongly convex and smooth objective functions, and into an error arising from the variance of the stochastic approximation of the gradients. In order to deduce a convergence rate along the iterations, we need to balance both errors by either decreasing the step size $\alpha_k$ sufficiently or by decreasing the variance term. The latter one will be the topic of Section~\ref{sec:SGD_varred}, where we consider methods of variance reduction. In the following, we will present the former approach of decreasing the step size to $0$. 

\begin{cor}\label{cor:SGD_strngconvex}
Suppose that the same conditions as in Theorem~\ref{thm:SGD_strngconvex} are satisfied. Moreover, let $\alpha_k = \frac{\tau}{\mu(k+s)}$ for some $\tau\ge 2$ and $s:= \kappa\tau = \frac{L}{\mu}\tau\ge1$. Then it holds true that $\alpha_0\le \frac{1}{L}$ and there exists $\gamma\ge 2(s+1)\max(\E[\|X_0-x_\ast\|^2], \frac{\tau^2c}{s\mu^2})$ such that
\[\E[\|X_k-x_\ast\|^2] \le \frac{\gamma}{k+s}\]
for all $k\ge1$.
\end{cor}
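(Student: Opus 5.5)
The plan is to prove the bound by induction on $k$, using the one-step recursion of Theorem~\ref{thm:SGD_strngconvex},
\[
e_{k+1}\le (1-\alpha_k\mu)\,e_k + c\alpha_k^2,\qquad e_k:=\E[\|X_k-x_\ast\|^2].
\]
First I check admissibility of the step sizes. Since $\alpha_k=\frac{\tau}{\mu(k+s)}$ is decreasing, $\alpha_k\le\alpha_0=\frac{\tau}{\mu s}=\frac{\tau}{\mu\kappa\tau}=\frac1L$. Hence Theorem~\ref{thm:SGD_strngconvex} applies for all $k\ge0$. Note also that $\alpha_k\mu=\frac{\tau}{k+s}\le \frac{\tau}{s}=\frac1\kappa\le1$, so $1-\alpha_k\mu\ge0$, which is what allows the induction hypothesis to be inserted.

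\textbf{Base case $k=1$.} From the recursion with $k=0$,
\[
e_1\le (1-\tfrac{\tau}{s})e_0+\frac{c\tau^2}{\mu^2 s^2}\le e_0+\frac{c\tau^2}{\mu^2s^2}.
\]
I want this to be at most $\frac{\gamma}{1+s}$. With $\gamma\ge 2(s+1)\max(e_0,\frac{\tau^2 c}{s\mu^2})$ we get
\[
\frac{\gamma}{s+1}\ge e_0+\frac{\tau^2 c}{s\mu^2}\ge e_0+\frac{\tau^2 c}{s^2\mu^2},
\]
using $s\ge1$. So the base case holds. In fact the same choice also gives $e_0\le \gamma/s$, so one could even start the induction at $k=0$.

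\textbf{Inductive step.} Assume $e_k\le\frac{\gamma}{k+s}$ and write $m:=k+s$. Then
\[
e_{k+1}\le \Big(1-\frac{\tau}{m}\Big)\frac{\gamma}{m}+\frac{c\tau^2}{\mu^2m^2}
=\frac{(m-1)\gamma}{m^2}-\frac{(\tau-1)\gamma-c\tau^2/\mu^2}{m^2}.
\]
Since $(m-1)(m+1)\le m^2$, the first term is at most $\frac{\gamma}{m+1}$. It therefore suffices to have $(\tau-1)\gamma\ge c\tau^2/\mu^2$. Because $\tau\ge2$, it is enough that $\gamma\ge \frac{c\tau^2}{\mu^2}$, and this follows from $\gamma\ge 2(s+1)\frac{\tau^2c}{s\mu^2}\ge \frac{\tau^2 c}{\mu^2}$.

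\textbf{Main obstacle.} The only real difficulty is the bookkeeping in the inductive step. One has to split $1-\tau/m$ so that the $-(\tau-1)\gamma/m^2$ term absorbs the variance term $c\tau^2/(\mu^2 m^2)$; this is exactly where $\tau\ge2$ is needed, rather than $\tau>1$ merely giving $(\tau-1)>0$. The remaining ingredient is the elementary inequality $\frac{m-1}{m^2}\le\frac1{m+1}$. Everything else is verifying that the stated lower bound on $\gamma$ covers both the base case and the absorption condition.
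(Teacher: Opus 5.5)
Your proof is correct and follows the same route as the paper. You check $\alpha_0=\frac{\tau}{\mu s}=\frac1L$, verify the base case $k=1$ via $2\max(a,b)\ge a+b$, and in the inductive step absorb the variance term using $(\tau-1)\gamma\ge\gamma\ge\frac{\tau^2c}{\mu^2}$; your bound $\frac{m-1}{m^2}\le\frac{1}{m+1}$ is just the paper's splitting $\frac{\gamma}{m}=\frac{\gamma}{m+1}+\frac{\gamma}{m(m+1)}$ in another form. Your explicit check that $1-\alpha_k\mu=1-\frac{\tau}{k+s}\ge 1-\frac1\kappa\ge0$, which is needed to insert the induction hypothesis into the recursion, is a detail the paper uses silently.
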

\begin{proof}
Firstly, we observe that by definition
\[ \alpha_0 = \frac{\tau}{\mu \cdot s} = \frac{\tau \mu}{\mu L \tau} = \frac{1}{L}\, , \]
such that $\alpha_k\le\frac1L$ for all $k\ge0$. We define $\Delta_k = \E[\|X_k-x_\ast\|^2]$ and prove the second claim via induction. For $k=1$ it holds true that
\begin{align*}
\Delta_1 \le (1-\alpha_0\mu)\Delta_0 + \alpha_0^2 c &\le (1-\frac{1}{\kappa}) \Delta_0 + \frac{\tau^2 c}{\mu^2 s}\\ &\le \frac{(s+1)2\max(\Delta_0, \frac{\tau^2 c}{\mu^2 s})}{s+1} \le \frac{\gamma}{s+1}\, .
\end{align*}
Now, suppose that the upper bound $\Delta_k\le\frac{\gamma}{k+s}$ is satisfied for some $k\ge1$, then it follows that
\begin{align*}
\Delta_{k+1} \le (1-\alpha_k \mu)\Delta_k + \alpha_k^2 c&\le \left(1-\frac{\tau}{k+s}\right)\frac{\gamma}{k+s} + \frac{\tau^2 c}{\mu^2} \frac{1}{(k+s)^2}\\
&= \frac{\gamma}{k+1+s} + \frac{\gamma}{(k+s)(k+1+s)}-\frac{\gamma\tau}{(k+s)^2} + \frac{\tau^2c}{\mu^2}\frac1{(k+s)^2}\\
&\le \frac\gamma{k+1+s} + \frac{\gamma-\tau\gamma +\frac{\tau^2c}{\mu^2}}{(k+s)^2}\\
&\le \frac\gamma{k+1+s} + \frac{\frac{\tau^2c}{\mu^2}-\gamma}{(k+s)^2}\\
&\le \frac\gamma{k+1+s},
\end{align*}
where we have used that $\gamma(1-\tau) \le -\gamma$ and $\gamma\ge 2(s+1) \max(\Delta_0, \frac{\tau^2c}{\mu^2s})\ge \frac{\tau^2 c}{\mu^2}$.
\end{proof}

\subsection{Convergence under PL-condition and smooth objective functions}

Similar to the deterministic gradient descent method, we are able to prove convergence under the PL-condition. We obtain the same type of convergence behavior as in the strong convex setting, with the main difference being the error discrepancy in the objective function evaluation.

\begin{thm}[SGD under PL-condition]\label{thm:SGD_PL}
Let $F:\R^d\to\R$ be $L$-smooth and assume that $F$ satisfies the PL-condition
\[\|\nabla_x F(x)\|^2 \ge 2r(F(x)-F^\ast)\]
for some $r\in(0,L)$ and all $x\in\R^d$, where $F^\ast = \inf_{x\in\R^d}\ F(x) >-\infty$. We assume that the assumptions of Lemma~\ref{lem:condexp} are satisfied and that there exists $c>0$ such that
\[\E[\|\nabla_x f(x,Z)- \E[\nabla_x f(x,Z)] \|^2]\le c \]
for all $x\in\R^d$. Let $X_0$ be a random variable such that $\E[|F(X_0)|+\|X_0\|^2]<\infty$, and let $(X_k)_{k\in\N}$ be generated by Algorithm~\ref{alg:SGD} with deterministic and decreasing sequence of step sizes $(\alpha_k)_{k\in\N}$ such that $\alpha_k\in(0,\frac1L]$. Then it holds true that
\[\E[F(X_k)-F^\ast] \le (1-\alpha_k r)\E[F(X_k)-F^\ast] + c\frac{L}2 \alpha_k^2\,. \]
\end{thm}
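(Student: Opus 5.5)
The statement as written has $\E[F(X_k)-F^\ast]$ on both sides; the intended claim is clearly the one-step recursion
\[
\E[F(X_{k+1})-F^\ast] \le (1-\alpha_k r)\,\E[F(X_k)-F^\ast] + c\frac{L}{2}\alpha_k^2 ,
\]
in analogy to Theorem~\ref{thm:SGD_strngconvex}, and this is what I would prove.

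The plan is to start from the one-step conditional descent estimate \eqref{eq:smooth_condexpest} derived in the proof of Theorem~\ref{thm:SGD_as}. Redo that derivation with the bounded-variance assumption $\E[\|M_{k+1}\|^2\mid\cF_k]\le c$ in place of the growth condition; this is again justified via Lemma~\ref{lem:condexp} and Lemma~\ref{lem:condexp_factorization}. It gives
\[
\E[F(X_{k+1})-F^\ast\mid\cF_k] \le F(X_k)-F^\ast - \alpha_k\Big(1-\frac{L}{2}\alpha_k\Big)\|\nabla_x F(X_k)\|^2 + c\frac{L}{2}\alpha_k^2 .
\]
Concretely, $L$-smoothness (Lemma~\ref{lem:descentlemma}) is applied pathwise. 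We then expand $\nabla_x f(X_k,Z_{k+1})=\nabla_x F(X_k)-M_{k+1}$. The cross terms vanish under conditional expectation since $\E[M_{k+1}\mid\cF_k]=0$, and the $\|M_{k+1}\|^2$ term is bounded by $c$.

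Next use $\alpha_k\le 1/L$, so that $1-\frac L2\alpha_k\ge\frac12$. The gradient term is then at most $-\frac{\alpha_k}{2}\|\nabla_x F(X_k)\|^2$. Applying the PL condition $\|\nabla_x F(X_k)\|^2\ge 2r(F(X_k)-F^\ast)$ bounds it further by $-\alpha_k r(F(X_k)-F^\ast)$. This yields
\[
\E[F(X_{k+1})-F^\ast\mid\cF_k]\le (1-\alpha_k r)(F(X_k)-F^\ast)+c\frac L2\alpha_k^2 .
\]
Taking total expectation, with the tower property and deterministic $\alpha_k$, gives the claim. The argument mirrors the deterministic Theorem~\ref{thm:GDPL}. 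Note also that $1-\alpha_k r\ge 1-r/L>0$.

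The only genuinely delicate point is integrability, so that the expectations are finite and the conditional-expectation manipulations are legitimate. I would handle it by induction on $k$ using $\E[|F(X_0)|]<\infty$. $L$-smoothness gives $F(X_{k+1})\le F(X_k)+\|\nabla F(X_k)\|\,\alpha_k\|G_k\|+\dots$, and quadratic growth of $F$ together with the linear growth of $\nabla F$ controls these terms through second moments of $X_k$. Those moments are propagated from $\E\|X_0\|^2<\infty$ and the variance bound. Alternatively, one can note that $F(X_k)-F^\ast\ge 0$, so all conditional expectations are well defined in $[0,\infty]$ and the inequality holds even without finiteness. I would take the latter route as the cleanest.
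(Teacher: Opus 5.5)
Your proposal is correct and follows the paper's proof. You start from the smoothness descent estimate with the variance bound $c$ and combine $1-\frac L2\alpha_k\ge\frac12$ with the PL inequality; the paper applies PL first and then bounds $1-\frac L2\alpha_k$, which makes no difference. You are also right to read the left-hand side as $\E[F(X_{k+1})-F^\ast]$. Your attention to integrability goes beyond the paper. For the $[0,\infty]$ route, it is worth noting that the signed cross term $\langle\nabla_x F(X_k),M_{k+1}\rangle$ is conditionally integrable, since $\E[\|M_{k+1}\|\mid\cF_k]\le\sqrt{c}$ by conditional Jensen; this is what justifies setting its conditional expectation to zero.
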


\begin{proof}
We have already seen that under $L$-smoothness we obtain the following bound
\[\E[F(X_{k+1}) - F^\ast] \le \E[F(X_k)-F^\ast] -\alpha_k(1-\frac{L}2\alpha_k) \E[\|\nabla_x F(X_k)\|^2] + c\frac{L}2\alpha_k^2\, . \]
Under the PL-condition and the fact that $1-\frac{L}2\alpha_k\ge \frac12>0$, we improve the upper bound to
\begin{align*}
\E[F(X_{k+1}) - F^\ast] &\le\E[F(X_k)-F^\ast] -\alpha_k(1-\frac{L}2\alpha_k) 2r \E[F(X_k)-F^\ast]+ c\frac{L}2\alpha_k^2\\
&=(1-\alpha_k(1-\frac{L}2\alpha_k)2r)\E[F(X_k)-F^\ast] + c\frac{L}2\alpha_k^2\\
&\le (1-\alpha_k r)\E[F(X_k)-F^\ast] + c\frac{L}2\alpha_k^2\, .
\end{align*}
\end{proof}

We can apply the same step size strategy as in the strongly convex setting to derive convergence.
\begin{cor}
Suppose that the same conditions as in Theorem~\ref{thm:SGD_PL} are satisfied. Moreover, let $\alpha_k = \frac{\tau}{r(k+s)}$ for some $\tau\ge 2$ and $s := \frac{L}{r}\tau\ge1$. Then it holds true that $\alpha_0\le \frac{1}{L}$ and there exists $\gamma\ge (s+1)2\max(\E[F(X_0)-F^\ast], \frac{\tau^2c}{r^2})$ such that
\[\E[F(X_k)-F^\ast] \le \frac{\gamma}{k+s}\]
for all $k\ge1$.
\end{cor}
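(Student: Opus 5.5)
The plan mirrors the proof of Corollary~\ref{cor:SGD_strngconvex}, with $\Delta_k := \E[F(X_k)-F^\ast]$ in place of $\E[\|X_k-x_\ast\|^2]$. I read the recursion of Theorem~\ref{thm:SGD_PL} as the intended one-step bound
\[
\Delta_{k+1}\le (1-\alpha_k r)\Delta_k + \tfrac{cL}{2}\alpha_k^2 .
\]
The statement prints $X_k$ on both sides, but the proof there yields exactly this inequality.

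\textbf{Step size.} First I check that
\[
\alpha_0=\frac{\tau}{r s}=\frac{\tau}{r\cdot L\tau/r}=\frac1L .
\]
Since $(\alpha_k)$ is decreasing, $\alpha_k\le \frac1L$ for all $k$, so Theorem~\ref{thm:SGD_PL} applies at every step. Also $s=\frac{L}{r}\tau\ge 1$, because $r<L$ and $\tau\ge 2$.

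\textbf{Base case ($k=1$).} Here $1-\alpha_0 r = 1-\frac{r}{L}\in(0,1)$, so
\[
\Delta_1\le \Delta_0 + \tfrac{cL}{2}\alpha_0^2 = \Delta_0+\frac{c}{2L}.
\]
I then want this to be at most $\frac{\gamma}{1+s}$. The condition $\gamma\ge 2(s+1)\max(\Delta_0,\frac{\tau^2c}{r^2})$ gives $\frac{\gamma}{s+1}\ge \Delta_0+\frac{\tau^2 c}{r^2}$. It therefore suffices that $\frac{c}{2L}\le \frac{\tau^2 c}{r^2}$, which holds since $r<L$ and $\tau\ge 2$.

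\textbf{Induction step.} Assume $\Delta_k\le\frac{\gamma}{k+s}$. Plugging in $\alpha_k r=\frac{\tau}{k+s}$ gives
\[
\Delta_{k+1}\le \Big(1-\frac{\tau}{k+s}\Big)\frac{\gamma}{k+s} + \frac{cL\tau^2}{2r^2}\frac{1}{(k+s)^2}.
\]
Using $\frac{\gamma}{k+s}=\frac{\gamma}{k+1+s}+\frac{\gamma}{(k+s)(k+1+s)}\le \frac{\gamma}{k+1+s}+\frac{\gamma}{(k+s)^2}$, this becomes
\[
\Delta_{k+1}\le \frac{\gamma}{k+1+s}+\frac{\gamma(1-\tau)+\frac{cL\tau^2}{2r^2}}{(k+s)^2}.
\]
Since $\tau\ge 2$, we have $\gamma(1-\tau)\le-\gamma$, so the remainder is nonpositive as soon as $\gamma\ge \frac{cL\tau^2}{2r^2}$.

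\textbf{Main obstacle.} The only delicate point is bookkeeping of the noise constant. The PL recursion carries $\frac{cL}{2}\alpha_k^2$ rather than $c\alpha_k^2$, so the threshold on $\gamma$ must absorb the extra factor $L/2$. The stated lower bound $2(s+1)\frac{\tau^2 c}{r^2}$ does this. Because $s+1>s=\frac{L\tau}{r}$, we get
\[
2(s+1)\frac{\tau^2c}{r^2}\ \ge\ \frac{2L\tau^3 c}{r^3}\ \ge\ \frac{cL\tau^2}{2r^2},
\]
using $\tau\ge2$ and $r<L$ (so $\frac{\tau}{r}\ge \frac14$ is ample). I would choose $\gamma$ equal to this bound, or anything larger. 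With that choice the induction closes, and $\Delta_k\le \frac{\gamma}{k+s}$ for all $k\ge1$.
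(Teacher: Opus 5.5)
You follow the same route as the paper, whose proof just says to repeat the induction of Corollary~\ref{cor:SGD_strngconvex}. Your step-size check, your telescoping estimate and the sufficient condition $\gamma\ge\frac{cL\tau^2}{2r^2}$ for the induction step are all correct. You are also right that Theorem~\ref{thm:SGD_PL} should read $X_{k+1}$ on the left.

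The gap is in the two places where you argue that the threshold $2(s+1)\max(\Delta_0,\frac{\tau^2c}{r^2})$ dominates the constants you need. The inequality $\frac{c}{2L}\le\frac{\tau^2c}{r^2}$ is equivalent to $r^2\le 2\tau^2L$. The inequality $\frac{2L\tau^3c}{r^3}\ge\frac{cL\tau^2}{2r^2}$ is equivalent to $r\le 4\tau$. Neither follows from $r<L$ and $\tau\ge 2$; your remark that $\frac{\tau}{r}\ge\frac14$ is ``ample'' compares a dimensionless number with one that carries units.

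Rescaling makes this visible. Replacing $f$ by $\lambda f$ multiplies $L$, $r$ and every $\Delta_k$ by $\lambda$ and $c$ by $\lambda^2$, and leaves $\tau$, $s$ and the iterates unchanged. Under this rescaling $\frac{\tau^2c}{r^2}$ is invariant, while $\frac{c}{2L}$ and $\frac{cL\tau^2}{2r^2}$ grow like $\lambda$.

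This is not merely a lossy estimate. Take $F(x)=\frac r2x^2$ on $\R$ with $\nabla_xf(x,Z)=rx+Z$, $\E[Z]=0$, $\E[Z^2]=c$, $X_0=0$, $r=100$, $L=200$, $\tau=2$, so that $s=4$ and $\alpha_0=\frac1L$. Then $\Delta_1=\frac r2\alpha_0^2c=\frac{c}{800}$. Your choice of $\gamma$ gives $\frac{\gamma}{1+s}=\frac{2\tau^2c}{r^2}=\frac{c}{1250}$. So with $\gamma$ equal to the stated bound, the claimed inequality already fails at $k=1$.

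The statement survives only because it asserts the \emph{existence} of some $\gamma$ above the threshold, so the repair is to enlarge $\gamma$. With
\[
\gamma:=\max\Big\{2(s+1)\max\big(\Delta_0,\tfrac{\tau^2c}{r^2}\big),\ (s+1)\big(\Delta_0+\tfrac{c}{2L}\big),\ \tfrac{cL\tau^2}{2r^2}\Big\}
\]
your base case and induction step go through unchanged. Alternatively, replace $\frac{\tau^2c}{r^2}$ by the correct analogue of $\frac{\tau^2c}{s\mu^2}$ under $c\mapsto\frac{cL}{2}$, $\mu\mapsto r$, namely $\frac{L\tau^2c}{2sr^2}=\frac{\tau c}{2r}$. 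For all admissible parameters, $\frac{c}{2L}\le\frac{\tau c}{2r}$ and $2(s+1)\frac{\tau c}{2r}\ge\frac{L\tau^2c}{r^2}\ge\frac{cL\tau^2}{2r^2}$, so both checks hold.

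The paper's one-line proof glosses over the same point: its threshold $\frac{\tau^2c}{r^2}$ does not absorb the factor $\frac L2$ in the noise term $\frac{cL}{2}\alpha_k^2$. Your argument needed to catch this rather than assert the comparison.
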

\begin{proof}
The proof proceeds line by line as the proof of Corollary~\ref{cor:SGD_strngconvex}.
\end{proof}

\subsection{Discussion about the complexity of SGD}

In the previous sections, we derived convergence rates of SGD under non-convexity, convexity, strong convexity and the PL-condition. We obtained similar results for the deterministic GD scheme. Comparing both GD and SGD, we observe that the derived results for SGD are significantly worse. 

\begin{center}
\begin{tabular}{|l|c|c|c|c|}
\hline
\ & convex & strongly convex & PL & non-convex \\
\hline
QoI & $F(\bar X_k) - F^\ast$ & $\|X_k-X_\ast\|$ & $F(X_k)-F^\ast$ & $\min_{0\le n\le k}\ \E[\|\nabla F(X_n)\|^2]$ \\
\hline \hline
GD & $\cO\left(\frac{1}{k}\right)$ & $\cO(\rho^k),\ \rho\in(0,1)$ & $\cO(\rho^k),\ \rho\in(0,1)$ & $\cO\left(\frac{1}{k}\right)$\\
\hline
SGD & $\cO\left(\frac{\log(k)}{\sqrt{k}}\right)$ & $\cO(\frac1{k})$ & $\cO(\frac1{k})$ & $\cO\left(\frac{1}{\sqrt{k}}\right)$ \\
\hline
\end{tabular}
\end{center}
However, the implementation of GD might be expensive (e.g.~for large data sets in empirical risk minimization) or even impossible. In this case, it is infeasible to implement GD and there is no other choice to work with SGD. 

The comparison of GD and SGD gets more involved in cases where we are able to compute the exact gradient. Let us consider the empirical risk minimization problem 
\[\min_{x\in\R^d}\ F_N(x),\quad F_n(x) = \frac{1}{N}\sum_{i=1}^N f(x,z^{(i)}),\ z^{(i)}\in\R^p,\ i=1,\dots,N,\]
where we assume that $x\mapsto f(x,z^{(i)})$ are $\mu$-strongly convex and $L$-smooth for all $i$. We have seen that GD with a fixed step size $\bar\alpha\le \frac1L$ converges linearly with rate $\rho \in(0,1)$ such that
\[\|x_{k,N}^{\mathrm{GD}}-x_N\|^2\le \rho^{k}\|x_0-x_N\|^2, \]
where $(x_{k,N}^{\mathrm{GD}})_{k\in\N}$ denotes the iteration generated by GD and $x_N=\arg\min_{x\in\R^d}\ F_N(x)$. In comparison, SGD (with decreasing step size) converges sub-linear with
\[\E[\|X_{k,N}^{\mathrm{SGD}}-x_N\|^2] \le \frac{\gamma}{k+s}. \]
In order to achieve an error of a certain tolerance $\varepsilon>0$ we need to iterate 
\begin{itemize}
\item[(i)] $k^{\mathrm{GD}}\ge \cO(\log(\varepsilon^{-1}))$, such that $\|x_{k,N}^{\mathrm{GD}}-x_N\|^2\le\varepsilon$,
\item[(ii)] $k^{\mathrm{SGD}}\ge \cO(\varepsilon^{-1})$, such that $\E[\|X_{k,N}^{\mathrm{SGD}}-x_N\|^2]\le \varepsilon$.
\end{itemize}
The first guess is, that as long we are able to compute the full gradient, there is no reason to implement SGD over GD. However, this train of thought is too naive. The reason is, that up to now we have ignored the empirical error which occurs through solving $x_N=\arg\min\ F_N(x)$. Indeed, $x_N$ should be treated as random variable $X_N$, which depends on $Z^{(1)},\dots,Z^{(N)}$. We want to quantify the error of both GD and SGD to $x_\ast = \arg\min_{x\in\R^d}\ F(x)$, where $F(x) = \E[f(x,Z)]$ is the expected risk. For SGD, implemented through Algorithm~\ref{alg:SGD}, we again obtain convergence (with decreasing step size) in the form of 
\[\E[\|X_{k,N}^{\mathrm{SGD}}-x_\ast\|^2] \le \frac{\gamma}{k+s}.\]
However, the situation changes for GD. Assuming that we are not able to compute the exact gradient of $F$, we firstly have to approximate $F$ through $F_N$ and then apply GD to find $X_N = \arg\min_{x\in\R^d}\ F_N(x)$. The final error decomposes to
\[\frac12\E[ \|X_{k,N}^{\mathrm{GD}} - x_\ast\|^2] \le \E[\|X_{k,N}^{\mathrm{GD}}-X_N\|^2] + \E[\|X_N-x_\ast\|^2] \le \rho^k \E[\|X_{0,N}^{\mathrm{GD}}-X_N\|^2] + \E[\|X_N-x_\ast\|^2], \]
where $X_N$ denotes the random minimum of $F_N$ given $Z^{(1)},\dots,Z^{(N)}$. In the following, we study the error $\E[\|X_N-x_\ast\|^2]$ for strongly convex objective functions depending on the number of data points.

\begin{thm}
Let $F:\R^d\to\R$ be $\mu$-strongly convex and let $f:\R^d\times\R^p\to\R$ such that $x\mapsto f(x,z)$ is $\mu$-strongly convex for all $z\in\R^p$. Moreover, let $x_\ast = \arg\min_{x\in\R^d}\ F(x)$, and assume that $\E[\nabla_x f(x_\ast,Z)] = \nabla_x F(x_\ast)$ and
\[\E[\|\nabla_x f(x_\ast,Z) - \E[\nabla_x f(x_\ast,Z)] \|^2]\le B \]
for some $B>0$. Let $Z^{(1)},\dots,Z^{(N)}$ be a family of i.i.d.~random variables with distribution $\mu_Z$, then it holds true that
\[\E[\|X_N-x_\ast\|^2]\le \frac{B}{\mu^2}\frac{1}{N}, \]
where $X_N = \arg\min_{x\in\R^d} F_N(x)$.
\end{thm}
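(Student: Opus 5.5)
The plan is to compare the first-order optimality conditions of the empirical and the expected risk minimizers, and then use strong monotonicity of the empirical gradient. Since every $x\mapsto f(x,Z^{(i)})$ is $\mu$-strongly convex, the empirical risk $F_N=\frac1N\sum_{i=1}^N f(\cdot,Z^{(i)})$ is $\mu$-strongly convex pathwise. It therefore has a unique minimizer $X_N$ with $\nabla F_N(X_N)=0$; I take measurability of $X_N$ for granted. Likewise $\nabla F(x_\ast)=0$, and by assumption $\E[\nabla_x f(x_\ast,Z)]=\nabla F(x_\ast)=0$.

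The key step is the first inequality in Lemma~\ref{lem:auxiliary_bound}, which only uses strong convexity. Applied pathwise to $F_N$ at the points $X_N$ and $x_\ast$, it gives
\[\mu\|X_N-x_\ast\|^2\le \langle \nabla F_N(x_\ast)-\nabla F_N(X_N),x_\ast-X_N\rangle=\langle \nabla F_N(x_\ast),x_\ast-X_N\rangle .\]
By Cauchy--Schwarz this yields $\mu\|X_N-x_\ast\|\le\|\nabla F_N(x_\ast)\|$. Squaring gives the pathwise bound
\[\|X_N-x_\ast\|^2\le\mu^{-2}\|\nabla F_N(x_\ast)\|^2 .\]
The monotonicity inequality $\langle\nabla g(x)-\nabla g(y),x-y\rangle\ge\mu\|x-y\|^2$ follows by adding the two strong convexity inequalities at $x$ and $y$. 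This means no smoothness is actually needed, even though Lemma~\ref{lem:auxiliary_bound} is stated under $L$-smoothness.

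It then remains to compute $\E\|\nabla F_N(x_\ast)\|^2$. Write
\[\nabla F_N(x_\ast)=\frac1N\sum_{i=1}^N \xi_i,\qquad \xi_i:=\nabla_x f(x_\ast,Z^{(i)}).\]
The $\xi_i$ are i.i.d. with mean $\nabla F(x_\ast)=0$ and second moment $\E\|\xi_i\|^2\le B$. Expanding the square, the cross terms $\E\langle\xi_i,\xi_j\rangle=\langle\E\xi_i,\E\xi_j\rangle$ vanish for $i\ne j$ by independence. Hence
\[\E\|\nabla F_N(x_\ast)\|^2=\frac1{N^2}\sum_{i=1}^N\E\|\xi_i\|^2\le\frac BN .\]
Taking expectations in the pathwise bound gives the claim.

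I expect the only delicate points to be technical rather than conceptual. One is justifying that $X_N$ is a well-defined random variable. The other is that the inner products $\langle\xi_i,\xi_j\rangle$ are integrable, which follows from $\E\|\xi_i\|^2<\infty$. The core argument is the short strong-monotonicity estimate above.
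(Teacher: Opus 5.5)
Your proposal is correct and follows essentially the same route as the paper. Strong monotonicity of $\nabla F_N$, combined with $\nabla F_N(X_N)=0$ and Cauchy--Schwarz, gives the pathwise bound $\|X_N-x_\ast\|\le\mu^{-1}\|\nabla F_N(x_\ast)-\nabla F(x_\ast)\|$, and the i.i.d.\ variance computation then bounds the expectation by $B/(\mu^2N)$; your remark that only the smoothness-free first inequality of Lemma~\ref{lem:auxiliary_bound} is needed is a worthwhile clarification, since the theorem does not assume $L$-smoothness while the paper cites that lemma.
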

\begin{proof}
Let 
\begin{align*}
 x_\ast &= \arg\min_{x\in\R^d}\ F(x),\quad F(x) = \E[f(x,Z)]\\
 X_N&= \arg\min_{x\in\R^d}\ F_N(x),\quad F_N(x) = \frac1N\sum_{i=1}^N f(x,Z^{(i)}),
 \end{align*}
which means that $\nabla_x F(x_\ast)=0$ and $\nabla_x F_N(X_N)=0$ almost surely. By strong convexity we can apply Lemma~\ref{lem:auxiliary_bound} to imply
\begin{align*}
\|X_N-x_\ast\|^2\le \frac1\mu \langle X_n-x_\ast, \nabla_x F_N(X_N)-\nabla_x F_N(x_\ast)\rangle &= \frac1\mu\langle X_n-x_\ast,\nabla_x F(x_\ast) - \nabla_x F_N(x_\ast)\rangle\\
&\le \frac1\mu\|X_N-x_\ast\| \|\nabla_x F(x_\ast)-\nabla_x F_N(x_\ast)\|,
\end{align*}
almost surely, where we have used Cauchy-Schwarz inequality in the last line. Reordering the above inequality leads to
\begin{equation}\label{eq:sec4_aux}
\|X_N-x_\ast\| \le \frac{1}{\mu}\|\nabla_x F(x_\ast)-\nabla_x F_N(x_\ast)\| 
\end{equation}
almost surely. Note that
\[\E[\nabla_x F_N(x_\ast)] = \E[\frac1N\sum_{i=1}^N \nabla_x f(x_\ast,Z^{(i)})] = \nabla_x F(x_\ast)\]
such that we obtain
\begin{align*}
\E[\|\nabla_x F_N(x_\ast)&- \nabla_x F(x_\ast)\|^2] = \E[\|\frac1N\sum_{i=1}^N\left(\nabla_x f(x_\ast,Z^{(i)})-\E[\nabla_x f(x_\ast,Z^{(i)})]\right)\|^2]\\
&=\frac1{N^2}\sum_{i,j=1}^N\E[\langle \nabla_x f(x_\ast,Z^{(i)})-\E[\nabla_x f(x_\ast,Z^{(i)})],\nabla_x f(x_\ast,Z^{(j)})-\E[\nabla_x f(x_\ast,Z^{(j)})]\rangle]\\
&=\frac1{N^2} \sum_{i=1}^N \E[\|\nabla_x f(x_\ast,Z^{(i)})-\E[\nabla_x f(x_\ast,Z^{(i)})]\|^2]\le \frac{B}{N},
\end{align*}
where we have used that the $Z^{(1)},\dots,Z^{(N)}$ are i.i.d.~random variables. Together with inequality~\eqref{eq:sec4_aux} we close the proof with
\[\E[\|X_N-x_\ast\|^2] \le \frac{B}{\mu^2}\frac{1}{N}. \]
\end{proof}
The overall error of GD is then given by
\[\frac12\E[ \|X_{k,N}^{\mathrm{GD}} - x_\ast\|^2]\le c(\frac1N + \rho^{k}) \]
for some constant $c>0$. Therefore, it is sufficient to choose $N\ge \cO(\varepsilon^{-1})$ and $k\ge\cO(\log(\varepsilon^{-1})$, such that the computational cost of GD are given by
\[{\mathrm{cost}}_{\mathrm{GD}} (\varepsilon) = N\cdot k \simeq  \varepsilon^{-1}\log(\varepsilon^{-1}),\] 
whereas the computational cost of SGD are given by
\[{\mathrm{cost}}_{\mathrm{SGD}}(\varepsilon) = 1\cdot k \simeq \varepsilon^{-1}. \]
Note that $\simeq$ hides constants independent of $\varepsilon$.

\subsection{Lower bound of SGD}

In the following, we will observe that we do not expect to improve the derived upper bound on SGD in the strongly convex setting. Therefore, we consider the example of a simple quadratic function. The example to be considered has been studied in detail in \cite{JENTZEN2020101438}.

\begin{example}[SGD lower error bound]
Let $(Z_k)_{k\in\N}$ be a sequence of i.i.d.~random variables in $\R^d$ with distribution $\mu_Z$ and $\E[\|Z_1\|^2]<\infty$. We define
\[f(x,z) = \frac12\|x-z\|^2,\quad x,z\in\R^d \]
and the corresponding expectation function
\[F(x) = \E[f(x,Z)],\quad x\in\R^d,\quad Z\sim\mu_Z\,. \]
Firstly, observe that this expectation computes as
\[F(x) = \frac{1}{2}\E[\|x-Z\|^2] = \frac12\E[\|x-\E[Z]\|^2] + \frac12\E[\|Z-\E[Z|\|^2] \]
such that we identify $x_\ast = \E[Z]\in\R^d$ as the global minimum of $F$. In this formulation we are also able to compute exact derivatives of $F$ and obtain
\begin{align*}
\E[\|\nabla_x f(x,Z) - \nabla_x F(x)\|^2] = \E[\|(x-Z)-(x-\E[Z])\|^2] = \E[\|Z-\E[Z]\|^2]=:\sigma^2\,.
\end{align*}
Next, consider the iteration generated by SGD with sequence of step sizes $\alpha_k=\frac{\tau}{k^{\nu}}$, $k\in\N$ for some $\nu>0$ and $\tau>0$, which can be written as
\[X_{k+1} = X_k - \frac{\tau}{k^{\nu}}(X_k-Z_{k+1}) = (1-\frac{\tau}{k^{\nu}})X_k + \frac{\tau}{k^{\nu}} Z_{k+1}\,. \]
In this example, the mean squared error can be computed explicitly,
\begin{align*}
\E[\|X_{k+1}-x_\ast \|^2] &= \E[\|X_{k+1}-\E[Z]\|^2]\\ &= \left(1-\frac{\tau}{k^{\nu}}\right)^2\E[\|X_k-\E[Z]\|^2] + 2\left(1-\frac{\tau}{k^{\nu}}\right) \frac{\tau}{k^{\nu}}\E[\langle X_k-\E[Z],Z_{k+1}-\E[Z]\rangle] \\ &\quad+ \left(\frac{\tau}{k^{\nu}}\right)^2 \E[ \|Z_{k+1}-\E[Z]\|^2]\\
&= \left(1-\frac{\tau}{k^{\nu}}\right)^2\E[\|X_k-\E[Z]\|^2]+ \left(\frac{\tau}{k^{\nu}}\right)^2\sigma^2\,.
\end{align*}
It follows that
\begin{align*}
\E[\|X_k-\E[Z]\|^2] &= \prod_{j=0}^{k-1} \left(1-\frac{\tau}{j^{\nu}}\right)^2 \E[\|X_0-\E[Z]\|^2] + \sigma^2 \sum_{j=0}^{k-1}\left(\frac{\tau}{k^{\nu}}\right)^2 \prod_{i=j+1}^{k-1}\left(1-\frac{\tau}{i^\nu}\right)^2\\
&\ge \sigma^2 \sum_{j=0}^{k-1}\left(\frac{\tau}{k^{\nu}}\right)^2 \prod_{i=j+1}^{k-1}\left(1-\frac{\tau}{i^\nu}\right)^2\,.
\end{align*}
The last expression is bounded from below by $Ck^{-\nu}$ for all sufficiently large $k\in\N$ and some constant $C>0$; see~\cite{JENTZEN2020101438} for details. 
For the choice $\nu=1$, this lower bound matches the upper rate derived in Section~\ref{sec:SGD_strcnvx} up to
constants.
\end{example}

\section{Almost sure convergence rates}\label{sec:as_rates}
In \Cref{sec:SGDas} we have proved almost sure convergence of SGD in the sense that the objective values converge almost surely to some limit and that
\[ \liminf_{k\to\infty} \|\nabla_x F(X_k)\|^2=0\quad \text{almost surely}.\]
The aim of the present section is to strengthen this statement by deriving almost sure convergence rates. The main idea is to combine the one-step descent estimate \eqref{eq:smooth_condexpest} with a rate version of the Robbins--Siegmund theorem. This approach follows the analysis of \cite{LY2022,pmlr-v134-sebbouh21a}, where almost sure convergence rates for SGD, stochastic heavy ball and stochastic Nesterov acceleration are derived under smoothness assumption. In this lecture, we restrict ourselves to the smooth non-convex and strongly convex case. 

\subsection{From supermartingales to almost sure convergence rates}
We first collect two elementary rate lemmas for stochastic processes. The first one is useful in the strongly convex case, where the descent inequality contains a contraction term. The second one is used in the non-convex case, where only summability of the weighted gradient norm is available.

\begin{lemma}[Almost sure rates for contracting supermartingales]\label{lem:asrates}
Let $(Z_k)_{k\in\mathbb{N}}$ be a sequence of non-negative random variables adapted to $(\mathcal{F}_k)_{k\in\mathbb{N}}$. Let \(\alpha_k=\frac{c_\alpha}{(k+1)^{\theta}}\), $k\in\N$ for some $\theta\in(\frac12,1)$. Suppose that there exist constants $c_1,c_2>0$ such that
\[
\mathbb{E}[Z_{k+1}\mid \mathcal{F}_k]
\leq
(1-c_1\alpha_k)Z_k+c_2\alpha_k^2
\]
Then, for every $\eta\in(2-2\theta,1)$,  
\[
\lim_{k\to\infty} (k+1)^{1-\eta}Z_k=0
\qquad \text{almost surely}.
\]
\end{lemma}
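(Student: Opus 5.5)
The plan is to rescale $Z_k$ by the target rate, show that the rescaled process satisfies the hypotheses of Corollary~\ref{cor:RS}, and read off the conclusion. Set $\lambda:=1-\eta\in(0,2\theta-1)$, $w_k:=(k+1)^{\lambda}$ and $V_k:=w_kZ_k$, which is non-negative and $\mathcal F_k$-adapted. Since $w_k$ is deterministic, multiplying the assumed recursion by $w_{k+1}$ gives
\[
\E[V_{k+1}\mid\mathcal F_k]\le \frac{w_{k+1}}{w_k}(1-c_1\alpha_k)V_k + c_2 w_{k+1}\alpha_k^2 .
\]
Two things must then be checked. First, the additive term is summable: $w_{k+1}\alpha_k^2\lesssim (k+1)^{\lambda-2\theta}$, and $\lambda-2\theta<-1$ holds exactly because $\eta>2-2\theta$. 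Second, the multiplicative factor has the form $1-D_k$ with $D_k\ge0$ and $\sum_k D_k=\infty$, at least for large $k$.

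For the factor, expand $\frac{w_{k+1}}{w_k}=(1+\frac1{k+1})^{\lambda}\le 1+\frac{\lambda}{k+1}$; this uses only Bernoulli/concavity, valid since $\lambda<1$. Then
\[
\frac{w_{k+1}}{w_k}(1-c_1\alpha_k)\le 1-c_1\alpha_k+\frac{\lambda}{k+1}.
\]
Because $\theta<1$, the term $\frac{\lambda}{k+1}=o(\alpha_k)$, so there is $k_0$ with $\frac{\lambda}{k+1}\le \frac{c_1}{2}\alpha_k$ for $k\ge k_0$. For such $k$ the factor is at most $1-\frac{c_1}{2}\alpha_k$, and we may take $D_k:=\frac{c_1}{2}\alpha_k$, with $\sum_k D_k=\infty$ since $\theta\le1$. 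We may also assume $c_1\alpha_k\le1$ for $k\ge k_0$ so that all factors are non-negative. Finitely many initial indices are handled by shifting the index, applying the result to $(V_{k_0+k})_k$ with filtration $(\mathcal F_{k_0+k})_k$. For this we need $\E[V_{k_0}]<\infty$, or at least that $V_{k_0}$ is a.s.\ finite, which suffices for Corollary~\ref{cor:RS} since it only requires non-negativity. If integrability is wanted, it follows by iterating the recursion in expectation, provided $\E[Z_0]<\infty$; I will add this remark.

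With $A_k=0$, $B_k=c_2w_{k+1}\alpha_k^2$ (deterministic and summable) and $D_k$ as above, Corollary~\ref{cor:RS} gives $V_k\to0$ almost surely, that is, $(k+1)^{1-\eta}Z_k\to0$ almost surely.

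The main obstacle is the absorption step: the growth of the weight must be dominated by the contraction. This is where $\theta<1$ is needed, while summability of the noise forces $\eta>2-2\theta$, and both constraints are compatible exactly because $\theta>\frac12$. The remaining work is routine bookkeeping of the index shift and the non-negativity of the factors.
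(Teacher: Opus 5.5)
Your proposal is correct and follows essentially the same route as the paper. Like the paper, you rescale by $(k+1)^{1-\eta}$ and bound the weight ratio by $(k+2)^{1-\eta}\le (k+1)^{1-\eta}+(1-\eta)(k+1)^{-\eta}$, which is your Bernoulli step. You then absorb the $\frac{1-\eta}{k+1}$ growth into the contraction for large $k$ using $\theta<1$, observe that the noise term is summable exactly because $\eta>2-2\theta$, and conclude with Corollary~\ref{cor:RS}. Your explicit treatment of the index shift and of the sign of $1-c_1\alpha_k$ is slightly more careful than the paper's write-up, which leaves both implicit.
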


\begin{proof}
The proof follows the same idea as in the proof of Corollary~\ref{cor:RS}, but applied to a rescaled process. Define the adapted process $\widetilde Z_k:=(k+1)^{1-\eta}Z_k$.
Using the elementary estimate
\[
(k+2)^{1-\eta}
\leq
(k+1)^{1-\eta}+(1-\eta)(k+1)^{-\eta},
\]
we obtain, for all $k\in\N$,
\[
\begin{aligned}
\mathbb{E}[\widetilde Z_{k+1}\mid \mathcal{F}_k]
&=
(k+2)^{1-\eta}
\mathbb{E}[Z_{k+1}\mid \mathcal{F}_k]\leq
(k+2)^{1-\eta}
(1-c_1\alpha_k)Z_k
+
c_2(k+2)^{1-\eta}\alpha_k^2 \\
&\le (k+1)^{1-\eta} (1-\frac{c_1 c_\alpha}{(k+1)^{\theta}})Z_k + c_2(k+1)^{1-\eta}\frac{c_1 c_\alpha}{(k+1)^{2\theta}}\\ &\quad + (1-\eta)(k+1)^{-\eta} (1-\frac{c_1 c_\alpha}{(k+1)^{\theta}})Z_k + (1-\eta)c_2(k+1)^{-\eta} \frac{c_1 c_\alpha}{(k+1)^{2\theta}} \\
&=(1+\frac{1-\eta}{k+1}-\frac{c_1 c_\alpha}{(k+1)^{\theta}}-\frac{(1-\eta)c_1c_\alpha}{(k+1)^{\theta+1}})(k+1)^{1-\eta}Z_k\\
&+\quad \frac{c_1c_2c_\alpha}{(k+1)^{2\theta+\eta-1}} + \frac{(1-\eta)c_1c_2c_\alpha}{(k+1)^{2\theta+\eta}}\,.
\end{aligned}
\]
Since $\theta< 1$, there exists $\bar k\ge1$ sufficiently large and $c_3>0$ such that
\[ \frac{1-\eta}{k+1}-\frac{c_1c_\alpha}{(k+1)^\theta}\le - \frac{c_3}{(k+1)^\theta}\]
for all $k\ge\bar k$. Hence, for all $k\ge\bar k$,
\[
\begin{aligned}
\mathbb{E}[\widetilde Z_{k+1}\mid \mathcal{F}_k]\le (1-\frac{c_3}{(k+1)^\theta})\tilde Z_k + \frac{c_1c_2c_\alpha}{(k+1)^{2\theta+\eta-1}} + \frac{(1-\eta)c_1c_2c_\alpha}{(k+1)^{2\theta+\eta}}\,.
\end{aligned}
\]
Moreover, by assumption, we have
\[ \sum_{k=1}^\infty\frac{c_3}{(k+1)^\theta}=\infty,\quad \sum_{k=0}^\infty \frac{c_1c_2c_\alpha}{(k+1)^{2\theta+\eta-1}}<\infty\quad \text{and}\quad \sum_{k=0}^\infty \frac{(1-\eta)c_1c_2c_\alpha}{(k+1)^{2\theta+\eta}}<\infty\,.\]
Finally, Corollary~\ref{cor:RS} implies that $(\widetilde Z_k)_{k\in\mathbb{N}}$ converges almost surely to zero.
\end{proof}

In the non-convex regime, we require the following adjusted statement.
\begin{lemma}[From summability to rates]\label{lem:sum_rates}
Let $(a_k)_{k\in\mathbb{N}}$ be a decreasing sequence of strictly positive reals such that
\[
\sum_{k=0}^{\infty}a_k=\infty
\quad 
\text{and}
\quad
\sum_{k=1}^{\infty}\frac{a_k}{\sum_{i=0}^{k-1}a_i}=\infty.
\]
Suppose that $(Y_k)_{k\in\mathbb{N}}$ is a sequence of non-negative random variables such that
\[
\sum_{k=0}^{\infty}a_kY_k<\infty
\qquad \text{almost surely}.
\]
Then we have,
\[
\lim_{k\to\infty}\ \min_{0\leq i\leq k}Y_i\, \sum_{i=0}^{k-1} a_i
=
0
\qquad \text{almost surely}.
\]
\end{lemma}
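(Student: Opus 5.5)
The argument is pathwise; the probabilistic content is only the almost sure finiteness of $\sum_k a_kY_k$. I fix an $\omega$ in the full-measure event where $\sum_{k}a_kY_k(\omega)<\infty$, suppress $\omega$, and prove the deterministic claim for real sequences. For this I write $S_k:=\sum_{i=0}^{k}a_i$, with $S_{-1}:=0$, and $m_k:=\min_{0\le i\le k}Y_i$, which is non-increasing in $k$. I also write $T_n:=\sum_{i\ge n}a_iY_i$ for the tails. Since the series converges, $T_n\to0$ as $n\to\infty$. Since $S_{k-1}\le S_k$ and $m_k\ge0$, it suffices to show $m_kS_k\to0$.

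The key step is to compare a block of the convergent series with the minimum. For any $n\le k$, the bound $Y_i\ge m_k$ for $i\le k$ gives
\[
T_n\;\ge\;\sum_{i=n}^{k}a_iY_i\;\ge\;m_k\,(S_k-S_{n-1}).
\]
I choose the block so that it carries at least half of the total mass. Let $n(k)$ be the largest index $n\in\{0,\dots,k+1\}$ with $S_{n-1}\le \tfrac12 S_k$. Such an $n$ exists because $S_{-1}=0$. It satisfies $n(k)\le k$, since $S_k>\tfrac12S_k$ by positivity of the $a_i$. For this block $S_k-S_{n(k)-1}\ge\tfrac12S_k$, and therefore
\[
m_kS_k\;\le\;2\,T_{n(k)}.
\]

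The only point needing care is that $n(k)\to\infty$, so that the tail $T_{n(k)}$ really vanishes. This is where $\sum_k a_k=\infty$ enters. For any fixed $N$, we have $S_k\to\infty$, so $S_{N-1}\le\tfrac12S_k$ for all large $k$, which forces $n(k)\ge N$. Hence $T_{n(k)}\to0$, and so $m_kS_k\to0$, which gives $\min_{0\le i\le k}Y_i\,\sum_{i=0}^{k-1}a_i\to0$ almost surely.

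The argument does not use monotonicity of $(a_k)$ or the condition $\sum_k a_k/S_{k-1}=\infty$. These hypotheses are harmless here; I expect they matter only for the later application, where they ensure the resulting rate is nontrivial, for instance for $a_k=\alpha_k$ with $\alpha_k\propto(k+1)^{-\theta}$. Since the proof is short, I see no real obstacle beyond choosing the splitting index $n(k)$ correctly.
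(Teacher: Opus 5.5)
Your proof is correct, and it takes a different route from the paper's. The paper argues by contradiction. It assumes $\min_{0\le i\le k}Y_i\,S_k\ge\delta$ along a subsequence $k_n$, bounds $a_iY_i\ge\delta a_i/S_{k_n}$ for all $i\le k_n$, sums, and appeals to $\sum_k a_k/S_{k-1}=\infty$.

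As written, that final step does not produce a contradiction. The denominator in the summed bound is the fixed number $S_{k_n}$ (or $S_{k_n-1}$), not $S_{i-1}$. So the lower bound is just $\delta$, up to a factor tending to $1$, which is perfectly compatible with $\sum_i a_iY_i<\infty$. The divergent series $\sum_i a_i/S_{i-1}$ would only enter if $m_iS_{i-1}\ge\delta$ held for \emph{all} large $i$. That route yields merely $\liminf_k m_kS_{k-1}=0$, since $m_kS_{k-1}$ need not be monotone.

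Your argument avoids both issues. You compare $m_k$ only with the block $\{n(k),\dots,k\}$, which carries half of $S_k$, and let the vanishing tail $T_{n(k)}$ do the work, so you get the full limit directly. This is the weighted form of the classical fact that a non-increasing summable sequence $b_k$ satisfies $kb_k\to0$, applied to $b_k=m_k$ with $\sum_k a_km_k\le\sum_k a_kY_k$. It also shows that $\sum_k a_k=\infty$ alone suffices. That condition cannot be dropped, as $Y_k\equiv1$ with summable $a_k$ shows.

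One correction to your closing remark: $\sum_k a_k/S_{k-1}=\infty$ is not an extra restriction even in the later application. It already follows from $\sum_k a_k=\infty$ by the Abel--Dini argument: $\sum_{k=n+1}^{m}a_k/S_k\ge 1-S_n/S_m\to1$ as $m\to\infty$, and $S_{k-1}\le S_k$.
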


\begin{proof}
We argue path-wise on the event $\{\sum_{k=0}^{\infty}a_kY_k<\infty\}$. Let
$
S_k:=\sum_{i=0}^{k}a_i.
$
Assume that the assertion is false. Then there exist $\delta>0$ and infinitely many $k\in\mathbb{N}$ (represented as a subsequence $(k_n)_{n\in\N}$) such that
\[
\min_{0\leq i\leq k}Y_i\, S_k \geq \delta.
\]
Since $(S_k)_{k\in\mathbb{N}}$ is increasing, this implies for the corresponding indices that
\[
a_iY_i\geq a_i\min_{0\le j\le k} Y_i\geq a_i\frac{\delta}{S_k}
\]
for all $i=0,\dots, k_n$ (and $n\in\N$). Summing over the indices gives, 
\[ \delta \lim_{n\to\infty} \sum_{i=1}^{k_n}\frac{a_i}{\sum_{j=0}^{k_n-1}a_j} \le \lim_{n\to\infty} \sum_{i=1}^{k_n} a_i Y_i\,,\]
which yields a contradiction to the finiteness of $\sum_{i=0}^{\infty}a_iY_i$.
\end{proof}

\subsection{Non-convex and smooth objective functions}
We first consider the smooth non-convex case. As already discussed, in the absence of convexity one can generally only expect convergence towards stationary points. Therefore, the natural quantity is again the gradient norm. This result is an almost sure counterpart to the classical convergence-rate statement in expectation from Theorem~\ref{thm:SGDnonconvex}. The essential ingredient is the summability property
\[
\sum_{k=0}^{\infty}
\alpha_k\|\nabla_xF(X_k)\|^2<\infty
\qquad \text{almost surely}.
\]
\begin{thm}[Almost sure rate for the minimal gradient norm]
Suppose that the assumptions of Theorem~\ref{thm:SGD_as}, and the additional assumptions that $\alpha_k\in(0,1/L]$ and 
\[ \sum_{k=1}^{\infty}
 \frac{\alpha_k}{\sum_{i=0}^{k-1}\alpha_i}
 =
 \infty
\]
are in place. Then it holds that
\[
\lim_{k\to\infty}\ \Big(\sum_{i=0}^{k}\alpha_i\Big)\min_{0\leq i\leq k}\|\nabla_xF(X_i)\|^2
=
0
\qquad \text{almost surely}.
\]
In particular, for the choice
$
\alpha_k=\frac{\alpha}{(k+1)^{\frac12+\eta}},
\
\alpha\in(0,1/L],\ \eta\in(0,\tfrac12),
$
we obtain
\[
\lim_{k\to\infty} (k+1)^{\frac12-\eta}\min_{0\leq i\leq k}\|\nabla_xF(X_i)\|^2
=0
\qquad \text{almost surely}.
\]
\end{thm}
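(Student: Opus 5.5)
The plan is to combine the summability statement from the proof of Theorem~\ref{thm:SGD_as} with Lemma~\ref{lem:sum_rates}. Since $\alpha_k\le 1/L$, we have $1-\frac{L}{2}\alpha_k\ge\frac12$. The one-step conditional descent estimate \eqref{eq:smooth_condexpest} is then a Robbins--Siegmund recursion of the form \eqref{eq:RS_recursion}, with the identifications
\[
Z_k=F(X_k)-F^\ast,\qquad A_k=B_k=c\tfrac{L}{2}\alpha_k^2,\qquad C_k=\tfrac12\alpha_k\|\nabla_xF(X_k)\|^2 .
\]
Theorem~\ref{thm:RS} therefore gives
\[
\sum_{k=0}^\infty\alpha_k\|\nabla_xF(X_k)\|^2<\infty\qquad\text{almost surely.}
\]
This step is already contained in the proof of Theorem~\ref{thm:SGD_as}, so I will just cite it.

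Next I apply Lemma~\ref{lem:sum_rates} pathwise with $a_k=\alpha_k$ and $Y_k=\|\nabla_xF(X_k)\|^2$. The hypotheses $\sum_k\alpha_k=\infty$ and $\sum_k\alpha_k/\sum_{i<k}\alpha_i=\infty$ are assumed, so the lemma yields
\[
\min_{0\le i\le k}Y_i\sum_{i=0}^{k-1}\alpha_i\to0 .
\]
The statement uses $\sum_{i=0}^{k}\alpha_i$ instead, and the difference is only the term $\alpha_k\min_{i\le k}Y_i$. This tends to zero: $\alpha_k\to0$, and $\min_{i\le k}Y_i\le Y_0<\infty$ is nonincreasing in $k$. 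So the first claim follows.

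Lemma~\ref{lem:sum_rates} also requires $(a_k)$ to be decreasing. In the statement $\alpha_k$ may be adapted, so I will treat the deterministic decreasing case, which covers the explicit choice, and remark that the general case needs this extra hypothesis.

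For the particular choice $\alpha_k=\alpha(k+1)^{-(1/2+\eta)}$ with $\eta\in(0,\frac12)$, I check the hypotheses in turn.
\begin{itemize}
\item[(i)] The exponent satisfies $\frac12+\eta\in(\frac12,1)$. Hence $\sum_k\alpha_k=\infty$ and $\sum_k\alpha_k^2<\infty$, which are the assumptions of Theorem~\ref{thm:SGD_as}. Moreover $\alpha_k\le\alpha\le 1/L$.
\item[(ii)] By an integral comparison as in Corollary~\ref{cor:SGDnonconvex}, there are constants with $c'(k+1)^{1/2-\eta}\le\sum_{i=0}^{k}\alpha_i\le C'(k+1)^{1/2-\eta}$ for large $k$.
\item[(iii)] Consequently $\alpha_k/\sum_{i<k}\alpha_i\gtrsim (k+1)^{-(1/2+\eta)-(1/2-\eta)}=(k+1)^{-1}$, so its sum diverges.
\end{itemize}
Combining the lower bound in (ii) with the first claim gives $(k+1)^{1/2-\eta}\min_{i\le k}\|\nabla_xF(X_i)\|^2\to0$ almost surely.

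The main obstacle I expect is the careful use of Lemma~\ref{lem:sum_rates}. Its stated proof only sums over indices up to $k_n$ and needs the divergence of $\sum_{i\le k_n}a_i/S_{k_n}$ along the subsequence, which is delicate. If this does not work directly, I will argue through a Kronecker/Abel--Dini-type estimate. Set $S_k=\sum_{i\le k}a_i$. From $\sum_ia_iY_i<\infty$ and monotonicity of the minimum one gets $(S_k-S_{m})\min_{i\le k}Y_i\le\sum_{i=m}^{k}a_iY_i$. Choosing $m$ large makes the right-hand side small, and since $S_k\to\infty$ with $S_m$ fixed, the conclusion follows. This fallback in fact only needs $\sum a_k=\infty$, so it would remove any doubt about that step.
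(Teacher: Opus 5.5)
Your proposal is correct and follows the paper's proof. Both arguments get $\sum_k\alpha_k\|\nabla_xF(X_k)\|^2<\infty$ from \eqref{eq:smooth_condexpest} via Robbins--Siegmund using $1-\frac L2\alpha_k\ge\frac12$, then apply Lemma~\ref{lem:sum_rates} with $a_k=\alpha_k$, then use the integral lower bound on $\sum_{i\le k}\alpha_i$.

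Your fallback should be the main argument rather than a backup, because the paper's proof of Lemma~\ref{lem:sum_rates} does not close. It sums $a_i\delta/S_{k_n}$ over all $i\le k_n$. That only gives $\delta\,(S_{k_n}-a_0)/S_{k_n-1}\le\sum_i a_iY_i$, and for decreasing $a_k$ the left-hand side tends to $\delta$, so there is no contradiction. Your tail estimate $(S_k-S_m)\min_{i\le k}Y_i\le\sum_{i\ge m}a_iY_i$ is exactly the missing step. Letting $k\to\infty$ with $m$ fixed gives $\limsup_k S_k\min_{i\le k}Y_i\le\sum_{i\ge m}a_iY_i$, which tends to $0$ as $m\to\infty$.

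This argument works pathwise and uses only $\sum_k\alpha_k=\infty$. It therefore removes the need for decreasing $\alpha_k$, so adapted step sizes are covered. It also makes the index-shift correction unnecessary, and the extra hypothesis $\sum_k\alpha_k/\sum_{i<k}\alpha_i=\infty$ becomes superfluous. Your checks (i)--(iii) for the polynomial step size are fine. The paper leaves them implicit: it never verifies $\sum_k\alpha_k^2<\infty$ or the divergence condition for that choice.
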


\begin{proof}
Recall from \eqref{eq:smooth_condexpest}, that for all $k\in\N$, 
\begin{align*} 
\E[F(X_{k+1})-F^\ast\mid\cF_k] &\le (1+c\frac{L}2\alpha_k^2) (F(X_k)-F^\ast)+c\frac{L}{2}\alpha_k^2 - \alpha_k(1-\frac{L\alpha_k}{2})\|\nabla_xF(X_k)\|^2\\
&\le (1+c\frac{L}2\alpha_k^2) (F(X_k)-F^\ast)+c\frac{L}{2}\alpha_k^2 - \frac{\alpha_k}2\|\nabla_xF(X_k)\|^2\,.
\end{align*}
By Theorem~\ref{thm:RS},
\[
\sum_{k=0}^{\infty}
\alpha_k\|\nabla_xF(X_k)\|^2
<\infty
\qquad \text{almost surely}.
\]
Next, we apply Lemma~\ref{lem:sum_rates} with
\[
Y_k:=\|\nabla_xF(X_k)\|^2
\quad\text{and}\quad
a_k:=\alpha_k\,,
\]
which yields
\[
\lim_{k\to\infty}\ \Big(\sum_{i=0}^{k}\alpha_i\Big)\min_{0\leq i\leq k}\|\nabla_xF(X_i)\|^2
=
0
\qquad \text{almost surely}.
\]
For the polynomial step size
\[
\alpha_k=\frac{\alpha}{(k+1)^{\frac12+\eta}},
\]
we can compute the growth of the accumulated step sizes explicitly. Set
\[
S_k:=\sum_{i=0}^{k}\alpha_i
=
\alpha \sum_{i=0}^{k}\frac{1}{(i+1)^{\frac12+\eta}}
=
\alpha \sum_{j=1}^{k+1}\frac{1}{j^{\frac12+\eta}}
\]
and lower bound this sum via an integral similar as in the proof of Corollary~\ref{cor:SGDnonconvex}.
Since the function $x\mapsto x^{-\frac12-\eta}$ is positive and decreasing on $[1,\infty)$, one has:
\[
\sum_{j=1}^{k+1}\frac{1}{j^{\frac12+\eta}}
\geq
\int_{1}^{k+2} x^{-\frac12-\eta}\,{\mathrm d}x\,.
\]
Due to $\eta\in(0,\tfrac12)$, we have $\frac12-\eta>0$, and hence
\[
S_k \ge \alpha\,\int_{1}^{k+2} x^{-\frac12-\eta}\,{\mathrm d}x
=
\frac{\alpha}{\frac12-\eta}
\left((k+2)^{\frac12-\eta}-1\right)\,.
\] 
This implies that there exists a constant $B>0$ and an index $k_0\in\mathbb{N}$ such that
\[S_k=\sum_{i=0}^{k}\alpha_i\geq B k^{\frac12-\eta} \qquad \text{for all } k\geq k_0\,. \]
In summary, this verifies that
\[\lim_{k\to\infty}\ k^{\frac12-\eta}\min_{0\leq i\leq k}\|\nabla_xF(X_i)\|^2
\le \frac{1}{B}\lim_{k\to\infty} S_k \min_{0\leq i\leq k}\|\nabla_xF(X_i)\|^2 = 0
\qquad \text{almost surely}\,.
\]
\end{proof}
\subsection{Strongly convex and smooth objective functions}
We now turn to the strongly convex case. Recall that if $F$ is $\mu$-strongly convex, then there exists a unique global minimizer $x_\ast\in\mathbb{R}^d$, and by Proposition~\ref{prop:strongconvex_PL} it satisfies the PL condition
\[
\|\nabla_xF(x)\|^2\geq 2\mu(F(x)-F(x_\ast)).
\]
In contrast to the proof of Theorem~\ref{thm:SGD_strngconvex} and Theorem~\ref{thm:SGD_PL}, we work with the recursion under conditional expectation rather than full expectation. Note that the following result is directly based on the proof of Theorem~\ref{thm:SGD_PL}, and a similar statement can be derived in the setting of Theorem~\ref{thm:SGD_strngconvex}.
\begin{thm}[Almost sure rate under strong convexity]\label{thm:SGD_asPL}
Suppose that the assumptions of Theorem~\ref{thm:SGD_PL} are in place, and the additional requirement that 
\[
\alpha_k=\frac{\alpha}{(k+1)^{1-\theta}}\
\]
for some $\alpha\in(0,1/L]$ and $\theta\in(0,\frac12)$ holds.

Then, for every $\eta\in(2\theta,1)$, it holds that
\[
\lim_{k\to\infty}\ k^{1-\eta}(F(X_k)-F(x_\ast))
=0
\qquad \text{almost surely}.
\]
\end{thm}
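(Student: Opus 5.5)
The plan is to reduce the statement to Lemma~\ref{lem:asrates}. That lemma needs a contracting conditional recursion for $Z_k:=F(X_k)-F^\ast$, which is non-negative and $\mathcal F_k$-adapted. The step sizes $\alpha_k=\alpha(k+1)^{-(1-\theta)}$ are already of the form required there, with exponent $\theta':=1-\theta\in(\frac12,1)$ because $\theta\in(0,\frac12)$.

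First, I will redo the computation in the proof of Theorem~\ref{thm:SGD_PL} \emph{without} taking the outer expectation. The starting point is the one-step conditional descent estimate \eqref{eq:smooth_condexpest}, now in the bounded-variance setting $\E[\|M_{k+1}\|^2\mid\mathcal F_k]\le c$. This variance bound follows from Lemma~\ref{lem:condexp} and Lemma~\ref{lem:condexp_factorization}, exactly as in the proof of Theorem~\ref{thm:SGD_as}. It gives
\[
\E[Z_{k+1}\mid\mathcal F_k]\le Z_k-\alpha_k\Big(1-\frac{L\alpha_k}{2}\Big)\|\nabla_x F(X_k)\|^2+c\frac{L}{2}\alpha_k^2 .
\]
Since $\alpha_k\le\alpha\le 1/L$, we have $1-\frac{L\alpha_k}{2}\ge\frac12$. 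Applying the PL inequality pathwise then yields
\[
\E[Z_{k+1}\mid\mathcal F_k]\le(1-r\alpha_k)Z_k+\frac{cL}{2}\alpha_k^2
\]
almost surely. This is the hypothesis of Lemma~\ref{lem:asrates} with $c_1=r$, $c_2=cL/2$, $c_\alpha=\alpha$, and exponent $\theta'=1-\theta$. In the strongly convex reading, Proposition~\ref{prop:strongconvex_PL} gives $r=\mu$ and $F^\ast=F(x_\ast)$.

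Next, I will check the exponent range. Lemma~\ref{lem:asrates} allows any $\eta\in(2-2\theta',1)=(2\theta,1)$, which is exactly the range in the statement. It concludes $(k+1)^{1-\eta}Z_k\to0$ almost surely. Since $k^{1-\eta}\le (k+1)^{1-\eta}$, this gives $k^{1-\eta}(F(X_k)-F(x_\ast))\to0$ almost surely.

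The main obstacle is the bookkeeping in the first step. The descent estimate must hold conditionally and almost surely, not just in expectation. This requires that $\E[\langle\nabla F(X_k),M_{k+1}\rangle\mid\mathcal F_k]=0$ and that the conditional variance bound transfers via the factorization lemma. Both were already verified in the proof of Theorem~\ref{thm:SGD_as}, so I would cite that argument rather than repeat it. A secondary point is that the contraction factor $1-r\alpha_k$ should be non-negative. This holds because $r<L$ and $\alpha_k\le 1/L$, although Lemma~\ref{lem:asrates}, via Corollary~\ref{cor:RS}, only needs the large-$k$ behaviour anyway.
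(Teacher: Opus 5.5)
Your proposal is correct and follows the paper's proof: both derive the conditional (not full-expectation) recursion from \eqref{eq:smooth_condexpest} with the bounded-variance constant, use $\alpha_k\le 1/L$ and the PL inequality to obtain $\E[Z_{k+1}\mid\mathcal F_k]\le(1-r\alpha_k)Z_k+\frac{cL}{2}\alpha_k^2$, and then apply Lemma~\ref{lem:asrates} with exponent $1-\theta\in(\frac12,1)$, which gives exactly the range $\eta\in(2\theta,1)$. Your version is slightly more careful than the paper's in two respects: it keeps the PL constant $r$ from Theorem~\ref{thm:SGD_PL} (the paper writes $\mu$), and it explicitly passes from $(k+1)^{1-\eta}$ to $k^{1-\eta}$.
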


\begin{proof}
We define
\[
Y_k:=F(X_k)-F(x_\ast).
\]
Similarly to the proof of Theorem~\ref{thm:SGD_PL}, combining \eqref{eq:smooth_condexpest} and the PL condition, we obtain
\[
\begin{aligned}
\mathbb{E}[Y_{k+1}\mid \mathcal{F}_k]
&\leq
Y_k
-
\frac{\alpha_k}2
\|\nabla_xF(X_k)\|^2
+
\frac{LC}{2}\alpha_k^2\\
&\leq Y_k
-
\mu\alpha_kY_k
+
\frac{LC}{2}\alpha_k^2
\,.
\end{aligned}
\]
Note that we have used $\alpha_k\le 1/L$. Therefore, Lemma~\ref{lem:asrates} yields
\[
\lim_{k\to\infty}\ k^{1-\eta}Y_k
= 0\qquad \text{almost surely}
\]
for every $\eta\in(2\theta,1)$. This proves the first assertion.
\end{proof}

The parameter $\theta\in(0,\frac12)$ can be chosen arbitrarily small, and then $\eta>2\theta$ can also be chosen arbitrarily small. Hence, the rate in Theorem~\ref{thm:SGD_asPL} can get arbitrarily close to $1/k$, the derived rate in Theorem~\ref{thm:SGD_PL} for the convergence in expectation. 

\section{Variance reduction}\label{sec:SGD_varred}
In the derived convergence results for SGD we have always assumed that $\nabla_x f(x,Z)$ is an unbiased estimator of $\nabla_x F(x)$ with uniformly bounded variance 
\[ \E[\|\nabla_x f(x,Z)-\nabla_x F(x)\|^2]\le {\mathrm{var}}\,.\]
This upper bound ${\mathrm{var}}>0$ occurs in all derived error bounds in a similar way:

\begin{center}
\begin{tabular}{|l|c|}
\hline 
Assumption & error bound \\
\hline
convex & $\frac{C_1}{\sqrt{k}} + {\mathrm{var}}\cdot \frac{\log(k)}{\sqrt{k}}$\\
strong convex & $(1-\alpha_k\mu) e_k + {\mathrm{var}} \cdot \alpha_k^2$ \\
PL-condition & $(1-\alpha_k r) e_k + {\mathrm{var}}\cdot \alpha_k^2$\\
\hline
\end{tabular}
\end{center}

In order to push the total error towards zero, we had to choose $\alpha_k\to0$. In the specific cases of strong convexity or under PL-condition, we lose the behavior of linear convergence which we obtained for the exact (deterministic) gradient descent scheme. In the deterministic setting, we were able to choose $\alpha_k=\bar\alpha>0$ such that
\[e_{k+1} \le \rho(\bar\alpha) e_k,\quad \rho(\bar\alpha) \in(0,1)\,\]
By controlling the variance error term through $\alpha_k\to0$, we obtain an error bound for SGD of the form
\[e_{k+1} \le \rho_k e_k + {\mathrm{var}}_k, \]
where $\rho_k\to1$ for $k\to\infty$. This behavior makes the analysis challenging and in particular, we have seen that SGD does not converge linearly. 

In the following section, we consider different types of variance reduction methods, which control the variance error term in a different way. We are no longer forced to consider $\alpha_k\to0$ and will choose a fixed step size $\alpha_k=\bar\alpha >0$ for all $k\in\N$.

\subsection{Dynamic Sampling}
Our first method to be considered is called \textit{dynamic sampling}, where we control the variance error term through a dynamical batch-sampling strategy. The unbiased estimator of the descent direction $\nabla_x F(X_k)$ is estimated through a batch of samples $(Z_k^{(m)})_{k\in\N,\ m=1,\dots, B_{k-1}}$, where the random variables are assumed to be independent in $k$ and in $m$ with an identical distribution $\mu_Z$.

\begin{algorithm}[htb!]
\begin{algorithmic}[1]
\State \textbf{Input:} \begin{itemize}
 \item loss function $f:\R^d\times \R^p\to\R$
 \item initial random variable $X_0:\Omega\to\R^d$
 \item sequence of step sizes $(\alpha_k)_{k\in\N}$, $\alpha_k>0$ (deterministic or $\mathcal F$-adapted)
 \item sequence of batch sizes $(B_k)_{k\in\N}$
 \item sequence of i.i.d.~random variables $(Z_k^{(m)})_{k\in\N,\ m=1,\dots,B_{k-1}}$ with $Z_1^{(1)}\sim\mu_Z$. 
 \end{itemize}
 \State set $k=0$
\While{"convergence/stopping criterion not met"}
	\State approximate the gradient $\nabla_x F(X_k)$ through
	\[G_k = \frac{1}{B_{k}}\sum_{m=1}^{B_k}\nabla_x f(X_k,Z_{k+1}^{(m)}) \]
	\State set $X_{k+1} = X_k -\alpha_k G_k$, $k \mapsto k+1$
\EndWhile
\end{algorithmic}
 \caption{SGD with dynamic sampling}\label{alg:SGD_DS}
\end{algorithm}

In the following, we will analyze SGD with dynamical batch-sampling. The focus will be placed on the strongly convex setting and the method will be compared to a fixed batch size $\bar B>0$ across all iterations. For both schemes, we will consider a fixed step size $\bar \alpha>0$.

Firstly, we discuss how the assumed uniform upper bound on the variance is effected through the incorporation of batch-sampling. 
\begin{lemma}\label{lem:batching}
Let the assumptions of Lemma~\ref{lem:condexp} be satisfied and assume that
\[\E[\|\nabla_x f(x,Z) - \nabla_x F(x)\|^2]\le c \]
for some $c>0$ and all $x\in\R^d$. Moreover, let $Z^{(1)},\dots, Z^{(B)}$ be i.i.d.~random variables with distribution $\mu_Z$. Then for $G:=\frac{1}{B}\sum_{m=1}^B\nabla_x f(x,Z^{(m)})$ it holds true that
\[ \E[\|G-\nabla_x F(x)\|^2 \le \frac{c}{B} \]
for all $x\in\R^d$. 
\end{lemma}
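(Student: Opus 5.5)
The plan is to write the centred mini-batch estimator as an average of i.i.d. mean-zero vectors and then expand the squared norm. Fix $x\in\R^d$ and define
\[
\xi_m := \nabla_x f(x,Z^{(m)})-\nabla_x F(x),\qquad m=1,\dots,B,
\]
so that $G-\nabla_x F(x)=\frac1B\sum_{m=1}^B\xi_m$. Here $x$ is deterministic, so no conditional-expectation machinery (Lemma~\ref{lem:condexp_factorization}) is needed. Lemma~\ref{lem:condexp} gives $\E[\xi_m]=\E[\nabla_x f(x,Z^{(m)})]-\nabla_x F(x)=0$, because each $Z^{(m)}\sim\mu_Z$. The hypothesis gives $\E[\|\xi_m\|^2]\le c$, so each $\xi_m$ is square integrable. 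Finally, the $\xi_m$ are independent, since each is a measurable function of $Z^{(m)}$ alone.

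Next I expand the square:
\[
\E\Big[\Big\|\frac1B\sum_{m=1}^B\xi_m\Big\|^2\Big]=\frac1{B^2}\sum_{m,n=1}^B\E[\langle\xi_m,\xi_n\rangle].
\]
The main point is that the off-diagonal terms vanish. For $m\neq n$, write $\langle\xi_m,\xi_n\rangle=\sum_{i=1}^d\xi_m^{(i)}\xi_n^{(i)}$ componentwise. Each product is integrable by Cauchy--Schwarz, because both factors lie in $L^2$. By independence, $\E[\xi_m^{(i)}\xi_n^{(i)}]=\E[\xi_m^{(i)}]\,\E[\xi_n^{(i)}]=0$.

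Only the $B$ diagonal terms remain, each bounded by $c$. This gives
\[
\E[\|G-\nabla_x F(x)\|^2]=\frac1{B^2}\sum_{m=1}^B\E[\|\xi_m\|^2]\le\frac{Bc}{B^2}=\frac cB.
\]
This is the same computation used earlier for $\E[\|\nabla_x F_N(x_\ast)-\nabla_x F(x_\ast)\|^2]\le B/N$. There is no real obstacle. The only point needing care is integrability of the cross terms, which is what justifies splitting the expectation of the products into the product of the expectations.
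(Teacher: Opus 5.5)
Your proposal is correct and follows essentially the same route as the paper: expand the squared norm of the averaged centred gradients into a double sum, kill the cross terms using independence and unbiasedness (Lemma~\ref{lem:condexp}), and bound each of the $B$ diagonal terms by $c$. Your extra care about integrability of the cross terms and measurability of each $\xi_m$ as a function of $Z^{(m)}$ justifies a step the paper leaves implicit.
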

\begin{proof}
By $\E[\nabla_x f(x,Z^{(1)})] = \nabla_x F(x)$ and the independence of $Z^{(1)},\dots,Z^{(m)}$ we have
\begin{align*}
\E[\|G-\nabla_x F(x) \|^2] &= \frac1{B^2}\sum_{m,n=1}^B\E[\langle \nabla_x f(x,Z^{(m)}) - \nabla_x F(x) , \nabla_x f(x,Z^{(n)}) - \nabla_x F(x)\rangle ]\\ &= \frac{1}{B^2} \sum_{m=1}^B \E[\|\nabla_x f(x,Z^{(m)})- \nabla_x F(x)\|^2] \le \frac{c}{B}\,.
\end{align*}
\end{proof}

With the previous observations we are now able to extend Theorem~\ref{thm:SGD_strngconvex} to dynamical batch-sampling. 
\begin{thm}[SGD with dynamic sampling]\label{thm:SGD_DS}
Let $F:\R^d\to\R$ be $\mu$-strongly convex and $L$-smooth. We assume that the assumptions of Lemma~\ref{lem:condexp} are satisfied and that there exists $c>0$ such that
\[\E[\|\nabla_x f(x,Z) - \E[\nabla_x f(x,Z)] \|^2] \le c \]
for all $x\in\R^d$. Let $X_0$ be a random variable such that $\E[|F(X_0)| + \|X_0-x_\ast\|^2]<\infty$, where $x_\ast\in\R^d$ is the unique global minimum of $F$. Moreover, let $(X_k)_{k\in\N}$ be generated by Algorithm~\ref{alg:SGD_DS} with sequence of batch sizes $(B_k)_{k\in\N}$, $B_k\ge1$ and deterministic, decreasing sequence of step sizes $(\alpha_k)_{k\in\N}$ such that $\alpha_k\in(0,\frac1L]$. Then for the error $e_k:=\E[\|X_k-x_\ast\|^2]$ it holds true that
\[e_{k+1} \le (1-\alpha_k \mu) e_k + \frac{c\alpha_k^2}{B_k}\]
for all $k\ge0$. Furthermore, for a fixed step size $\alpha_k = \bar\alpha = \frac{\tau}{\mu}$ with $\tau\le \frac{1}{\kappa} = \frac{\mu}{L}$, it holds true that
\begin{equation}\label{eq:SGD_iterativeerror}
e_{k+1} \le \rho e_k + \frac{c\bar{\alpha}^2}{B_k}
\end{equation}
where $\rho = (1-\tau)\in(0,1)$.
\end{thm}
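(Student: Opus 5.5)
The plan is to repeat the proof of Theorem~\ref{thm:SGD_strngconvex} essentially line by line, replacing the single-sample gradient $\nabla_x f(X_k,Z_{k+1})$ by the batch estimator $G_k=\frac{1}{B_k}\sum_{m=1}^{B_k}\nabla_x f(X_k,Z^{(m)}_{k+1})$. Only the variance term changes, and Lemma~\ref{lem:batching} controls it.

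First, I would fix the natural filtration $\cF_k=\sigma(X_0,Z_j^{(m)},\,j\le k,\,m\le B_{j-1})$ and write $M_{k+1}:=\nabla_x F(X_k)-G_k$. The batch $(Z_{k+1}^{(1)},\dots,Z_{k+1}^{(B_k)})$ is independent of $\cF_k$ and $X_k$ is $\cF_k$-measurable. So Lemma~\ref{lem:condexp_factorization} applies, with $Y_1$ the whole batch (a random variable in $(\R^p)^{B_k}$) and $Y_2=X_k$, together with Lemma~\ref{lem:condexp}. This gives
\[\E[M_{k+1}\mid\cF_k]=0\quad\text{and}\quad\E[\|M_{k+1}\|^2\mid\cF_k]=\varphi(X_k),\]
where $\varphi(x)=\E\bigl[\|\frac1{B_k}\sum_m\nabla_x f(x,Z^{(m)})-\nabla_x F(x)\|^2\bigr]\le c/B_k$ by Lemma~\ref{lem:batching}. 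This measurability and factorization step is the only point needing care, namely that the factorization lemma is applied to a vector of i.i.d.\ samples and to the squared-norm functional. Since the lemma is stated for general measurable spaces and real-valued $\Phi$, I expect it to go through directly. For the mean identity one applies it componentwise.

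Next, I would expand
\[\|X_{k+1}-x_\ast\|^2=\|X_k-x_\ast\|^2-2\alpha_k\langle G_k,X_k-x_\ast\rangle+\alpha_k^2\|G_k\|^2\]
and insert $G_k=\nabla_xF(X_k)-M_{k+1}$. Conditioning on $\cF_k$ removes the cross terms. Taking full expectation then yields
\[e_{k+1}\le e_k-2\alpha_k\E[\langle\nabla_xF(X_k),X_k-x_\ast\rangle]+\alpha_k^2\E[\|\nabla_xF(X_k)\|^2]+\frac{c\alpha_k^2}{B_k}.\]
This is exactly the inequality used in Theorem~\ref{thm:SGD_strngconvex}, with $c$ replaced by $c/B_k$.

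From here I would follow the same argument as in Theorem~\ref{thm:SGD_strngconvex}. Strong convexity gives $-\langle X_k-x_\ast,\nabla_xF(X_k)\rangle\le-(F(X_k)-F(x_\ast))-\frac\mu2\|X_k-x_\ast\|^2$. $L$-smoothness gives $F(X_k)-F(x_\ast)\ge\frac1{2L}\|\nabla_xF(X_k)\|^2$. Combining these leaves the term $\alpha_k(\alpha_k-\frac1L)\E[\|\nabla_xF(X_k)\|^2]$, which is nonpositive because $\alpha_k\le 1/L$, and hence $e_{k+1}\le(1-\alpha_k\mu)e_k+c\alpha_k^2/B_k$.

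Finally, for the constant step size $\bar\alpha=\tau/\mu$ with $\tau\le\mu/L$, we have $\bar\alpha\le 1/L$, so the first part applies. Also $\alpha_k\mu=\tau$, which gives \eqref{eq:SGD_iterativeerror} with $\rho=1-\tau$. Here $\rho\in(0,1)$ because $0<\tau\le\mu/L\le1$ (strict inequality holds unless $\mu=L$; in the degenerate case $\tau=1$ one has $\rho=0$, which is harmless).
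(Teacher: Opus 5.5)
Your proposal is correct and takes the same route as the paper, whose proof simply reruns the argument of Theorem~\ref{thm:SGD_strngconvex} with the single-sample variance bound $c$ replaced by the batch bound $c/B_k$ from Lemma~\ref{lem:batching}. You spell out the details the paper omits, namely applying the factorization to the whole batch and checking $\bar\alpha\le 1/L$. Your remark that $\rho=0$ in the edge case $\tau=\mu/L=1$ is a fair correction of the stated claim $\rho\in(0,1)$.
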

\begin{proof}
The proof follows by Theorem~\ref{thm:SGD_strngconvex} combined with Lemma~\ref{lem:batching}.
\end{proof}

In the following, we will keep the step size $\bar \alpha = \frac{\tau}{\mu}\le \frac1L$ fixed and aim to derive an optimal sequence of batch sizes. Firstly, we need to formulate what we mean by an \textit{optimal} batch size. Therefore, we will assume that the computation of each iteration of SGD occurs with cost which are determined through the generation of the samples $(Z_k^{(m)})_{m=1,\dots,B_{k-1}}$. We will assume that these cost are normalized.
\begin{ass}
The generation of the state $X_k$ by Algorithm~\ref{alg:SGD_DS} with sequence of batch sizes $(B_k)_{k\in\N}$ occurs with computational cost
\[{\mathrm{cost}}(X_k) = \sum_{j=0}^{k-1} B_j\,. \]
\end{ass}

We want to choose the batch sizes such that we minimize the computational cost under the constraint that the final error bound is below a specified tolerance level $\varepsilon>0$, i.e.~we want to solve the constrained minimization problem
\[\min_{B_0,\dots,B_{K-1}}\ \sum_{j=0}^{K-1} B_j\quad \text{s.t.}\ e_K\le \varepsilon\,. \]
We refer the interested reader to \cite{pmlr-v178-weissmann22a}, where this approach has been considered in a more general framework. The total error after iteration $K\ge1$ can be upper bounded by iterating the error bound \eqref{eq:SGD_iterativeerror}:
\[e_K \le \rho^K e_0 + c\bar\alpha^2 \sum_{j=0}^{K-1} \rho^{K-1-j} \frac{1}{B_j}\,.\]
For simplicity, we assume that $K\ge\ceil{\log(\rho)^{-1}\log(\frac{\varepsilon}2e_0^{-1})}$ such that
\[e_K\le \frac{\varepsilon}{2}.\]
For this given $K$ we want to determine $B_0,\dots,B_{K-1}$ under the constrain
\[ c\bar\alpha^2 \sum_{j=0}^{K-1} \rho^{K-1-j} \frac{1}{B_j}\le \frac{\varepsilon}2\,.\]
We start with the following auxiliary result:
\begin{lemma}\label{lem:optimalbatch}
Let $\varepsilon>0$, $\gamma>0$ and $a_j>0$, $j\in\{0,\dots,K-1\}$. Then the choice
\[B_j = C(\varepsilon,K)\cdot a_j^{\frac{1}{1+\gamma}},\quad C(\varepsilon,K) = \varepsilon^{-\frac1\gamma}\left(\sum_{s=0}^{K-1} a_s^{\frac1{1+\gamma}}\right)^{\frac1\gamma} \]
solves the constrained optimization problem
\[\min_{B_0,\dots,B_{K-1}}\ \sum_{j=0}^{K-1} B_j,\quad \text{s.t.}\ \sum_{j=0}^{K-1} a_j B_j^{-\gamma}\le \varepsilon\, . \]
\end{lemma}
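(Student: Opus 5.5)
The plan is to treat the batch sizes as continuous positive variables and solve the problem by convex duality, or equivalently by a Hölder/Lagrange argument. Two preliminary observations come first. The constraint function $B\mapsto \sum_j a_j B_j^{-\gamma}$ is strictly convex and decreasing in each coordinate on $(0,\infty)^K$, and the objective is linear. Hence the problem is convex. Moreover, at any optimum the constraint is active: if it were slack, we could decrease some $B_j$ slightly and lower the cost.

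\textbf{Step 1: optimality conditions.} We form the Lagrangian
\[
\mathcal L(B,\lambda)=\sum_j B_j+\lambda\Big(\sum_j a_jB_j^{-\gamma}-\varepsilon\Big)
\]
and set $\partial_{B_j}\mathcal L=0$. This gives $1=\lambda\gamma a_jB_j^{-\gamma-1}$, that is, $B_j=(\lambda\gamma)^{1/(1+\gamma)}a_j^{1/(1+\gamma)}$. So every $B_j$ is a common constant $C$ times $a_j^{1/(1+\gamma)}$.

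\textbf{Step 2: determine the constant.} Plugging this form into the active constraint gives
\[
\sum_j a_j C^{-\gamma}a_j^{-\gamma/(1+\gamma)}=C^{-\gamma}\sum_j a_j^{1/(1+\gamma)}=\varepsilon.
\]
Solving yields $C=\varepsilon^{-1/\gamma}\big(\sum_s a_s^{1/(1+\gamma)}\big)^{1/\gamma}$, which matches $C(\varepsilon,K)$ in the statement.

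\textbf{Step 3: global optimality.} Rather than appeal to KKT sufficiency, I would give a direct inequality, which I expect to be the main point needing care. Take any feasible $B$, and write $p=1+\gamma$, $w_j=a_j^{1/(1+\gamma)}$. Apply Hölder's inequality to
\[
\sum_j w_j=\sum_j (a_jB_j^{-\gamma})^{1/(1+\gamma)}B_j^{\gamma/(1+\gamma)}
\]
with exponents $1+\gamma$ and $(1+\gamma)/\gamma$. This gives
\[
\sum_j w_j\le\Big(\sum_j a_jB_j^{-\gamma}\Big)^{1/(1+\gamma)}\Big(\sum_j B_j\Big)^{\gamma/(1+\gamma)}\le \varepsilon^{1/(1+\gamma)}\Big(\sum_j B_j\Big)^{\gamma/(1+\gamma)}.
\]
Therefore
\[
\sum_j B_j\ge \varepsilon^{-1/\gamma}\Big(\sum_j w_j\Big)^{(1+\gamma)/\gamma}.
\]
Finally, a direct computation shows that the proposed choice satisfies $\sum_j B_j=C\sum_j w_j=\varepsilon^{-1/\gamma}(\sum_j w_j)^{1+1/\gamma}$, so it attains this lower bound. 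By Step 2 it is also feasible. Hence it is a minimizer, and by the equality case of Hölder it is the unique one.

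Integrality of batch sizes is ignored here, as in the statement; in practice one rounds up, which keeps feasibility.
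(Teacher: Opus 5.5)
Your proof is correct, and it proves more than the paper's own argument does.

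Steps 1 and 2 coincide with the paper's proof. The paper forms the same Lagrangian, solves the stationarity condition for $B_j=(\lambda\gamma)^{1/(1+\gamma)}a_j^{1/(1+\gamma)}$, and fixes $\lambda\gamma$ from the active constraint. Your equation $1=\lambda\gamma a_jB_j^{-(1+\gamma)}$ is the correct stationarity condition; the paper's condition (I) carries a stray $-\lambda$ term, although its subsequent formula for $B_j$ agrees with yours.

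Where you genuinely depart is Step 3. The paper explicitly stops at ``we only derive a stationary point'', so it never shows that this point actually minimizes the cost. Your Hölder estimate
$\sum_j a_j^{1/(1+\gamma)}\le\bigl(\sum_j a_jB_j^{-\gamma}\bigr)^{1/(1+\gamma)}\bigl(\sum_j B_j\bigr)^{\gamma/(1+\gamma)}$
gives the lower bound $\varepsilon^{-1/\gamma}\bigl(\sum_j a_j^{1/(1+\gamma)}\bigr)^{(1+\gamma)/\gamma}$ on the cost of every feasible point. The candidate attains this bound, and the equality case of Hölder gives uniqueness. This actually establishes the claim that the choice ``solves'' the problem, and it does so without appealing to KKT sufficiency for convex programs.

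In fact, Step 3 together with the feasibility check is a complete proof on its own. Your Step 1 and the preliminary remarks on convexity and the active constraint then only explain where the candidate comes from.

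The paper's route gives a quick, constructive derivation of the formula. Yours adds rigor and uniqueness at the cost of a few lines.
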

\begin{proof}
We only derive a stationary point of the considered constrained optimization problem. 
The Lagrange function is given by
\[\cL(B_0,\dots,B_{K-1},\lambda) = \sum_{j=0}^{K-1} B_j + \lambda\left(\sum_{j=0}^{K-1} a_j B_j^{-\gamma} - \varepsilon\right) \]
and the corresponding optimality conditions are
\begin{align*}
({\mathrm I})& \quad 1-\lambda -\lambda\gamma a_j B_j^{-(1+\gamma)} = 0,\ j=0,\dots,K-1,\\
({\mathrm{II}})& \quad \sum_{j=0}^{K-1} a_j B_j^{-\gamma} -\varepsilon =0\,.
\end{align*}
We solve $({\mathrm{I}})$ to derive
\[B_j = (\lambda \gamma)^{\frac{1}{1+\gamma}} a_j^{\frac{1}{1+\gamma}}, \]
which together with $({\mathrm{II}})$ gives
\[(\lambda\gamma)^{-\frac{\gamma}{1+\gamma}} \sum_{j=0}^{K-1} a_j\cdot a_j^{-\frac{\gamma}{\gamma+1}} = (\lambda\gamma)^{-\frac{\gamma}{1+\gamma}} \sum_{j=0}^{K-1}  a_j^{\frac{1}{\gamma+1}} = \varepsilon\]
and therefore, 
\[\lambda\gamma = \varepsilon^{-\frac{1+\gamma}{\gamma}} \left( \sum_{j=0}^{K-1} a_j^{\frac{1}{1+\gamma}}\right)^{\frac{1+\gamma}{\gamma}}\,. \]
This results in 
\[B_j = C(\varepsilon,K)\cdot a_j^{\frac{1}{1+\gamma}},\quad C(\varepsilon,K) = \varepsilon^{-\frac1\gamma}\left(\sum_{s=0}^{K-1} a_s^{\frac1{1+\gamma}}\right)^{\frac1\gamma}\,. \]
\end{proof}

We are now ready to choose the optimal batch size for Algorithm~\ref{alg:SGD_DS} under strong convexity assumption: 
\[\min_{B_0,\dots,B_{K-1}}\ \sum_{j=0}^{K-1} B_j,\quad \text{s.t.}\ \sum_{j=0}^{K-1}c\bar\alpha^2 \rho^{K-1-j} B_j^{-1} \le \varepsilon/2\, , \]
which by Lemma~\ref{lem:optimalbatch} leads to
\[C(\varepsilon,K) = 2\varepsilon^{-1} \sqrt{c}\bar\alpha \sum_{j=0}^{K-1}\rho^{\frac{K-1-j}2} = 2\varepsilon^{-1} \sqrt{c}\bar\alpha \sum_{j=0}^{K-1}\rho^{\frac{j}2} = 2\varepsilon^{-1} \sqrt{c}\bar\alpha\left(\frac{1-\rho^{\frac{K}2}}{1-\rho^{\frac12}}\right) \]
and therefore, to an optimal dynamical batch size
\begin{equation}\label{eq:optimalbatch}
B_j = 2\varepsilon^{-1} c\bar\alpha^2\left(\frac{1-\rho^{\frac{K}2}}{1-\rho^{\frac12}}\right)\rho^{\frac{K-1-j}2}\,.
\end{equation}
The corresponding computational cost are given by
\[\sum_{j=0}^{K-1} B_j = 2\varepsilon^{-1} c\bar\alpha^2\left(\frac{1-\rho^{\frac{K}2}}{1-\rho^{\frac12}}\right)\sum_{j=0}^{K-1}\rho^{\frac{K-1-j}2} = 2\varepsilon^{-1} c\bar\alpha^2\left(\frac{1-\rho^{\frac{K}2}}{1-\rho^{\frac12}}\right)^2 \simeq \varepsilon^{-1}, \]
where $\left(\frac{1-\rho^{\frac{K}2}}{1-\rho^{\frac12}}\right)^2\in\left(1,\left(\frac{1}{1-\sqrt{\rho}}\right)^2\right)$, independent of $K$. We compare the derived dynamical batch-sampling strategy to a fixed batch size $\bar B\ge 1$ for all $k = 0,\dots,K-1$. This fixed batch size has again to be chosen such that $e_K\le \varepsilon$. For simplicity, let again $K\ge\ceil{\log(\rho^{-1})\log(\frac{\varepsilon}2e_0^{-1})}$ such that $\rho^Ke_0\le\frac{\varepsilon}2$ and therefore,
\[e_K \le \frac{\varepsilon}2 + c\bar\alpha^2 \frac{1}{\bar B} \sum_{j=0}^{K-1} \rho^{K-1-j} = \frac{\varepsilon}2 + c\bar\alpha^2\frac{1-\rho^{K}}{1-\rho} \frac1{\bar B},  \]
where $\frac{1-\rho^{K}}{1-\rho}\le \frac{1}{1-\rho}$. The fixed batch size $\bar B$ needs to be chosen such that
\begin{equation}\label{eq:fixedbatch}
\bar B\ge 2\varepsilon^{-1} c\bar\alpha^2 (1-\rho)^{-1} \simeq \varepsilon^{-1}
\end{equation}
and the corresponding computational cost are given by
\[\sum_{j=0}^{K-1} B_j = K\cdot \bar B \simeq |\log(\varepsilon^{-1})|\varepsilon^{-1}\,. \]
We summarize the derived batch-sampling strategies in the following theorem.
\begin{thm}
Suppose that the conditions of Theorem~\ref{thm:SGD_DS} are satisfied and define $e_0 = \E[\|X_0-x_\ast\|^2]$. Let $\varepsilon>0$ and $K\ge\ceil{\log(\rho^{-1})\log(\frac{\varepsilon}2e_0^{-1})}$. Let $(X_k^{\mathrm{DS}})_{k=0,\dots,K}$ be generated by Algorithm~\ref{alg:SGD_DS} with the dynamical batch sizes
$(B_k)_{k=0,\dots,K-1}$ defined in~\eqref{eq:optimalbatch}. Moreover, let
$(X_k^{\mathrm{FB}})_{k=0,\dots,K}$ be generated by Algorithm~\ref{alg:SGD_DS} with the fixed batch size $B_k=\bar B$ from~\eqref{eq:fixedbatch}.
Then
\begin{align*}
e_K^{\mathrm{DS}} := \E[\|X_K^{\mathrm{DS}}-x_\ast\|^2] &\le \varepsilon\quad \text{and}\quad 
e_K^{\mathrm{FB}} := \E[\|X_K^{\mathrm{FB}}-x_\ast\|^2] \le \varepsilon\,,
\end{align*}
while the computational cost is given by
\begin{align*}
{\mathrm{cost}}^{\mathrm{DS}} := {\mathrm{cost}}(X_K^{\mathrm{DS}}) = \sum_{j=0}^{K-1}B_j &\simeq \varepsilon^{-1},\\
{\mathrm{cost}}^{\mathrm{FB}} := {\mathrm{cost}}(X_K^{\mathrm{FB}}) = K\cdot \bar B &\simeq |\log(\varepsilon^{-1})|\varepsilon^{-1}\,.
\end{align*}
\end{thm}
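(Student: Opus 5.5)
The plan is to iterate the one-step recursion \eqref{eq:SGD_iterativeerror} from Theorem~\ref{thm:SGD_DS}. That recursion holds for any batch sequence, so both the dynamic and the fixed batch schedules can be fed into it. Unrolling from $k=0$ to $K$ gives
\[
e_K\le \rho^K e_0 + c\bar\alpha^2\sum_{j=0}^{K-1}\rho^{K-1-j}B_j^{-1}.
\]
The proof then splits into a bias part and a variance part, each bounded by $\varepsilon/2$.

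\textbf{Bias term.} The lower bound on $K$ should be read as $K\ge \log(\varepsilon/(2e_0))/\log\rho$. I would verify this interpretation explicitly, since the bound as typeset is ambiguous. Under this reading, $\rho^K e_0\le \varepsilon/2$.

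\textbf{Dynamic sampling, error.} I substitute \eqref{eq:optimalbatch}, i.e. $B_j = C\rho^{(K-1-j)/2}$ with $C=2\varepsilon^{-1}c\bar\alpha^2 S$ and $S:=\sum_{j=0}^{K-1}\rho^{j/2}=(1-\rho^{K/2})/(1-\rho^{1/2})$. The variance sum then becomes
\[
c\bar\alpha^2 C^{-1}\sum_j \rho^{(K-1-j)/2} = c\bar\alpha^2 S/C = \varepsilon/2 .
\]
This is exactly the active constraint in Lemma~\ref{lem:optimalbatch}, applied with $\gamma=1$ and $a_j=c\bar\alpha^2\rho^{K-1-j}$. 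Hence $e_K^{\mathrm{DS}}\le\varepsilon$.

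\textbf{Dynamic sampling, cost.} The cost is $\sum_j B_j = CS = 2\varepsilon^{-1}c\bar\alpha^2S^2$. Since $1\le S\le (1-\sqrt\rho)^{-1}$ independently of $K$, this gives $\mathrm{cost}^{\mathrm{DS}}\simeq\varepsilon^{-1}$.

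\textbf{Fixed batch.} The variance sum equals $c\bar\alpha^2\bar B^{-1}(1-\rho^K)/(1-\rho)\le c\bar\alpha^2\bar B^{-1}(1-\rho)^{-1}\le\varepsilon/2$ by \eqref{eq:fixedbatch}, so $e_K^{\mathrm{FB}}\le\varepsilon$. The cost is $K\bar B$. Taking $K$ at its minimal value, $K\simeq\log(\varepsilon^{-1})$ up to constants depending on $\rho$ and $e_0$, and $\bar B\simeq\varepsilon^{-1}$, we get $\mathrm{cost}^{\mathrm{FB}}\simeq|\log(\varepsilon^{-1})|\varepsilon^{-1}$.

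\textbf{Main obstacle: integrality of the batch sizes.} Batch sizes must be integers $\ge1$, but \eqref{eq:optimalbatch} gives real numbers. I would replace $B_j$ by $\lceil B_j\rceil$. This only decreases $B_j^{-1}$, so the error bound is preserved. It raises the cost by at most $K$, which is $O(\log\varepsilon^{-1})=o(\varepsilon^{-1})$, so $\simeq\varepsilon^{-1}$ survives for small $\varepsilon$.

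A secondary point is that $\simeq$ must hide only constants independent of $\varepsilon$. This holds because $S$ is uniformly bounded in $K$, and because $K$ is chosen minimal, which gives the matching lower bound $\mathrm{cost}^{\mathrm{FB}}\gtrsim\varepsilon^{-1}\log\varepsilon^{-1}$.
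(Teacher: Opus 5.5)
Your proposal is correct and follows the paper's own argument. You unroll \eqref{eq:SGD_iterativeerror}, use the choice of $K$ to get $\rho^K e_0\le\varepsilon/2$, and check that the batch sizes \eqref{eq:optimalbatch} (the stationary point of Lemma~\ref{lem:optimalbatch} with $\gamma=1$, $a_j=c\bar\alpha^2\rho^{K-1-j}$) and \eqref{eq:fixedbatch} keep the variance term at most $\varepsilon/2$, with costs $2\varepsilon^{-1}c\bar\alpha^2S^2$ and $K\bar B$. Your additional points are sound refinements of details the paper passes over: reading the bound on $K$ as $\log(2e_0/\varepsilon)/\log(\rho^{-1})$, rounding the $B_j$ up to integers $\ge 1$ as Theorem~\ref{thm:SGD_DS} requires, and taking $K$ of order $\log(\varepsilon^{-1})$ in the cost statements.
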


\subsection{Stochastic average gradient method (SAG)}

In the following three subsections, we consider variance reduction methods for finite-sum optimization problems. We fix the realization of the data set \(\{z^{(1)},\dots,z^{(N)}\}\subset\R^p\) and focus on the empirical objective
\[
F_N(x)=\frac1N\sum_{i=1}^N f(x,z^{(i)})
=:\frac1N\sum_{i=1}^N f_i(x),
\qquad x\in\R^d,
\]
where $f_i(x):=f(x,z^{(i)})$. In this section, we do not study the statistical error caused by replacing the expected risk by the empirical risk.

If $F_N$ is $\mu$-strongly convex and $L$-smooth, the deterministic gradient descent scheme applied to $F_N$ converges linearly to its unique minimizer. However, each iteration requires the evaluation of the full gradient
\[\nabla F_N(x_k)=\frac1N\sum_{i=1}^N\nabla f_i(x_k),\]
and therefore one pass over the entire data set. In contrast, finite-sum SGD evaluates only one component gradient $\nabla f_{i_k}(x_k)$ per iteration. This reduces the cost of a single iteration, but the persistent variance of the stochastic
gradient estimator prevents the same linear convergence behavior with a fixed step size. Variance reduction methods are designed to combine the advantages of both approaches: cheap stochastic updates and linear convergence under strong convexity.
In each iteration, the goal is to evaluate only one new gradient across the family of functions $\{f_i\}_{i=1,\dots,N}$ such as it is the case in Algorithm~\ref{alg:SGD2}. Note that all of these algorithms essentially assume that the objective function is in the form of a finite sum $F_N$. 

In the following it is clear out of context, that the gradient $\nabla_x f_i(\cdot)$ is computed wrt.~$x$, such that from now on we will omit the dependence on $x$ and simply write $\nabla f_i(\cdot)$.

\paragraph{Motivation:}
Suppose that we want to estimate an unknown parameter $\theta\in\R$ and $G$ be an unbiased estimator of $\theta$, i.e.~$\E[G]=\theta$. Moreover, let $\xi$ be a random variable with mean close to zero, $\E[\xi]\approx 0$, such that the modified random variable $G_\xi := G-\xi$ is \textit{nearly} unbiased, i.e.~$\E[G_\xi] = \E[G]-\E[\xi]\approx \theta$. (In case $\E[\xi]=0$, $G_\xi$ even remains unbiased). The modification becomes interesting when considering the resulting variance:
\[\V(G_\xi) = \V(G-\xi) = \V(G)+\V(\xi)-2{\mathrm{Cov}}(G,\xi)\,.\]
So in case we find a high (positive) correlation between $G$ and $\xi$, we hope for a significant reduction of the variance without introducing a large bias. This concept can be viewed as motivation for the following three algorithms to be considered.\medskip

We consider the first algorithm which forms the basis for introducing variance reduction in SGD. In \cite{Schmidt2017} the authors propose the \textit{stochastic average gradient} (SAG) method, which achieves linear convergence for strongly convex objective functions while having same complexity characteristics as SGD. The idea is to reuse the gradient information obtained from the past.

\begin{algorithm}[htb!]
\begin{algorithmic}[1]
\State \textbf{Input:} \begin{itemize}
 \item objective function $F_N:\R^d\to\R$, $F_N(x) = \frac1N\sum_{i=1}^N f_i(x)$
 \item initial random variable $X_0:\Omega\to\R^d$
 \item sequence of step sizes $(\alpha_k)_{k\in\N}$, $\alpha_k>0$ (deterministic or $\mathcal F$-adapted)
 \end{itemize}
 \State set $k=0$, initialize $G_0^{(i)}=0$, $i=1,\dots,N$
 \State compute $\bar G_0 = \frac{1}{N} \sum_{i=1}^N G_0^{(i)}$
\While{"convergence/stopping criterion not met"}
	\State generate independently $\mathfrak i_{k+1} \sim \mathcal U(\{1,\dots,N\})$
	\State set $G_k^{(i)} = \begin{cases} \nabla f_i(X_k), & i = \mathfrak i_{k+1} \\ G_{k-1}^{(i)}, & i \neq \mathfrak i_{k+1} \end{cases}$
	\State approximate the gradient $\nabla F_N(X_k)$ through
	\[\bar G_k = \frac1N\sum_{i=1}^N G_k^{(i)} = \bar G_{k-1} - \frac1N G_{k-1}^{(\mathfrak i_{k+1})} + \frac1N \nabla f_{\mathfrak i_{k+1}}(X_k) \]
	\State set $X_{k+1} = X_k -\alpha_k \bar G_k$, $k \mapsto k+1$
\EndWhile
\end{algorithmic}
 \caption{Stochastic average gradient method (SAG)}\label{alg:SAG}
\end{algorithm}

The algorithm stores all gradient approximations across the entire data set $i=1,\dots,N$, and in each iteration it updates the approximation of the gradient $f_{\mathfrak i}$ for only one randomly picked index $\mathfrak i$. Similarly to SGD in the form of Algorithm~\ref{alg:SGD2}, only one new gradient needs to be evaluated per iteration. However, one needs to have capacity for storing the gradient approximation for each index $i=1,\dots,N$, which can be seen as the main disadvantage of SAG. In contrast to the basic SGD estimator, the SAG estimator is generally biased. As mentioned above, the algorithm achieves linear convergence toward the global optimum of $F_N$ as shown in \cite{Schmidt2017}.

\begin{thm}[Theorem~1 in \cite{Schmidt2017}]
Let $F_N,\ f_i:\R^d\to\R$, $i=1,\dots,N$, be $\mu$-strongly convex and $L$-smooth. Moreover, let $(X_k)_{k\in\N}$ be generated by Algorithm~\ref{alg:SAG} with fixed step size $\alpha_k = \bar\alpha = \frac{1}{16L}$. Then it holds true that
\[\frac\mu2\E[\|X_k-x_\ast\|^2]\le \E[F_N(X_k)-F_N(x_\ast)] \le \left(1-\min\left\{\frac{\mu}{16L},\frac{1}{8N}\right\}\right)^k C_0, \]
where $C_0 = \E[F_N(X_0)-F_N(x_\ast)] + \frac{4L}{N} \E[\|X_0-x_\ast\|^2] + \frac{\sigma^2}{16L}$ with $\sigma^2 = \frac1N\sum_{i=1}^N \|\nabla f_i(x_\ast)\|^2$. 
\end{thm}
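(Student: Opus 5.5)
The first inequality, $\frac\mu2\E[\|X_k-x_\ast\|^2]\le \E[F_N(X_k)-F_N(x_\ast)]$, follows from $\mu$-strong convexity of $F_N$ at its minimizer, since $\nabla F_N(x_\ast)=0$, after taking expectations. The substance is the linear rate. I would prove it with a Lyapunov argument in the style of the Nesterov analysis in Chapter~\ref{ch:accmethods}. The idea is to build a quadratic Lyapunov function in the joint state $(X_k, Y_k)$, where $Y_k=(G_k^{(1)}-\nabla f_1(x_\ast),\dots,G_k^{(N)}-\nabla f_N(x_\ast))$ collects the deviations of the stored gradients from their values at the optimum. Concretely, take
\[
\mathcal L_k = \big(F_N(X_k+d\,\textstyle\sum_i G_k^{(i)}) - F_N(x_\ast)\big)\;\cdot\; 2\bar\alpha\, + \text{quadratic form in } (X_k-x_\ast,\,Y_k),
\]
roughly as in \cite{Schmidt2017}: a combination of $F_N(X_k)-F_N(x_\ast)$, $\|X_k-x_\ast\|^2$ with weight of order $L/N$, and $\frac1N\sum_i\|G_k^{(i)}-\nabla f_i(x_\ast)\|^2$ with weight of order $\bar\alpha$. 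The initialization $G^{(i)}_0=0$ explains the $\sigma^2/(16L)$ term in $C_0$, since $\frac1N\sum_i\|0-\nabla f_i(x_\ast)\|^2=\sigma^2$. The target is the one-step contraction
\[
\E[\mathcal L_{k+1}\mid\mathcal F_k]\le\Big(1-\min\big\{\tfrac{\mu}{16L},\tfrac1{8N}\big\}\Big)\mathcal L_k .
\]
Iterating this with the tower property, and checking that $\E[F_N(X_k)-F_N(x_\ast)]\le\E[\mathcal L_k]$ and $\E[\mathcal L_0]\le C_0$, gives the claim.

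The steps, in order:
\begin{enumerate}
\item Compute the conditional expectations of the stored-gradient term under uniform sampling of $\mathfrak i_{k+1}$. Each coordinate is refreshed with probability $1/N$, so
\[
\E\Big[\tfrac1N\sum_i\|G_{k+1}^{(i)}-\nabla f_i(x_\ast)\|^2\,\Big|\,\mathcal F_k\Big]=(1-\tfrac1N)\tfrac1N\sum_i\|G_k^{(i)}-\nabla f_i(x_\ast)\|^2+\tfrac1{N^2}\sum_i\|\nabla f_i(X_{k+1})-\nabla f_i(x_\ast)\|^2 ,
\]
and the cross terms are linear in $Y_k$. This uses Lemma~\ref{lem:condexp_factorization}.
\item Expand $\|X_{k+1}-x_\ast\|^2$ and apply the descent lemma, Lemma~\ref{lem:descentlemma}, to $F_N(X_{k+1})$ with $X_{k+1}=X_k-\bar\alpha\bar G_k$. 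Write $\bar G_k$ as $\nabla F_N(X_k)$ plus a bias term $\frac1N\sum_i(G_k^{(i)}-\nabla f_i(X_k))$.
\item Control the fresh gradient differences via Lemma~\ref{lem:convex_and_smooth} applied to each convex, $L$-smooth $f_i$:
\[
\tfrac1N\sum_i\|\nabla f_i(x)-\nabla f_i(x_\ast)\|^2\le 2L\big(F_N(x)-F_N(x_\ast)\big).
\]
Control the iterate term via the strong-convexity inequalities of Lemma~\ref{lem:auxiliary_bound}.
\item Collect everything into a quadratic form in $(X_k-x_\ast, Y_k)$ and the function gap. Then show that, with $\bar\alpha=\frac1{16L}$ and the chosen weights, the matrix $\mathcal L_{k+1}$-expectation minus $(1-\delta)\mathcal L_k$ is negative semidefinite, where $\delta=\min\{\mu/(16L),1/(8N)\}$.
\end{enumerate}

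The main obstacle is step 4. SAG's estimator is biased, so there is no martingale-difference structure, and the cross terms between $X_k-x_\ast$ and the stale gradients do not vanish in conditional expectation. They have to be absorbed through Young's inequality with carefully tuned constants. The two regimes of the minimum must be handled separately: in the first, $\mu/(16L)$ is limiting and the function gap drives the contraction; in the second, $1/(8N)$ is limiting and the slow refresh of the memory drives it. I would first fix the Lyapunov weights by requiring the coefficient of each block to be nonpositive. I would then verify negative semidefiniteness of the remaining $2\times2$ block by a determinant (Schur complement) check, which is where the specific constants $16$ and $8$ should emerge. If that check fails, I would fall back on the computer-verified weights of \cite{Schmidt2017}.
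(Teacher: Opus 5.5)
The notes do not prove this theorem. They quote Theorem~1 of \cite{Schmidt2017}, and your outline reproduces the architecture of the proof there: a function value at a shifted point plus a permutation-symmetric quadratic form in the joint state, contracted at rate $\delta=\min\{\frac{\mu}{16L},\frac1{8N}\}$. As a proof, however, the proposal has a gap exactly where the work lies. Steps~1--3 are bookkeeping; the theorem \emph{is} step~4. There you neither specify $d$ and the coefficients of the quadratic form nor verify $\E[\mathcal L_{k+1}\mid\cF_k]\le(1-\delta)\mathcal L_k$. Falling back on ``the computer-verified weights of \cite{Schmidt2017}'' means that, like the notes, you cite the result rather than prove it.

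Step~4 is also more than a negative-semidefiniteness check. The quantity $\E[\mathcal L_{k+1}\mid\cF_k]-(1-\delta)\mathcal L_k$ is not a quadratic form. The $F_N$-terms must first be bounded by smoothness and convexity, which brings in $(\nabla f_i(X_k)-\nabla f_i(x_\ast))_i$ and function gaps as extra variables. The $N$ memory blocks reduce to a small system only after splitting $Y_k$ into its mean and mean-zero parts. Likewise, once the function term is evaluated at $X_k+d\sum_iG_k^{(i)}$, the bound $\E[F_N(X_k)-F_N(x_\ast)]\le\E[\mathcal L_k]$ needs its own argument. A minor point: with $\cF_k=\sigma(X_0,\mathfrak i_m,m\le k)$, neither $G_k$ nor $X_{k+1}$ is $\cF_k$-measurable, so your step-1 identity holds given $\cF_{k+1}$; the natural $\cF_k$-measurable state is $(X_k,G_{k-1})$.

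More substantively, the Lyapunov function you describe in words cannot be made to work. Let
$\mathcal L_k=\beta(F_N(X_k)-F_N(x_\ast))+c_1\|X_k-x_\ast\|^2+c_2\frac1N\sum_i\|G_{k-1}^{(i)}-\nabla f_i(x_\ast)\|^2$
with $\beta,c_1,c_2\ge0$. Require that it dominates the function gap in every state and that $\mathcal L_0\le C_0$ for the initial memory $0$. Taking $X_0=x_\ast$ then forces $c_2\le\frac1{16L}=\bar\alpha$. Now let $f_i(x)=\frac L2\|x\|^2$ for all $i$, so that $\delta=\frac1{8N}$ for $N\ge2$. Consider the state $X_k=x_\ast=0$ with $G_{k-1}^{(i)}=v$ for all $i$, where $\mathcal L_k=c_2\|v\|^2$. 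The step is deterministic:
\begin{itemize}
\item one stored gradient is reset to $0$;
\item $X_{k+1}=-\bar\alpha\frac{N-1}{N}v$;
\item the memory term drops only to $c_2\frac{N-1}{N}\|v\|^2$.
\end{itemize}
Domination, applied with zero memory deviation, gives $\beta(F_N(X_{k+1})-F_N(x_\ast))+c_1\|X_{k+1}\|^2\ge\frac L2\|X_{k+1}\|^2$. Hence $\mathcal L_{k+1}\le(1-\frac1{8N})\mathcal L_k$ forces $\frac L2\bar\alpha^2(1-\frac1N)^2\le\frac{7c_2}{8N}$, i.e.\ $c_2\ge\frac{N}{28}(1-\frac1N)^2\bar\alpha$. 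This contradicts $c_2\le\bar\alpha$ for $N\ge30$. Reachability does not rescue it, since a one-step inequality verified through quadratic forms must hold for every state.

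What makes the argument of \cite{Schmidt2017} work are precisely the ingredients your description leaves open. The averaged memory deviation $\frac1N\sum_i(G^{(i)}-\nabla f_i(x_\ast))$ enters in three places:
\begin{itemize}
\item through the shifted argument;
\item through an $ee^\top$-type block carrying a much larger weight than the individual deviations;
\item through a cross term with $X_k-x_\ast$.
\end{itemize}
All three vanish at initialization, because $G^{(i)}=0$ and $\sum_i\nabla f_i(x_\ast)=0$. That is why they leave no trace in $C_0$. Until these weights are fixed and the contraction is verified, the linear rate is not proved.
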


\begin{remark}
Ignoring the cost of storing $\{G_k^{(i)}\}_{i=1,\dots,N}$ we can run $N$ iterations of Algorithm~\ref{alg:SAG} to achieve a similar complexity as the deterministic full gradient descent scheme. To compare both SAG and deterministic GD we can view SAG as linearly converging scheme with rate \[\rho^{\mathrm{SAG}}=\left(1-\min\left\{\frac{\mu}{16L},\frac{1}{8N}\right\}\right)^N\,.\] For small $N$ ($N\le \frac{2L}{\mu}$) the rate is dominated by 
\[\rho^{\mathrm{SAG}}=\left(1-\frac{\mu}{16L}\right)^N\]
which corresponds to $N$ steps of GD with step size $\bar\alpha=\frac{1}{16L}$. In comparison, for large $N$ ($N\ge\frac{2L}{\mu} = 2\kappa$) the rate is dominated by
\[\rho^{\mathrm{SAG}}=\left(1-\frac{1}{8N}\right)^N\le \exp(-\frac18)\,,\]
such that the rate can be uniformly controlled in $N$.
\end{remark}

\subsection{SAGA}

The convergence analysis of SAG is challenging due to the biased estimation of the gradients $\nabla F_N(X_k)$ in each iteration:
\[ \E[\bar G_{k}\mid\cF_k] = \underbrace{\E[\bar G_{k-1} - \frac{1}{N}G_{k-1}^{(\mathfrak i_{k+1})}]}_{\neq0} + \frac1N \nabla F_N(X_k) = (1-\frac1N) \bar G_{k-1} + \frac1N \nabla F_N(X_k)\,.\]
This problem can be solved by replacing the approximation of $\nabla F_N(X_k)$ through an unbiased estimator of form
\[\bar G_k = \frac1N\sum_{i=1}^N G_{k-1}^{(i)} - G_{k-1}^{(\mathfrak i_{k+1})} +\nabla f_{\mathfrak i_{k+1}}(X_k)\,.\]
This estimator is unbiased since 
\[\E[\frac1N\sum_{i=1}^N G_{k-1}^{(i)} - G_{k-1}^{(\mathfrak i_{k+1})}\mid \cF_k] = 0\quad\text{and}\quad \E[\nabla f_{\mathfrak i_{k+1}}(X_k)\mid\cF_k] = \nabla F_N(X_k)\,.\]
Note that $\bar G_k$ does not correspond to the mean over all stored gradients $\bar G_k \neq \frac{1}{N}\sum_{i=1}^N G_{k}^{(i)}$ anymore. 

This observation led to a modified algorithm called \textit{SAGA} which has been introduced in \cite{NIPS2014_ede7e2b6}.

\begin{algorithm}[htb!]
\begin{algorithmic}[1]
\State \textbf{Input:} \begin{itemize}
 \item objective function $F_N:\R^d\to\R$, $F_N(x) = \frac1N\sum_{i=1}^N f_i(x)$
 \item initial random variable $X_0:\Omega\to\R^d$
 \item sequence of step sizes $(\alpha_k)_{k\in\N}$, $\alpha_k>0$ (deterministic or $\mathcal F$-adapted)
 \end{itemize}
 \State set $k=0$, initialize $G_0^{(i)}=0$, $i=1,\dots,N$
 \State compute $\bar G_0 = \frac{1}{N} \sum_{i=1}^N G_0^{(i)}$
\While{"convergence/stopping criterion not met"}
	\State generate independently $\mathfrak i_{k+1} \sim \mathcal U(\{1,\dots,N\})$
	\State set $G_k^{(i)} = \begin{cases} \nabla f_i(X_k), & i = \mathfrak i_{k+1} \\ G_{k-1}^{(i)}, & i \neq \mathfrak i_{k+1} \end{cases}$
	\State approximate the gradient $\nabla F_N(X_k)$ through 
	\[\bar G_k = \bar G_{k-1} - G_{k-1}^{(\mathfrak i_{k+1})} + \nabla f_{\mathfrak i_{k+1}}(X_k) 
    \]
	\State set $X_{k+1} = X_k -\alpha_k \bar G_k$, $k \mapsto k+1$
\EndWhile
\end{algorithmic}
 \caption{SAGA}\label{alg:SAGA}
\end{algorithm}

We follow the proof of linear convergence under strong convexity for Algorithm~\ref{alg:SAGA} presented in \cite{NIPS2014_ede7e2b6}. Let us assume that $F_N$, $f_i$, $i=1,\dots,N$ are $\mu$-strongly convex and $L$-smooth. For $X_0(\omega) = x_0\in\R^d$ we define point-wise
\[ \phi_0^{(i)}(\omega) = x_0,\quad \phi_{k+1}^{(i)}(\omega) = \begin{cases} \phi_k^{(i)}(\omega), & i\neq \mathfrak i_{k+1}(\omega)\\ X_k(\omega), &i=\mathfrak i_{k+1}(\omega) \end{cases} \]
and consider the error function of form
\begin{align*}
E_k = \underbrace{c\|X_k-x_\ast\|^2}_{=:E_k^{(2)}} + \underbrace{\frac1N\sum_{i=1}^N \left( f_i(\phi_k^{(i)})-f_i(x_\ast) - \langle \nabla f_i(x_\ast), \phi_k^{(i)}-x_\ast\rangle\right)}_{=:E_k^{(1)}}\ge c\|X_k-x_\ast\|^2,
\end{align*}
where $x_\ast = \arg\min_{x\in\R^d}\ F_N(x)$. Note that $E_k^{(1)}\ge0$ by convexity of $f_i$. We observe that
\[\|X_{k+1}-x_\ast\|^2 = \|X_k-x_\ast\|^2 - 2\bar\alpha\langle X_k-x_\ast,\bar G_k\rangle +\bar\alpha^2 \|\bar G_k\|^2 \]
and with $\cF_k:=\sigma(X_0,\mathfrak i_m,m\le k)$ and $\E[\bar G_k\mid\cF_k] = \nabla F_N(X_k)$ we have that
\[\E[\|X_{k+1}-x_\ast\|^2\mid \cF_k] = \|X_k-x_\ast\|^2 -2\bar\alpha \langle X_k-x_\ast, \nabla F_N(X_k)\rangle + \bar\alpha^2 \E[\|\bar G_k\|^2\mid \cF_k]\,.\]
In order to obtain an improved convergence result compared to SGD, we need to control $\E[\|\bar G_k\|^2\mid \cF_k]$ sufficiently well.
\begin{lemma}\label{lem:varred}
Let $F_N,\ f_i:\R^d\to\R$, $i=1,\dots,N$, be $\mu$-strongly convex and $L$-smooth. Then for any $\beta>0$ it holds true that 
\begin{align*}
\E[\|\bar G_k\|^2\mid \cF_k] &\le (1+\beta^{-1}) \frac1N\sum_{i=1}^N \|\nabla f_i(\phi_k^{(i)})-\nabla f_i(x_\ast)\|^2\\&\quad + (1+\beta) \frac{1}{N}\sum_{i=1}^N \|\nabla f_i(X_k)-\nabla f_i(x_\ast)\|^2 - \beta\|\nabla F_N(X_k)\|^2\,,
\end{align*}
where $x_\ast =\arg\min_{x\in\R^d}\ F_N(x)$.
\end{lemma}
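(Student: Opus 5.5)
We work conditionally on $\cF_k$, where $X_k$ and the stored points $\phi_k^{(i)}$ are fixed and the only randomness is the index $\mathfrak i:=\mathfrak i_{k+1}$, uniform on $\{1,\dots,N\}$. The stored gradient is $G_{k-1}^{(i)}=\nabla f_i(\phi_k^{(i)})$, and we set $\bar G_{k-1}=\frac1N\sum_{i}\nabla f_i(\phi_k^{(i)})$. We also use $\nabla F_N(x_\ast)=\frac1N\sum_i\nabla f_i(x_\ast)=0$. Then the SAGA estimator can be written as $\bar G_k = U+V$ with
\[
U:=\Big(\nabla f_{\mathfrak i}(X_k)-\nabla f_{\mathfrak i}(x_\ast)\Big)-\nabla F_N(X_k),\qquad
V:=\nabla F_N(X_k)-\Big(\nabla f_{\mathfrak i}(\phi_k^{(\mathfrak i)})-\nabla f_{\mathfrak i}(x_\ast)\Big)+\frac1N\sum_{i=1}^N\Big(\nabla f_i(\phi_k^{(i)})-\nabla f_i(x_\ast)\Big).
\]
To verify this, add and subtract $\nabla f_{\mathfrak i}(x_\ast)$ and $\nabla F_N(X_k)$, and note that the added term $\frac1N\sum_i\nabla f_i(x_\ast)$ vanishes. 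The point of this split is that $\E[U\mid\cF_k]=0$, because $\E[\nabla f_{\mathfrak i}(X_k)\mid\cF_k]=\nabla F_N(X_k)$ and $\E[\nabla f_{\mathfrak i}(x_\ast)\mid\cF_k]=0$. Moreover, $\E[V\mid\cF_k]=\nabla F_N(X_k)$.

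The key tool is the elementary inequality $\|a+b\|^2\le(1+\beta^{-1})\|a\|^2+(1+\beta)\|b\|^2$, which follows from Young's inequality $2\langle a,b\rangle\le\beta^{-1}\|a\|^2+\beta\|b\|^2$. We apply it with the roles arranged so that the $\phi$-terms receive the factor $(1+\beta^{-1})$ and the $X_k$-terms receive $(1+\beta)$, as the statement of Lemma~\ref{lem:varred} requires. This gives
\[
\E[\|\bar G_k\|^2\mid\cF_k]\le (1+\beta^{-1})\E[\|V'\|^2\mid\cF_k]+(1+\beta)\E[\|U'\|^2\mid\cF_k].
\]
Here $U'$ and $V'$ are a regrouping of the same summands, chosen so that each one is a mean-zero fluctuation plus a deterministic part. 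Concretely:
\[
V':=\Big(\nabla f_{\mathfrak i}(\phi_k^{(\mathfrak i)})-\nabla f_{\mathfrak i}(x_\ast)\Big)-\frac1N\sum_i\Big(\nabla f_i(\phi_k^{(i)})-\nabla f_i(x_\ast)\Big)
\quad\text{and}\quad
U':=\nabla f_{\mathfrak i}(X_k)-\nabla f_{\mathfrak i}(x_\ast),
\]
so that $\bar G_k=U'-V'$ and $\E[V'\mid\cF_k]=0$.

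For the second moments we use the variance identity $\E\|W-\E W\|^2=\E\|W\|^2-\|\E W\|^2$. Applied to $V'$, it shows that $\E[\|V'\|^2\mid\cF_k]$ is at most the uncentered second moment, namely $\frac1N\sum_i\|\nabla f_i(\phi_k^{(i)})-\nabla f_i(x_\ast)\|^2$. Applied to $U'$, it gives the exact value $\E[\|U'\|^2\mid\cF_k]=\frac1N\sum_i\|\nabla f_i(X_k)-\nabla f_i(x_\ast)\|^2$.

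So far this yields the bound without the negative term $-\beta\|\nabla F_N(X_k)\|^2$. To recover it, we do not apply Young's inequality to the full vectors. Instead, we first extract the mean: $\E\|\bar G_k\|^2=\|\nabla F_N(X_k)\|^2+\E\|\bar G_k-\nabla F_N(X_k)\|^2$. We then apply the $\beta$-Young inequality to $\bar G_k-\nabla F_N(X_k)=(U'-\nabla F_N(X_k))-V'$. After that, the variance identity gives $\E\|U'-\nabla F_N(X_k)\|^2=\frac1N\sum_i\|\nabla f_i(X_k)-\nabla f_i(x_\ast)\|^2-\|\nabla F_N(X_k)\|^2$. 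Collecting terms, the coefficient of $\|\nabla F_N(X_k)\|^2$ becomes $1-(1+\beta)=-\beta$, exactly as claimed.

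The main obstacle is this bookkeeping step. One has to make sure the mean $\nabla F_N(X_k)$ is extracted before Young's inequality is applied, and that it is attached to the $U'$ term, since only this produces the $-\beta$ factor. Strong convexity and smoothness are not actually needed for this lemma. They enter only later, when the gradient differences are bounded by Bregman divergences via Lemma~\ref{lem:convex_and_smooth}.
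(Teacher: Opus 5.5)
Your proof is correct, and its final form is exactly the paper's argument: extract the conditional mean $\nabla F_N(X_k)$ with the variance identity, split the centered estimator into a $\phi$-fluctuation and an $X_k$-fluctuation, apply the $\beta$-weighted Young inequality, then use the variance identity on each piece (dropping $-\|\frac1N\sum_i\nabla f_i(\phi_k^{(i)})\|^2$) so the coefficient of $\|\nabla F_N(X_k)\|^2$ becomes $1-(1+\beta)=-\beta$. The opening $U,V$ split and the first Young bound on the uncentered vectors are valid but unused, so you can delete them and start directly from the mean-extracted decomposition.
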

\begin{proof}
We will apply multiple times the following equality
\begin{equation}\label{eq:variance_formula}
\E[\|Q-\E[Q]\|^2] = \E[\|Q\|^2]-\|\E[Q]\|^2 
\end{equation}
for any random vector $Q$ with $\E[\|Q\|^2]<\infty$. By construction of $\bar G_k$ we can write
\begin{align*}
\E[\|\bar G_k\|^2 \mid \cF_k] &= \E[\|\frac1N\sum_{i=1}^N \nabla f_i(\phi_k^{(i)})- \nabla f_{\mathfrak i_{k+1}}(\phi_k^{(\mathfrak i_{k+1})})+\nabla f_{\mathfrak i_{k+1}}(X_k)\|^2\mid \cF_k]\\
&=: \E[\|Q\|^2\mid \cF_k] = \E[\|Q-\E[Q\mid\cF_k]\|^2\mid \cF_k] + \|\E[Q\mid\cF_k]\|^2\\
&=\E[\|\frac1N\sum_{i=1}^N \nabla f_i(\phi_k^{(i)})- \nabla f_{\mathfrak i_{k+1}}(\phi_k^{(\mathfrak i_{k+1})})+\nabla f_{\mathfrak i_{k+1}}(X_k) - \nabla F_N(X_k)\|^2\mid \cF_k]\\ &\quad+\|\nabla F_N(X_k)\|^2
\end{align*}
Let us consider the first term
\begin{align*}
&\E[\|\frac1N\sum_{i=1}^N \nabla f_i(\phi_k^{(i)})- \nabla f_{\mathfrak i_{k+1}}(\phi_k^{(\mathfrak i_{k+1})})+\nabla f_{\mathfrak i_{k+1}}(X_k) - \nabla F_N(X_k)\|^2\mid \cF_k]\\
&=\E[\|\left\{\nabla f_{\mathfrak i_{k+1}}(\phi_k^{(\mathfrak i_{k+1})}) - \nabla f_{\mathfrak i_{k+1}}(x_\ast)-\frac1N\sum_{i=1}^N \nabla f_i(\phi_k^{(i)})\right\}\\&\quad\quad\quad- \left\{\nabla f_{\mathfrak i_{k+1}}(X_k) - \nabla f_{\mathfrak i_{k+1}}(x_\ast) - \nabla F_N(X_k)\right\}\|^2\mid \cF_k]\\
&\le (1+\beta^{-1}) \E[\|\nabla f_{\mathfrak i_{k+1}}(\phi_k^{(\mathfrak i_{k+1})}) - \nabla f_{\mathfrak i_{k+1}}(x_\ast)-\frac1N\sum_{i=1}^N \nabla f_i(\phi_k^{(i)})\|^2\mid\cF_k]\\ &\quad+(1+\beta)\E[\|\nabla f_{\mathfrak i_{k+1}}(X_k) - \nabla f_{\mathfrak i_{k+1}}(x_\ast) - \nabla F_N(X_k)\|^2\mid\cF_k]\,,
\end{align*}
where we have used that $\|x+y\|^2 \le (1+\beta^{-1})\|x\|^2+(1+\beta)\|y\|^2$ for any $\beta>0$. We define $Q_1 = \nabla f_{\mathfrak i_{k+1}}(\phi_k^{(\mathfrak i_{k+1})}) - \nabla f_{\mathfrak i_{k+1}}(x_\ast)$ with
\[\E[Q_1\mid\cF_k] = \frac1N\sum_{i=1}^N \nabla f_i(\phi_k^{(i)})-\nabla F_N(x_\ast)=\frac1N\sum_{i=1}^N \nabla f_i(\phi_k^{(i)})\,, \]
and similarly
\[Q_2 = \nabla f_{\mathfrak i_{k+1}}(X_k) - \nabla f_{\mathfrak i_{k+1}}(x_\ast) \quad \text{with}\quad \E[Q_2\mid\cF_k] = \nabla F_N(X_k)\,.\]
Finally, we obtain
\begin{align*}
&(1+\beta^{-1}) \E[\|\nabla f_{\mathfrak i_{k+1}}(\phi_k^{(\mathfrak i_{k+1})}) - \nabla f_{\mathfrak i_{k+1}}(x_\ast)-\frac1N\sum_{i=1}^N \nabla f_i(\phi_k^{(i)})\|^2\mid\cF_k]\\ &\quad+(1+\beta)\E[\|\nabla f_{\mathfrak i_{k+1}}(X_k) - \nabla f_{\mathfrak i_{k+1}}(x_\ast) - \nabla F_N(X_k)\|^2\mid\cF_k]\\
&\le (1+\beta^{-1})\Big\{\E[\|\nabla f_{\mathfrak i_{k+1}}(\phi_k^{(\mathfrak i_{k+1})}) - \nabla f_{\mathfrak i_{k+1}}(x_\ast)\|^2\mid\cF_k]-\|\frac1N\sum_{i=1}^N \nabla f_i(\phi_k^{(i)})\|^2 \Big\}\\
&\quad+ (1+\beta)\left\{\E[\|\nabla f_{\mathfrak i_{k+1}}(X_k) - \nabla f_{\mathfrak i_{k+1}}(x_\ast)\|^2 \mid\cF_k]- \|\nabla F_N(X_k)\|^2 \right\}
\end{align*}
and all together
\begin{align*}
\E[\|\bar G_k\|^2\mid \cF_k] &\le (1+\beta^{-1}) \frac1N\sum_{i=1}^N \|\nabla f_i(\phi_k^{(i)})-\nabla f_i(x_\ast)\|^2\\&\quad + (1+\beta) \frac{1}{N}\sum_{i=1}^N \|\nabla f_i(X_k)-\nabla f_i(x_\ast)\|^2 - \beta\|\nabla F_N(X_k)\|^2\,.
\end{align*}
\end{proof}

Applying the upper bound in Lemma~\ref{lem:varred} results in
\begin{equation}\label{eq:SAGA_errorbound1}
\begin{split}
\E[\|X_{k+1}-x_\ast\|^2\mid \cF_k]&\le \|X_k-x_\ast\|^2 -2\bar\alpha \langle X_k-x_\ast, \nabla F_N(X_k)\rangle -\bar\alpha^2\beta \|\nabla F_N(X_k)\|^2\\
&\quad +\bar\alpha^2 (1+\beta^{-1})\frac1N\sum_{i=1}^N\|\nabla f_i(\phi_k^{(i)})-\nabla f_i(x_\ast)\|^2\\ &\quad +\bar\alpha^2 (1+\beta) \frac1N\sum_{i=1}^N\|\nabla f_i(X_k)-\nabla f_i(x_\ast)\|^2\,.
\end{split}
\end{equation}
With the following Lemma, we are able to control $\frac1N\sum_{i=1}^N\|\nabla f_i(X_k)-\nabla f_i(x_\ast)\|^2$ by setting it in relation to $-\langle X_k-x_\ast, \nabla F_N(X_k)\rangle$.
\begin{lemma}\label{lem:finitesum_convex_smooth}
Let $F_N,\ f_i:\R^d\to\R$, $i=1,\dots,N$, be $\mu$-strongly convex and $L$-smooth. Then for all $x\in\R^d$ it holds true that 
\begin{align*}
\langle \nabla F_N(x), x_\ast-x\rangle &\le -\frac{L-\mu}{L} (F_N(x)-F_N(x_\ast)) - \frac{\mu}2\|x-x_\ast\|^2\\&\quad - \frac{1}{2L}\frac1N\sum_{i=1}^N\|\nabla f_i(x)-\nabla f_i(x_\ast)\|^2\,,
\end{align*}
where $x_\ast =\arg\min_{x\in\R^d}\ F_N(x)$.
\end{lemma}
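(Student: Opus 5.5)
The plan is to prove the inequality for each component $f_i$ separately and then average over $i$. Strong convexity will enter through a shifted function, and smoothness through Lemma~\ref{lem:convex_and_smooth}. The basic ingredient is the following. For each $i$, $f_i$ is convex and $L$-smooth, so Lemma~\ref{lem:convex_and_smooth} applied with the pair $(x_\ast,x)$ gives
\[
f_i(x_\ast)\ge f_i(x)+\langle \nabla f_i(x),x_\ast-x\rangle+\frac{1}{2L}\|\nabla f_i(x)-\nabla f_i(x_\ast)\|^2 .
\]
Averaging over $i$ and using $\nabla F_N(x_\ast)=0$ would already give
\[
\langle\nabla F_N(x),x_\ast-x\rangle\le -(F_N(x)-F_N(x_\ast))-\frac1{2L}\frac1N\sum_i\|\nabla f_i(x)-\nabla f_i(x_\ast)\|^2 .
\]
This bound lacks the $\frac{\mu}{2}\|x-x_\ast\|^2$ term, so strong convexity has to be built in more carefully.

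To do this, I would introduce $g_i(y):=f_i(y)-\frac{\mu}{2}\|y\|^2$. This function is convex by $\mu$-strong convexity of $f_i$, and it is $(L-\mu)$-smooth; the latter should be justified via co-coercivity, or simply assumed in the standard way from the paper's source \cite{NIPS2014_ede7e2b6}. Applying Lemma~\ref{lem:convex_and_smooth} to $g_i$ with constant $L-\mu$ gives
\[
g_i(x_\ast)\ge g_i(x)+\langle\nabla g_i(x),x_\ast-x\rangle+\frac{1}{2(L-\mu)}\|\nabla g_i(x)-\nabla g_i(x_\ast)\|^2 .
\]
I would then substitute $\nabla g_i=\nabla f_i-\mu\,\mathrm{id}$ and expand. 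The quadratic terms in $x$ and $x_\ast$ combine into $\frac{\mu}{2}\|x-x_\ast\|^2$. The last squared norm expands to $\|a\|^2-2\mu\langle a,x-x_\ast\rangle+\mu^2\|x-x_\ast\|^2$ with $a=\nabla f_i(x)-\nabla f_i(x_\ast)$. After averaging over $i$ and using $\nabla F_N(x_\ast)=0$, the cross term becomes proportional to $\langle\nabla F_N(x),x-x_\ast\rangle$. This produces an inequality of the form
\[
\frac{L}{L-\mu}\langle\nabla F_N(x),x_\ast-x\rangle\le -(F_N(x)-F_N(x_\ast))-\frac{L\mu}{2(L-\mu)}\|x-x_\ast\|^2-\frac{1}{2(L-\mu)}\frac1N\sum_i\|a_i\|^2 .
\]
Multiplying by $\frac{L-\mu}{L}$ yields exactly the stated coefficients $-\frac{L-\mu}{L}$, $-\frac{\mu}{2}$ and $-\frac{1}{2L}$.

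The main obstacle is the bookkeeping in this expansion: the cross term $\langle\nabla f_i(x)-\nabla f_i(x_\ast),x-x_\ast\rangle$ has to be collected correctly with the $\frac{\mu}{L-\mu}$ factor and moved to the left side. A secondary point is justifying the $(L-\mu)$-smoothness of $g_i$, and the degenerate case $L=\mu$. When $L=\mu$ the Hessian of each $f_i$ equals $\mu I$, so $f_i$ is quadratic, and the claim can be verified directly by computation or obtained as a limit.
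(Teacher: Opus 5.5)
Your proof is correct. The paper gives no proof of this lemma (it is left as an exercise), so there is no competing argument to compare against. What you propose is the standard argument: the shift $g_i=f_i-\frac{\mu}{2}\|\cdot\|^2$, which Nesterov also uses for \cite[Theorem~2.1.12]{N2018} (Lemma~\ref{lem:auxiliary_bound} here), applied componentwise through Lemma~\ref{lem:convex_and_smooth} and then averaged. Your bookkeeping checks out. With $h=x-x_\ast$ and $a_i=\nabla f_i(x)-\nabla f_i(x_\ast)$, the shifted inequality reads $D^{(B)}_{f_i}(x_\ast,x)-\frac{\mu}{2}\|h\|^2\ge\frac{1}{2(L-\mu)}\bigl(\|a_i\|^2-2\mu\langle a_i,h\rangle+\mu^2\|h\|^2\bigr)$. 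Averaging uses $\frac1N\sum_i\langle a_i,h\rangle=\langle\nabla F_N(x),h\rangle$. The identities $1+\frac{\mu}{L-\mu}=\frac{L}{L-\mu}$ and $\frac{\mu}{2}+\frac{\mu^2}{2(L-\mu)}=\frac{\mu L}{2(L-\mu)}$ then give your intermediate display, and hence the claim.

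The two points you flag can be closed more cleanly than you suggest.

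\emph{Smoothness of $g_i$.} You do not need co-coercivity or any Lipschitz statement for $\nabla g_i$. The proof of Lemma~\ref{lem:convex_and_smooth} uses only convexity and the quadratic upper bound \eqref{eq:upperbound_smooth}. For $g_i$ both follow in one line: $D^{(B)}_{g_i}(w,z)=D^{(B)}_{f_i}(w,z)-\frac{\mu}{2}\|w-z\|^2$ lies in $\bigl[0,\frac{L-\mu}{2}\|w-z\|^2\bigr]$ by strong convexity of $f_i$ and Lemma~\ref{lem:descentlemma}.

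\emph{The case $L=\mu$.} Your Hessian argument silently assumes $f_i$ is twice differentiable, which is not a hypothesis. The same sandwich instead gives $D^{(B)}_{g_i}\equiv 0$, so $g_i$ is affine, $a_i=\mu h$, and both sides of the claimed inequality equal $-\mu\|h\|^2$. Alternatively, take the limit you mention: each $f_i$ is also $L'$-smooth for every $L'>L$, so the inequality holds with $L'$ in place of $L$. Both sides are continuous in $L'$, and letting $L'\downarrow L$ avoids the case distinction altogether.
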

The proof of Lemma~\ref{lem:finitesum_convex_smooth} is left as an exercise for the interested reader. We can now combine the upper bound in Lemma~\ref{lem:finitesum_convex_smooth} with \eqref{eq:SAGA_errorbound1} to obtain
\begin{align*}
\E[\|X_{k+1}-x_\ast\|^2\mid \cF_k] &\le (1-\bar\alpha\mu)\|X_k-x_\ast\|^2-2\bar\alpha \frac{L-\mu}{L} \left(F_N(X_k)-F_N(x_\ast)\right)-\bar\alpha^2 \beta\|\nabla F_N(X_k)\|^2\\
&\quad + (\bar\alpha^2(1+\beta)-\frac{\bar\alpha}L) \frac1N\sum_{i=1}^N\|\nabla f_i(X_k)-\nabla f_i(x_\ast)\|^2\\
&\quad + \bar\alpha^2(1+\beta^{-1})\frac1N\sum_{i=1}^N\|\nabla f_i(\phi_k^{(i)})-\nabla f_i(x_\ast)\|^2
\end{align*}
It remains to control 
\[\frac1N\sum_{i=1}^N\|\nabla f_i(\phi_k^{(i)})-\nabla f_i(x_\ast)\|^2\,.\]
Since all $f_i$, $i=1,\dots,N$, are assumed to be $L$-smooth with the same $L>0$, we have that
\[\|\nabla f_i(\phi_k^{(i)})-\nabla f_i(x_\ast)\|^2\le 2L(f_i(\phi_k^{(i)})-f_i(x_\ast)-\langle \nabla f_i(x_\ast),\phi_k^{(i)}-x_\ast\rangle)\]
(see also Lemma~\ref{lem:SVRG_aux} below) implying that
\[\frac1N\sum_{i=1}^N\|\nabla f_i(\phi_k^{(i)})-\nabla f_i(x_\ast)\|^2\le \frac1N\sum_{i=1}^N f_i(\phi_k^{(i)})-F_N(x_\ast) -\frac1N\sum_{i=1}^N \langle \nabla f_i(x_\ast),\phi_k^{(i)}-x_\ast\rangle =E_k^{(1)},\]
which also explains the origin of the error function $E_k = E_k^{(2)}+E_k^{(1)}$. We are now ready to prove the linear convergence of SAGA.
\begin{thm}[Theorem~1 in \cite{NIPS2014_ede7e2b6}]\label{thm:SAGA}
Let $F_N,\ f_i:\R^d\to\R$, $i=1,\dots,N$, be $\mu$-strongly convex and $L$-smooth, and let $X_0$ be a random variable such that $\E[E_0] = c\E[\|X_0-x_\ast\|^2] + \E[F_N(X_0)-F_N(x_\ast)]<\infty$. Moreover, let $(X_k)_{k\in\N}$ be generated by Algorithm~\ref{alg:SAGA} with fixed step size $\alpha_k =\bar\alpha = \frac{1}{2(\mu N+L)}$. Then for $c=\frac{1}{2\bar\alpha (1-\bar\alpha\mu)N}$ it holds true that
\[\E[E_{k+1}] \le (1-\bar\alpha\mu)\E[E_k]\,. \]
\end{thm}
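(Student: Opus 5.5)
The plan is a Lyapunov argument: compute the conditional expectation of each of the two parts of $E_{k+1}=E_{k+1}^{(2)}+E_{k+1}^{(1)}$ given $\cF_k$, show that the sum is bounded by $(1-\bar\alpha\mu)E_k$ plus terms with non-positive coefficients, and take full expectations. Throughout we use $\nabla F_N(x_\ast)=0$.

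\textbf{Step 1: the table part $E^{(1)}$.} By construction of $\phi_{k+1}^{(i)}$, with probability $1/N$ the $i$-th entry is replaced by $X_k$ and otherwise it is unchanged. Hence
\[
\E[E_{k+1}^{(1)}\mid\cF_k]=\Big(1-\frac1N\Big)E_k^{(1)}+\frac1N\big(F_N(X_k)-F_N(x_\ast)\big),
\]
where the linear term $\langle\nabla F_N(x_\ast),X_k-x_\ast\rangle$ vanishes.

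\textbf{Step 2: the distance part $E^{(2)}$.} I multiply by $c$ the bound on $\E[\|X_{k+1}-x_\ast\|^2\mid\cF_k]$ obtained above. That bound combines \eqref{eq:SAGA_errorbound1} with Lemma~\ref{lem:finitesum_convex_smooth}. In it I replace the $\phi$-term using the componentwise estimate from Lemma~\ref{lem:convex_and_smooth}, with the factor $2L$ kept explicitly:
\[
\frac1N\sum_{i=1}^N\|\nabla f_i(\phi_k^{(i)})-\nabla f_i(x_\ast)\|^2\le 2L\,E_k^{(1)}.
\]
I also drop the term $-c\bar\alpha^2\beta\|\nabla F_N(X_k)\|^2\le0$.

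\textbf{Step 3: collect coefficients.} Adding Steps 1 and 2, I group the right-hand side into four quantities. Each must end up with coefficient at most $(1-\bar\alpha\mu)$ times its coefficient in $E_k$, or at most zero if it does not appear in $E_k$.
\begin{itemize}
\item[(a)] The coefficient of $\|X_k-x_\ast\|^2$ is $c(1-\bar\alpha\mu)$, which is exactly what is needed.
\item[(b)] For $E_k^{(1)}$ I need $1-\frac1N+2Lc\bar\alpha^2(1+\beta^{-1})\le 1-\bar\alpha\mu$.
\item[(c)] For $F_N(X_k)-F_N(x_\ast)$ I need $\frac1N-2c\bar\alpha\frac{L-\mu}{L}\le0$.
\item[(d)] For $\frac1N\sum_{i=1}^N\|\nabla f_i(X_k)-\nabla f_i(x_\ast)\|^2$ I need $c\big(\bar\alpha^2(1+\beta)-\frac{\bar\alpha}{L}\big)\le0$, that is $\bar\alpha\le\frac{1}{L(1+\beta)}$.
\end{itemize}
If these hold, $\E[E_{k+1}\mid\cF_k]\le(1-\bar\alpha\mu)E_k$, and the tower property gives the claim. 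Integrability follows from $\E[E_0]<\infty$.

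\textbf{Main obstacle: choosing $\beta$.} The remaining work is to choose $\beta$ so that (b)–(d) hold simultaneously with $\bar\alpha=\frac1{2(\mu N+L)}$ and $c=\frac{1}{2\bar\alpha(1-\bar\alpha\mu)N}$.

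\begin{itemize}
\item For (d), I would first try $\beta=\frac{2\mu N+L}{L}$, following \cite{NIPS2014_ede7e2b6}. Then $L(1+\beta)=2(\mu N+L)=\bar\alpha^{-1}$, so (d) holds with equality.
\item For (b), substituting $c$ turns $2Lc\bar\alpha^2(1+\beta^{-1})$ into $\frac{L\bar\alpha(1+\beta^{-1})}{(1-\bar\alpha\mu)N}$. Condition (b) becomes an algebraic inequality in $\mu,L,N$, which I would check by clearing denominators.
\item Condition (c) is the delicate one, since $\frac{L-\mu}{L}$ can be small when $\mu\approx L$. If it fails, I would avoid discarding the useful term in Lemma~\ref{lem:finitesum_convex_smooth}: split $F_N(X_k)-F_N(x_\ast)$ using $F_N(X_k)-F_N(x_\ast)\le\frac{L}{2}\|X_k-x_\ast\|^2$ and absorb the excess into the slack of (d), or adjust $\beta$. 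In the worst case I would redo the estimate with a different convex combination of the two bounds on $\langle\nabla F_N(X_k),x_\ast-X_k\rangle$.
\end{itemize}
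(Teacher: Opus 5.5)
Your condition (c) can never be satisfied. The plan therefore breaks exactly where you flagged it, but not for the reason you give: the failure has nothing to do with $\mu\approx L$, and (c) does not involve $\beta$ at all. Since $2c\bar\alpha=\frac{1}{(1-\bar\alpha\mu)N}$, condition (c) is equivalent to $(1-\bar\alpha\mu)L\le L-\mu$, that is, to $\bar\alpha L\ge 1$. But $\bar\alpha=\frac{1}{2(\mu N+L)}<\frac{1}{2L}$.

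Your fallbacks do not repair this. Conditions (a) and (d) are tight, so bounding $F_N(X_k)-F_N(x_\ast)\le\frac L2\|X_k-x_\ast\|^2$ pushes the coefficient in (a) above $c(1-\bar\alpha\mu)$. Suppose instead you lower $\beta$ to create slack in (d), and convert that slack via $\frac1N\sum_{i=1}^N\|\nabla f_i(X_k)-\nabla f_i(x_\ast)\|^2\ge\|\nabla F_N(X_k)\|^2\ge 2\mu(F_N(X_k)-F_N(x_\ast))$. The remaining coefficient of $F_N(X_k)-F_N(x_\ast)$ is then exactly $\frac{\mu\bar\alpha\beta}{(1-\bar\alpha\mu)N}=2\mu c\bar\alpha^2\beta>0$. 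Driving $\beta\to 0$ to shrink it makes (b) fail, because of the factor $1+\beta^{-1}$.

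That residual is precisely the term you threw away in Step~2. The missing step, which the paper's proof takes, is to keep $-c\bar\alpha^2\beta\|\nabla F_N(X_k)\|^2$ and bound it by $-2\mu c\bar\alpha^2\beta\,(F_N(X_k)-F_N(x_\ast))$, using the PL inequality implied by strong convexity (Proposition~\ref{prop:strongconvex_PL}). Condition (c) then becomes $\frac1N-2c\bar\alpha\frac{L-\mu}{L}-2\mu c\bar\alpha^2\beta\le 0$, which is equivalent to $\bar\alpha(1+\beta)\ge\frac1L$. Together with (d), this forces $\bar\alpha(1+\beta)=\frac1L$, i.e.\ $\beta=\frac{2\mu N+L}{L}$, which is your first choice, and (c) holds with equality.

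For (b), this $\beta$ gives $L\bar\alpha(1+\beta^{-1})=\frac{L}{2\mu N+L}$. With $a:=\mu N$, the inequality reduces to $4L(a+L)^2\le(a+2L)(2a+L)(2a+2L-\mu)$, which holds because $\mu\le a$. The rest of your plan agrees with the paper: Step~1, the $2L$ bound on the $\phi$-term, (a), (d), and the tower-property conclusion.
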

\begin{proof}
Firstly, we observe that each $\phi_{k+1}^{(i)}$ given $\cF_k$ is distributed according to
\[\phi_{k+1}^{(i)}\mid\cF_k \sim \frac{1}{N}\delta_{X_k} + (1-\frac1N)\delta_{\phi_k^{(i)}}\,, \]
such that
\begin{align*}
\frac1N\sum_{i=1}^N \E[ f_i(\phi_{k+1}^{(i)})\mid \cF_k] &= \frac1N\sum_{i=1}^N \left(\frac1N f_i(X_k) + (1-\frac1N) f_i(\phi_{k}^{(i)}) \right)
= \frac1N F_N(X_k) + (1-\frac1N) \frac1N\sum_{i=1}^N f_i(\phi_{k}^{(i)})\,.
\end{align*}
With a similar computation we also obtain 
\begin{align*} 
\E[-\frac1N\sum_{i=1}^N \langle \nabla f_i(x_\ast),\phi_{k+1}^{(i)}-x_\ast\rangle] &= -\frac1N\langle \nabla F_N(x_\ast),X_k-x_\ast\rangle - (1-\frac1N)\sum_{i=1}^N \langle \nabla f_i(x_\ast),\phi_k^{(i)}-x_\ast\rangle\\ &= - (1-\frac1N)\sum_{i=1}^N \langle \nabla f_i(x_\ast),\phi_k^{(i)}-x_\ast\rangle\,.
\end{align*}
Finally, we obtain the iterative error bound
\begin{align*}
\E[E_{k+1}\mid \cF_k] &\le \left(\frac1N - c2\bar\alpha\frac{L-\mu}{L}\right) \left(F_N(X_k)-F_N(x_\ast)\right) - c\bar\alpha^2 \beta\|\nabla F_N(X_K)\|^2\\
&\quad +\left(1-\bar\alpha\mu\right)c\|X_k-x_\ast\|^2 +\left(1-\frac1N+2c(1+\beta^{-1})\bar\alpha^2L-\bar\alpha\mu+\bar\alpha\mu\right) E_k^{(1)}\\
&\quad + \left(c\bar\alpha(\bar\alpha(1+\beta)-\frac1L\right) \frac1N\sum_{i=1}^N\|\nabla f_i(X_k)-\nabla f_i(x_\ast)\|^2\,,
\end{align*}
and using $-\|\nabla F_N(X_K)\|^2\le -2\mu(F_N(X_k)-F_N(x_\ast))$ by $\mu$-strong convexity of $F_N$ yields
\begin{align*}
   \E[E_{k+1}\mid \cF_k]  &\le (1-\bar\alpha\mu) E_k +\left(\frac1N - 2c\bar\alpha \frac{L-\mu}{L}-2\mu\bar\alpha^2\beta c\right) \cdot \left(F_N(X_k)-F_N(x_\ast)\right)\\
&\quad+\left(\bar\alpha\mu-\frac1N+2c(1+\beta^{-1})\bar\alpha^2 L \right) E_k^{(1)}\\
&\quad+c\bar\alpha(\bar\alpha(1+\beta)-\frac1L) \frac1N\sum_{i=1}^N\|\nabla f_i(X_k)-\nabla f_i(x_\ast)\|^2\,,
\end{align*}
With the choice $\bar\alpha= \frac{1}{2(\mu N+L)}$, $\beta = \frac{2\mu N+L}{L}$ and $c=\frac{1}{2\bar\alpha (1-\bar\alpha\mu)N}$ one can verify that
\begin{itemize}
\item $\bar\alpha (1+\beta)-\frac1L = 0$,
\item $\bar\alpha\mu-\frac1N+2c(1+\beta^{-1})\bar\alpha^2 L\le 0$,
\item $\frac1N - 2c\bar\alpha \frac{L-\mu}{L}-2\mu\bar\alpha^2\beta c=0$,
\end{itemize}
such that we conclude the proof with
\[ \E[E_{k+1}] = \E[\E[E_{k+1}\mid \cF_k]] \le (1-\bar\alpha\mu) \E[E_k]\,.\]
\end{proof}

We observe that $c\E[\|X_k-x_\ast\|^2]\le E_k$ such that we obtain the following corollary:

\begin{cor}[Corollary~1 in \cite{NIPS2014_ede7e2b6}]
Under the same assumptions as in Theorem~\ref{thm:SAGA} it holds true that
\[\E[\|X_k-x_\ast\|^2] \le \left(1-\frac{\mu}{2(\mu N+ L)}\right)^k\left(\E[\|X_0-x_\ast\|^2 + \frac{N}{\mu N + L}\E[F_N(X_k)-F_N(x_\ast)] \right)\,, \]
where $x_\ast =\arg\min_{x\in\R^d}\ F_N(x)$.
\end{cor}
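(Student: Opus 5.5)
The corollary follows directly from Theorem~\ref{thm:SAGA}. I will iterate its one-step contraction and then use the lower bound $E_k\ge c\|X_k-x_\ast\|^2$ to pass from the Lyapunov function to the squared distance.

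First, applying Theorem~\ref{thm:SAGA} repeatedly (by induction on $k$) gives
\[
\E[E_k]\le (1-\bar\alpha\mu)^k\,\E[E_0],\qquad \bar\alpha=\frac{1}{2(\mu N+L)},
\]
so that $1-\bar\alpha\mu = 1-\frac{\mu}{2(\mu N+L)}$. This is exactly the contraction factor in the claim.

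Second, I bound the left-hand side from below. Since $E_k^{(1)}\ge 0$ by convexity of each $f_i$, we have $c\,\E[\|X_k-x_\ast\|^2]\le \E[E_k]$. Dividing by $c$ gives
\[
\E[\|X_k-x_\ast\|^2]\le (1-\bar\alpha\mu)^k\Big(\E[\|X_0-x_\ast\|^2]+\tfrac1c\E[E_0^{(1)}]\Big).
\]

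Third, I identify the initial term. Because $\phi_0^{(i)}=X_0$ for all $i$, and $\frac1N\sum_i\nabla f_i(x_\ast)=\nabla F_N(x_\ast)=0$, it follows that $E_0^{(1)}=F_N(X_0)-F_N(x_\ast)$. It remains to compute $1/c=2\bar\alpha(1-\bar\alpha\mu)N$. Since $1-\bar\alpha\mu\le 1$ and $2\bar\alpha=\frac{1}{\mu N+L}$, we get
\[
\frac1c\le 2\bar\alpha N=\frac{N}{\mu N+L},
\]
which is exactly the stated coefficient.

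The only delicate point is bookkeeping. The stated bound writes $\E[F_N(X_k)-F_N(x_\ast)]$ inside the parenthesis, but the argument produces $\E[F_N(X_0)-F_N(x_\ast)]$; I read the subscript $k$ as a typo for $0$. The argument also relies on $x_\ast$ being a stationary point of $F_N$, so that the linear term in $E_0^{(1)}$ vanishes.
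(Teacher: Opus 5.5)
Your proposal is correct and matches the paper's own argument, which consists only of observing $c\,\E[\|X_k-x_\ast\|^2]\le\E[E_k]$ and iterating the contraction $\E[E_{k+1}]\le(1-\bar\alpha\mu)\E[E_k]$ of Theorem~\ref{thm:SAGA}; your identification $E_0^{(1)}=F_N(X_0)-F_N(x_\ast)$ (via $\nabla F_N(x_\ast)=0$) and the bound $1/c=2\bar\alpha(1-\bar\alpha\mu)N\le N/(\mu N+L)$ are valid, the latter because $F_N(X_0)-F_N(x_\ast)\ge 0$. You are also right that the $X_k$ inside the parenthesis of the stated bound should read $X_0$.
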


\begin{remark}
We emphasize again, that for both SAG and SAGA it is necessary to store $\{G_k^{(i)}\}_{i=1,\dots,N}$ which may occur with additional cost. 
\end{remark}

In the following, we take a look at Table~1 of \cite{Schmidt2017} (extended by the values for SAGA), where the theoretically derived rates of convergence of SAG (and SAGA) are compared to various deterministic first-order methods, which in each iteration need to evaluate the gradients across the entire data set $i=1,\dots,N$. We observe a significant improvement through SAG and SAGA.

\begin{table}[htb!]
\begin{center}
\begin{tabular}{|l|c|c|l|l|}
\hline
Algorithm & $\bar\alpha$ & rate & $\mu=0.01$ & $\mu = 0.0001$\\
\hline 
\parbox[0pt][3em][c]{0cm}{}GD & $\frac1L$ & $(1-\frac\mu{L})$ & $\sim 0.9999$ & $\sim 1$\\
\parbox[0pt][3em][c]{0cm}{}GD & $\frac{2}{\mu+L}$ & $(1-\frac{2\mu}{L+\mu})^2$ & $\sim0.9996$ & $\sim 1$\\
\parbox[0pt][3em][c]{0cm}{}NAM & $\frac1L$ & $(1-\sqrt{\frac{\mu}{L}})$ & $\sim 0.99$ & $\sim 0.999$\\
\hline
\parbox[0pt][3em][c]{0cm}{}lower bound & -- & $(1-\frac{2\sqrt{\mu}}{\sqrt{L}+\sqrt{\mu}})^2$ & $\sim 0.9608$ & $\sim 0.996$\\
\hline 
\parbox[0pt][3em][c]{0cm}{}SAG & $\frac1{16L}$ & $(1-\min\{\frac{\mu}{16L},\frac{1}{8N}\})^N$ & $\sim0.8825$ & $\sim0.9938$\\
\parbox[0pt][3em][c]{0cm}{}SAGA & $\frac{1}{2(\mu N + L)}$ & $(1-\frac{\mu}{2(\mu N+ L)})^N$ & $\sim0.635$ & $\sim0.956$\\
\hline
\end{tabular}
\caption{For both scenarios we assume that $L=100$ and $N=10^5$.}\label{tab:SAG}
\end{center}
\end{table}

\subsection{Stochastic variance reduced gradient (SVRG) }

In the following, we consider a variance reduction method for SGD that avoids storing gradients across the entire data set. The \textit{stochastic variance reduced gradient} (SVRG) method has been introduced in \cite{NIPS2013_ac1dd209}. The algorithm operates cyclic by a loop of SGD followed by an exact gradient update. Every $M$ iterations, the scheme updates the gradient information across the entire data set. For each cycle of SGD followed by the exact gradient iteration, we need to evaluate $2M + N$ gradients. In order to compare the algorithm to GD, SAG and SAGA, we will need to pay attention to this observation and rescale the effective rate of convergence accordingly. 

\begin{algorithm}[htb!]
\begin{algorithmic}[1]
\State \textbf{Input:} \begin{itemize}
 \item objective function $F_N:\R^d\to\R$, $F_N(x) = \frac1N\sum_{i=1}^N f_i(x)$
 \item initial random variable $X_0:\Omega\to\R^d$
 \item length of cycle $M\ge1$
 \item sequence of step sizes $(\alpha_k^{(m)})_{k\in\N,\ m=0,\dots,M-1}$, $\alpha_k^{(m)}>0$ (deterministic or $\mathcal F$-adapted)
 \end{itemize}
 \State set $k=0$
 \State set $\tilde X_0 = X_0^{(0)}$, $\tilde G_0 = \nabla F_N(\tilde X_0)$ 
 \State set $X_0^{(0)} = X_0$ $\bar G_0^{(0)} = \tilde G_0$ 
\While{"convergence/stopping criterion not met"}
	\For{$m=0,\dots,M-1$ }
		\State generate independently $\mathfrak i_{k+1}^{(m+1)} \sim \mathcal U(\{1,\dots,N\})$
		\State approximate the gradient $\nabla F_N(X_k^{(m)})$ through
		\[\bar G_k^{(m)} = \nabla f_{\mathfrak i_{k+1}^{(m+1)}}(X_k^{(m)}) - \nabla f_{\mathfrak i_{k+1}^{(m+1)}}(\tilde X_k) + \tilde G_k  \]
		\State set $X_{k}^{(m+1)} = X_k^{(m)} -\alpha_k^{(m)} \bar G_k^{(m)}$, 
	\EndFor
	\State generate independently $\mathfrak m_{k+1}\sim\cU(\{0,\dots,M-1\})$,
	\State set $\tilde X_{k+1} = X_k^{(\mathfrak m_{k+1})}$,
	\State set $\tilde G_{k+1} = \nabla F_N(\tilde X_{k+1}) = \frac1N \sum_{i=1}^N \nabla f_i(\tilde X_{k+1})$
	\State set $X_{k+1}^{(0)} = \tilde X_{k+1}$, $\bar G_{k+1}^{(0)} = \tilde G_{k+1}$
	\State set $k \mapsto k+1$
\EndWhile
\end{algorithmic}
 \caption{Stochastic variance reduced gradient method (SVRG)}\label{alg:SVRG}
\end{algorithm}
We will follow the proof for linear convergence of SVRG presented in \cite{NIPS2013_ac1dd209}. Let us start with the following auxiliary bound on the gradients $\nabla f_i$, $i=1,\dots,N$. Note that this result can be viewed as an extension of the inequality
\[\frac1{2L}\|\nabla F_N(x)- \nabla F_N(x_\ast)\|^2\le F_N(x)-F_N(x_\ast),\]
which we have derived under $L$-smoothness and convexity of $F_N$.

\begin{lemma}\label{lem:SVRG_aux}
Let $F_N,\ f_i:\R^d\to\R$, $i=1,\dots,N$, be convex and $L$-smooth. Then for $x_\ast =\arg\min_{x\in\R^d}\ F_N(x)$ and all $x\in\R^d$ it holds
\[\frac1N\sum_{i=1}^N \|\nabla f_i(x)-\nabla f_i(x_\ast)\|^2 \le 2L \left(F_N(x)-F_N(x_\ast)\right)\,. \]
\end{lemma}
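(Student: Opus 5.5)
The plan is to apply Lemma~\ref{lem:convex_and_smooth} to each component $f_i$ separately, with the pair $(x,x_\ast)$, and then average over $i$.

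Since each $f_i$ is convex and $L$-smooth, Lemma~\ref{lem:convex_and_smooth} gives for every $i$ and all $x\in\R^d$
\[
\frac{1}{2L}\|\nabla f_i(x)-\nabla f_i(x_\ast)\|^2\le D_{f_i}^{(B)}(x,x_\ast)= f_i(x)-f_i(x_\ast)-\langle \nabla f_i(x_\ast),x-x_\ast\rangle .
\]

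Next I sum over $i=1,\dots,N$ and divide by $N$. By linearity, the right-hand side becomes
\[
F_N(x)-F_N(x_\ast)-\langle \nabla F_N(x_\ast),x-x_\ast\rangle .
\]

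The key step is removing the linear term. Since $x_\ast$ minimizes the differentiable function $F_N$, the first-order optimality condition gives $\nabla F_N(x_\ast)=0$. Note that the individual $\nabla f_i(x_\ast)$ need not vanish; only their average does. That is exactly why the averaging must happen before the inner product drops out.

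Multiplying by $2L$ then yields the claim. There is no real obstacle here. One only needs the existence of the minimizer $x_\ast$, which the statement presupposes, and to avoid trying to drop the inner product term for each $i$ individually.
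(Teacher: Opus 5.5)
Your proposal is correct and follows the paper's proof. The paper re-derives the per-component bound $\|\nabla f_i(x)-\nabla f_i(x_\ast)\|^2\le 2L\,D_{f_i}^{(B)}(x,x_\ast)$ inline through the auxiliary function $\varphi_i$, which is the same argument as Lemma~\ref{lem:convex_and_smooth}, whereas you cite that lemma directly; both then average over $i$ and drop the linear term using $\nabla F_N(x_\ast)=0$.
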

\begin{proof}
For each $i=1,\dots,N$, define the function
\[\varphi_i(x) = f_i(x)-f_i(x_\ast)-\langle \nabla f_i(x_\ast),x-x_\ast\rangle\ge0 \]
such that $\nabla \varphi_i(x) = \nabla f_i(x)-\nabla f_i(x_\ast)$ and $\nabla \varphi_i(x_\ast) =0$. Since $\varphi_i(x_\ast)=0$ and $\varphi_i(x)\ge0$ by convexity of $f_i$, we observe that $\varphi_i(x_\ast) =\min_{x\in\R^d}\ \varphi_i(x)$. For any $x\in\R^d$ it holds that
\begin{align*}
0=\varphi_i(x_\ast) \le \min_{\alpha\ge0}\ \varphi_i(x-\alpha \nabla f_i(x)) &\le \min_{\alpha\ge0}\ \left(\varphi_i(x)-\alpha (1-\frac{L}2\alpha) \|\nabla \varphi_i(x)\|^2\right)\\
&=\varphi_i(x)-\frac1{2L}\|\nabla \varphi_i(x)\|^2,
\end{align*}
where we have used that $\varphi_i$ is $L$-smooth (since $\nabla \varphi_i(x) = \nabla f_i(x) - \nabla f_i(x_\ast)$ remains $L$-Lipschitz continuous) and that $\max_\alpha \alpha(1-\frac{L}2\alpha) = \frac{1}{2L}$. Hence, it follows that
\[\|\nabla\varphi_i(x)\|^2 = \|\nabla f_i(x)-\nabla f_i(x_\ast)\|^2 \le 2L \varphi_i(x) = 2L\left(f_i(x)-f_i(x_\ast)-\langle \nabla f_i(x_\ast),x-x_\ast\rangle\right).\]
Taking the average over all $i=1,\dots,N$, finishes the proof
\begin{align*}
\frac1N\sum_{i=1}^N \|\nabla f_i(x)-\nabla f_i(x_\ast)\|^2 &\le 2L\left(\frac1N\sum_{i=1}^Nf_i(x)-\frac1N\sum_{i=1}^Nf_i(x_\ast)-\langle\frac1N\sum_{i=1}^N \nabla f_i(x_\ast),x-x_\ast\rangle\right)\\ &= 2L\left(F_N(x)-F_N(x_\ast)\right)\,.
\end{align*}
\end{proof}

Let $(X_k^{(m)})_{k\in\N,\ m=0,\dots,M}$ be generated by Algorithm~\ref{alg:SVRG} with fixed step size $\bar\alpha>0$ and consider the following natural filtration
\[\cF_k^{(m)} = \begin{cases}\sigma(\{X_0\},\{\mathfrak i_\ell^{(s)}, \ell \le k,s\le M\}, \{\mathfrak i_k^{(s)},s\le m\},\{\mathfrak m_\ell,\ell\le k\}), &k\ge1 \\ \sigma(\{X_0\},\{\mathfrak i_0^{(s)},s\le m\}), & k=0 \end{cases}\,. \]
Similarly as in the case of SAGA, we have that
\[\E[\|X_{k}^{(m+1)} - x_\ast\|^2\mid \cF_k^{(m)}] = \|X_k^{(m)}-x_\ast\|^2 - 2\bar\alpha\langle X_k^{(m)}-x_\ast,\nabla F(X_k^{(m)})\rangle +\bar\alpha^2\E[\|\bar G_k^{(m)}\|^2\mid\cF_k^{(m)}], \]
since $\bar G_k^{(m)}$ is an unbiased estimator of $\nabla F_N(X_k^{(m)})$:
\[\E[\bar G_k^{(m)}\mid \cF_k^{(m)}] = \E[\nabla f_{\mathfrak i_{k+1}^{(m+1)}}(X_k^{(m)})\mid \cF_k^{(m)}] - \underbrace{\E[\nabla f_{\mathfrak i_{k+1}^{(m+1)}}(\tilde X_k) - \tilde G_k\mid \cF_k^{(m)}]}_{=0}  = \nabla F_N(X_k^{(m)})\,. \]

We again aim to control $\E[\|\bar G_k^{(m)}\|^2\mid\cF_k^{(m)}]$ in order to achieve significant variance reduction.

\begin{lemma}\label{lem:SVRG_aux2}
Let $F_N,\ f_i:\R^d\to\R$, $i=1,\dots,N$, be $\mu$-strongly convex and $L$-smooth, and let $X_0$ be a random variable such that $\E[\|X_0\|^2] + \E[|F_N(X_0)|]<\infty$. Moreover, let $(X_k^{(m)})_{k\in\N, m=0,\dots,M}$ be generated by Algorithm~\ref{alg:SVRG} with fixed step size $\alpha_k^{(m)} =\bar\alpha>0$. Then for all $k\ge0$ and $m\ge1$ it holds true that
\[ \E[\|\bar G_k^{(m)}\|^2\mid \cF_k^{(m)}] \le 4L \left(F_N(X_k^{(m)})-F_N(x_\ast) + F_N(\tilde X_k)-F_N(x_\ast) \right)\,,\]
where $x_\ast =\arg\min_{x\in\R^d}\ F_N(x)$.
\end{lemma}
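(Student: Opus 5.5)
The plan is the standard SVRG argument from \cite{NIPS2013_ac1dd209}. Split the estimator into two centered pieces and bound each by Lemma~\ref{lem:SVRG_aux}. Write $\mathfrak i:=\mathfrak i_{k+1}^{(m+1)}$ and add and subtract $\nabla f_{\mathfrak i}(x_\ast)$:
\[
\bar G_k^{(m)} = \underbrace{\big(\nabla f_{\mathfrak i}(X_k^{(m)})-\nabla f_{\mathfrak i}(x_\ast)\big)}_{=:Q_1} - \underbrace{\big(\nabla f_{\mathfrak i}(\tilde X_k)-\nabla f_{\mathfrak i}(x_\ast)-\tilde G_k\big)}_{=:Q_2}.
\]
Using $\|a-b\|^2\le 2\|a\|^2+2\|b\|^2$ gives $\E[\|\bar G_k^{(m)}\|^2\mid\cF_k^{(m)}]\le 2\E[\|Q_1\|^2\mid\cF_k^{(m)}]+2\E[\|Q_2\|^2\mid\cF_k^{(m)}]$.

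For the first term, $X_k^{(m)}$ is $\cF_k^{(m)}$-measurable and $\mathfrak i$ is uniform on $\{1,\dots,N\}$ and independent of $\cF_k^{(m)}$. By Lemma~\ref{lem:condexp_factorization} (or simply by the finite-sum structure), the conditional expectation is the average over $i$:
\[
\E[\|Q_1\|^2\mid\cF_k^{(m)}]=\frac1N\sum_{i=1}^N\|\nabla f_i(X_k^{(m)})-\nabla f_i(x_\ast)\|^2\le 2L\big(F_N(X_k^{(m)})-F_N(x_\ast)\big),
\]
where the inequality is Lemma~\ref{lem:SVRG_aux}.

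For the second term, note that $\tilde X_k$ and $\tilde G_k=\nabla F_N(\tilde X_k)$ are $\cF_k^{(m)}$-measurable. Since $\nabla F_N(x_\ast)=0$,
\[
\E\big[\nabla f_{\mathfrak i}(\tilde X_k)-\nabla f_{\mathfrak i}(x_\ast)\mid\cF_k^{(m)}\big]=\nabla F_N(\tilde X_k)-\nabla F_N(x_\ast)=\tilde G_k,
\]
so $Q_2$ is exactly the centered version of $\nabla f_{\mathfrak i}(\tilde X_k)-\nabla f_{\mathfrak i}(x_\ast)$. The variance identity \eqref{eq:variance_formula} (conditional version) then gives
\[
\E[\|Q_2\|^2\mid\cF_k^{(m)}]\le \E[\|\nabla f_{\mathfrak i}(\tilde X_k)-\nabla f_{\mathfrak i}(x_\ast)\|^2\mid\cF_k^{(m)}] = \frac1N\sum_{i=1}^N\|\nabla f_i(\tilde X_k)-\nabla f_i(x_\ast)\|^2\le 2L\big(F_N(\tilde X_k)-F_N(x_\ast)\big),
\]
again by Lemma~\ref{lem:SVRG_aux}. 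Combining the two bounds with the factor $2$ gives the claimed constant $4L$.

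The only delicate point is the measurability bookkeeping. One must check that $X_k^{(m)}$, $\tilde X_k$ and $\tilde G_k$ are $\cF_k^{(m)}$-measurable, while the fresh index $\mathfrak i_{k+1}^{(m+1)}$ is independent of $\cF_k^{(m)}$; given the somewhat irregular indexing of the filtration, this requires care. Once that holds, computing the conditional expectations as averages is justified by Lemma~\ref{lem:condexp_factorization}, with $\Phi(i,x)=\|\nabla f_i(x)-\nabla f_i(x_\ast)\|^2$, which is integrable by the moment assumptions on $X_0$. Everything else reduces directly to Lemma~\ref{lem:SVRG_aux}, and strong convexity is not actually needed beyond guaranteeing that $x_\ast$ exists and is unique.
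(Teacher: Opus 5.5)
Your proposal is correct and follows the paper's proof essentially step for step: add and subtract $\nabla f_{\mathfrak i}(x_\ast)$, use $\|a+b\|^2\le 2\|a\|^2+2\|b\|^2$, drop the centering term $\nabla F_N(\tilde X_k)-\nabla F_N(x_\ast)$ via the variance identity \eqref{eq:variance_formula}, average over the uniform index, and conclude with Lemma~\ref{lem:SVRG_aux}. Your caution about the filtration is also justified: for $X_k^{(m)}$ to be $\cF_k^{(m)}$-measurable and $\mathfrak i_{k+1}^{(m+1)}$ to be independent of it, $\cF_k^{(m)}$ should contain $\mathfrak i_{k+1}^{(s)}$, $s\le m$, rather than $\mathfrak i_k^{(s)}$ as literally written, but this is a bookkeeping slip in the paper, not a gap in your argument.
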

\begin{proof}
The proof follows by similar argumentation as the proof of Lemma~\ref{lem:varred}. We again apply \eqref{eq:variance_formula} together with $\|x+y\|^2 \le 2\|x\|^2 + 2\|y\|^2$:
\begin{align*}
\E[\|\bar G_k^{(m)}\|^2\mid \cF_k^{(m)}]&\le 2\E[\|\nabla f_{\mathfrak i_{k+1}^{(m+1)}}(X_k^{(m)})- \nabla f_{\mathfrak i_{k+1}^{(m+1)}}(x_\ast)\|^2\mid \cF_k^{(m)}]\\&\quad + 2\E[\|\nabla f_{\mathfrak i_{k+1}^{(m+1)}}(\tilde X_k)-\nabla f_{\mathfrak i_{k+1}^{(m+1)}}(x_\ast) - \nabla F_N(\tilde X_k)\|^2\mid \cF_k^{(m)}]\\
&=2\E[\|\nabla f_{\mathfrak i_{k+1}^{(m+1)}}(X_k^{(m)})- \nabla f_{\mathfrak i_{k+1}^{(m+1)}}(x_\ast)\|^2\mid \cF_k^{(m)}]\\
&\quad+2\E[\|\{\nabla f_{\mathfrak i_{k+1}^{(m+1)}}(\tilde X_k)-\nabla f_{\mathfrak i_{k+1}^{(m+1)}}(x_\ast)\} - \{\nabla F_N(\tilde X_k)-\nabla F_N(x_\ast)\}\|^2\mid \cF_k^{(m)}]\\
&\le 2\E[\|\nabla f_{\mathfrak i_{k+1}^{(m+1)}}(X_k^{(m)})- \nabla f_{\mathfrak i_{k+1}^{(m+1)}}(x_\ast)\|^2\mid \cF_k^{(m)}]\\
&\quad+2\E[\|\nabla f_{\mathfrak i_{k+1}^{(m+1)}}(\tilde X_k)-\nabla f_{\mathfrak i_{k+1}^{(m+1)}}(x_\ast)\|^2\mid \cF_k^{(m)}]\\
& = \frac{2}{N}\sum_{i=1}^N\left(\|\nabla f_i(X_k^{(m)})- \nabla f_i(x_\ast)\|^2+\|\nabla f_i(\tilde X_k)-\nabla f_i(x_\ast)\|^2\right)\,.
\end{align*}
The assertion follows by application of Lemma~\ref{lem:SVRG_aux}.
\end{proof}

We are now ready to prove linear convergence of SVRG under strong convexity.
\begin{thm}[Theorem~1 in \cite{NIPS2013_ac1dd209}]\label{thm:SVRG}
Let $F_N,\ f_i:\R^d\to\R$, $i=1,\dots,N$, be $\mu$-strongly convex and $L$-smooth, and let $X_0$ be a random variable such that $\E[\|X_0\|^2] + \E[|F_N(X_0)|]<\infty$. Moreover, let $(X_k^{(m)})_{k\in\N,\ m=0,\dots,M}$ be generated by Algorithm~\ref{alg:SVRG} with fixed step size $\alpha_k^{(m)} =\bar\alpha>0$ and $M\ge1$ such that
\[0<\rho := \frac{1}{\mu(1-2\bar\alpha L)\bar\alpha M} + \frac{2\bar\alpha L}{1-2\bar\alpha L}<1\,. \] 
Then it holds true that 
\[\E[F_N(\tilde X_k)-F_N(x_\ast)] \le \rho^k \E[F_N(X_0) - F_N(x_\ast)]\,. \]
\end{thm}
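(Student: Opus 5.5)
The plan is the classical Johnson--Zhang epoch argument. Fix an outer index $k$ and work inside one inner loop of length $M$, conditioning on $\cF_k^{(0)}$ so that $\tilde X_k$ and $\tilde G_k=\nabla F_N(\tilde X_k)$ are fixed. The starting point is the one-step identity already noted before Lemma~\ref{lem:SVRG_aux2}:
\[
\E[\|X_k^{(m+1)}-x_\ast\|^2\mid\cF_k^{(m)}]=\|X_k^{(m)}-x_\ast\|^2-2\bar\alpha\langle X_k^{(m)}-x_\ast,\nabla F_N(X_k^{(m)})\rangle+\bar\alpha^2\E[\|\bar G_k^{(m)}\|^2\mid\cF_k^{(m)}].
\]

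In the inner-product term I use convexity in the form $\langle X_k^{(m)}-x_\ast,\nabla F_N(X_k^{(m)})\rangle\ge F_N(X_k^{(m)})-F_N(x_\ast)$. The second moment I bound by Lemma~\ref{lem:SVRG_aux2}. Its proof only uses Lemma~\ref{lem:SVRG_aux} and never uses $m\ge1$, so it also covers $m=0$. This gives
\[
\E[\|X_k^{(m+1)}-x_\ast\|^2\mid\cF_k^{(m)}]\le\|X_k^{(m)}-x_\ast\|^2-2\bar\alpha(1-2\bar\alpha L)\big(F_N(X_k^{(m)})-F_N(x_\ast)\big)+4L\bar\alpha^2\big(F_N(\tilde X_k)-F_N(x_\ast)\big).
\]

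Next I take expectations and sum over $m=0,\dots,M-1$. The distance terms telescope, and dropping $\E\|X_k^{(M)}-x_\ast\|^2\ge0$ leaves
\[
2\bar\alpha(1-2\bar\alpha L)\sum_{m=0}^{M-1}\E[F_N(X_k^{(m)})-F_N(x_\ast)]\le\E\|\tilde X_k-x_\ast\|^2+4LM\bar\alpha^2\,\E[F_N(\tilde X_k)-F_N(x_\ast)],
\]
where I used $X_k^{(0)}=\tilde X_k$.

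Since $\mathfrak m_{k+1}$ is uniform on $\{0,\dots,M-1\}$ and independent of the inner loop, the left-hand sum equals $M\,\E[F_N(\tilde X_{k+1})-F_N(x_\ast)]$. I would justify this with the factorization Lemma~\ref{lem:condexp_factorization}, or simply by conditioning on the inner iterates. The remaining distance term is controlled by strong convexity with $\nabla F_N(x_\ast)=0$, namely $\|\tilde X_k-x_\ast\|^2\le\frac2\mu\big(F_N(\tilde X_k)-F_N(x_\ast)\big)$. Dividing by $2\bar\alpha(1-2\bar\alpha L)M$ yields
\[
\E[F_N(\tilde X_{k+1})-F_N(x_\ast)]\le\Big(\frac{1}{\mu\bar\alpha(1-2\bar\alpha L)M}+\frac{2\bar\alpha L}{1-2\bar\alpha L}\Big)\E[F_N(\tilde X_k)-F_N(x_\ast)]=\rho\,\E[F_N(\tilde X_k)-F_N(x_\ast)].
\]
Iterating this recursion and using $\tilde X_0=X_0$ gives the claim.

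The main obstacle is bookkeeping rather than the estimates themselves. One point is that $\rho<1$ forces $1-2\bar\alpha L>0$, which is needed so that the coefficient in the divisions is positive. The other points are the measurability and independence of $\mathfrak m_{k+1}$ when passing from the average over $m$ to $\tilde X_{k+1}$, and checking the integrability of all iterates. For integrability, I would induct on $(k,m)$ using the moment assumptions on $X_0$ together with the Lipschitz gradients.
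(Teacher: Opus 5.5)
Your proposal is correct and follows the paper's proof essentially step for step: the one-step bound via convexity and Lemma~\ref{lem:SVRG_aux2}, telescoping over the inner loop, identifying the averaged sum with $M\,\E[F_N(\tilde X_{k+1})-F_N(x_\ast)]$ through the uniform snapshot index, and strong convexity at $\tilde X_k$. One small correction: it is the hypothesis $\rho>0$, not $\rho<1$, that forces $1-2\bar\alpha L>0$. Indeed $\rho=\frac{1}{1-2\bar\alpha L}\big(\frac{1}{\mu\bar\alpha M}+2\bar\alpha L\big)$ has the sign of $1-2\bar\alpha L$, so $1-2\bar\alpha L<0$ would give $\rho<0<1$.
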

\begin{proof}
Using the update rule, the conditional unbiasedness of the SVRG estimator, and convexity of
$F_N$, we obtain
\begin{align*}
\E[\|X_k^{(m+1)}-x_\ast\|^2\mid \cF_k^{(m)}] &\le \|X_k^{(m)}-x_\ast\|^2 -2\bar\alpha \left(F_N(X_k^{(m)})-F_N(x_\ast)\right) + \bar\alpha^2 \E[\|\bar G_k^{(m)}\|^2\mid\cF_k^{(m)}]\\
&\le\|X_k^{(m)}-x_\ast\|^2 -2\bar\alpha \left(F_N(X_k^{(m)})-F_N(x_\ast)\right)\\
&\quad+ 4L\bar\alpha^2\left(F_N(X_k^{(m)})-F_N(x_\ast)\right)+ 4L\bar\alpha^2\left(F_N(\tilde X_k)-F_N(x_\ast)\right)\\
&=\|X_k^{(m)}-x_\ast\|^2 -2\bar\alpha(1-2L\bar\alpha) \left(F_N(X_k^{(m)})-F_N(x_\ast)\right)\\
&\quad+4L\bar\alpha^2\left(F_N(\tilde X_k)-F_N(x_\ast)\right)\,
\end{align*}
where we have applied Lemma~\ref{lem:SVRG_aux2}. By construction of the Algorithm~\ref{alg:SVRG} we have
\[\E[F_N(\tilde X_{k+1})-F_N(x_\ast)\mid \cF_k^{(M)}] = \frac1M\sum_{m=0}^{M-1}\left(F_N(X_k^{(m)})-F_N(x_\ast)\right),\]
such that
\begin{align*}
&\E[\|X_k^{(M)}-x_\ast\|^2] + 2\bar\alpha(1-2L\bar\alpha) M \E[F_N(\tilde X_{k+1})-F_N(x_\ast)]\\ &\le \E[\|X_k^{(0)}-x_\ast\|^2] -2\bar\alpha(1-2L\bar\alpha)\sum_{m=0}^{M-1} \E[F_N(X_k^{(m)})-F_N(x_\ast)]\\
&\quad+2\bar\alpha(1-2L\bar\alpha)M\frac1M\sum_{m=0}^{M-1} \E[F_N(X_k^{(m)})-F_N(x_\ast)]\\
&\quad+4L\bar\alpha^2 M \E[F_N(\tilde X_k) - F_N(x_\ast)]\\
&\le \frac{2}{\mu}\E[F_N(X_k^{(0)})-F_N(x_\ast)] + 4L\bar\alpha^2 M \E[F_N(\tilde X_k) - F_N(x_\ast)]\\
&=2(\frac1{\mu}+2L\bar\alpha^2 M)\E[F_N(\tilde X_k) - F_N(x_\ast)]\,,
\end{align*}
where we have used that $\frac{\mu}2\|x-x_\ast\|^2\le F_N(x)-F_N(x_\ast)$ by strong convexity of $F_N$. Finally, we have
\[\E[F_N(\tilde X_{k+1})-F_N(x_\ast)]\le\left(\frac{1}{\mu(1-2\bar\alpha L)\bar\alpha M} + \frac{2\bar\alpha L}{1-2\bar\alpha L}\right)\E[F_N(\tilde X_k)-F_N(x_\ast)]\,.  \]
\end{proof}
\begin{remark}
Returning to the setting of Table~\ref{tab:SAG}. For $L=100$, $\mu=0.01$ and $N=10^5=10\kappa$, we want to choose $\bar\alpha=\frac{\tau}{L}$, $\tau\le\frac12$ such that $\rho$ defined in Theorem~\ref{thm:SVRG} is given by
\[\rho = \frac{1}{1-2\tau}\left(\frac{\kappa}{\tau M} + 2\tau\right) = \frac{1}{1-2\tau}\left(\frac{N}{10 \tau M} + 2\tau\right)\,. \]
We set $\tau = \frac1{10}$, $M=4N$ such that $\rho=\frac9{16}$. Since one SVRG outer iteration requires $2M+N$ component gradient evaluations, while one full
gradient descent step requires $N$ component gradient evaluations, the effective contraction factor
per cost of one full gradient evaluation is
\[\rho^{\mathrm{SVRG}} = \rho^{\frac{N}{2M+N}} = \left(\frac{9}{16}\right)^{\frac{1}{9}} \approx 0.94, \]
which still achieves a better convergence rate than GD. Note that the above choice of step size has been chosen heuristically and is not optimized. 
\end{remark}

The three finite-sum methods illustrate different ways of reducing variance. SAG and SAGA store past component gradients and therefore require additional memory of order $Nd$. SVRG avoids this storage cost by using occasional full-gradient computations at snapshot points. In all cases, the goal is to recover linear convergence in strongly convex finite-sum problems while using mostly stochastic updates.

\section{Further reading and outlook}
Stochastic gradient methods date back to the seminal work of Robbins and Monro~\cite{RM1951} and
are now among the standard algorithms for large-scale optimization and machine learning. As mentioned earlier, for a broad introduction to stochastic approximation and stochastic gradient methods, we refer to
Bottou et al.~\cite{Bottou2018} and to the recent handbook of convergence proofs by Garrigos and Gower~\cite{garrigos2023handbook}.

The convergence results in Section~\ref{sec:SGDas} and \ref{sec:SGDexp} are classical in spirit. Proofs of the convergence rates in expectation for SGD under smoothness and convexity assumptions can be found, for example, in \cite{moulines11,Nguyen2018,poliak1987introduction}. For smooth but non-convex objectives, the standard goal is to prove convergence of the (minimal, averaged or randomly selected) gradient norm; see \cite{ghadimi-lan13}. For the non-smooth but convex setting we refer to \cite{Lan2021}. 

Almost sure convergence has a long history in stochastic approximation. One fundamental tool is the Robbins--Siegmund supermartingale convergence theorem \cite{RS1971}, which we discussed in Section~\ref{sec:SGDas}. Other approaches to verify almost sure convergence guarantees for SGD are described in \cite{Bertsekas_2000,Blum1954,chouzenoux2023kurdykalojasiewicz,dereich2021convergence,Nguyen2018,mertikopoulos2020sure}. Almost sure convergence rates have received renewed attention in recent years. In particular, Sebbouh et al.~\cite{pmlr-v134-sebbouh21a} and Liu and Yuan~\cite{LY2022} derive almost sure rates for stochastic gradient methods in smooth (convex and non-convex) settings. They also treat stochastic momentum such as heavy ball and Nesterov acceleration.

Another important line of work replaces convexity by local or global gradient domination conditions, such as the PL condition \eqref{eq:PL} or the weaker condition \eqref{eq:wPL}. These conditions are weaker than strong convexity but can still imply convergence to global minimizers. For convergence results in expectation under gradient domination, we refer to
\cite{fatkhullin2022,pmlr-v134-fontaine21a,10.1007/978-3-319-46128-1_50,khaled2022better}. 
Almost sure (last iterate) convergence rates for SGD under global and local gradient domination are studied in \cite{weissmann2025almost}.

Several important topics are only briefly touched upon in these notes. These include stochastic
momentum methods, adaptive step size rules such as AdaGrad \cite{JMLR:v12:duchi11a}, AdaDelta \cite{zeiler2012adadeltaadaptivelearningrate}, RMSProp \cite{tieleman2012rmsprop} and Adam \cite{kingma2014adam}, high-probability bounds \cite{liu23},
and local convergence theory near non-isolated minima or saddle points \cite{mertikopoulos2020sure}. For stochastic momentum
methods, we refer to recent analyzes such as \cite{gadat16,gess2023convergence,LY2022,mai2020convergence,pmlr-v134-sebbouh21a}.

\bibliographystyle{plain}
\bibliography{mybib.bib}

\appendix

\chapter{Appendix}
\section{Convex sets and functions}\label{app:convex}
We give a brief overview of convex functions and properties that are important for optimization. This appendix covers selected material from \cite[Appendix~B]{DPB15}, adapted to
the notation and scope of these notes. We start with the following basic definition.
\begin{defi}[convex function]
A subset $C\subset \R^d$ is called \textit{convex}, if 
\[\lambda x + (1-\lambda) y \in C \]
for all $x,y\in C$ and $\lambda\in[0,1]$. 
Let $C\subset\R^d$ be a convex set. A function $f:C\to\R$ is called \textit{convex function}, if 
\[f(\lambda x + (1-\lambda) y) \le \lambda f(x) + (1-\lambda) f(y)\]
for all $x,y\in C$ and $\lambda\in[0,1]$. A function $f$ is called concave, if $-f$ is convex. Moreover, a function $f$ is called strictly convex, if
\[f(\lambda x+ (1-\lambda)y)< \lambda f(x) + (1-\lambda) f(y) \]
for all $x,y\in C$, $x\neq y$ and $\lambda\in(0,1)$.
\end{defi}

\begin{example}
Consider the following examples.
\begin{enumerate}
\item Let $\{C_i,\ i\in I\}$ be a family of convex sets. Then $\cap_{i\in I} C_i$ is a convex set. 
\item Let $C_1,C_2\subset \R^d$ be two convex sets, then the set
\[C = \{ x\in\R^d\mid x = x_1+x_2,\ x_1\in C_1, x_2\in C_2\} \]
is a convex set.
\item The image of a convex set under linear transformation is again a convex set. 
\item Let $C\subset\R^d$ be a convex set and let $f:C\to\R$ be a convex function. Then the level sets 
\[ A_\alpha = \{x\in C\mid f(x)\le \alpha\} \quad \text{and}\quad B_\alpha = \{x\in C\mid f(x)<\alpha\} \]
are convex sets for all $\alpha \in \R$.
\end{enumerate}
\end{example}

\begin{defi}
Let $f:C\to\R$ be a function and $C\subset\R^d$ be a convex set. We define the \textit{epigraph} of $f$ as
\[\mathcal E(f) := \{(x,w)\in C\times \R \mid f(x)\le w \},  \]
the set of all points above the \textit{graph} of $f$ defined as
\[\mathcal G(f) := \{(x,w)\in C\times \R \mid f(x)= w \}. \]
\end{defi}
We can characterize convex functions via convexity of its epigraph.
\begin{prop}[Fact]
Let $f:C\to\R$ be a function and $C\subset \R^d$ be a convex set. The $f$ is convex if and only if its epigraph $\mathcal E(f)$ is a convex set.
\end{prop}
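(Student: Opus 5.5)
We prove the two directions separately, using only the definitions of convexity of a function and of the epigraph $\mathcal E(f)=\{(x,w)\in C\times\R\mid f(x)\le w\}$.

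\emph{Convexity of $f$ implies convexity of $\mathcal E(f)$.} Let $f$ be convex. Take $(x_1,w_1),(x_2,w_2)\in\mathcal E(f)$ and $\lambda\in[0,1]$. The convex combination $\lambda(x_1,w_1)+(1-\lambda)(x_2,w_2)$ has first component $\lambda x_1+(1-\lambda)x_2$, which lies in $C$ because $C$ is convex. For the second component, convexity of $f$ and the inequalities $f(x_i)\le w_i$ give
\[
f(\lambda x_1+(1-\lambda)x_2)\le \lambda f(x_1)+(1-\lambda)f(x_2)\le \lambda w_1+(1-\lambda)w_2 .
\]
The second step uses $\lambda\ge0$ and $1-\lambda\ge0$. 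Hence the convex combination lies in $\mathcal E(f)$.

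\emph{Convexity of $\mathcal E(f)$ implies convexity of $f$.} Let $\mathcal E(f)$ be convex, and take $x,y\in C$ and $\lambda\in[0,1]$. The points $(x,f(x))$ and $(y,f(y))$ lie in $\mathcal E(f)$, since they lie on the graph $\mathcal G(f)\subset\mathcal E(f)$. By convexity of $\mathcal E(f)$,
\[
\bigl(\lambda x+(1-\lambda)y,\ \lambda f(x)+(1-\lambda)f(y)\bigr)\in\mathcal E(f).
\]
By the definition of the epigraph this means $f(\lambda x+(1-\lambda)y)\le \lambda f(x)+(1-\lambda)f(y)$, which is exactly convexity of $f$.

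No step poses a real obstacle. The only points needing care are that $C$ is assumed convex, so the first coordinate of the convex combination stays in the domain, and that the inequalities are multiplied only by the nonnegative weights $\lambda$ and $1-\lambda$.
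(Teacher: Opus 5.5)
Your proof is correct and complete: both directions follow directly from the definitions exactly as you write them, and the convexity of $C$ and the nonnegativity of the weights $\lambda$ and $1-\lambda$ are indeed the only points needing care. The paper states this proposition as a ``Fact'' without proof, so there is no alternative argument to compare against; yours is the standard one.
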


\begin{figure}[!htb]
  \centering \includegraphics[width=0.5\textwidth]{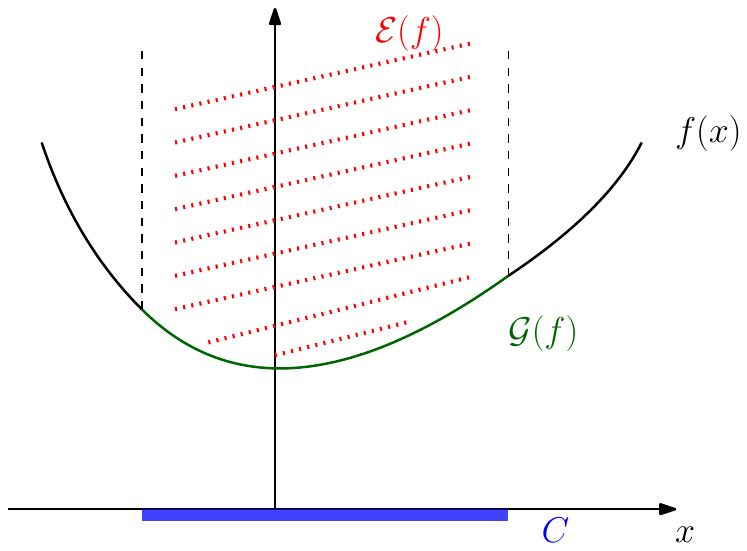}
 \caption{Illustration of the connection between the graph $\mathcal G(f)$, the epigraph $\mathcal E(f)$ and the convexity of $f$. } \label{fig:epigraph}
\end{figure} 

Next, we describe an inequality for convex functions which will be used often times in this lecture.
\begin{prop}[Jensen's inequality]\label{prop:Jensen}
Let $f:C\to\R$ be a convex function, $C\subset \R^d$ be a convex set and $\lambda_1,\dots,\lambda_n\in(0,1)$ with $\sum_{i=1}^n\lambda_i = 1$. Then it holds true that
\[f(\sum_{i=1}^n \lambda_i x_i) \le \sum_{i=1}^n \lambda_i f(x_i). \]
\end{prop}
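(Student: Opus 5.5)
The plan is an induction on $n$, with the two-point definition of convexity as the only tool. For $n=1$ there is nothing to prove: the constraint $\sum\lambda_i=1$ forces $\lambda_1=1$, so the claim reads $f(x_1)\le f(x_1)$. The statement takes $\lambda_i\in(0,1)$, but the argument works just as well for weights in $[0,1]$. I would allow that during the induction, so that degenerate weights never force a case split. The case $n=2$ is exactly the definition of a convex function with $\lambda=\lambda_1$ and $1-\lambda=\lambda_2$.

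For the induction step, suppose the inequality holds for $n-1$ points and take $x_1,\dots,x_n\in C$ with weights $\lambda_i\in(0,1)$ summing to one. Since $n\ge 2$ and all weights are positive, $\lambda_n<1$, so $s:=1-\lambda_n=\sum_{i=1}^{n-1}\lambda_i>0$. Set
\[y:=\sum_{i=1}^{n-1}\frac{\lambda_i}{s}\,x_i .\]
The weights $\lambda_i/s$ lie in $(0,1]$ and sum to one. An inner induction shows that convex combinations of finitely many points of $C$ stay in $C$: peel off one point at a time and use the two-point definition of a convex set. Hence $y\in C$. Then
\[\sum_{i=1}^n\lambda_i x_i = s\,y + \lambda_n x_n, \qquad s+\lambda_n=1,\]
so the two-point convexity of $f$ gives $f\bigl(\sum_i\lambda_i x_i\bigr)\le s f(y)+\lambda_n f(x_n)$. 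Applying the induction hypothesis to $y$ gives $f(y)\le \sum_{i=1}^{n-1}\frac{\lambda_i}{s}f(x_i)$. Multiplying this by $s$ and adding $\lambda_n f(x_n)$ yields the claim.

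The only point needing care is the membership $y\in C$, which is required so that $f(y)$ is defined. As described above, I would handle it by the same induction applied to the set $C$ instead of to $f$. Everything else is a routine rescaling of the weights.
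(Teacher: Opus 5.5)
Your induction is correct and complete. The paper gives no proof of its own for \Cref{prop:Jensen}; it leaves the result as the exercise immediately following the statement. Your argument is the standard intended solution: split off $x_n$, rescale the remaining weights by $s=1-\lambda_n>0$, verify $y\in C$ by the same induction on the convex set, then use two-point convexity once and the induction hypothesis once.

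The only thing to tidy is the phrasing of the inductive statement. You say you would allow ``weights in $[0,1]$'', but that permits $\lambda_n=1$, where your step's claim $s>0$ fails and a separate trivial case is needed. Instead, induct on ``positive weights summing to one''. Such weights automatically lie in $(0,1)$ for $n\ge 2$ and equal $1$ for $n=1$. This is exactly what your step uses when it applies the hypothesis to the weights $\lambda_i/s\in(0,1]$.
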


\begin{exercise}
Prove Proposition~\ref{prop:Jensen}.
\end{exercise}

One typical example for the application of Jensen's inequality is
\[\left(\frac{1}{n}\sum_{i=1}^n x_i\right)^2 \le \frac{1}{n} \sum_{i=1}^n x_i^2. \]
\begin{example}\label{ex:convex_functions}
In the following we present a row of useful properties in the context of convex functions. 
\begin{enumerate}
\item Every linear function is convex.
\item Every norm in $\R^d$ is convex.
\item Let $f_1,\dots,f_n$ be convex functions and $c_1,\dots,c_n\ge0$.
\begin{enumerate}
\item[a)] Then $x\mapsto \sum_{i=1}^n f_i(x)$ is a convex function.
\item[b)] Then $x\mapsto \sup_{i=1,\dots,n}\ f_i(x)$ is a convex function. 
\end{enumerate}
\end{enumerate}
\end{example}

\begin{exercise}
Prove the properties presented in Example~\ref{ex:convex_functions}.
\end{exercise}

\begin{prop}\label{prop:app_convexity1}
Let $C\subset \R^d$ be a convex set and let $f:C\to\R$ be differentiable over $C$. Then the following holds true:
\begin{enumerate}
\item The function $f$ is convex over $C$ if and only if 
\[f(z)\ge f(x) + (z-x)^\top \nabla f(x), \]
for all $x,z\in C$.
\item If 
\[f(z) > f(x) + (z-x)^\top \nabla f(x),\quad x\neq z, \] 
for all $x,z\in C$, then $f$ is strictly convex.
\end{enumerate} 
\end{prop}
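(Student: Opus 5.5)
For part 1, I would reduce everything to one-dimensional restrictions of $f$ along segments in $C$. For part 2, I would run the same argument with strict inequalities.

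\textbf{Part 1, necessity.} Assume $f$ is convex, and fix $x,z\in C$. For $t\in(0,1]$ the point $x+t(z-x)$ lies in $C$, and convexity gives
\[
f(x+t(z-x))\le (1-t)f(x)+tf(z).
\]
Rearranging, this becomes
\[
\frac{f(x+t(z-x))-f(x)}{t}\le f(z)-f(x).
\]
As $t\downarrow0$, differentiability of $f$ turns the left-hand side into the directional derivative $(z-x)^\top\nabla f(x)$. This is exactly the same limit used in Lemma~\ref{lem:descent_condition}, and it yields the claimed first-order inequality.

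\textbf{Part 1, sufficiency.} Assume the gradient inequality holds for all pairs in $C$. Fix $x,y\in C$ and $\lambda\in[0,1]$, and set $w=\lambda x+(1-\lambda)y\in C$. Applying the inequality at the base point $w$ once with $z=x$ and once with $z=y$ gives
\[
f(x)\ge f(w)+(x-w)^\top\nabla f(w),\qquad f(y)\ge f(w)+(y-w)^\top\nabla f(w).
\]
I would multiply the first by $\lambda$, the second by $1-\lambda$, and add. Since $\lambda(x-w)+(1-\lambda)(y-w)=0$, the gradient terms cancel and what remains is $\lambda f(x)+(1-\lambda)f(y)\ge f(w)$. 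This is the same cancellation used in the paper's proof that nonempty subdifferentials imply convexity; there $\nabla f(w)$ plays the role of a subgradient.

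\textbf{Part 2.} I would repeat the sufficiency argument with $x\neq y$ and $\lambda\in(0,1)$. Then $w\neq x$ and $w\neq y$, so both inequalities are strict. Since both weights $\lambda$ and $1-\lambda$ are positive, the weighted sum is strict as well, giving strict convexity.

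The only step needing care is the limit in the necessity direction. One has to be sure that differentiability at $x$ (relative to $C$) gives the directional derivative along $z-x$, and that the whole segment from $x$ to $z$ stays in $C$. The latter holds by convexity of $C$. I would state explicitly that differentiability on $C$ is meant in this sense, for instance on an open convex $C$ or with one-sided directional derivatives. Everything else is routine algebra.
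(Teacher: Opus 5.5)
Your proposal is correct and follows the paper's proof essentially step by step. For necessity you use the difference-quotient limit along the segment, and for sufficiency the two gradient inequalities at the convex combination $w$ with cancelling gradient terms; beyond that, you write out the strict-inequality argument for part~2 (using $w\neq x$, $w\neq y$ and positive weights), which the paper only indicates as ``similar argumentation,'' and your remark on interpreting differentiability at boundary points of $C$ is a fair refinement the paper passes over.
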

\begin{proof}
We only prove the first assertion, the second one follows by similar argumentation. Firstly, assume that $f$ is convex and let $x,z\in C$. Since $C$ is a convex set, $x+a(z-x) \in C$ for all $a\in(0,1)$.  The convexity of $f$ implies that
\[f(x+a(z-x)) \le af(z) + (1-a) f(x),\]
which we can rewrite to
\[\frac{f(x+a(z-x))-f(x)}{a}\le f(z)-f(x). \]
On the other side, since $f$ is differentiable, we have
\[(z-x)^\top \nabla f(x)=\lim_{a\leftharpoondown 0} \frac{f(x+a(z-x))-f(x)}{a}\le f(z)-f(x). \]
This proves the first direction $"\Rightarrow"$.
Secondly, for the direction $"\Leftarrow"$ suppose that
\begin{equation}\label{eq:ass_direction2}
f(z)\ge f(x)+ (z-x)^\top \nabla f(x)
\end{equation}
for all $x,z\in C$.  We fix arbitrary $x,y\in C$ and $\lambda\in(0,1)$, and define $z:= \lambda x+ (1-\lambda)y$. Then by \eqref{eq:ass_direction2} we have both 
\begin{align*}
f(x)&\ge f(z) + (x-z)^\top \nabla f(z)\\
f(y)&\ge f(z) + (y-z)^\top \nabla f(z).
\end{align*}
Combining both inequalities yields
\begin{align*}
\lambda f(x) + (1-\lambda) f(y) &\ge \lambda f(z) + \lambda (x-z)^\top \nabla f(z) + (1-\lambda) f(z)+ (1-\lambda) (y-z)^\top \nabla f(z)\\
&=f(z)+ (\lambda x+(1-\lambda)y-z)\nabla f(z)\\ &= f(z) =f(\lambda x+(1-\lambda)y).
\end{align*}
Since $x,y\in C$ are arbitrary, we have proven convexity of $f$. 
\end{proof}

\begin{prop}
Let $C\subset\R^d$ be a convex set, $f:\R^d\to \R$ twice continuous differentiable over $C$ and let $Q\in\R^{d\times d}$ be a symmetric matrix. 
\begin{enumerate}
\item If $\nabla^2 f(x)$ is positive semi-definite for all $x\in C$, then $f$ is convex over $C$.
\item If $\nabla^2 f(x)$ is positive definite for all $x\in C$, then $f$ is strictly convex over $C$.
\item If $C=\R^d$ and $f$ is convex, then $\nabla^2 f(x)$ is positive semi-definite for all $x\in C$.
\item The quadratic function $x\mapsto f(x) = x^\top Q x$ is convex if and only if $Q$ is positive semi-definite. Moreover, $f$ is strictly convex if and only if $Q$ is positive definite.  
\end{enumerate}
\end{prop}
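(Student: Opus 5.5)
The plan is to reduce everything to one-dimensional statements along line segments, using the Taylor formula with integral remainder, and then to treat the four parts in order. Fix $x,y\in C$. Define $g(t)=f(x+t(y-x))$ for $t\in[0,1]$. This is well defined because $C$ is convex, and $g$ is twice continuously differentiable with
\[
g'(t)=\nabla f(x+t(y-x))^\top(y-x),\qquad g''(t)=(y-x)^\top\nabla^2 f(x+t(y-x))(y-x).
\]

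For part 1, the second-order Taylor formula with integral remainder gives
\[
f(y)=f(x)+\nabla f(x)^\top(y-x)+\int_0^1(1-t)\,(y-x)^\top\nabla^2 f(x+t(y-x))(y-x)\,\dd t .
\]
If the Hessian is positive semi-definite on $C$, the integral is non-negative. This yields the first-order inequality, and part 1 of Proposition~\ref{prop:app_convexity1} then gives convexity.

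For part 2, take $y\neq x$. The integrand is then strictly positive and continuous in $t$, so the integral is strictly positive. Part 2 of Proposition~\ref{prop:app_convexity1} then gives strict convexity.

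For part 3, I argue by contradiction. Suppose $z^\top\nabla^2 f(x)z<0$ for some $x$ and some $z$. Since the Hessian is continuous, there is $\delta>0$ such that $z^\top\nabla^2 f(x+sz)z<0$ for all $|s|\le\delta$. Apply the Taylor formula above with $y=x+\delta z$; this is allowed since $C=\R^d$. It gives $f(y)<f(x)+\nabla f(x)^\top(y-x)$, which contradicts the first-order characterization in Proposition~\ref{prop:app_convexity1}. The hypothesis $C=\R^d$ enters exactly here: it guarantees that the whole neighbourhood, and in particular the point $x+\delta z$, lies in the domain.

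Part 4 follows from the first three once we note that $\nabla^2 f\equiv 2Q$ on $\R^d$. Parts 1 and 3 give that $f$ is convex if and only if $Q$ is positive semi-definite. Part 2 shows that positive definiteness of $Q$ implies strict convexity.

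The converse of the strict statement needs a separate argument, since part 2 has no converse in general. Suppose $Q$ is not positive definite, so that $z^\top Qz\le 0$ for some $z\neq0$. We only need to consider the case where $f$ is convex, since otherwise it is certainly not strictly convex. In that case $Q$ is positive semi-definite, so $z^\top Qz=0$. Then $t\mapsto f(tz)=t^2z^\top Qz\equiv0$ is affine along the line spanned by $z$, and this contradicts strict convexity. I expect the main care to be needed in this last direction and in the continuity argument of part 3; everything else is direct.
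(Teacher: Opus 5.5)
Your proof is correct and follows the paper's argument part by part: a second-order Taylor expansion combined with the first-order characterization for parts 1--2, a continuity-plus-contradiction argument for part 3, and a direction $z\neq 0$ with $z^\top Qz=0$, along which $f$ vanishes, for the converse in part 4. You use the integral form of the remainder where the paper uses the mean-value form, and you correctly invoke part 2 (the paper's text says ``the 3.\ assertion'') for positive definite $\Rightarrow$ strictly convex; the only cosmetic point is that in part 2 the integrand vanishes at $t=1$ because of the factor $(1-t)$, so it is strictly positive only on $[0,1)$, which still suffices.
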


\begin{proof}
\begin{enumerate}
\item Let $x,y\in C$, then by the mean-value theorem there exists $\alpha\in[0,1]$ such that
\begin{align*}
f(y) &= f(x) + (y-x)^\top \nabla f(x) + \frac{1}{2} (y-x)^\top \nabla^2 f(x+\alpha (y-x))(y-x)\\ &\ge f(x) + (y-x)^\top f(x),
\end{align*}
where we have used that $ \nabla^2 f(x+\alpha (y-x))$ is positive semi-definite, since $x+\alpha(y-x)\in C$. Then $f$ is convex by Proposition~\ref{prop:app_convexity1}.
\item Follows similarly as the proof of the first claim. 
\item Let $f$ be convex and suppose that there exists $x\in\R^d$ such that $z^\top \nabla^2 f(x) z <0$ for some $z\in\R^d$. By assumption we have that $x\mapsto \nabla^2 f(x)$ is continuous, such that for $\bar\alpha>0$ small enough it holds true that
\[z^\top \nabla^2f(x) z<0,\quad \text{for}\ \alpha \in [0,\bar\alpha). \]
For an arbitrary $\tilde \alpha\in[0,\bar\alpha)$ we set $\tilde z = \tilde \alpha z$. By the mean-value theorem there exists $\beta\in[0,1]$ such that
\begin{align*}
f(x+\tilde z) &= f(x) + \tilde z^\top \nabla f(x)+ \frac{1}{2} \tilde z^\top \nabla^2 f(x+ \beta \tilde z) \tilde z\\
&= f(x) + \tilde z^\top \nabla f(x)+\frac12 \tilde\alpha^2 z^\top \nabla f(x+\underbrace{\beta\tilde\alpha}_{<\bar\alpha} z)z\\
&< f(x) + \tilde z^\top \nabla f(x),
\end{align*}
which is in contradiction to the convexity of $f$ (see Proposition~\ref{prop:app_convexity1}).
\item The hessian of $f$ is given by $\nabla^2 f(x) = 2Q$ for all $x\in\R^d$. With 1. and 3. it follows that $f$ is convex if and only if $Q$ is positive semi-definite.  If $Q$ is positive definite, then convexity of $f$ follows by the 3. assertion.  It is left to prove that strict convexity of $f$ implies that $Q$ is positive definite. Assume that $f$ is strictly convex, then by 3. we know that $Q$ is positive semi-definite.  If a matrix $A\in\R^{d\times d}$ is positive semi-definite but not positive definite, then there exists at least one $x\in\R^d$, $x\neq 0$ such that $x^\top A x = 0$, i.e.~$Ax = 0 = 0\cdot x$. Hence, to prove that $Q$ is positive definite we will show that $\lambda = 0$ is no eigenvalue of $Q$. Suppose $\lambda = 0$ is an eigenvalue of $Q$, then there exists $x\neq 0$ such that $Qx = \lambda x = 0$. This implies that
\[\frac12 f(x) + \frac12 f(-x) = \frac12(x^\top Q x + (-x)^\top Q (-x) = x^\top Q x = 0 = f(\frac12 x-\frac12 x), \]
which is in contradiction to strict convexity of $f$.  Hence, $Q$ is positive definite. 
\end{enumerate}
\end{proof}

We will often consider the class of strong convex functions.
\begin{defi}[strongly convex function]\label{def:strong_convexity}
Let $C\subset \R^d$ be a convex set.  A function $f:C\to \R$ is called \textit{$\mu$-strongly convex} over $C$, if 
\[f(y) \ge f(x) + (y-x)^\top \nabla f(x) + \frac{\mu}{2} \|y-x\|^2 \]
for all $x,y\in C$.
\end{defi}
\begin{prop}\label{prop:strong_convexity}
\begin{enumerate}
\item Every $\mu$-strongly convex function $f$ is also strictly convex.
\item A function $f:C\to\R$ is $\mu$-strongly convex if and only if its hessian $\nabla^2 f(x)$ is uniformly positive definite with
\[z^\top \nabla^2f(x) z \ge \mu \|z\|^2 \]
for all $x\in C$ and all $z\in\R^d$.
\end{enumerate}
\end{prop}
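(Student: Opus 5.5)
To prove the two assertions of Proposition~\ref{prop:strong_convexity}, I would reduce both to the first-order characterizations already established in Proposition~\ref{prop:app_convexity1} and to the Hessian criteria in the subsequent proposition. Throughout I assume, as the second assertion implicitly requires, that $f$ is twice continuously differentiable when Hessians appear, and that $C$ is convex. For the reverse direction of the second part I would also assume that $C$ is open, or equivalently that $f$ is defined on $\R^d$ as in the rest of the notes.

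\textbf{First assertion.} Strong convexity gives, for $x\neq y$,
\[f(y)\ge f(x)+(y-x)^\top\nabla f(x)+\tfrac{\mu}{2}\|y-x\|^2>f(x)+(y-x)^\top\nabla f(x),\]
since $\mu>0$ and $\|y-x\|>0$. This is exactly the strict first-order inequality. The second part of Proposition~\ref{prop:app_convexity1} then yields strict convexity, and nothing more is needed.

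\textbf{Second assertion, ``$\Leftarrow$''.} Given $x,y\in C$, I would use the mean-value (Taylor with Lagrange remainder) form already used in the preceding proof: there is $\alpha\in[0,1]$ with
\[f(y)=f(x)+(y-x)^\top\nabla f(x)+\tfrac12(y-x)^\top\nabla^2 f(x+\alpha(y-x))(y-x).\]
The intermediate point lies in $C$ by convexity, so the uniform lower bound $z^\top\nabla^2 f\,z\ge\mu\|z\|^2$ applied with $z=y-x$ gives the defining inequality of Definition~\ref{def:strong_convexity} immediately.

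\textbf{Second assertion, ``$\Rightarrow$''.} This is the main step. A clean route is to write down the strong convexity inequality twice, once with the roles of $x$ and $y$ exchanged, and add the two. This gives strong monotonicity:
\[\langle\nabla f(y)-\nabla f(x),y-x\rangle\ge\mu\|y-x\|^2.\]
Now fix $x$ in the interior of $C$ and a direction $z$, and set $y=x+tz$ with $t>0$ small. Dividing by $t^2$ gives
\[\Big\langle\frac{\nabla f(x+tz)-\nabla f(x)}{t},z\Big\rangle\ge\mu\|z\|^2.\]
Letting $t\downarrow0$ and using differentiability of $\nabla f$, the left side tends to $z^\top\nabla^2 f(x)z$. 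This yields the bound at interior points.

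\textbf{Main obstacle.} The delicate point is the boundary of $C$. For general convex $C$ the Hessian bound can only be obtained at boundary points by continuity of $\nabla^2 f$, approximating boundary points by interior ones. That approximation requires $C$ to have nonempty interior, which in the usual setting $C=\R^d$ is automatic. I would state this restriction explicitly.

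An alternative to the monotonicity argument would mimic the contradiction argument in part~3 of the preceding proposition: apply it to $g(x)=f(x)-\frac{\mu}{2}\|x\|^2$, which is convex precisely when $f$ is $\mu$-strongly convex. This gives $\nabla^2 g\succeq0$, i.e.\ $\nabla^2 f\succeq\mu I$. This reduction is arguably the shortest, so I would present it as the main proof and mention the monotonicity argument as a remark.
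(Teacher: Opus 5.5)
The notes give no proof of this proposition; it is left as an exercise immediately afterwards, so there is nothing to compare your argument against. Your argument is correct, and it uses exactly the tools the notes provide: Proposition~\ref{prop:app_convexity1} for part~1, and the Hessian criteria of the subsequent proposition for part~2.

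The reduction via $g(x)=f(x)-\frac{\mu}{2}\|x\|^2$ is indeed the most economical route. The identity $g(y)-g(x)-(y-x)^\top\nabla g(x)=f(y)-f(x)-(y-x)^\top\nabla f(x)-\frac{\mu}{2}\|y-x\|^2$, together with $\nabla^2 g=\nabla^2 f-\mu I$, turns the two directions of part~2 into parts~1 and~3 of that proposition. Two small points:
\begin{itemize}
\item Part~3 there is stated only for $C=\R^d$, although its proof works verbatim for open $C$.
\item ``$C$ open'' and ``$f$ defined on $\R^d$'' are not equivalent assumptions, though either suffices.
\end{itemize}

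The interior caveat you raise is a genuine correction to the statement as written, not a mere technicality. Take $C=\{(t,0):t\in\R\}\subset\R^2$ and $f(x)=\frac{\mu}{2}x_1^2-x_2^2$. For $x,y\in C$ one checks $f(y)-f(x)-(y-x)^\top\nabla f(x)=\frac{\mu}{2}\|y-x\|^2$, so $f$ is $\mu$-strongly convex over $C$. Yet $\nabla^2 f=\mathrm{diag}(\mu,-2)$, so the direction ``$\Rightarrow$'' fails when $C$ has empty interior. When the interior is nonempty, your approximation argument using continuity of $\nabla^2 f$ handles boundary points correctly.
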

\begin{exercise}
Prove Proposition~\ref{prop:strong_convexity}.
\end{exercise}

\section{Optimality conditions}\label{app:op-condition}
We recall a few basic optimality conditions that are used throughout the notes. The selection and
presentation are partly guided by \cite[Section 1.1]{DPB15}, with the notation adapted to the rest of these notes. We start with the necessary optimality conditions of first-order, which only needs differentiability of the objective function.  

\begin{thm}\label{thm:fo_opti}
Let $f:\R^d\to\R$ be continuously differentiable over the open set $S\subset \R^d$ and suppose $x_\ast\in S$ is a local minimum of $f$. Then it holds true that $\nabla f(x_\ast) = 0$.
\end{thm}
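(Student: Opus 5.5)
The plan is to argue by contradiction, combining Lemma~\ref{lem:descent_condition} with the definition of a local minimum. Suppose $x_\ast\in S$ is a local minimum of $f$ and that $\nabla f(x_\ast)\neq 0$. I will choose the steepest descent direction $d:=-\nabla f(x_\ast)$. This direction satisfies
\[
\nabla f(x_\ast)^\top d = -\|\nabla f(x_\ast)\|^2<0,
\]
so it fulfils the sufficient condition \eqref{eq:descent_condition}.

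The next step is to invoke the argument of Lemma~\ref{lem:descent_condition}, localized to $S$. Set $\varphi(\alpha)=f(x_\ast+\alpha d)$. Since $S$ is open, there is $\alpha_0>0$ such that $x_\ast+\alpha d\in S$ for all $\alpha\in[0,\alpha_0]$. On this interval $\varphi$ is differentiable with $\varphi'(0)=\nabla f(x_\ast)^\top d<0$. The first-order Taylor expansion $\varphi(\alpha)=\varphi(0)+\alpha\varphi'(0)+o(\alpha)$ then yields some $\bar\alpha\in(0,\alpha_0]$ with
\[
f(x_\ast+\alpha d)<f(x_\ast)\qquad\text{for all }\alpha\in(0,\bar\alpha].
\]
In other words, $d$ is a descent direction at $x_\ast$.

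Finally, let $\varepsilon>0$ be the radius from the definition of a local minimum. I pick $\alpha\in(0,\bar\alpha]$ with $\alpha\|d\|<\varepsilon$. The point $x=x_\ast+\alpha d$ then lies in $\mathcal B_\varepsilon(x_\ast)$ and satisfies $f(x)<f(x_\ast)$, which contradicts local minimality. Hence $\nabla f(x_\ast)=0$.

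The only step needing care is the localization in the second paragraph. Lemma~\ref{lem:descent_condition} is stated for $f$ on all of $\R^d$, whereas here $f$ is only assumed differentiable on $S$. I handle this by restricting $\alpha$ to $[0,\alpha_0]$, using that $S$ is open; with that restriction the Taylor argument is unchanged. Otherwise the proof is routine.
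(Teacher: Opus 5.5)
Your proof is correct and rests on the same idea as the paper's: the sign of the first-order term of $\alpha\mapsto f(x_\ast+\alpha z)$ at $\alpha=0$. The paper argues directly for an arbitrary $z$, obtaining $z^\top\nabla f(x_\ast)\ge 0$ and then applying this to $-z$, whereas you argue by contradiction with the single direction $d=-\nabla f(x_\ast)$ via Lemma~\ref{lem:descent_condition}; the localization step you flag is harmless but unnecessary, since $f$ is defined on all of $\R^d$ and Lemma~\ref{lem:descent_condition} only requires continuous differentiability at the point $x_\ast$ itself.
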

\begin{proof}
Let $x_\ast\in S$ be a local minimum of $f$ and consider the corresponding ball $\mathcal B_\varepsilon(x_\ast)$ such that $f(x_\ast)\le f(x)$ for all $x\in\mathcal B_\varepsilon(x_\ast)$. Let $z\in \R^d$ be arbitrary but fixed, such that for $\bar\alpha>0$ small enough $x_\ast+\alpha z\in \mathcal B_\varepsilon(x_\ast)$ and hence $f(x_\ast + \alpha z) \ge f(x_\ast)$ for all $\alpha\in [0,\bar\alpha)$. We define $\alpha \mapsto g(\alpha) = f(x_\ast+ \alpha z)$, $\alpha>0$ and observe 
\[ \frac{{\mathrm d} g(0)}{d \alpha} = \lim_{\alpha\to 0} \frac{f(x_\ast + \alpha z) - f(x_\ast)}{\alpha} \ge 0.\]
On the other side by chain rule we also have
\[\frac{{\mathrm d} g(0)}{{\mathrm d} \alpha} = z^\top \nabla f(x_\ast + \alpha z).\]
In particular, this implies $0\le z^\top \nabla f(x_\ast)$. Since $z\in\R^d$ is arbitrary, with $y=-z\in\R^d$ we also obtain $0\le y^\top \nabla f(x_\ast)=-z^\top \nabla f(x_\ast)$. Therefore, for all $z\in\R^d$ we have $z^\top \nabla f(x_\ast) = 0$, which yields $\nabla f(x_\ast) = 0$.
\end{proof}

Under the additional assumption that $f$ is twice continuously differentiable there is also a necessary optimality condition of second-order.

\begin{thm}\label{thm:so_opti}
Let $f:\R^d\to \R$ be twice continuously differentiable over the open set $S\subset \R^d$ and suppose $x_\ast \in S$ is a local minimum of $f$. Then it holds true that $\nabla^2 f(x_\ast)$ is positive semi-definite. 
\end{thm}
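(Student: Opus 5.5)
The plan is to argue by contradiction, using the second-order Taylor expansion along a fixed direction, in the same spirit as the proof of Theorem~\ref{thm:fo_opti}. First, by Theorem~\ref{thm:fo_opti}, the local minimum $x_\ast\in S$ satisfies $\nabla f(x_\ast)=0$. Next, assume that $\nabla^2 f(x_\ast)$ is not positive semi-definite. Then there is a direction $z\in\R^d$ with $z^\top\nabla^2 f(x_\ast) z<0$. Since $S$ is open and $x_\ast$ is a local minimum, there exists $\varepsilon>0$ such that $\mathcal B_\varepsilon(x_\ast)\subset S$ and $f(x_\ast)\le f(x)$ for all $x\in\mathcal B_\varepsilon(x_\ast)$.

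The key step uses continuity of the Hessian, which is what twice continuous differentiability provides. By continuity of $x\mapsto\nabla^2 f(x)$ on $S$, there is $\bar\alpha>0$ (small enough that $\bar\alpha\|z\|<\varepsilon$) such that
\[
z^\top\nabla^2 f(x_\ast+\beta z)\,z<0 \qquad \text{for all } \beta\in[0,\bar\alpha).
\]
Now fix $\alpha\in(0,\bar\alpha)$. The mean-value form of Taylor's theorem, applied to $g(t)=f(x_\ast+tz)$ exactly as in the proof of part 3 of the Hessian proposition in Appendix~\ref{app:convex}, gives some $\theta\in[0,1]$ with
\[
f(x_\ast+\alpha z)=f(x_\ast)+\alpha z^\top\nabla f(x_\ast)+\tfrac{\alpha^2}{2}\,z^\top\nabla^2 f(x_\ast+\theta\alpha z)\,z .
\]

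In this expansion the linear term vanishes because $\nabla f(x_\ast)=0$. The quadratic term is strictly negative because $\theta\alpha<\bar\alpha$. Hence $f(x_\ast+\alpha z)<f(x_\ast)$. On the other hand, $x_\ast+\alpha z\in\mathcal B_\varepsilon(x_\ast)$, so this contradicts local minimality. Therefore $\nabla^2 f(x_\ast)$ must be positive semi-definite.

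The main point needing care is justifying the mean-value Taylor remainder. This follows from $g$ being twice continuously differentiable on $[0,\alpha]$ with $g''(t)=z^\top\nabla^2 f(x_\ast+tz)z$, which comes from the chain rule. A second point of care is keeping the whole segment from $x_\ast$ to $x_\ast+\alpha z$ inside $S$ and inside the ball of local minimality, which is ensured by the choice of $\bar\alpha$ above.
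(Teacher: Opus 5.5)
Your proof is correct, but it takes a slightly different route from the paper's. The paper argues directly rather than by contradiction. For an arbitrary $z$ it uses the Peano-form expansion $f(x_\ast+\alpha z)-f(x_\ast)=\frac{\alpha^2}{2}z^\top\nabla^2 f(x_\ast)z+o(\alpha^2)$, where the linear term vanishes because $\nabla f(x_\ast)=0$. It then divides by $\alpha^2$ and lets $\alpha\to 0$ to obtain $z^\top\nabla^2 f(x_\ast)z\ge 0$. You instead combine the mean-value (Lagrange) remainder with continuity of the Hessian along the segment, mirroring the proof of part~3 of the Hessian proposition in Appendix~\ref{app:convex}.

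The paper's argument is more economical and slightly more general. It only uses the second-order expansion at the single point $x_\ast$, so continuity of $\nabla^2 f$ plays no role; essentially only twice differentiability at $x_\ast$ is needed. Your argument genuinely relies on the $C^2$ hypothesis. In exchange, it avoids little-$o$ bookkeeping and the limit, and it produces the strict inequality $f(x_\ast+\alpha z)<f(x_\ast)$ explicitly for every $\alpha\in(0,\bar\alpha)$. You also take care to keep the whole segment inside $S$ and inside the ball of local minimality, a detail the paper leaves implicit.
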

\begin{proof}
Let again $x_\ast\in S$ be the local minimum of $f$ with the corresponding ball $\mathcal B_\varepsilon(x_\ast)$ such that $f(x_\ast)\le f(x)$ for all $x\in\mathcal B_\varepsilon(x_\ast)$ and consider an arbitrary $z\in\R^d$. With the Taylor expansion it holds
\[f(x_\ast+\alpha z)-f(x_\ast) = \alpha \underbrace{\nabla f(x_\ast)^\top}_{=0} z + \frac{\alpha^2}{2} z^\top \nabla^2 f(x_\ast) z + o(\alpha^2) =   \frac{\alpha^2}{2} z^\top \nabla^2 f(x_\ast) z + o(\alpha^2)\, . \]
Let $\bar\alpha>0$ be small enough such that $f(x_\ast + \alpha z) \ge f(x_\ast)$ for all $\alpha\in [0,\bar\alpha)$. With the above equation we obtain
\[0\le \frac{f(x_\ast+\alpha z)-f(x_\ast)}{\alpha^2} = \frac{1}{2} z^\top \nabla^2 f(x_\ast) z + \frac{o(\alpha^2)}{\alpha^2}\to \frac{1}{2} z^\top \nabla^2 f(x_\ast) z \]
for $\alpha\to 0$. Since $z\in\R^d$ is chosen arbitrary, this leads to $z^\top \nabla^2 f(x_\ast) z\ge 0$ for all $z\in\R^d$ and the assertion follows.
\end{proof}

\begin{remark}
Theorem~\ref{thm:fo_opti} and \ref{thm:so_opti} characterize necessary, but not sufficient, conditions for optimality. Firstly, consider for example $f(x) = -x^2$ with stationary point $x_\ast = 0$, which is no local minimum.  Secondly, consider $f(x) = x_1^2-x_2^4$, $x=(x_1,x_2)^\top\in\R^2$ with stationary point $x_\ast = (0,0)^\top$ and positive semi-definite Hessian $\nabla^2 f(x_\ast) = \begin{pmatrix} 2 & 0 \\ 0 & 0 \end{pmatrix}$, which is no local minimum. 
\end{remark}

In the following theorem, we formulate second-order sufficient conditions for optimality.
\begin{thm}\label{thm:so_suff_opti}
Let $f:\R^d\to \R$ be twice continuously differentiable over the open set $S\subset \R^d$. Let $x_\ast\in S$ with 
\begin{enumerate}
\item $\nabla f(x_\ast) = 0$
\item $\nabla^2 f(x_\ast)$ (strictly) positive definite.
\end{enumerate}
Then $x_\ast$ is a strict local minimum of $f$ and there exist $\gamma>0$, $\varepsilon>0$ such that
\[f(x)\ge f(x_\ast) + \frac{\gamma}{2}\|x-x_\ast\|^2 \]
for all $x\in\mathcal B_{\varepsilon}(x_\ast)$.
\end{thm}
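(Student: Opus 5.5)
The approach is a second-order Taylor expansion around $x_\ast$, followed by a uniform lower bound on the Hessian over a small ball obtained from continuity.

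First, let $\lambda>0$ be the smallest eigenvalue of the symmetric positive definite matrix $\nabla^2 f(x_\ast)$. Then $z^\top \nabla^2 f(x_\ast) z\ge \lambda\|z\|^2$ for all $z\in\R^d$. Since $x\mapsto\nabla^2 f(x)$ is continuous on the open set $S$, we can choose $\varepsilon>0$ with $\mathcal B_\varepsilon(x_\ast)\subset S$ and $\|\nabla^2 f(y)-\nabla^2 f(x_\ast)\|\le \lambda/2$ for all $y\in\mathcal B_\varepsilon(x_\ast)$, where $\|\cdot\|$ is the spectral norm. This yields the uniform bound $z^\top\nabla^2 f(y) z\ge \frac{\lambda}{2}\|z\|^2$ for every $y$ in the ball.

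Second, fix $x\in\mathcal B_\varepsilon(x_\ast)$. The segment from $x_\ast$ to $x$ lies in the ball because balls are convex. The mean-value form of Taylor's theorem, as already used in the convexity proposition of Appendix~\ref{app:convex}, gives some $\alpha\in[0,1]$ with
\[f(x)=f(x_\ast)+\nabla f(x_\ast)^\top(x-x_\ast)+\tfrac12 (x-x_\ast)^\top\nabla^2 f\big(x_\ast+\alpha(x-x_\ast)\big)(x-x_\ast).\]
Using $\nabla f(x_\ast)=0$ and the uniform bound from the first step, this becomes
\[f(x)\ge f(x_\ast)+\tfrac{\lambda}{4}\|x-x_\ast\|^2.\]
So the claimed inequality holds with $\gamma=\lambda/2$. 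For $x\neq x_\ast$ the quadratic term is strictly positive, which gives $f(x)>f(x_\ast)$ and shows that $x_\ast$ is a strict local minimum.

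The only delicate point is the first step: converting positive definiteness at the single point $x_\ast$ into a uniform eigenvalue bound on a neighbourhood. This uses continuity of the Hessian together with Weyl's inequality, or equivalently the elementary estimate $|z^\top(A-B)z|\le\|A-B\|\|z\|^2$.

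An alternative is to use the $o(\|x-x_\ast\|^2)$ Taylor remainder directly, as in the proof of Theorem~\ref{thm:so_opti}, and choose $\varepsilon$ so that the remainder is at most $\frac{\lambda}{4}\|x-x_\ast\|^2$. The remainder must be uniform in the direction, not only along a fixed line, but the Peano form in $\R^d$ provides exactly this uniformity.
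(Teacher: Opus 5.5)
Your proof is correct, but your main argument takes a different route from the paper. The paper uses the alternative you sketch at the end. It writes $f(x_\ast+d)-f(x_\ast)=\frac12 d^\top\nabla^2 f(x_\ast)d+o(\|d\|^2)\ge\bigl(\frac{\lambda}{2}+\frac{o(\|d\|^2)}{\|d\|^2}\bigr)\|d\|^2$, then picks $\varepsilon$ so that the remainder ratio lies in $(-\frac{\lambda}{4},\frac{\lambda}{4})$, and arrives at the same constant $\gamma=\lambda/2$.

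You instead use the Lagrange form of the remainder along the segment, and you transfer positive definiteness from $x_\ast$ to a whole ball via continuity of the Hessian.

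\begin{itemize}
\item The Peano route needs slightly weaker hypotheses. It only requires second-order differentiability at $x_\ast$ itself, not a continuous Hessian. However, it relies on the multivariate remainder being uniform in the direction of $d$. The paper uses this without comment, and you correctly flag it.
\item Your route needs only the one-dimensional Taylor theorem. It makes the choice of $\varepsilon$ explicit, including $\mathcal B_\varepsilon(x_\ast)\subset S$, which the paper leaves implicit.
\item Your route also proves more. Since $\nabla^2 f\succeq\frac{\lambda}{2}I$ on the ball, the same mean-value step gives $f(y)\ge f(x)+\nabla f(x)^\top(y-x)+\frac{\lambda}{4}\|y-x\|^2$ for all $x,y\in\mathcal B_\varepsilon(x_\ast)$. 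So $f$ is $\frac{\lambda}{2}$-strongly convex on the ball, and the claimed bound is the special case $x=x_\ast$.
\end{itemize}
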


Before proving Theorem~\ref{thm:so_suff_opti}, we will need to prove the following auxiliary result.

\begin{lemma}\label{lem:eigenvalues_definiteness}
Let $A\in\R^{d\times d}$ be a symmetric matrix with real valued eigenvalues $\lambda_1\le \dots\le \lambda_d$ and corresponding eigenvectors $v_1,\dots,v_d$. Then it holds true that:
\begin{enumerate}
\item $\lambda_1\|z\|^2\le z^\top A z\le \lambda_d \|z\|^2$ for all $z\in\R^d$.
\item The matrix $A$ is (strict) positive definite if and only if all eigenvalues are (strictly) positive.
\end{enumerate}
\end{lemma}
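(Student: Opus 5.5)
The plan is the spectral theorem. Since $A$ is real symmetric, the eigenvectors $v_1,\dots,v_d$ can be chosen to form an orthonormal basis of $\R^d$; I will take this as known from linear algebra. Equivalently, $A=V\Lambda V^\top$ with $V=(v_1,\dots,v_d)$ orthogonal and $\Lambda=\mathrm{diag}(\lambda_1,\dots,\lambda_d)$, exactly as $Q=UDU^\top$ in Example~\ref{ex:quadratic}.

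For the first assertion, fix $z\in\R^d$ and expand it in the eigenbasis as $z=\sum_{i=1}^d c_i v_i$, where $c_i=v_i^\top z$. Orthonormality gives $\|z\|^2=\sum_i c_i^2$. Using $Av_i=\lambda_i v_i$ together with $v_i^\top v_j=\delta_{ij}$, we get $z^\top A z=\sum_{i=1}^d \lambda_i c_i^2$. Since $\lambda_1\le\lambda_i\le\lambda_d$ for every $i$ and each $c_i^2\ge 0$, this sum lies between $\lambda_1\sum_i c_i^2$ and $\lambda_d\sum_i c_i^2$. That is exactly the claimed two-sided bound.

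For the second assertion, the strict case goes as follows.

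\emph{If all $\lambda_i>0$:} then $\lambda_1>0$, and part~1 gives $z^\top A z\ge\lambda_1\|z\|^2>0$ for every $z\neq 0$. Hence $A$ is positive definite.

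\emph{If $A$ is positive definite:} insert $z=v_i\neq 0$ to obtain $0<v_i^\top A v_i=\lambda_i\|v_i\|^2$, so $\lambda_i>0$.

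The semi-definite case uses the same two arguments with $\ge$ in place of $>$.

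The only non-elementary ingredient is the existence of an orthonormal eigenbasis (real eigenvalues and orthogonal diagonalizability of symmetric matrices). I regard this as the one real obstacle and intend to quote it rather than reprove it. If a self-contained argument were required, I would prove it by induction on $d$: take a maximizer of $z^\top Az$ over the unit sphere, which exists by compactness; a Lagrange condition shows it is an eigenvector; its orthogonal complement is $A$-invariant, so the induction can continue there. Everything else in the proof is direct computation.
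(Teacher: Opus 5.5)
Your proof is correct and follows the paper's argument essentially step for step: expand $z$ in the eigenbasis, bound $z^\top A z=\sum_i\lambda_i c_i^2$ between $\lambda_1\|z\|^2$ and $\lambda_d\|z\|^2$, then for part~2 test definiteness on each $v_i$ for one direction and use part~1 for the other. Your explicit appeal to an orthonormal eigenbasis (the spectral theorem) is exactly what the paper's computation silently relies on, both to drop the cross terms and to identify $\sum_i\xi_i^2\|v_i\|^2$ with $\|z\|^2$.
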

\begin{proof}
We start with the first assertion. Let $z\in\R^d$ and write
\[z = \sum_{i=1}^d \xi_i v_i\]
with coefficients $\xi_i\in\R$. Then we can write
\begin{align*}
z^\top A z = \sum_{i=1}^d \xi_i^2 \langle v_i, \underbrace{A v_i}_{=\lambda_i v_i}\rangle = \sum_{i=1}^d \lambda_i \xi_i^2 \|v_i\|^2\begin{cases}\ge & \lambda_1\sum_{i=1}^d \xi_i^2 \|v_i\|^2 = \lambda_i \|z\|^2,\\ \le & \lambda_d \sum_{i=1}^d \xi_i^2\|v_i\|^2 = \lambda_d \|z\|^2., \end{cases}
\end{align*}
which finishes the proof of the first claim. 
For the second assertion we start with "$\Rightarrow$". Let $(\lambda_i,v_i)$ be eigenvalue and corresponding eigenvector of the (strictly) positive definite matrix $A$.  By definition of a  positive definite matrix we have
\[0\le (<) v_i^\top A v_i = v_i^\top (\lambda_i v_i) = \lambda_i \|v_i\|^2. \]
Since $\|v_i\|^2>0$ for $v_i \neq 0$, we obtain $\lambda_i \ge (>) 0$ for all $i=1,\dots,d$.
For the other way around "$\Leftarrow$", we assume that $0\le (<) \lambda_1\le \dots \le \lambda_d$ are eigenvalues of $A$ with corresponding eigenvectors $v_1\dots, v_d$.  Then it follows with the first assertion 
\[z^\top A z \ge \lambda_1 \|z\|^2 \ge (>) 0\]
for all $z\in\R^d$ with $z\neq 0$.
\end{proof}
We are now ready to prove Theorem~\ref{thm:so_suff_opti}.
\begin{proof}[Proof of Theorem~\ref{thm:so_suff_opti}]
Let $\lambda>0$ be the smallest eigenvalue of the positive definite matrix $\nabla^2 f(x_\ast)$. By Lemma~\ref{lem:eigenvalues_definiteness} it holds true that
\[z^\top \nabla^2 f(x_\ast) z \ge \lambda \|z\|^2.\]
Application of Taylor's expansion around $x_\ast$ together with $\nabla f(x_\ast)=0$ yields 
\begin{align*}
f(x_\ast + d) - f(x_\ast) &= \nabla f(x_\ast)^\top d + \frac12 d^\top \nabla^2 f(x_\ast) d + o(\|d\|^2)\\
&= \frac12 d^\top \nabla^2 f(x_\ast)d + o(\|d\|^2)\\
&\ge\frac{ \lambda}2 \|d\|^2 + o(\|d\|^2)\\
&= \left(\frac{\lambda}2+ \frac{o(\|d\|^2)}{\|d\|^2}\right) \|d\|^2.
\end{align*}
Let $\varepsilon>0$ be sufficiently small such that it holds $\frac{o(\|d\|^2)}{\|d\|^2}\in(-\frac\lambda4,\frac\lambda4)$ for $\|d\|^2<\varepsilon$. For $x\in\R^d$ with $\|x-x_\ast\|<\varepsilon$ it follows
\[f(x) \ge f(x_\ast) + \left(\frac{\lambda}{2} + \frac{o(\|x-x_\ast\|^2)}{\|x-x_\ast\|^2}\right)\|x-x_\ast\|^2\ge f(x_\ast) + \underbrace{(\frac{\lambda}{2}- \frac{\lambda}{4})}_{=: \frac{\gamma}{2}} \|x-x_\ast\|^2>f(x_\ast). \]
\end{proof}

Under the additional assumption of a convex objective function $f$, we can further characterize sufficient optimality conditions. The proof of the following proposition is left as an exercise for the interested reader.
\begin{prop}\label{prop:opt_cond_convex}
Let $f:\R^d\to \R$ be continuously differentiable and convex over the convex set $S\subset \R^d$. Then it holds true that:
\begin{enumerate}
\item Every local minimum of $f$ over $S$ is also a global minimum over $S$.
\item If $f$ is even strictly convex, then there exists at most one global minimum. 
\item Let $S$ be an open set. Then the condition $\nabla f(x_\ast)=0$ is a sufficient and necessary condition for $x_\ast\in S$ to be a global minimum of $f$ over $S$.
\end{enumerate}
\end{prop}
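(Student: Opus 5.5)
The plan is to prove the three assertions of Proposition~\ref{prop:opt_cond_convex} separately. Each reduces to either the definition of convexity or the first-order characterization in Proposition~\ref{prop:app_convexity1}, together with Theorem~\ref{thm:fo_opti} for the necessity part.

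For the first assertion I would argue by contradiction. Let $x_\ast\in S$ be a local minimum, so $f(x_\ast)\le f(x)$ for all $x\in S$ with $\|x-x_\ast\|<\varepsilon$. Suppose there is $y\in S$ with $f(y)<f(x_\ast)$. Since $S$ is convex, $x_\lambda:=x_\ast+\lambda(y-x_\ast)\in S$ for every $\lambda\in(0,1)$, and convexity of $f$ gives
\[f(x_\lambda)\le \lambda f(y)+(1-\lambda)f(x_\ast)<f(x_\ast).\]
Choosing $\lambda<\varepsilon/\|y-x_\ast\|$ places $x_\lambda$ inside the $\varepsilon$-ball, which contradicts local minimality.

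For the second assertion, suppose $x\neq y$ are both global minima with common value $f^\ast$. The midpoint $\tfrac12x+\tfrac12y$ lies in $S$, and strict convexity gives
\[f\big(\tfrac12x+\tfrac12y\big)<\tfrac12f(x)+\tfrac12f(y)=f^\ast.\]
This contradicts the minimality of $f^\ast$, so there is at most one global minimum.

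For the third assertion, let $S$ be open. For necessity, a global minimum over $S$ is in particular a local minimum in the open set $S$, so Theorem~\ref{thm:fo_opti} yields $\nabla f(x_\ast)=0$. For sufficiency, Proposition~\ref{prop:app_convexity1}(1), applied with $C=S$, gives
\[f(z)\ge f(x_\ast)+(z-x_\ast)^\top\nabla f(x_\ast)=f(x_\ast)\quad\text{for all } z\in S.\]
Hence $x_\ast$ is a global minimum over $S$.

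I do not expect a genuine obstacle. The only point requiring care is in part one: the segment from $x_\ast$ to $y$ must stay in $S$, which is exactly where convexity of $S$ is used. Also, the neighborhood in the definition of a local minimum is intersected with $S$, so $S$ need not be open for parts one and two; openness is needed only for the necessity direction in part three, through Theorem~\ref{thm:fo_opti}.
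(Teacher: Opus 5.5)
Your proof is correct. The paper does not prove this proposition; it leaves it as an exercise. Your argument is the standard one and uses the tools the paper provides: the segment argument for part one, the midpoint argument for part two, and Theorem~\ref{thm:fo_opti} together with the first-order characterization in Proposition~\ref{prop:app_convexity1} for part three.

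One step deserves an explicit line: the necessity direction in part three. Because $S$ is open, some ball $\mathcal B_\delta(x_\ast)$ lies inside $S$. Hence a global minimizer over $S$ is an unconstrained local minimizer of $f$, which is the form of local minimality that the proof of Theorem~\ref{thm:fo_opti} actually uses.
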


\section{Lyapunov methods for optimization}\label{app:Lyapunov}
We will motivate the application of Lyapunov theory to optimization methods based on stability analysis of ordinary differential equations (ODEs). Lyapunov theory can be applied to analyze the behavior of dynamical systems without solving them analytically.  Let 
\begin{equation}\label{eq:ODE_system}
\frac{\dd z(t)}{\dd t} = g(z(t)),\quad z(0) = z_0\in\R^d
\end{equation}
be a continuous-time dynamical system described as ODE with $g:\R^d\to\R^d$ locally Lipschitz-continuous.  
\begin{defi}
We call a point $\bar z\in\R^d$ \textit{equilibrium point} of \eqref{eq:ODE_system} if $g(\bar z)=0$.
\end{defi}
Lyapunov methods can be applied to describe stability of equilibrium points of \eqref{eq:ODE_system}. Without loss of generality we  assume that $\bar z=0\in\R^d$ is an equilibrium point of $\eqref{eq:ODE_system}$.
\begin{defi}
The equilibrium point $\bar z=0$ is called
\begin{enumerate}
\item \textit{stable}, if for all $\varepsilon>0$ there exists a $\delta=\delta(\varepsilon)>0$ such that:
\[\|z(0)\|<\delta \quad \implies \quad \|z(t)\|<\varepsilon \quad\text{for all}\ t\ge0. \]
\item \textit{unstable}, if $\bar z$ is not stable. 
\item \textit{locally asymptotically stable}, if $\bar z$ is stable and $\delta>0$ can be chosen such that:
\[\|z(0)\|<\delta \quad \implies \quad \lim_{t\to\infty} z(t) = 0. \]
\item \textit{globally stable}, if $\bar z$ is stable and $\lim_{\varepsilon\to\infty}\delta(\varepsilon)=\infty$.
\item \textit{globally asymptotically stable}, if $\lim_{t\to\infty} z(t) = 0$ for all $z_0\in\R^d$.
\end{enumerate}
\end{defi}

We consider a continuously differentiable function $V:\R^d\to\R$ - in the following called candidate of a Lyapunov function - which will be used to be described along the trajectories.  With the help of chain rule, we can describe the dynamical behavior of $V$ along the trajectory of $z(t)$: 
\[\frac{\dd V(z(t))}{\dd t} =\langle \nabla_z V(z(t)), \frac{\dd z(t)}{\dd t}\rangle = \langle \nabla_z V(z(t)), g(z(t))\rangle. \]
Under specific assumptions on the function $V$ one can verify global stability of $\bar z$.
\begin{thm}[Theorem~3.2 in \cite{khalil2002nonlinear}]\label{thm:Lyapunov}
Let $\bar z=0$ be an equilibrium point  of \eqref{eq:ODE_system}. Moreover, let $V:\R^d\to\R$ be continuously differentiable with
\begin{enumerate}
\item $V(\bar z) =0$ and $V(z)>0$ for all $z\in\R^d\setminus \{\bar z\}$,
\item $V(z) \to\infty$ for $\|z\|\to\infty$,
\item $\frac{\dd V(z(t))}{\dd t} = \langle \nabla_z V(z(t)), g(z(t))\rangle \le -W(z(t))$, for some continuous function $W:\R^d\to\R$.
\end{enumerate}
If $W(z)\ge0$ for all $z\in\R^d$, then $\bar z$ is globally stable. Moreover, if $W(z)>0$ for all $z\in\R^d\setminus\{\bar z\}$, then $\bar z$ is even globally asymptotically stable. 
\end{thm}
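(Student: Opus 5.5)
We prove the two assertions of Theorem~\ref{thm:Lyapunov} separately. Throughout, $V$ serves as a monotone energy along trajectories. Global existence of solutions comes for free: local Lipschitz continuity of $g$ gives local existence and uniqueness, and confinement to a compact sublevel set rules out finite-time blow-up.

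\textbf{Global stability ($W\ge 0$).} Condition 3 gives $\frac{\dd}{\dd t}V(z(t))\le 0$, so $V(z(t))\le V(z_0)$ for every $t$ in the existence interval. Condition 2 (radial unboundedness) together with continuity of $V$ makes every sublevel set $\Omega_c=\{V\le c\}$ compact. Hence trajectories remain in a compact set, so no blow-up occurs and solutions exist for all $t\ge 0$.

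For stability, fix $\varepsilon>0$. The sphere $\{\|z\|=\varepsilon\}$ is compact and $V>0$ on it, so $m(\varepsilon):=\min_{\|z\|=\varepsilon}V(z)>0$. By continuity of $V$ and $V(0)=0$, there is $\delta>0$ with $V(z)<m(\varepsilon)$ whenever $\|z\|<\delta$. A trajectory starting in $B_\delta$ keeps $V<m(\varepsilon)$ for all times, so it can never reach the sphere of radius $\varepsilon$. By connectedness it therefore stays in $B_\varepsilon$.

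For $\delta(\varepsilon)\to\infty$, choose $\delta(\varepsilon)$ maximal, namely $\delta(\varepsilon)=\sup\{r: \sup_{\|z\|<r}V(z)< \inf_{\|z\|\ge\varepsilon}V(z)\}$. Radial unboundedness gives $\inf_{\|z\|\ge\varepsilon}V\to\infty$ as $\varepsilon\to\infty$, and $\sup_{\|z\|<r}V$ is finite for each $r$. Hence $\delta(\varepsilon)\to\infty$. The same barrier argument as above applies with the region $\{\|z\|\ge\varepsilon\}$ in place of the sphere.

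\textbf{Global asymptotic stability ($W>0$ off $0$).} Since $V(z(t))$ is nonincreasing and bounded below by $0$, it converges to some $c\ge 0$. Suppose $c>0$. Then the trajectory lies in the compact annulus-type set $K=\{z: c\le V(z)\le V(z_0)\}$, which excludes $0$. On $K$ the continuous function $W$ attains a positive minimum $w>0$. Integrating condition 3 gives
\[V(z(t))\le V(z_0)-wt\to-\infty,\]
which contradicts $V\ge 0$. Thus $c=0$.

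It remains to show $z(t)\to 0$. The trajectory lies in the compact set $\Omega_{V(z_0)}$. On the compact set $\{\|z\|\ge\eta\}\cap\Omega_{V(z_0)}$, the function $V$ has a positive minimum; call it $m_\eta$. Once $V(z(t))<m_\eta$, the trajectory must satisfy $\|z(t)\|<\eta$. Since $\eta>0$ was arbitrary, $z(t)\to 0$.

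\textbf{Main obstacle.} The delicate point is the compactness bookkeeping: proving global existence of solutions, and making the compact set $K$ on which $W$ is bounded below explicitly avoid the origin. Radial unboundedness supplies compactness of the sublevel sets, and positivity of $V$ away from $0$ supplies the separation from the origin.
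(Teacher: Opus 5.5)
Your proof is correct. The paper itself gives no proof of this theorem (it only cites Khalil), and your argument is the standard one from that source: a sublevel-set barrier for stability, compactness of sublevel sets for global existence, and a positive lower bound of $W$ on a compact set avoiding the origin to force $V(z(t))\to 0$. On top of that, you make an explicit maximal choice of $\delta(\varepsilon)$ to meet the paper's requirement $\lim_{\varepsilon\to\infty}\delta(\varepsilon)=\infty$ in its definition of global stability.

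The one unstated detail is that $\inf_{\|z\|\ge\varepsilon}V>0$ for each fixed $\varepsilon$. This follows from radial unboundedness together with compactness of an annulus. Alternatively, you can avoid it by working with $m(\varepsilon)=\min_{\|z\|=\varepsilon}V$. This also tends to $\infty$, and it suffices because a trajectory must cross the sphere of radius $\varepsilon$ to leave the ball of radius $\varepsilon$.
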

A function satisfying the above conditions is sometimes also referred to a \textit{Lyapunov function}.

\medskip

We want to apply Lyapunov theory as tool for analyzing convergence of optimization  methods. Let us start with an motivating example in continuous-time. 
\begin{example}
The gradient descent method with fixed step size $\bar\alpha>0$ is written as
\[x_{k+1} = x_k - \bar\alpha \nabla f(x_k), \]
which for $\bar\alpha\to0$ can be interpreted as Euler-scheme of the ODE
\begin{equation}\label{eq:gradientflow}
\frac{\dd x(t)}{\dd t} = -\nabla f(x(t)).
\end{equation}
In general, optimization schemes are often described and/or analysed through its continuous-time formulation. The system is sometimes also called \textit{gradient flow} and intuitively speaking, it describes gradient descent with degenerated step size.  It can be used as indicator of how gradient descent may perform with sufficiently small step size $\bar\alpha>0$. We have analysed gradient descent methods under various settings such as (strong) convexity and smoothness. The convergence analysis of the gradient flow \eqref{eq:gradientflow} can be done in a very similar way using Lyapunov theory.  We want to construct a function describing the convergence behavior of the dynamical system \eqref{eq:gradientflow} without solving it explicitly.  The most straightforward analysis can be done for the error function 
$V(x(t)) = \mathcal E(x(t)) = f(x(t))- f^\ast$, where $f^\ast = \min_{x\in\R^d}\ f(x)>-\infty$. Similar as before, we can describe the dynamical behavior of $V$ through the ODE
\[\frac{\dd V(x(t))}{\dd t} = \langle \nabla_x V(x(t)), \frac{\dd x(t)}{\dd t}\rangle = -\langle \nabla f(x(t)), \nabla f(x(t))\rangle = -\|\nabla f(x(t))\|^2. \]
Under suitable conditions on $f$ one can now apply Theorem~\ref{thm:Lyapunov} in order to quantify stability of a (unique) stationary point $x_\ast\in\R^d$ of $f$ as equilibrium point of the gradient flow \eqref{eq:gradientflow}.
\end{example}

Let us consider a optimization scheme in continuous-time of the form
\begin{equation}\label{eq:cont_time_Opti}
\frac{\dd x(t)}{\dd t} = g(x(t)), 
\end{equation}
and a corresponding error function $\mathcal E:\R^d\to\R$ to be analyzed. We aim to quantify the convergence of \eqref{eq:cont_time_Opti} through the error function along the trajectory
\[\frac{\dd \mathcal E(x(t))}{\dd t} = \langle \nabla_x\mathcal E(x(t)), g(x(t))\rangle\begin{cases}\le 0\\ <0 \end{cases}\, . \]
This can be used to obtain results such as monotonically decreasing error ($\le 0$) or even convergence of the error ($<0$). However, these properties are not sufficient to describe the speed of convergence.  In order to say something about the convergence speed, we can define a time-dependent error function $\widehat{\mathcal E}:[0,\infty)\times\R^d\to\R$ of the form
\[\widehat{\mathcal E}(t,x) = \gamma(t) \mathcal E(x),\]
where $\gamma:[0,\infty)\to\R_+$ (smooth and continuously differentiable) is devoted to describe the speed of convergence.  Let us illustrate the strategy of proving convergence with the help of this error function throught the following example.

\begin{example}
We want to minimize $f$ using \eqref{eq:cont_time_Opti} and prove convergence of the error function $\mathcal E(x) = f(x)-f^\ast$. One possible strategy is to construct the time dependent error function \[\widehat{\mathcal E}(t,x(t)) = \gamma(t)\mathcal E(x(t))+r(x(t)), \]
where $\gamma:[0,\infty)$ with $\frac{\dd \gamma(t)}{\dd t}>0$ describes our guess of convergence rate and $r:\R^d\to\R$ is an auxiliary function with $r(x)\ge0$.  Suppose we are able to prove monotonicity of the error in the form of
\[\frac{\dd \widehat{\mathcal E}(t,x(t))}{\dd t} \le 0 \]
(using again chain rule), we can directly imply by definition of $\widehat{\mathcal E}$ that
\[\widehat{\mathcal E}(t,x(t)) = \gamma(t) (f(x(t))-f^\ast) + r(x(t)) \le \widehat{\mathcal E}(0,x(0)), \]
and therefore, we can imply convergence of the error $\mathcal E$ in the form of
\[\mathcal E(x(t))=f(x(t))-f^\ast \le \frac{\widehat{\mathcal E}(0,x(0))}{\gamma(t)}.\]
\end{example}

A similar strategy for discrete-time optimization schemes is described in Section~\ref{sec:NAM_convex}.

\section{Measure-theoretic background}\label{app:measuretheory}
In the following section, we will briefly recall the definition of Dynkin systems and $\sigma$-algebras, and a useful tool which allows to prove measure theoretical properties on a $\cap$-stable generator of a $\sigma$-algebra. For example, if one wants to verify that two measures $\mu_1$ and $\mu_2$ on the same measurable space $(\Omega,\mathcal A)$ are equal, it is sufficient to verify the equality on an $\cap$-stable generator $\cE$ of $\mathcal A$ (i.e.~$\sigma(\cE)=\cA$). For more details we refer to \cite[Section~1]{klenke}.

\begin{defi}
Let $\cA\subset \cP(\Omega)$ be a non-empty family of subsets of $\Omega$. We call $\cA$ a \textit{$\sigma$-algebra} if
\begin{enumerate}
\item $\Omega\in\cA$,
\item $A\in\cA$ $\implies$ $A^\complement\in\cA$,
\item $A_1,A_2,\dots \in\cA$ $\implies$ $\bigcup\limits_{i=1}^\infty A_i \in\cA$
\end{enumerate}
\end{defi}

\begin{defi}
Let $\cD\subset \cP(\Omega)$ be a non-empty family of subsets of $\Omega$. We call $\cD$ a \textit{Dynkin system} if
\begin{enumerate}
\item $\Omega\in\cD$,
\item $A\in\cD$ $\implies$ $A^\complement\in\cD$,
\item $A_1,A_2,\dots \in\cD$ pairwise disjoint (i.e. $A_i\cap A_j = \emptyset$, $i\neq j$)  $\implies$ $\biguplus\limits_{i=1}^\infty A_i \in\cD$
\end{enumerate}
\end{defi}

\begin{prop}
Let $\cD$ be a Dynkin system, then the following is equivalent:
\begin{itemize}
\item $\cD$ is a $\sigma$-algebra,
\item $\cD$ is $\cap$-stable, i.e.~for $A,B\in\cD$ it follows $A\cap B\in\cD$.
\end{itemize} 
\end{prop}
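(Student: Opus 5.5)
We have to show that a Dynkin system $\cD$ is a $\sigma$-algebra if and only if it is $\cap$-stable. The direction ``$\sigma$-algebra $\Rightarrow$ $\cap$-stable'' is immediate. For $A,B\in\cD$ we have $A^\complement,B^\complement\in\cD$, hence $A^\complement\cup B^\complement\in\cD$. To see this, apply the countable union axiom to the sequence $A^\complement,B^\complement,\emptyset,\emptyset,\dots$, where $\emptyset=\Omega^\complement\in\cD$. Taking complements once more gives $A\cap B=(A^\complement\cup B^\complement)^\complement\in\cD$.

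For the converse, assume $\cD$ is a $\cap$-stable Dynkin system. Axioms (1) and (2) of a $\sigma$-algebra hold already, so only closure under arbitrary countable unions needs proof. The plan is to reduce arbitrary unions to disjoint unions, which $\cD$ handles by definition.

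The first step is to check that $\cD$ is closed under finite unions. For $A,B\in\cD$, $\cap$-stability and complements give $A\cap B^\complement\in\cD$. The sets $A\cap B^\complement$ and $B$ are disjoint, and $A\cup B=(A\cap B^\complement)\uplus B$. Padding this pair with empty sets and using axiom (3) shows $A\cup B\in\cD$. Induction then extends this to all finite unions.

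Now let $A_1,A_2,\dots\in\cD$ be arbitrary. Define $D_1:=A_1$ and, for $n\ge2$,
\[D_n:=A_n\cap\Big(\bigcup_{i=1}^{n-1}A_i\Big)^\complement .\]
The finite union $\bigcup_{i<n}A_i$ lies in $\cD$ by the first step, so its complement does too, and $\cap$-stability gives $D_n\in\cD$. By construction the $D_n$ are pairwise disjoint and satisfy $\biguplus_{n}D_n=\bigcup_n A_n$. Axiom (3) therefore yields $\bigcup_n A_n\in\cD$, so $\cD$ is a $\sigma$-algebra.

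The only step needing care is showing that $D_n$ belongs to $\cD$. This is exactly where closure under finite unions, and hence $\cap$-stability, is used. Everything else in the argument is bookkeeping.
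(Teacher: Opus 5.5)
Your proof is correct and complete. The paper gives no proof of this proposition; it is stated as a known fact in the measure-theoretic appendix, with a reference to Klenke. Your argument is the standard one: padding with $\emptyset=\Omega^\complement$ for the easy direction, and the disjointification $D_n=A_n\cap\bigl(\bigcup_{i<n}A_i\bigr)^\complement$ for the converse.

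One small economy is available. Your separate step proving closure under finite unions is not needed. By De Morgan, $\bigl(\bigcup_{i<n}A_i\bigr)^\complement=\bigcap_{i<n}A_i^\complement$, and this lies in $\cD$ directly from closure under complements and finitely many applications of $\cap$-stability. So $D_n\in\cD$ follows in one line, without first establishing closure under finite unions.
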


\begin{defi}
Let $\cE\subset \cP(\Omega)$ be a non-empty family of subsets of $\Omega$. Then we define 
\[\sigma(\cE) = \bigcap\limits_{\cE\subset B,\ B\ \sigma-\text{algebra}} B \]
as the $\sigma$-Algebra generated by $\cE$. Similarly we define
\[ d(\cE) = \bigcap\limits_{\cE\subset B,\ B\ \text{Dynkin system}} B \]
as the Dynkin system generated by $\cE$.
\end{defi}

\begin{thm}\label{thm:monclass}
Let $\cE\subset \cP(\Omega)$ be a non-empty family of subsets of $\Omega$ which is $\cap$-stable, then $d(\cE)=\sigma(\cE)$.
\end{thm}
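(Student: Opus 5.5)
The statement to prove is the π–λ (Dynkin) theorem: for a non-empty $\cap$-stable family $\cE$, $d(\cE)=\sigma(\cE)$. The plan is to prove the two inclusions separately. The inclusion $d(\cE)\subset\sigma(\cE)$ is immediate. Every $\sigma$-algebra is a Dynkin system, since pairwise disjoint countable unions are in particular countable unions. Hence $\sigma(\cE)$ is one of the Dynkin systems containing $\cE$ in the intersection defining $d(\cE)$. For the reverse inclusion, it suffices by the preceding proposition (a Dynkin system is a $\sigma$-algebra if and only if it is $\cap$-stable) to show that $d(\cE)$ is $\cap$-stable. Then $d(\cE)$ is a $\sigma$-algebra containing $\cE$, so $\sigma(\cE)\subset d(\cE)$.

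To show $\cap$-stability, I use the standard two-step bootstrapping. For $D\in d(\cE)$ define
\[\cD_D:=\{A\subset\Omega\mid A\cap D\in d(\cE)\}.\]
The first claim is that $\cD_D$ is a Dynkin system whenever $D\in d(\cE)$. Since $\Omega\cap D=D\in d(\cE)$, we have $\Omega\in\cD_D$. For complements, if $A\in\cD_D$, write $A^\complement\cap D=(D^\complement\uplus(A\cap D))^\complement$. Here $D^\complement$ and $A\cap D$ are disjoint elements of $d(\cE)$, and Dynkin systems are closed under complements and disjoint unions, so $A^\complement\cap D\in d(\cE)$. For unions, pairwise disjoint $A_i\in\cD_D$ give pairwise disjoint $A_i\cap D\in d(\cE)$, whose union is $(\biguplus_i A_i)\cap D$, so the union lies in $\cD_D$.

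The second step handles generators. For $E\in\cE$, $\cap$-stability of $\cE$ gives $\cE\subset\cD_E$. Since $\cD_E$ is a Dynkin system, $d(\cE)\subset\cD_E$, so $E\cap D\in d(\cE)$ for all $E\in\cE$ and $D\in d(\cE)$. Read the other way, this says $\cE\subset\cD_D$ for every $D\in d(\cE)$. Applying minimality once more gives $d(\cE)\subset\cD_D$, that is, $A\cap D\in d(\cE)$ for all $A,D\in d(\cE)$, which is the desired $\cap$-stability.

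The only delicate point is the complement step inside $\cD_D$. Dynkin systems are closed only under disjoint unions, not general unions or differences, so the set $A^\complement\cap D$ has to be written as the complement of a disjoint union, as above. Everything else is routine minimality of generated systems. I will also rely on the fact that an intersection of Dynkin systems is again a Dynkin system, so that $d(\cE)$ is well defined.
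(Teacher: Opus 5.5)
Your proof is correct. The notes state this theorem without proof and defer to \cite[Section~1]{klenke}, and your argument is the standard one found there: show that $d(\cE)$ is $\cap$-stable by applying minimality twice to the Dynkin systems $\cD_D$, then invoke the proposition that a $\cap$-stable Dynkin system is a $\sigma$-algebra. Your identity $A^\complement\cap D=(D^\complement\uplus(A\cap D))^\complement$ correctly adapts that argument to the complement-based definition of a Dynkin system used in these notes. The finite disjoint union there is covered by padding with $\emptyset=\Omega^\complement$.
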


\begin{remark}\label{rem:monotoneclass}
In order to prove that some abstract condition $\oplus$ is satisfied for all $A\in\cA$, where $\cA$ is a $\sigma$-algebra over $\Omega$, we can follow a specific strategy:
\begin{enumerate}
\item define the set $\cM = \{A\in\cA\mid \text{condition}\ \oplus\ \text{is satisfied for}\ A\}\subset \cA$,
\item prove that $\cM$ is a Dynkin system,
\item find a $\cap$-stable generator $\cE$ of $\cA$ such that $\cE\subset \cM$,
\item imply that
\[\cA = \sigma(\cE) = d(\cE) \overset{\cE\subseteq \cM}{\subseteq} d(\cM) \overset{\cM\ \text{Dynkin system}}{=} \cM \subseteq \cA, \]
which yields that $\cM = \cA$, i.e.~condition $\oplus$ is satisfied for all $A\in\cA$.
\end{enumerate}
\end{remark}

\section{Martingales}\label{app:martingales}
In this section, we briefly recall Doob's martingale convergence theorem, in particular for supermartingales. We refer to \cite[Section~9--11]{klenke} for more details.
\begin{defi}
Let $(\Omega,\mathcal A,\mathcal F,\mathbb P)$ be a filtered probability space. We call stochastic process $X=(X_k)_{k\in\N}$ \textit{martingale} with respect to the filtration $\mathcal F$, if
\begin{enumerate}
\item $X$ is $\mathcal F$-adapted,
\item $\E[|X_k|]<\infty$ for all $k\in\N$,
\item $\E[X_k\mid \mathcal F_l] = X_l$ for all $k,l\in\N$ with $l\le k$. 
\end{enumerate}
If 3.~holds with $\le$ ($\ge$), then we call $X$ a \textit{supermartingale} (\textit{submartingale}).
\end{defi}

\begin{thm}[Doob's supermartingale convergence]\label{thm:martingaleconv}
Let $X=(X_k)_{k\in\N}$ be a supermartingale or submartingale with
\[\sup_{k\in\N}\ \E[|X_k|] <\infty, \]
then $(X_k)_{k\in\N}$ converges almost surely to an $\mathcal F_\infty$-measurable and integrable random variable $X_\infty$.
\end{thm}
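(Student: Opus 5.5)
The plan is the classical upcrossing argument. First I reduce to the supermartingale case: if $X$ is a submartingale, then $-X$ is a supermartingale with the same bound $\sup_k\E[|X_k|]<\infty$, and convergence of $-X$ gives convergence of $X$. So let $X$ be a supermartingale and set $C:=\sup_{k\in\N}\E[|X_k|]<\infty$.

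For rationals $a<b$, let $U_n[a,b]$ be the number of completed upcrossings of $[a,b]$ by $X_0,\dots,X_n$. To define it, introduce the stopping times $\sigma_1=\inf\{k: X_k\le a\}$, $\tau_1=\inf\{k>\sigma_1: X_k\ge b\}$, $\sigma_{j+1}=\inf\{k>\tau_j: X_k\le a\}$, and so on; then $U_n[a,b]=\max\{j:\tau_j\le n\}$.

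The key step is the upcrossing inequality
\[
(b-a)\,\E[U_n[a,b]]\le \E[(X_n-a)^-]\le C+|a|.
\]
To prove it, consider the predictable process $H_k=\sum_j \mathds{1}_{\{\sigma_j<k\le \tau_j\}}$, which takes values in $\{0,1\}$ and is $\mathcal F_{k-1}$-measurable. Form the martingale transform $(H\cdot X)_n=\sum_{k=1}^n H_k(X_k-X_{k-1})$.
\begin{itemize}
\item Since $H_k$ is bounded, nonnegative and predictable, we have $\E[H_k(X_k-X_{k-1})\mid\mathcal F_{k-1}]=H_k(\E[X_k\mid\mathcal F_{k-1}]-X_{k-1})\le 0$. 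Hence $\E[(H\cdot X)_n]\le 0$.
\item Pathwise, each completed upcrossing contributes at least $b-a$ to $(H\cdot X)_n$. A possibly unfinished last upcrossing contributes at least $-(X_n-a)^-$. This gives $(H\cdot X)_n\ge (b-a)U_n[a,b]-(X_n-a)^-$.
\end{itemize}
Taking expectations in the pathwise bound and using $\E[(H\cdot X)_n]\le 0$ yields the inequality. I expect this to be the main obstacle: the stopping-time bookkeeping and the pathwise lower bound on the transform need care.

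Next, $U_n[a,b]$ increases to $U_\infty[a,b]$. By monotone convergence, $\E[U_\infty[a,b]]\le (C+|a|)/(b-a)<\infty$, so $U_\infty[a,b]<\infty$ almost surely. Now observe that
\[
\{\liminf_k X_k<\limsup_k X_k\}\subset\bigcup_{a<b,\ a,b\in\mathbb Q}\{U_\infty[a,b]=\infty\}.
\]
The right-hand side is a countable union of null sets. Hence $X_k$ converges almost surely to a limit $X_\infty:=\limsup_k X_k$ with values in $[-\infty,\infty]$. This limit is $\mathcal F_\infty$-measurable, being a limsup of $\mathcal F_\infty$-measurable variables.

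Finally, Fatou's lemma gives $\E[|X_\infty|]\le\liminf_k\E[|X_k|]\le C<\infty$. Therefore $X_\infty$ is integrable, and in particular finite almost surely, which completes the proof.
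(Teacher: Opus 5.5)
Your upcrossing argument is correct and complete. It is the standard route: the inequality $(b-a)\,\E[U_n[a,b]]\le\E[(X_n-a)^-]$ via the predictable transform $H\cdot X$, then a countable union over rational $a<b$, then Fatou for integrability. The notes state the theorem without proof and defer to Klenke's book, which proves it the same way, there phrased for submartingales with $(X_n-a)^+$. The only cosmetic addition is to set $X_\infty:=0$ on the $\mathcal F_\infty$-measurable null set where the limit fails to exist or is infinite, so that $X_\infty$ is a genuine real-valued random variable.
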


\begin{remark}
For a martingale we have the property of constant expectation $\E[X_k]= \E[X_0]$ for all $k\in\N$. For a supermartingale in contrast, we have decreasing expectation $\E[X_k]\le \E[X_l]\le \E[X_0]$ for $k\ge l$. Therefore, it is sufficient to replace the condition $\sup_{k\in\N}\ \E[|X_k|] <\infty$ of Theorem~\ref{thm:martingaleconv} with the expectation of the negative part $X_k^{-} = \max(0,-X_k)$, since we have
\[\E[|X_k|] = \E[X_k^+ + X_k^-] = \E[X_k^+ - X_k^- + X_k^- + X_k^-] = \E[X_k] + 2\E[X_k^-] \le \E[X_0] + 2\E[X_k^-].\]
In case that $X_k$ is bounded from below it follows that $\E[X_k^-]<\infty$. This means it is sufficient to prove a uniform lower bound on $(X_k)_{k\in\N}$ in order to apply Theorem~\ref{thm:martingaleconv}.
\end{remark}

\section{Deferred proofs}\label{app:omittedproofs}
\subsection{Deferred proofs of Chapter~\ref{ch:unOpt}}
\begin{proof}[Proof of Theorem~\ref{thm:armijo}]
We will prove the assertion via contradiction. Firstly, suppose that $\bar x\in\R^d$ is an accumulating point of the sequence $(x_k)_{k\in\N}$ satisfying $\nabla f(\bar x) \neq 0$. Define the corresponding sub-sequence $(x_{k_n})_{n\in\N}$ converging to $\bar x$. Since the objective function $f$ is continuous, the sequence $(f(x_{k_n}))_{n\in\N}$ converges to $f(\bar x)$. Moreover, through the Armijo step size rule it follows that the sequence $(f(x_k))_{k\in\N}$ is monotonically decreasing, i.e.~$f(x_{k+1}) < f(x_k)$, and hence, the whole sequence $(f(x_k))_{k\in\N}$ converges to $f(\bar x)$. In particular, it follows that $(f(x_k))_{k\in\N}$ is a Cauchy-sequence and therefore
\[\lim_{k\to\infty} (f(x_{k+1})-f(x_k)) = 0. \]
Due to the Armijo step size rule,
\[f(x_k) - f(x_{k+1}) \ge \sigma \alpha_k \|\nabla f(x_k)\|\,, \]
implying that 
\[\lim_{k\to\infty} \alpha_k \|\nabla f(x_k)\| = 0. \]

Since we have assumed that $\nabla f(\bar x) \neq 0$ it follows from continuity of $\nabla f$ that $\lim_{n\to\infty}\|\nabla f(x_{k_n})\|\neq 0$, and hence, $\lim_{n\to\infty} \alpha_{k_n} = 0$. By construction of the Armijo step size rule, we can write $\alpha_{k_n} = \rho^{\ell_n}\cdot s_0$, where $\ell_n\in\N$ denotes the first iteration such that condition \eqref{eq:armijo_condition} is satisfied. Since $\lim_{n\to\infty} \alpha_{k_n} = 0$, for $n\in\N$ large enough there exists $\ell_n>0$ such that $f(x_{k_n}+\rho^{\ell_n-1} s_0d_{k_n}) > f(x_{k_n}) + \sigma\rho^{\ell_n-1}s_0 \nabla f(x_{k_n})^\top d_{k_n}$, which means that condition \eqref{eq:armijo_condition} is at least once not satisfied during the application of Algorithm~\ref{alg:armijo}. Thus, we have
\begin{equation} \label{eq:armijo_cond_n}
\frac{f(x_{k_n}+\rho^{\ell_n-1} s_0 d_{k_n})-f(x_{k_n})}{\rho^{\ell_n-1} s_0}> \sigma \nabla f(x_{k_n})^\top d_{k_n}. 
\end{equation}

With the mean-value theorem there exists some $r_n\in[0,\rho^{\ell_n-1}\cdot s_0]$ such that
\begin{align*}
f(x_{k_n} + \rho^{\ell_n-1} s_0 d_{k_n} ) - f(x_{k_n}) &= \nabla f(x_{k_n} + r_n d_{k_n})^\top (x_{k_n} + \rho^{\ell_n-1} s_0 d_{k_n} - x_{k_n})\\
&=\rho^{\ell_n-1} s_0  \nabla f(x_{k_n} + r_n d_{k_n})^\top d_{k_n},
\end{align*}
and with \eqref{eq:armijo_cond_n} it follows
\begin{equation} \label{eq:armijo_cond_n2}
\nabla f(x_{k_n} + r_n d_{k_n})^\top d_{k_n} > \sigma \nabla f(x_{k_n})^\top d_{k_n}.
\end{equation}
Due to continuity of $\nabla f$ and the assumption that $\lim_{n\to\infty} x_{k_n} = \bar x$, we obtain
\[\lim_{n\to\infty} d_{k_n} =\lim_{n\to\infty} - \frac{\nabla f(x_{k_n})}{\|\nabla f(x_{k_n})\|} = -\frac{\nabla f(\bar x)}{\|\nabla f(\bar x)\|}, \]
and, since $\lim_{n\to\infty}\alpha_{k_n} = 0$, we obtain
\[\lim_{n\to\infty} r_n = \rho^{\ell_n-1}s_0 = \frac{\rho^{\ell_n}}{\rho} s_0 = \frac{1}{\rho}\alpha_{k_n} = 0. \]
Both limits together imply that
\[\lim_{n\to\infty} \nabla f(x_{k_n} + r_n d_{k_n})^\top d_{k_n} = -\frac{\nabla f(\bar x)^\top \nabla f(\bar x)}{\|\nabla f(\bar x)\|} = -\|\nabla f(\bar x)\|. \]
and similarly, 
\[\lim_{n\to\infty} \sigma\nabla f(x_{k_n})^\top d_{k_n} = -\sigma \|\nabla f(\bar x)\|. \]
Finally, taking the limit $n\to\infty$ in equation \eqref{eq:armijo_cond_n2} it follows
\( -\|\nabla f(\bar x)\| \ge -\sigma \|\nabla f(\bar x)\|,\)
which contradicts the assumption $\sigma\in(0,1)$ for $\nabla f(\bar x) \neq 0$.  This proves that $\nabla f(\bar x) = 0$. 
\end{proof}

\subsection{Deferred proofs of Chapter~\ref{ch:accmethods}}
\begin{proof}[Proof of Lemma~\ref{lem:NAMconvex_aux}]
Firstly, we write down the increments of $(E_k)_{k\in\N}$ 
\[E_{k+1}-E_k = \frac{1}{2}\|z_{k+1}-x_\ast\|^2 - \frac12 \|z_k-x_\ast\|^2 + A_{k+1}(f(y_{k+1})-f^\ast) - A_k (f(y_k)-f^\ast) \]
and observe that
\begin{align*}
&\frac12\|z_{k+1}-x_\ast\|^2 - \frac12\|z_k-x_\ast\|^2 = \frac12\|z_{k+1}-x_\ast\|^2 -\frac12\|(z_k-z_{k+1})+(z_{k+1}-x_\ast)\|^2 \\
&=\frac12\|z_{k+1}-x_\ast\|^2 - \frac12\|z_k-z_{k+1}\|^2 - \langle z_k-z_{k+1},z_{k+1}-x_\ast\rangle - \frac12\|z_{k+1}-x_\ast\|^2\\
&= -\frac12\|z_k-z_{k+1}\|^2 -\langle z_k-z_{k+1},z_{k+1}-x_\ast\rangle = -\frac12 \|z_k-z_{k+1}\|^2 + \langle x_\ast-z_{k+1},(A_{k+1}-A_k)\nabla f(x_k)\rangle.
\end{align*}
This leads to
\begin{align*}
E_{k+1}-E_k &= \langle x_\ast-z_{k+1},(A_{k+1}-A_k)\nabla f(x_k)\rangle -\frac12 \|z_k-z_{k+1}\|^2\\ &\quad + A_{k+1}(f(y_{k+1})-f^\ast) - A_k (f(y_k)-f^\ast)\\
&=  \langle x_\ast-z_{k},(A_{k+1}-A_k)\nabla f(x_k)\rangle+\langle z_{k}-z_{k+1},(A_{k+1}-A_k)\nabla f(x_k)\rangle  -\frac12 \|z_k-z_{k+1}\|^2\\
&\quad + A_{k+1}(f(y_{k+1})-f^\ast) - A_k (f(y_k)-f^\ast).
\end{align*}
Since $0\le \frac12\|a-b\|^2 = \frac12\|a\|^2 -\langle a,b\rangle +\frac12\|b\|^2$ for $a,b\in\R^d$, we can apply the inequality $\langle a,b\rangle -\frac12\|a\|^2\le \frac12\|b\|^2$ to derive
\[\langle z_{k}-z_{k+1},(A_{k+1}-A_k)\nabla f(x_k)\rangle -\frac12 \|z_k-z_{k+1}\|^2 \le \frac12\|(A_{k+1}-A_k)\nabla f(x_k)\|^2\]
such that
\begin{align*}
E_{k+1}-E_k &=  \frac12(A_{k+1}-A_k)^2\|\nabla f(x_k)\|^2+\langle x_\ast-z_{k},(A_{k+1}-A_k)\nabla f(x_k)\rangle\\
&\quad + A_{k+1}(f(y_{k+1})-f^\ast) - A_k (f(y_k)-f^\ast).
\end{align*}
Moreover, we observe that
\begin{align*}
A_{k+1}(f(y_{k+1})-f^\ast) - A_k (f(y_k)-f^\ast) &= (A_{k+1}-A_k)(f(x_k)-f^\ast) + A_k(f(x_k)-f(y_k))\\&\quad + A_{k+1} (f(y_{k+1}-f(x_k)).
\end{align*}
With $ \varepsilon_{k+1}:= \frac12 (A_{k+1}-A_k)^2 \|\nabla f(x_k)\|^2 + A_{k+1} (f(y_{k+1})-f(x_k))$ it follows that
\begin{align*}
E_{k+1}-E_k &\le \varepsilon_{k+1} +\langle x_\ast-z_{k},(A_{k+1}-A_k)\nabla f(x_k)\rangle\\
&\quad +(A_{k+1}-A_k)(f(x_k)-f^\ast)+A_k(f(x_k)-f(y_k)),
\end{align*}
and it is left to prove that
\[\mathcal R = \langle x_\ast-z_{k},(A_{k+1}-A_k)\nabla f(x_k)\rangle +(A_{k+1}-A_k)(f(x_k)-f^\ast)+A_k(f(x_k)-f(y_k))\le 0.\]
We add $-y_k+y_k=0$ to derive
\begin{align*}
\mathcal R &= \langle x_\ast-y_{k},(A_{k+1}-A_k)\nabla f(x_k)\rangle +\langle y_k-z_k,(A_{k+1}-A_k)\nabla f(x_k)\rangle\\ &\quad +(A_{k+1}-A_k)(f(x_k)-f^\ast)+A_k(f(x_k)-f(y_k)),
\end{align*}
where we now aim to apply convexity of $f$ in the form of 
\[f(z)-f(y) + \langle y-z,\nabla f(z)\rangle \le 0. \]
Therefore,  using $y_k-z_k = \frac{A_{k+1}}{A_{k+1}-A_k}(y_k-x_k)$ we again rewrite $\mathcal R$ in the form of
\begin{align*}
\mathcal R &=  \langle x_\ast-y_{k},(A_{k+1}-A_k)\nabla f(x_k)\rangle + A_{k+1}\langle y_k-x_k,\nabla f(x_k)\rangle\\
&\quad +(A_{k+1}-A_k)(f(x_k)-f^\ast)+A_k(f(x_k)-f(y_k))\\
&=\langle x_\ast-y_{k},(A_{k+1}-A_k)\nabla f(x_k)\rangle + A_{k+1}\langle y_k-x_k,\nabla f(x_k)\rangle\\
&\quad -A_k \langle x_k,\nabla f(x_k)\rangle + A_k \langle x_k,\nabla f(x_k)\rangle\\
&\quad +(A_{k+1}-A_k)(f(x_k)-f^\ast)+A_k(f(x_k)-f(y_k))\\
&= (A_{k+1}-A_k)\langle x_\ast - x_k,\nabla f(x_k)\rangle + A_k \langle y_k-x_k, \nabla f(x_k)\rangle \\
&\quad +(A_{k+1}-A_k)(f(x_k)-f^\ast)+A_k(f(x_k)-f(y_k))\\
&= (A_{k+1}-A_k)\left\{ f(x_k)-f(x_\ast) + \langle x_\ast-x_k,\nabla f(x_k)\rangle \right\}\\
&\quad + A_k \left\{ f(x_k)-f(y_k) + \langle y_k -x_k,\nabla f(x_k)\rangle \right\}\\
&\le 0
\end{align*}
by convexity of $f$. We have proved that
$E_{k+1}-E_k \le \varepsilon_{k+1}$.
\end{proof}

\end{document}